%
%

%
%

\documentclass[12pt]{amsart}

\usepackage[colorlinks=true,urlcolor=blue,
bookmarks=true,bookmarksopen=true,citecolor=blue
]{hyperref}

\usepackage{pdfsync}
\usepackage{graphicx}

\usepackage{epsfig}
\usepackage{amscd}
\usepackage[mathscr]{eucal}
\usepackage{amssymb}
\usepackage{amsxtra}
\usepackage{amsmath}
\usepackage[all]{xy}
\usepackage{enumerate}
\usepackage{mathrsfs}

\usepackage{color}
\setlength{\textwidth}{100cm}
\setlength{\textheight}{100cm}

%

\theoremstyle{plain}

\setcounter{mainthm}{0}

\newtheorem{thm}{Theorem}[subsection]

\newtheorem{cor}[thm]{Corollary}

\newtheorem{lem}[thm]{Lemma}
\newtheorem{prop}[thm]{Proposition}

\theoremstyle{definition}
\newtheorem{dfn}[thm]{Definition}

\theoremstyle{remark}
\newtheorem{rem}[thm]{Remark}

\newtheorem{rems}[thm]{Remarks}

\newtheorem{ex}[thm]{Example}

\theoremstyle{plain}

%

\newcommand{\Qed}{\hfill \qedsymbol \medskip}


\newcommand{\hooklongrightarrow}{\lhook\joinrel\longrightarrow}

%


\oddsidemargin 0pt 
\evensidemargin 0pt 
\marginparwidth 40pt 
\marginparsep 10pt 

\topmargin 0pt 
\headsep 15pt 

\textheight 8.5in 
\textwidth 6.3in 
\topmargin 0pt

\headheight12pt 

%
\newcommand{\R}{\mathbb{R}}
\newcommand{\Z}{\mathbb{Z}}

\newcommand{\C}{\mathbb{C}}
\newcommand{\la}{\lambda}
\newcommand{\La}{\Lambda}

\newcommand{\Crit}{\textnormal{Crit\/}}

%
\newcommand{\leftexp}[2]{{\vphantom{#2}}^{#1}{#2}}
\newcommand{\lrsub}[3]{{\vphantom{#2}}_{#1}{#2}_{#3}}

%
%

\newcommand{\pbaddress}{biran@math.ethz.ch}
\newcommand{\ocaddress}{cornea@dms.umontreal.ca}

\begin{document}

\title{Lagrangian topology and enumerative geometry} \date{\today}

\thanks{The second author was supported by an NSERC Discovery grant
  and a FQRNT Group Research grant}


\author{Paul Biran and Octav Cornea}

\address{Paul Biran, Department of Mathematics, ETH-Z\"{u}rich,
  R\"{a}mistrasse 101, 8092 Z\"{u}rich, Switzerland}
\email{\pbaddress} \address{Octav Cornea, Department of Mathematics
  and Statistics University of Montreal C.P. 6128 Succ.  Centre-Ville
  Montreal, QC H3C 3J7, Canada} \email{\ocaddress}

\bibliographystyle{alphanum}

%
%

\maketitle

%
%

\tableofcontents

\section{Introduction}\label{sec:intro}

The main motivation for this paper is the search for enumerative
invariants for Lagrangian submanifolds.  One of the simplest
fundamental questions in this topic can be formulated as follows.  Fix
a closed, connected Lagrangian submanifold $L^{n}$ inside some
symplectic manifold $ (M^{2n},\omega)$. Fix an almost complex
structure $J$ on $M$ that is compatible with $\omega$ and let
$P,Q,R\in L$ be three distinct points.

\subsubsection*{Problem} Estimate the number $n_{PQR}(L,J)$ of disks
$u:(D^{2}, \partial D^{2})\to (M,L)$ that are $J$-holomorphic (in the
sense that $\bar{\partial}_{J}(u)=0$ \cite{McD-Sa:Jhol-2}) and that go
through $P,Q,R$ in this order.

It is easily seen that for this question to make sense one should
restrict to generic almost complex structures $J$ and, to ensure that
the number in question is finite, we have to consider only those disks
$u$ belonging to homotopy classes $\la\in \pi_{2}(M,L)$ so that the
Maslov index of $\la$, $\mu(\la)$, equals $2n$. The count providing
the number $n_{PQR}(L,J)\in \mathbb{Z}$ takes into account appropriate
orientations. Ideally, one would like to obtain more refined estimates
by evaluating the numbers $n_{PQR}(L,J;\la)$ of disks $u$ as above
that belong to each specific class $\la$.

\subsection{An enumerative invariant}   We work in this paper under the restriction that $L\subset M$ is a monotone
Lagrangian, oriented and endowed with a fixed spin structure.

\

It is easy to see that the numbers
$n_{PQR}(L,J)$ above are in general not invariant, they depend on the
choice of the points $P,Q,R$ as well as on $J$. Thus, it is natural to investigate
whether this lack of invariance can possibly be compensated by some more
complicated enumerative ``counts''.

The origin of the present paper  lies precisely in such a formula
(closely related to expressions first detected in our paper \cite{Bi-Co:qrel-long}).

Assume that $L$ is the $2$-torus $\mathbb{T}^{2}$. Fix a triangle
$PQR$ on the torus. By this we mean three distinct points $P,Q, R\in
L$ together with a smooth oriented path $\overrightarrow{PQ}$ starting
from $P$ and ending at $Q$ as well as similar paths connecting $Q$ to
$R$ and $R$ to $P$.  Fix also a generic almost complex structure $J$.
Let $n_{P}$ be the number of $J$-holomorphic disks of Maslov index $2$
that go through $P$ and cross transversely the edge
$\overrightarrow{QR}$ (this number takes into account orientations -
it is defined with more precision in \ref{subsec:2-torus}). Define
similarly the numbers $n_{Q}$ and $n_{R}$.

We will see that if the Floer homology
   $HF(L,L)\not=0$, then the expression
   \begin{equation}\label{eq:numerical-torus}\Delta= 4 n_{PQR} + n_P^2 + n_Q^2 +
   n_R^2 - 2n_P n_Q - 2n_Q n_R - 2n_R n_P
   \end{equation} is independent of the
   triangle $P,Q,R$ as well as of $J$.

\subsection{Formula (\ref{eq:numerical-torus}): its meaning and generalizations}
In  the paper we investigate the invariant $\Delta$ and the meaning of 
formula (\ref{eq:numerical-torus}), besides, of course, proving this formula. We also  provide a more
general and conceptual perspective on other  enumerative expressions in arbitrary dimensions.
To summarize, we will see that:
\begin{itemize}
\item[-] $\Delta$ coincides with the discriminant of a certain quadratic form that can be read off the
quantum homology product of $L$.
\item[-] $\Delta$ is actually the unique (symmetric) polynomial, enumerative  invariant  that can be extracted from the
    quantum product. Interestingly, this uniqueness is a consequence of the classification of polynomial
     invariants associated to quadratic
    forms as in \cite{Hilbert}.
\item[-] $\Delta$ and/or other invariants like it, as well as formulae like (\ref{eq:numerical-torus}) also exist for more general Lagrangians and in arbitrarily high dimensions.
\item[-] There are refinements of these formulae that take into
 account the specific homotopy classes $\la\in\pi_{2}(M,L)$.  They
 allow for these invariants to be written as expressions with
 coefficients in the ring of regular functions $\mathcal{R}$ of
 certain algebraic subvarieties of the variety of representations
 $\pi_{2}(M,L)\to \C^{\ast}$.
\item[-]  In the case of toric fibres the ring of representation point of view is particularly useful as it relates $\Delta$
to the quantum Euler class of the ambient manifold.
\end{itemize}

In a number of examples, we also compute the relevant invariants explicitly over $\mathcal{R}$.
Some remarkable numerical identities follow.

\subsection{Structure of the paper} We now describe more thoroughly our approach
and the structure of the paper.

In \S\ref{s:setting} we summarize the main properties of
Lagrangian quantum homology of $L$, $QH(L)$, as described in
\cite{Bi-Co:rigidity} together with a number of its algebraic
properties.  In particular, we recall that $QH(L)$ is a ring - we will
denote the respective multiplication by $\ast$. We also fix a few basic
orientation conventions. To avoid disrupting  the natural flow of the paper
a complete and more technical discussion of orientations is postponed to
the Appendix \ref{a:orientations}.

\

In \S\ref{s:wide} we consider the representation variety
$$\mathcal{R}ep(L)=\{\rho : \pi_{2}(M,L)\to \C^{\ast}\}~.~$$
We show that for a certain algebraic subset $\mathcal{W}\subset
\mathcal{R}ep (L)$ the regular functions on $\mathcal{W}$,
$\mathcal{O}(\mathcal{W})$, can be used as coefficient ring for
quantum homology with the effect that the resulting object
$Q^{+}H(L;\mathcal{W})$ is isomorphic as a vector space to the
singular homology of $L$ taken with the appropriate coefficients. The
$+$ in $Q^{+}H$ reflects the fact that quantum homology as constructed
in \cite{Bi-Co:rigidity} has some strong positivity features due to
the fact that the various quantum structures are defined by using
unperturbed $J$-holomorphic objects. A key consequence of positivity
is that the algebra $Q^{+}H(L;\mathcal{W})$ is a deformation of
singular homology - viewed as algebra with the intersection product.

\

In \S\ref{s:qforms} we make use of this setting to define a quadratic
form associated to the quantum product with coefficients in
$\mathcal{O}(\mathcal{W})$ and its associated discriminant $\Delta$.
In the case of the $2$-torus this will later be seen to be precisely
the term on left hand side of (\ref{eq:numerical-torus}).

\

Section~\ref{s:defo} is based on the remark that the isomorphism
between $Q^{+}H(L;\mathcal{W})$ and singular homology that was
mentioned above {\em is not} canonical.  In particular, if a specific
isomorphism between quantum homology and singular homology is used to
expand the quantum product with respect to a singular basis,
$\mathbf{a}=(a_{1},\ldots a_{i},\ldots)$, as
\begin{equation}\label{eq:struct_cst0}
   a_{i}\ast  a_{j}=\sum_{s} k^{i,j}_{s}a_{s}t^{\epsilon'(i,j,s)}~,~
\end{equation}
then the resulting structural constants $k^{i,j}_{s}$ are not
invariants -- they depend on $J$ as well as on the other data used to
define the various structures involved (here $\epsilon'(i,j,s)$ are
appropriate integers -- see \S\ref{subsubsec:struct-const} ; $t$ is a
formal deformation variable used in the definition of the quantum
homology $Q^{+}H(L)$).  Notice that this lack of invariance of the
$k^{i,j}_{s}$'s is in marked contrast with the closed case where the
same type of expansion of the classical quantum product produces
structural constants that are identified with triple Gromov-Witten
invariants.

On the other hand, the deformation equivalence class of
$Q^{+}H(L;\mathcal{W}_{2})$ (as deformation of the singular
intersection algebra) {\em is invariant}. Thus, in searching for
invariant enumerative expressions, it is natural to look for
polynomial invariants in the $k^{i,j}_{s}$'s that only depend on this
equivalence class. This type of invariants is introduced in
\S\ref{s:defo} and most of the section is spent discussing them from a
variety of points of view.  It is also noticed that $\Delta$, as
defined in \S\ref{s:qforms}, is a particular such invariant.
Conceptually, one way to view this is by the prism of Hochschild
cohomology.  Indeed, this cohomology classifies algebra deformations
and we notice that there is a natural map that associates to specific
Hochschild cohomology classes (of the correct degrees) equivalence
classes of quadratic forms. As we will see, $\Delta$ is simply the
associated discriminant for these forms.

 \

In~\S\ref{s:enum} we start by revisiting formula (\ref{eq:numerical-torus}) from a
related but slightly different perspective. It turns out that $\Delta$, as defined
in \S\ref{s:qforms} only depends on counts of $J$-holomorphic disks of
Maslov class $2$. Thus  formula (\ref{eq:numerical-torus}) can be viewed as a {\em splitting formula}
expressing counts of Maslov $4$ disks in terms of counts  of configurations involving only
 Maslov $2$ disks. The first part of \S\ref{s:enum} contains a general definition of such splitting formulae
 and a proof that   they exist for  Lagrangians of arbitrary dimensions (under very mild assumptions).
We also notice that, as illustrated by formula (\ref{eq:numerical-torus}) there is a close
relationship between the invariant polynomials described in \S\ref{s:defo} and these splitting formulae.
The second part of \S\ref{s:enum} contains the proof of  (a more general version) of (\ref{eq:numerical-torus}).

\

As mentioned before, the role of the discriminant $\Delta$ is central
in our study especially for Lagrangian tori. In view of this,
in~\S\ref{s:toric} we focus on a variety of further properties for
Lagrangian tori that appear as fibres of the moment map in toric
manifolds. An extensive study of Floer theory of such tori has been
carried out by several authors, e.g.~\cite{Cho:Clifford,
  Cho-Oh:Floer-toric, Cho:products, FO3:toric-1, FO3:toric-2,
  Aur:t-duality, Aur:slag-1, Aur:slag-2}.  We build on these works and
exemplify our theory on the case of toric fibres. In particular, we
describe a relation between our machinery and the Frobenius structure
on the quantum homology of the ambient toric manifold and see that the
quantum Euler class viewed in the appropriate context can be
identified with our discriminant $\Delta$. Finally,~\S\ref{s:exp}
contains a series of explicit computations mostly for toric fibers.

\subsubsection*{Acknowledgments.}
We would like to thank Sasha Givental for explaining to us his
perspective on the Frobenius structure of the quantum homology of
toric manifolds and the reference to his paper~\cite{Gi:elliptic-GW}.
We would also like to thank Denis Auroux for useful discussions
related to the example in~\S\ref{sb:chekanov-torus}. Thanks to Andrew
Granville for useful discussions and for explaining us an elementary
and beautiful approach to verify the arithmetic identities
in~\S\ref{s:toric}. Finally, we would like to thank Jean-Yves
Welschinger for interesting and insightful discussions related to
enumerative invariants.

\section{Setting} \label{s:setting}

All our symplectic manifolds will be implicitly assumed to   be
connected and tame (see~\cite{ALP}).  The main examples of such
manifolds are closed symplectic manifolds, manifolds which are
symplectically convex at infinity as well as products of such. We
denote by $\mathcal{J}$ the space of $\omega$--compatible almost
complex structures on $M$ for which $(M, g_{\omega,J})$ is
geometrically bounded, where $g_{\omega,J}$ is the associated
Riemannian metric.

Lagrangian submanifolds $L \subset (M,\omega)$ will be assumed to be
connected and closed. Write $H_2^D = H_2^D(M,L) = \textnormal{image\,}
(\pi_2(M,L) \longrightarrow H_2(M,L))$ for the image of the Hurewicz
homomorphism.  Denote by $\mu:H_2^D \longrightarrow \mathbb{Z}$ the
Maslov index and by $N_L = \min \{ \mu(A) \mid \mu(A)>0 \}$ the
minimal Maslov number, so that $\mu(H_2^D) = N_L \mathbb{Z}$. Since
Maslov numbers come in multiples of $N_L$ we put $\bar{\mu} : =
\frac{1}{N_L} \mu$.

Denote by $\omega: H_2^D \longrightarrow \mathbb{R}$ the homomorphism
induced by integration of $\omega$. We will mostly assume that our
Lagrangians are {\em monotone}, that is there exists a constant
$\tau>0$ such that
\begin{equation}\label{eq:monotonicity}
   \omega(A)=\tau \mu(A),\ \forall \ A \in
   H^D_{2}(M,L),
\end{equation}
and moreover that $N_L \geq 2$.

\subsection{Coefficient rings} \label{sb:coefs} Our ground ring will
be denoted by $K$. We will mostly take $K$ to be either $\mathbb{C}$,
$\mathbb{Q}$ or $\mathbb{Z}$ and sometimes $\mathbb{Z}_2$. In case $K
\neq \mathbb{Z}_2$ we implicitly assume that our Lagrangian $L$ is
orientable and spin. Moreover we fix an orientation and a spin
structure on $L$.

The following rings will be used frequently in the sequel: $\Lambda =
K[t^{-1}, t]$, $\Lambda^+ = K[t]$. We grade these rings by setting
$|t| = -N_L$. Next consider the group ring $K[H_2^D]$ whose elements
we write as ``polynomials'' in the variable $T$, i.e.  $P(T) = \sum_{A
  \in H_2^D} a_A T^A$, with $a_A \in K$. We grade this ring by setting
$|T^{A}| = -\mu(A)$.

The most important ring for our considerations will be
$\widetilde{\Lambda}^{+}$ which is defined as
$$\widetilde{\Lambda}^{+} = \{ P(T) \in
K[H_2^D] \mid P(T) = a_0 + \sum_{A, \mu(A)>0} a_A T^A \}.$$ Note that
the degree $0$ component of $\widetilde{\Lambda}^+$ is just $K$ (i.e.
constants) while that of $K[H_2^D]$ is the whole of $K[\ker \mu]$.
We denote by $\widetilde{\La}^{>0}$ the elements of degree strictly
bigger than zero in $\widetilde{\La}^{+}$.

In what follows we will work with $\widetilde{\Lambda}^+$--algebras.
By this we mean commutative, graded rings $\mathcal{R}$ which are also
graded algebras over $\widetilde{\Lambda}^+$. This structure is
typically specified by a graded morphism of rings
$\widetilde{\Lambda}^{+} \longrightarrow \mathcal{R}$.

\subsection{Lagrangian quantum homology and quantum structures}
\label{sb:qh}

The pearl complex, Lagrangian quantum homology and its associated
quantum structures have been described in detail
in~\cite{Bi-Co:Yasha-fest, Bi-Co:rigidity, Bi-Co:qrel-long}. We refer
the reader to these papers for the detailed constructions. Here we
just set up the notation and recall the main properties of this
homology. In addition, we explain how to carry out the construction
over general ground rings $K$, other that $\mathbb{Z}_2$. This
requires orienting the moduli spaces of pearly trajectories and is
explained in detail in the appendix~\S\ref{a:orientations}.

Let $\mathcal{R}$ be an $\widetilde{\Lambda}^+$--algebra.  Fix a
triple $\mathscr{D} = (f, (\cdot,\cdot), J)$ where $f:L
\longrightarrow \mathbb{R}$ is a Morse function $(\cdot, \cdot)$ is a
Riemannian metric on $L$ and $J$ an $\omega$--compatible almost
complex structure on $M$.  Denote by
$$\mathcal{C}(\mathscr{D}) = K \langle \textnormal{Crit} f \rangle
\otimes \mathcal{R}, \quad d:\mathcal{C}_*(\mathscr{D}) \longrightarrow
\mathcal{C}_{*-1}(\mathscr{D})$$ the pearl complex with coefficients
in $\mathcal{R}$.  This complex is defined for generic $\mathscr{D}$,
its  homology does not depend on
$\mathscr{D}$ and is denoted by $QH(L; \mathcal{R})$.

\subsubsection{Product}\label{sbsb:prod-setting}
Recall that $QH(L;\mathcal{R})$ has the
structure of an associative (but not necessarily commutative) ring
with unity:
\begin{equation} \label{eq:qprod} QH_i(L;\mathcal{R})
   \otimes_{\mathcal{R}} QH_j(L; \mathcal{R}) \longrightarrow
   QH_{i+j-n}(L;\mathcal{R}), \quad \alpha \otimes \beta \longmapsto
   \alpha*\beta,
\end{equation}
where $n = dim L$. The unity lies in $QH_n(L;\mathcal{R})$ and is
denoted by $[L]$ (in analogy to the fundamental class in singular
homology).

\subsubsection{Module structure} \label{subsubsec:module} Denote by
$QH(M;
\mathcal{R})$ the quantum homology of the (ambient) symplectic
manifold $(M,\omega)$ endowed with the quantum product $*$. The
extension of the coefficients to $\mathcal{R}$ is induced by the
composition of the natural maps $\pi_2(M) \longrightarrow \pi_2(M,L)
\longrightarrow H_2^D(M,L)$, see~\cite{Bi-Co:rigidity} for details.
Then $QH(L;\mathcal{R})$ becomes an algebra over $QH(M;\mathcal{R})$
in the sense that there exists a canonical map
\begin{equation} \label{eq:qmod}
   QH_i(M;\mathcal{R})
   \otimes_{\mathcal{R}} QH_j(L;\mathcal{R}) \longrightarrow
   QH_{i+j-2n}(L;\mathcal{R}), \quad a \otimes \alpha \longmapsto
   a*\alpha,
\end{equation}
which turns $QH(L;\mathcal{R})$ into an algebra over the
ring $QH(M;\mathcal{R})$.

\subsubsection{Inclusion} \label{subsubsec:inclu} We also have a
quantum version of the map induced in homology by the inclusion $L
\longrightarrow M$. This is a map
\begin{equation} \label{eq:qinc}
   i_L :
   QH_*(L;\mathcal{R}) \longrightarrow QH_*(M;\mathcal{R})
\end{equation}
which extends the classical inclusion on the chain level. The map
$i_L$ is a $QH(M;\mathcal{R})$--module morphism.

\subsubsection{Minimal models} \label{subsubsec:minimal-mod} It is
important throughout the paper that all the structures above are
defined over $\widetilde{\La}^{+}$ and that, at the chain level, they
are deformations of the respective Morse-theoretic structures. The
Morse theoretic structures (on the chain level) are obtained from the
ones defined above by specializing to $t=0$. For an algebraic
structure defined over $V\otimes \La^{+}$ where $V$ is some $K$-vector
space we will refer to the algebraic object obtained by specializing
to $t=0$ as the ``Morse level'' or ``classical'' associated structure.

A very useful consequence of positivity is the existence of minimal
models whose definition and properties we now recall.

If $f$ is a perfect Morse function, in the sense that the differential
of its Morse complex is trivial, then the pearl complex is quite
efficient for computations. However, not all manifolds admit perfect
Morse functions. The existence of the minimal models allows to reduce
algebraically the pearl complex to such a minimal form whenever the
base ring $K$ is a field.  We recall the relevant result from
\cite{Bi-Co:rigidity}.

\begin{prop} \cite{Bi-Co:rigidity} \label{prop:min_pearls} Let $K$ be
   a field. For any monotone Lagrangian $L$ there exists a complex
   $\mathcal{C}_{min}(L)=(H_{\ast}(L;K)\otimes \widetilde{
     \La}^{+},\delta)$, with
  $$\delta : H_{\ast}(L;K)\otimes
  \widetilde{ \La}^{+}\to H_{\ast}(L;K)\otimes \widetilde{ \La}^{>0}$$
  so that, for any triple $\mathscr{D}=(f, (\cdot,\cdot), J)$ such
  that $\mathcal{C}(\mathscr{D})$ is defined, there are chain
  morphisms $\phi:\mathcal{C}(\mathscr{D})\to \mathcal{C}_{min}(L)$
  and $\psi : \mathcal{C}_{min}(L)\to \mathcal{C}(\mathscr{D})$ that
  both induce isomorphisms in quantum homology as well as in Morse
  homology and verify $\phi\circ\psi=id$.  The complex
  $\mathcal{C}_{min}(L)$ with these properties is unique up to (a
  generally non-canonical) isomorphism and is called the minimal pearl
  complex of $L$.  The maps $\psi$, $\phi$ are called structural maps
  associated to $\mathscr{D}$.
\end{prop}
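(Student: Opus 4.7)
The plan is to transfer the pearl differential on $\mathcal{C}(\mathscr{D})$ to the graded vector space $H_*(L;K)\otimes \widetilde{\Lambda}^+$ via the Homological Perturbation Lemma (HPL), exploiting the fact that at $t=0$ the pearl differential specializes to the Morse differential, which is already classically understood.

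First, decompose the pearl differential as $d = d_0 + d'$, where $d_0$ is the Morse differential (obtained by setting $t=0$) and $d'$ takes values in $K\langle \textnormal{Crit}(f)\rangle\otimes\widetilde{\Lambda}^{>0}$. Since $K$ is a field, the Morse complex $(K\langle \textnormal{Crit}(f)\rangle, d_0)$ admits a strong deformation retract onto its homology: there exist $K$-linear maps $\phi_0, \psi_0, h_0$ satisfying
\begin{equation*}
  \phi_0 \psi_0 = \mathrm{id}_{H_*(L;K)}, \quad \psi_0 \phi_0 - \mathrm{id} = d_0 h_0 + h_0 d_0,
\end{equation*}
together with the side conditions $h_0^2 = 0$, $\phi_0 h_0 = 0$, $h_0 \psi_0 = 0$ (any strong deformation retract can be modified to satisfy these). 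Extend $\phi_0, \psi_0, h_0$ to $\widetilde{\Lambda}^+$-linearly.

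Next, apply the HPL to the perturbation $d'$ to produce
\begin{equation*}
  \delta = \phi_0 \, d' (\mathrm{id} - h_0 d')^{-1} \psi_0, \qquad
  \phi = \phi_0 (\mathrm{id} - d' h_0)^{-1}, \qquad
  \psi = (\mathrm{id} - h_0 d')^{-1} \psi_0.
\end{equation*}
The formal inverses make sense because $d'$ takes values in $\widetilde{\Lambda}^{>0}$: any composition $(h_0 d')^k$ applied to a fixed element produces terms $T^A$ with $\mu(A) \geq k N_L$, and for each fixed total degree only finitely many such terms can contribute. Hence the series converge in the graded sense. The standard HPL identities then yield $\delta^2 = 0$, that $\phi, \psi$ are chain maps intertwining $d$ and $\delta$, and $\phi \psi = \mathrm{id}$ (using the side conditions). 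By construction $\delta$ lands in $H_*(L;K)\otimes \widetilde{\Lambda}^{>0}$, since each summand of $\delta$ contains at least one factor of $d'$. Since at $t = 0$ one has $d'\equiv 0$, the maps reduce to $\phi_0, \psi_0$, giving the Morse homology isomorphism; for general $t$ one gets the quantum homology isomorphism.

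For uniqueness, suppose $(\mathcal{C}_{min}, \delta)$ and $(\mathcal{C}'_{min}, \delta')$ both satisfy the stated properties, with structural maps $(\phi, \psi)$ and $(\phi', \psi')$ for the same triple $\mathscr{D}$. The composition $F = \phi' \circ \psi : \mathcal{C}_{min} \to \mathcal{C}'_{min}$ is a chain map over $\widetilde{\Lambda}^+$. At $t=0$ it specializes to an automorphism of $H_*(L;K)$ (both $\delta$ and $\delta'$ vanish at $t=0$). One then inverts $F$ by a filtration argument: write $F = F_0 + F_{>0}$ where $F_{>0}$ has image in $H_*(L;K)\otimes\widetilde{\Lambda}^{>0}$; the formal series $F^{-1} = (\mathrm{id} + F_0^{-1} F_{>0})^{-1} F_0^{-1}$ converges by the same positivity argument as above and provides a two-sided inverse.

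The main obstacle will be the bookkeeping for the convergence of the HPL series and for the uniqueness inversion, i.e.\ verifying that the positivity encoded in $\widetilde{\Lambda}^{>0}$ together with the grading truly ensures that only finitely many terms contribute in each total degree for a given input. Everything else is an application of standard homological algebra, but this convergence point must be pinned down carefully using monotonicity and the degree convention $|T^A| = -\mu(A)$.
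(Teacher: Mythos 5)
Your proposal is correct and captures the essential mechanism of the proof in \cite{Bi-Co:rigidity}: there the minimal complex is built by an inductive elimination of acyclic pairs of generators (a ``Gaussian elimination'' over $\widetilde{\Lambda}^+$), which is precisely the unwound form of the Homological Perturbation Lemma you invoke; the convergence rests on the same observation that each application of the quantum part $d'$ raises the quantum weight by at least $N_L$ while the Morse degree is confined to $[0,n]$, so the geometric series terminate after at most $\lfloor n/N_L \rfloor$ steps. Your degree bookkeeping is sound, the side conditions guarantee $\phi\psi=\mathrm{id}$ as you argue, and the filtration-based inversion in the uniqueness part is the standard way to upgrade an isomorphism ``mod $t$'' to a genuine isomorphism.

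One small logical step you gloss over: you construct $\mathcal{C}_{min}$ and the structural maps from a fixed triple $\mathscr{D}$, but the statement requires structural maps $\phi,\psi$ for \emph{every} generic triple $\mathscr{D}'$, and your uniqueness argument likewise compares two candidates only through a common $\mathscr{D}$. To close this you must invoke the comparison chain morphisms $\Psi_{\mathscr{D}',\mathscr{D}}:\mathcal{C}(\mathscr{D})\to\mathcal{C}(\mathscr{D}')$ (as in \S\ref{sbsb:invariance}) which induce isomorphisms on both quantum and Morse homology, and compose them with the maps you built; this is routine but should be said. Beyond that, the proposal is a clean and slightly more compact packaging of essentially the same argument.
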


All the algebraic structures described before (product, module
structure etc.) can be transported and computed on the minimal
complex. For instance, the product is the composition:
\begin{equation}\label{eq:prod_min}
   \mathcal{C}_{min}(L)\otimes
   \mathcal{C}_{min}(L)\stackrel{\psi_{1}\otimes\psi_{2}}
   {\longrightarrow}\mathcal{C}(\mathscr{D}_{1})\otimes
   \mathcal{C}(\mathscr{D}_{2})\stackrel{\ast}{\longrightarrow}
   \mathcal{C}(\mathscr{D}_{3})\stackrel{\phi_{3}}{\longrightarrow}
   \mathcal{C}_{min}(L)
\end{equation}

where $\phi_{i},\psi_{i}$ are structural maps associated to the data
set $\mathscr{D}_{i}$.

\begin{rem}\label{rem:min_mod} If the Lagrangian $L$ admits perfect
   Morse functions, then any pearl complex associated to such a
   function is a minimal pearl complex over any ring $K$ (not only
   when $K$ is a field).  Moreover, any two such minimal models are
   related by canonical comparison maps. This means for instance that
   for tori we may choose to work over $\Z$.
\end{rem}

\subsection{Additional conventions} \label{sb:additional-conv}
\subsubsection{Orientations of the pearly moduli spaces}
\label{sbsb:orient-pearly-1} In order to define the pearl complex over
a general ground ring we need to orient the moduli space of pearl
trajectories. These are a combination of moduli spaces of gradient
trajectories arising from Morse theory together with moduli spaces of
$J$-holomorphic disks. The precise orientation conventions
are described in detail in the
appendix~\S\ref{a:orientations}, we only mention here some of the
very basic choices used later in the paper.

Throughout the paper, by a Lagrangian $L\subset (M,\omega)$ we mean an
oriented Lagrangian submanifold together with a fixed spin structure.

Denote by $D \subset \mathbb{C}$ the closed unit disk. We orient its
boundary $\partial D$ by the counterclockwise orientation. Denote by
$G = Aut(D)$ the group of biholomorphisms of the disk, and by $H
\subset G$ the subgroup of elements that preserve the two points $-1,
+1 \in \partial D$. We orient both $G$ and $H$ as described
in~\S\ref{sbsb:aut-D}.

Fix a generic almost complex structure $J \in \mathcal{J}$. Let $B \in
H_2^D$. Denote by $\widetilde{\mathcal{M}}(B,J)$ the space of
(parametrized) $J$-holomorphic disks $u:(D, \partial D)
\longrightarrow (M,L)$ with $u_*([D]) = B$. It is well-known by the
work~\cite{FO3} that a spin structure on L induces orientations on the
moduli spaces $\widetilde{\mathcal{M}}(B,J)$. The groups $G$ and $H$
act on $\widetilde{M}(B,J)$ by $\sigma \cdot u = u \circ
\sigma^{-1}$, and similarly on $\widetilde{M}(B,J) \times \partial D$
by $\sigma \circ (u,z) = (u \circ \sigma^{-1}, \sigma(z)$. The
following spaces will play an important role in the sequel:
$$\mathcal{M}_2(B,J) = \widetilde{M}(B,J)/H,
\qquad (\widetilde{M}(B,J) \times \partial D)/G.$$ Both spaces come
with an orientation induced from those of $\widetilde{M}(B,J)$ and of
$G$ and $H$ as described in~\S\ref{sbsb:group-actions}
and~\S\ref{sbsb:aut-D}. There are natural
evaluation maps that will be used frequently in the sequel:
\begin{equation} \label{eq:ev-I}
   \begin{aligned}
      & e_{\scriptscriptstyle -1}: \mathcal{M}_2(B,J) \longrightarrow
      L, \quad [u]
      \longmapsto u(-1), \\
      & e_{\scriptscriptstyle +1}: \mathcal{M}_2(B,J) \longrightarrow
      L, \quad [u]
      \longmapsto u(+1), \\
      & ev : (\widetilde{\mathcal{M}}(B,J) \times \partial D)/G
      \longrightarrow L, \quad (u,z) \longmapsto u(z).
   \end{aligned}
\end{equation}
See~\S\ref{sbsb:moduli-of-disks} for more details concerning the
orientations of these spaces.

\subsubsection{The intersection product}
\label{sbsb:intersection-prod}
In the sequel we will use a version of the classical intersection
product on singular homology which we denote by:
$$H_i(L) \otimes H_j(L) \longrightarrow H_{i+j-n}(L),
\quad a \otimes b \longmapsto a \cdot b.$$ We remark here that our
convention for this operation is somewhat non-standard concerning
signs and orientations. Our intersection product is characterized by
the following property: if $a=[A]$, $b=[B]$, where $A, B \subset L$
are two transverse oriented submanifolds, then $a \cdot b = [B \cap
A]$, ({\em not $A \cap B$ !}), where $\cap$ stands for oriented
intersection (see~\S\ref{sbsb:intersections}). When $a$ and $b$ have
complementary dimensions we will also use their {\em intersection
  number} which we denote $\#(a \cap b) = \# (A \cap B)$. (Thus in
this case $a \cdot b = \#(B \cap A)[\textnormal{pt}] =
(-1)^{(n-i)(n-j)} \# (A \cap B)[\textnormal{pt}]$, where $n=\dim L$,
$i=\dim A$, $j=\dim B$.) Also by abuse of notation, when $i+j=n$ we
will sometimes view $(- \cdot -)$ as a $\mathbb{Z}$-valued pairing and
write $a \cdot b \in \mathbb{Z}$, instead of $a \cdot b \in
H_0(L)=\mathbb{Z} [\textnormal{pt}]$.

In favorable situations the product mentioned
in~\S\ref{sbsb:prod-setting} can be considered as a deformation of the
above version of the classical intersection product on the singular
homology. (See~\S\ref{sbsb:or-prod}.) The signs defining this product
were so chosen in order to make duality more natural
(see~\S\ref{sbsb:or-duality}).

Analogous remarks apply also to the module structure
from~\S\ref{subsubsec:module} and~\S\ref{sbsb:or-mod} (both the
classical and the quantum operations).

\subsection{Twisted coefficients} \label{sb:tc}

The most relevant ground ring here will be $K = \mathbb{C}$, though
one could work with $K = \mathbb{Q}$ or $K=\mathbb{Z}$ too.

Let $\rho: H_2^D \longrightarrow \mathbb{C}^*$ be a homomorphism. This
induces a structure of a $\widetilde{\Lambda}^+$--algebra on the ring
$\Lambda^{+} = K[t]$ induced by the morphism $\widetilde{\Lambda}^{+}
\longrightarrow \Lambda^{+}$ defined by
\begin{equation} \label{eq:algebra-rho} \widetilde{\Lambda}^+ \ni
   T^{A} \longmapsto \rho(A) t^{\bar{\mu}(A)} \in \Lambda, \quad
   \forall A \in H_2^D.
\end{equation}
In order to emphasize the dependence on $\rho$ in this algebra
structure we will sometimes write $(\Lambda^{\rho})^{+}$ rather than
$\Lambda^+$. Similarly, we denote $\Lambda^{\rho}$ the
$\widetilde{\La}^{+}$-algebra structure induced on $K[t^{-1},t]$ by
(\ref{eq:algebra-rho}). With these conventions we now have the
Lagrangian quantum homology $QH(L;(\Lambda^{\rho})^{+})$ together with
the quantum operations as described in the previous section as well as
the corresponding structures over $\Lambda^{\rho}$.  The differential
of the pearl complex with coefficients in $(\Lambda^{\rho})^{+}$ is
denoted by $d^{\rho}$ and we adopt a similar notation for all further
structures over these twisted coefficients.

A particular important case comes from representations of
$H_1(L;\mathbb{Z})$. More specifically, let $\rho' : H_1(L;\mathbb{Z})
\longrightarrow \mathbb{C}^*$ be a homomorphism. Then we can take in
the preceding construction $\rho = \rho' \circ \partial : H_2^D
\longrightarrow \mathbb{C}^*$, where $\partial : H_2^D \longrightarrow
H_1(L;\mathbb{Z})$ is the connectant map.

\

From now on we will use the following notation.  We abbreviate $H_1 =
H_1(L;\mathbb{Z})$. For an abelian group $H$ we write
$\textnormal{Hom}_0(H,\mathbb{C}^*)$ for the group of homomorphisms
$\rho: H \longrightarrow \mathbb{C}^*$ that are trivial (i.e. equal
$1$) on all torsion elements in $H$. Clearly there exists a
(non-canonical) isomorphism $\textnormal{Hom}_0(H, \mathbb{C}^*) \cong
(\mathbb{C}^*)^{\times r}$, where $r = \textnormal{rank}
(H/\textnormal{Torsion}(H))$.

\begin{rem}\label{rem:non-com}
   While we will not use non-commutative representations in this
   paper, much of the discussion described below can be adapted to
   coefficients twisted by representations of $\pi_{2}(M,L)$ with
   values in some non-necessarily commutative Lie group.

\end{rem}

\subsubsection{Relation to Floer homology} \label{sbsb:rel-hf}
Twisting the coefficients using representations $\rho$ has a
counterpart in Floer homology. Recall from~\cite{Bi-Co:rigidity} that
for the ring $\mathcal{R} = \Lambda$, or more generally for
$\mathcal{R}$'s that are $K[H_2^D]$--algebras, there is a canonical
isomorphism $QH(L;\mathcal{R}) \cong HF(L,L;\mathcal{R})$. Note that
if we take here $\mathcal{R} = \Lambda^{\rho}$ then
$HF(L,L;\Lambda^{\rho})$ can be naturally identified with
$HF((L,E_{\rho}), (L,E_{\rho}))$ which is a version of Floer homology
in which the coefficients are twisted in a flat complex line bundle
$E_{\rho} \longrightarrow L$. The relation of $E_{\rho}$ to $\rho$ is
that $\rho : H_1 \longrightarrow \mathbb{C}^*$ determines the
holonomies of the corresponding flat connection along loops in $L$.
Incorporating flat bundles into Floer homology was first introduced by
Kontsevich~\cite{Ko:ICM-HMS} (see also Fukaya~\cite{Fu:hfms-1}) in the
context of homological mirror symmetry. Due to considerations coming
from physics only unitary bundles were considered (i.e. $\rho: H_1
\longrightarrow S^1$). More recently it was discovered by
Cho~\cite{Cho:non-unitary} that working with non-unitary bundles makes
sense and is actually very useful. This point of view was further
developed and generalized by Fukaya, Oh, Ohta and
Ono~\cite{FO3:toric-1}.

\subsection{Elementary enumerative invariants}\label{sbsb:enum-elem}
As discussed in \S\ref{sec:intro} the purpose of the paper is to
discuss enumerative invariants that can be extracted from the algebraic
structures before. While later in the paper we will mainly concentrate
on the quantum product we now make explicit some simpler such
invariants.

\subsubsection{Disks of Maslov $2$} \label{subsubsec:enum-2disks} The
number of disks of Maslov class $2$, through a generic point of $L$
and in a given homotopy class $B\in \pi_{2}(M,L)$.  Thus, with the
notation of \S \ref{sb:additional-conv}, we are talking about the
degree of the evaluation map
$ev:(\widetilde{\mathcal{M}}(B,J)\times\partial D)/G \to L$, $(u,z)\to
u(z)$ under the assumption $\mu(B)=2$. This degree is well-defined and
independent of $J$ precisely because we work under the assumption
$N_{L}\geq 2$ which avoids any bubbling for disks of Maslov index $2$.
This point of view is formalized in the ``superpotentials'' of
\S\ref{sb:superpotential}.
\subsubsection{Invariants related to the quantum  inclusion}
\label{subsubsec:enum-inclu}
Assume that $L$ is so that $QH(L)\not=0$ and that $2n=4$. Fix two
points $P\in M\backslash L$ and $Q\in L$. The number of
$J$-holomorphic disks $u:(D,\partial D)\to (M,L)$ with $\mu(u)=4$ and
$u(-1)=Q$, $u(0)=P$ (for a generic $J$) is independent of $J$ and $P$
and $Q$.  By contrast with the remark in \S\ref{subsubsec:enum-2disks}
an argument is necessary here.  One way to see this is by using the
quantum inclusion of \S\ref{subsubsec:inclu}.  We will assume for this
argument the definition of the quantum inclusion in terms of the pearl
complex (the relevant moduli spaces are recalled in
\S\ref{sbsb:or-mod}). We first remark that for $\dim(L)=2$ we have
that $QH(L)\not=0$ implies that $QH(L)\cong H(L)\otimes \La$.  We now
consider a Morse function $f:L\to \R$ with a single minimum $x_{0}$
together with a Riemannian metric $(\cdot, \cdot)$ and a generic
almost complex structure $J$.  To define the quantum inclusion we also
need a metric on $M$ and a Morse function $h:M\to \R$. We assume that
$h$ has a single maximum $y_{4}$.  With these assumptions, the quantum
homology class $[x_{0}]$ is defined. However, in general, this class
is not independent of the choice of $\mathscr{D}=(f, (\cdot,
\cdot),J)$.  For a second choice of data $\mathscr{D}'=(f',(\cdot,
\cdot),J')$ where $f'$ is a Morse function with a single minimum
$x'_{0}$, the relation between the two classes is
$[x_{0}]=[x'_{0}]+q[L]t$ for some $q \in K$. Here $[L]$ represents the
fundamental class of $L$ (this class is well defined and independent
of the choices of data $\mathscr{D}$). It is easy to see that
$i_{L}([L])$ coincides with the classical singular inclusion. But this
implies that $i_{L}([x_{0}])=i_{L}([x'_{o}])$.  On the other hand, we
write $i_{L}([x_{0}])=[pt]+k_{1}at+k_{2}[M]t^{2}$ where $k_{i}\in K$,
$a\in H(M)$ and $[M]$ represents the fundamental class of $M$. By
using the chain level definition of $i_{L}$ for the data
$\mathscr{D}$, we see that $k_{2}$ equals the number of
$J$-holomorphic disks through $x_{0}$ and and $y_{4}$. As the
coefficient $k_{2}$ is independent of $\mathscr{D}$ (as well as of
$h$) we deduce that this is invariant.
 
The argument above does not work anymore in higher dimensions: both
the equality $i_{L}([x_{0}])=i_{L}([x'_{0}])$ and the interpretation
of $k_{2}$ are not necessarily valid anymore.  However, in Proposition
\ref{p:iL-x_0} we will see that - by a different and much more subtle
argument - the quantum inclusion of the point $[x_{0}]$ is independent
of $\mathscr{D}$ for monotone toric fibers. Moreover, it follows from
~\S\ref{sb:wide-qh} that for monotone toric fibres in dimension
$2n=4$, the invariant $k_2$ can be computed via the well known
Batyrev-Givental isomorphism (see formula~\eqref{eq:iL-x_0-2} after
Proposition~\ref{p:iL-x_0}).

\section{Wide varieties} \label{s:wide}

Let $L \subset M$ be a Lagrangian submanifold and $\mathcal{R}$ a
$\widetilde{\Lambda}^+$--algebra. Following~\cite{Bi-Co:rigidity,
  Bi-Co:Yasha-fest} we say that $L$ is {\em $\mathcal{R}$--wide} if
there exists an isomorphism $QH(L;\mathcal{R}) \cong H(L;\mathcal{R})$
between the quantum homology and the singular homologies of $L$, taken
with coefficients in $\mathcal{R}$. (Note that we do not require
existence of a canonical isomorphism here.) At the other extreme we
have {\em $\mathcal{R}$--narrow} Lagrangians. By this we mean
Lagrangians $L$ with $QH(L;\mathcal{R})=0$.

We now consider the moduli of all representations $\rho$ which make a
given Lagrangian wide. More precisely define:
\begin{equation} \label{eq:W_2} \mathcal{W}_2 = \{ \rho \in
   \textnormal{Hom}_0(H_2^D, \mathbb{C}^*) \mid L \, \textnormal{is }
   \, \Lambda^{\rho}\textnormal{--wide} \}.
\end{equation}
Similarly, put:
\begin{equation} \label{eq:W_1} \mathcal{W}_1 = \{ \rho' \in
   \textnormal{Hom}_0(H_1, \mathbb{C}^*) \mid \rho' \circ
   \partial \in \mathcal{W}_2  \}.
\end{equation}
We call $\mathcal{W}_2$ and $\mathcal{W}_1$ the {\em wide varieties}
associated to $L$. The connectant $\partial : H_2^D \longrightarrow
H_1$ induces a map $\partial_{\mathcal{W}}: \mathcal{W}_1
\longrightarrow \mathcal{W}_2$.

Note that both $\textnormal{Hom}_0(H_2^D, \mathbb{C}^*) \cong
(\mathbb{C}^*)^{\times r}$ and $\textnormal{Hom}_0(H_1, \mathbb{C}^*)
\cong (\mathbb{C}^*)^{\times l}$ are complex algebraic varieties (in
fact algebraic groups isomorphic to complex tori), where $r =
\textnormal{rank} (H_2^D)_{\textnormal{free}}$, $l = \textnormal{rank}
(H_1)_{\textnormal{free}}$.

\subsection{The wide varieties are algebraic}
We will now see that:
\begin{prop} \label{p:wide-var} For any monotone Lagrangian and with
   the notations above the sets $\mathcal{W}_2$ and $\mathcal{W}_1$
   are algebraic subvarieties of $\textnormal{Hom}_0(H_2^D,
   \mathbb{C}^*)$ and $\textnormal{Hom}_0(H_1, \mathbb{C}^*)$
   respectively.  Moreover, the map $\partial_{\mathcal{W}}:
   \mathcal{W}_1 \longrightarrow \mathcal{W}_2$ is a morphism of
   algebraic varieties.
\end{prop}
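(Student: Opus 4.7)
The plan is to use the minimal pearl complex of Proposition~\ref{prop:min_pearls} to encode wideness as the vanishing of a twisted differential, and then to observe that this vanishing is cut out by polynomial equations on $\rho$.

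Taking $K = \mathbb{C}$, fix once and for all a minimal pearl complex $\mathcal{C}_{\min}(L) = (H_*(L;\mathbb{C}) \otimes \widetilde{\Lambda}^+, \delta)$ and a basis $\{x_i\}$ of $H_*(L;\mathbb{C})$. Since $\delta$ is $\widetilde{\Lambda}^+$-linear, lands in $H_*(L;\mathbb{C}) \otimes \widetilde{\Lambda}^{>0}$, and $\widetilde{\Lambda}^+$ consists of polynomials, $\delta$ is determined by finite expressions
$$\delta(x_i) = \sum_{A : \mu(A) > 0} y_{i,A} T^A, \qquad y_{i,A} \in H_*(L;\mathbb{C}).$$
Base-changing via~(\ref{eq:algebra-rho}) produces a $\Lambda^\rho$-linear twisted differential $\delta^\rho(x_i) = \sum_A \rho(A) y_{i,A} t^{\bar{\mu}(A)}$ on $H_*(L;\mathbb{C}) \otimes \Lambda^\rho$, whose homology computes $QH(L;\Lambda^\rho)$.

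The key step will be to prove: $L$ is $\Lambda^\rho$-wide if and only if $\delta^\rho = 0$. The ``if'' direction is clear. For the converse, the crucial point is that in the minimal model there is no Morse-level ($T=0$) contribution to $\delta$, so $\delta^\rho$ only carries positive-$t$-degree terms. Combined with the rank-nullity identity over the PID $\Lambda^\rho = \mathbb{C}[t,t^{-1}]$, any nonzero $\delta^\rho$ will strictly reduce the $\Lambda^\rho$-free rank of $QH(L;\Lambda^\rho)$ below $\dim_\mathbb{C} H_*(L;\mathbb{C})$, which is impossible if $QH(L;\Lambda^\rho) \cong H_*(L;\mathbb{C}) \otimes \Lambda^\rho$. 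I expect this equivalence to be the main conceptual point of the proof.

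Once the equivalence is in hand, the rest is algebro-geometric. Fixing a basis of $(H_2^D)_{\textnormal{free}}$ to identify $\textnormal{Hom}_0(H_2^D,\mathbb{C}^*) \cong (\mathbb{C}^*)^r$ with coordinates $z_1,\dots,z_r$, each $\rho(A)$ becomes a Laurent monomial $z^A$; expanding $\delta^\rho(x_i) = 0$ in the basis $\{x_j\}$ and separating by power of $t$ yields a finite system of Laurent polynomial equations $\sum_A c_A z^A = 0$ whose common zero locus is exactly $\mathcal{W}_2$. After clearing denominators with a monomial, these become regular functions on $(\mathbb{C}^*)^r$, so $\mathcal{W}_2$ is algebraic. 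For $\mathcal{W}_1$, the pullback $\partial^*: \textnormal{Hom}_0(H_1,\mathbb{C}^*) \to \textnormal{Hom}_0(H_2^D,\mathbb{C}^*)$, $\rho' \mapsto \rho' \circ \partial$, is dual to the homomorphism $\partial$ and hence is a monomial morphism of algebraic tori; since $\mathcal{W}_1 = (\partial^*)^{-1}(\mathcal{W}_2)$ by definition, it is algebraic, and $\partial_{\mathcal{W}}$ is simply the restriction of $\partial^*$, manifestly a morphism of algebraic varieties.
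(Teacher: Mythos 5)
Your argument follows essentially the same route as the paper's: pass to the minimal pearl complex, show that $\Lambda^\rho$-wideness is equivalent to the vanishing of the twisted differential $\delta^\rho$, and observe that this vanishing is cut out by finitely many Laurent polynomial equations in the coordinates of $\rho$, with $\mathcal{W}_1$ obtained as the preimage of $\mathcal{W}_2$ under the algebraic torus morphism $\partial^*$. The only differences are expository: you fill in the paper's ``for dimension reasons'' with an explicit rank argument over the PID $\mathbb{C}[t,t^{-1}]$, and you go straight to the minimal model where the paper first handles the case of a perfect Morse function before invoking Proposition~\ref{prop:min_pearls}.
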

\begin{proof} We first treat the case when $L$ admits a perfect Morse
   function.  Let $f: L \longrightarrow \mathbb{R}$ be a perfect Morse
   function.  Add to it a Riemannian metric $(\cdot,\cdot)$ and an
   almost complex structure $J \in \mathcal{J}$ so that the triple
   $\mathscr{D} = (f, (\cdot,\cdot), J)$ is regular. Since $f$ is
   perfect we have an isomorphism of graded vector spaces
   $\mathcal{C}(\mathscr{D};\widetilde{\La}^{+})\cong (H(L;\C)\otimes
   \widetilde{\La}^{+})_*$ and $\mathcal{C}_*(\mathscr{D}; \Lambda)
   \cong (H(L;\mathbb{C}) \otimes \Lambda)_*$. For dimension reasons
   it follows that $L$ is $\Lambda^{\rho}$--wide if and only if the
   twisted pearly differential vanishes: $d^{\rho} (x) = 0$ for every
   $x \in \textnormal{Crit}(f)$. Notice that the differential $d$ of
   the complex $\mathcal{C}(\mathscr{D};\widetilde{\La}^{+})$ applied
   on $x\in\Crit(f)$ is of the form:
   $$ dx = \sum_{A,y}m_{xy}(A)yT^{A}\ ,\  m_{xy}(A)\in \Z$$
   so that $m_{xy}(A)$ vanishes whenever $|x|-|y|+\mu(A)\not=1$.
   The twisted differential $d^{\rho}$ is then written:
   $$ d^{\rho}x=\sum_{A,y}m_{xy}(A)\rho(A)yt^{\bar{\mu}(A)}$$
   in other words $d^{\rho}=d\otimes_{\rho} K[t]$.

   Pick a basis $A_1, \cdots, A_r$ for $(H_2^D)_{\textnormal{free}}$.
   This yields an identification of $\textnormal{Hom}_0(H_2^D,
   \mathbb{C}^*) \cong (\mathbb{C}^*)^{\times r}$. Use this
   identification to write $\rho$ as a tuple $(z_1, \ldots, z_r) \in
   (\mathbb{C}^*)^{\times r}$ so that if $A\in H_{2}^D$ is given by
   $A=\sum a_{i}A_{i}$, then $\rho(A)=\prod z_{i}^{a_{i}}$. Thus the
   condition $d^{\rho}(x) = 0$ translates into a polynomial equation
   in $z_1, \ldots, z_r$. As there are finitely many critical points
   $x$ we get a system with finite number of algebraic equations for
   $\mathcal{W}_2$. The proof for $\mathcal{W}_1$ is similar.

   We now turn to the general case - when perfect Morse functions
   might not exist.  We will make use of Proposition
   \ref{prop:min_pearls} by replacing in the argument above the
   complex $\mathcal{C}(\mathscr{D};\widetilde{\La}^{+})$ with a
   minimal pearl complex $\mathcal{C}_{min}(L)=(H(L;K)\otimes
   \widetilde{ \La}^{+},\delta)$.  Similarly, we replace the twisted
   pearl complex associated to $\rho$ and $\mathscr{D}$ with the
   complex $\mathcal{C}^{\rho}_{min}(L)=\mathcal{C}_{min}(L)\otimes
   \La^{\rho}$.  The differential of this complex, $d^{\rho}_{min}$,
   verifies $d^{\rho}_{min}=\delta\otimes\La^{\rho}$.  Again for
   degree reasons, $L$ is $\La^{\rho}$ wide iff the differential
   $d^{\rho}_{min}$ in the complex $\mathcal{C}^{\rho}_{min}(L)$
   vanishes.  We can then apply the argument above by using any fixed
   basis of $H_{\ast}(L;\C)$ in the place of the set of critical
   points of $f$.
\end{proof}

Versions of the moduli spaces $\mathcal{W}_2$ have already been
considered by Cho~\cite{Cho:non-unitary}. An analogue of
$\mathcal{W}_1$ (but with Novikov ring valued representations) has
played a central role in the work of Fukaya, Oh, Ohta and
Ono~\cite{FO3:toric-1}. Our approach below is somewhat different than
these works. We will not use the varieties $\mathcal{W}$ in order to
study Lagrangian intersections, but rather in order to construct new
invariants of Lagrangians.

\subsection{Regular functions and wide rings of coefficients}
\label{sb:reg-functions} From now on we will implicitly assume that
$\mathcal{W}_i$ (for either $i=1$ or $2$) is not an empty set.

Denote by $\mathcal{O}(\mathcal{W}_1)$ and
$\mathcal{O}(\mathcal{W}_2)$ the rings of global algebraic functions
on $\mathcal{W}_1$ and $\mathcal{W}_2$ respectively.  We do not grade
these rings. Given $A \in H_2^D$, denote by $f_A \in
\mathcal{O}(\mathcal{W}_2)$ the function defined by $f_A(\rho) :=
\rho(A)$. Consider now the following map:
\begin{equation} \label{eq:wide-alg} q: \widetilde{\Lambda}^{+}
   \longrightarrow \mathcal{O}(\mathcal{W}_2) \otimes \Lambda, \quad
   q(T^A) := f_A t^{\bar{\mu}(A)}.
\end{equation}
It is easy to check that $q$ is graded
homomorphism of rings hence $\mathcal{O}(\mathcal{W}_2) \otimes
\Lambda$ becomes a $\widetilde{\Lambda}^{+}$--algebra. In a similar
way we can define such a structure on $\mathcal{O}(\mathcal{W}_1)
\otimes \Lambda$.

With this setup we can define $QH(L; \mathcal{O}(\mathcal{W}_i)
\otimes \Lambda^{+})$, $i=1, 2$ (and similarly for $\Lambda$).
It easily follows from the definitions
that
\begin{equation}\label{eq:iso-with-singular}
QH(L; \mathcal{O}(\mathcal{W}_i) \otimes \Lambda^{+}) \cong
H(L;\mathcal{O}(\mathcal{W}_i) \otimes \La^{+})
\end{equation}
 and similarly for $\La$. Note that
these isomorphisms are not canonical.

Next, we have all the quantum operations with coefficients in
$\mathcal{R}^{+}_{i} = \mathcal{O}(\mathcal{W}_i) \otimes \Lambda^{+}$
as described in~\eqref{eq:qprod},~\eqref{eq:qmod} and~\eqref{eq:qinc}
and similarly for $\mathcal{R}_{i}=\mathcal{O}(\mathcal{W}_{i})\otimes
\Lambda$.

To shorten notation we will write from now on:
\begin{equation} \label{eq:qh-w} Q^{+}H(L;\mathcal{W}_i) := QH(L;
   \mathcal{O}(\mathcal{W}_i) \otimes \Lambda^{+}), \quad
   QH(L;\mathcal{W}_i) := QH(L; \mathcal{O}(\mathcal{W}_i) \otimes
   \Lambda).
\end{equation}

\subsection{The superpotential} \label{sb:superpotential} Here we
assume that $L^n \subset M^{2n}$ is a monotone Lagrangian with $N_L =
2$.

Pick a generic almost complex structure $J \in \mathcal{J}$. Using the
same notations as in \S\ref{sb:additional-conv} (see also \S\ref{sb:orient}) let $B \in H_2^D$ with
$\mu(B)=2$ and denote by $\widetilde{\mathcal{M}}(B,J)$ the space of
$J$-holomorphic disks $u:(D, \partial D) \longrightarrow (M,L)$ with
$u_*([D]) = B$, and by $G = Aut(D) \cong PSL(2, \mathbb{R})$ the group
of biholomorphisms of the disk.  Consider now the space of disks with
one marked point on the boundary, i.e. $(\widetilde{\mathcal{M}}(B,J)
\times \partial D) / G$, where $G$ acts as follows $\sigma \cdot (u,z)
= (u \circ \sigma^{-1}, \sigma(z))$, for $\sigma \in G$. By standard
arguments (see e.g.~\cite{Bi-Co:rigidity}) it follows that
$(\widetilde{\mathcal{M}}(B,J) \times \partial D) / G$ is a smooth
compact manifold without boundary and of (real) dimension $n$.
Moreover by our assumptions on $L$ (i.e. $L$ is oriented, spin and
with a prescribed choice of spin structure) the latter moduli space is
also oriented. Consider the evaluation map
$$ev: (\widetilde{\mathcal{M}}(B,J)
\times \partial D) / G \longrightarrow L, \quad ev(u,z) = u(z).$$ We
denote by $\nu(B)\in \mathbb{Z}$ the degree of this map. Standard
arguments then show that $\nu(B)$ does not depend on $J$ but only on
$B$. Moreover, there can be at most a finite number of classes $B \in
H_2^D$ with $\nu(B) \neq 0$. Put
$$\mathcal{E}_2 = \{ B \in H_2^D \mid \nu(B) \neq 0\}.$$

Define now the following function
\begin{equation} \label{eq:LG-1}
   \mathscr{P}:
   \textnormal{Hom}_0(H_1, \mathbb{C}^*) \longrightarrow \mathbb{C},
   \quad \mathscr{P}(\rho) = \sum_{B \in \mathcal{E}_2} \nu(B)
   \rho(\partial B).
\end{equation}
This function (and other analogous versions of it) is called the {\em
  Landau-Ginzburg superpotential}. It plays an important role in the
theory of mirror symmetry for toric varieties.  Its relation to
Lagrangian Floer theory was first noticed by Hori and
Vafa~\cite{Ho:linear, Ho-Va:ms} and further explored by Cho and
Oh~\cite{Cho-Oh:Floer-toric} and by Fukaya, Oh, Ohta and
Ono~\cite{FO3, FO3:toric-1}.

\subsubsection{Explicit formulae for $\mathcal{W}_{1}$}
\label{sbsb:sp-formulae}
We will now write $\mathscr{P}$ in coordinates. Fix a basis
$$\mathbf{e} = \{ e_1, \ldots, e_l \}$$ for
$H_1(L;\mathbb{Z})_{\textnormal{free}}$. For an element $a \in
H_1(L;\mathbb{Z})_{\textnormal{free}}$ we denote by $(a) = ((a)_1,
\ldots, (a)_l) \in \mathbb{Z}^{\times l}$ the vector of coordinates of
$a$ with respect to the basis $\mathbf{e}$ so that $a = (a)_1 e_1 +
\cdots + (a)_l e_l$.  Using the basis $\mathbf{e}$ we can identify
$\textnormal{Hom}_0(H_1, \mathbb{C}^*) \cong (\mathbb{C}^*)^{\times
  l}$. With these choices fixed, we write an element $\rho \in
\textnormal{Hom}_0(H_1, \mathbb{C}^*)$ as $(z_1, \ldots, z_l) \in
(\mathbb{C}^*)^{\times l}$, where $z_j = \rho(e_j)$.  In these
coordinates~\eqref{eq:LG-1} becomes:
\begin{equation} \label{eq:LG-2} \mathscr{P}(z_1, \ldots, z_l) =
   \sum_{B \in \mathcal{E}_2} \nu(B) z_1^{(\partial B)_1} \cdots
   z_l^{(\partial B)_l}.
\end{equation}

The relevance of $\mathscr{P}$ in our context is that we can describe
the wide variety $\mathcal{W}_{1}$ by means of the derivatives of
$\mathscr{P}$.  To see this fix a basis $\mathbf{C} = \{C_1, \ldots,
C_l \}$ for $H_{n-1}(L;\mathbb{Z})$ which is dual
to $\mathbf{e}$ in the sense that $C_i \cdot e_j = \delta_{i,j}$,
where $\cdot$ is the intersection pairing
(see~\S\ref{sbsb:intersection-prod}). Now let
$\mathcal{C}_{min}(L)=H_{\ast}(L;\C)\otimes \La^{+}$ be a minimal
pearl complex as provided by Proposition \ref{prop:min_pearls}. Let
$\mathcal{C}_{min}^{\rho}(L)=\mathcal{C}_{min}(L)\otimes\La^{\rho}$
and denote by $d^{\rho}_{min}$ the differential of this last complex.
Of course, in case $L$ admits a perfect Morse $f$ function we can
simply take instead of $\mathcal{C}_{min}(L)$ the pearl complex of $f$
and $d^{\rho}_{min}$ coincides in this case with the differential
$d^{\rho}$ of the pearl complex of $f$ twisted by the representation
$\rho$. We can write the (twisted) pearl differential
$$d^{\rho}_{min}: (H(L;\mathbb{C}) \otimes \Lambda)_* \longrightarrow
(H(L;\mathbb{C}) \otimes \Lambda)_{*-1}.$$

\begin{prop} \label{p:dC-i}
   For $\rho = (z_1, \ldots, z_l)$ we have:
   \begin{enumerate}
     \item $d^{\rho}_{min}(C_j) = z_j \frac{\partial
        \mathscr{P}}{\partial z_j}\, [L]t.$ \label{i:dC-i-1}
     \item If $QH(L; \Lambda^{\rho}) \neq 0$ then $\rho$ is a critical
      point of $\mathscr{P}$. \label{i:dC-i-qh}
   \end{enumerate}
   In particular, $\mathcal{W}_1 \subset
   \textnormal{Crit}(\mathscr{P})$. Moreover, if the cohomology ring
   $H^*(L;\mathbb{R})$ (with the classical cup product) is generated
   by $H^1(L;\mathbb{R})$ then $\mathcal{W}_1 =
   \textnormal{Crit}(\mathscr{P})$.
\end{prop}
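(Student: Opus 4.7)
The plan is to compute $d^\rho_{min}$ on the chosen basis $\mathbf{C}$ and then extract the algebraic consequences. I work on the minimal pearl complex $(H_\ast(L;\C)\otimes\widetilde{\La}^+, d^\rho_{min})$, where the classical ($t=0$) part of the differential vanishes by the very definition of the minimal model. Expand $d^\rho_{min} = \sum_{k\geq 1}\delta_k t^k$ with $\delta_k: H_\ast(L;\C)\to H_{\ast-1+2k}(L;\C)$. Applied to $C_j\in H_{n-1}(L;\C)$, only $k=1$ can contribute (the target $H_{n-2+2k}$ vanishes for $k\geq 2$) and $\delta_1(C_j)\in H_n(L;\C)=\C[L]$. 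Hence
\begin{equation*}
d^\rho_{min}(C_j) = c_j(\rho)\,[L]\,t
\end{equation*}
for some scalar $c_j(\rho)\in\C$, so part (\ref{i:dC-i-1}) reduces to identifying $c_j(\rho)$.

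By the chain-level definition of the pearl differential, $c_j(\rho)$ is a weighted count of pearl trajectories: for each class $A\in H_2^D$ with $\mu(A)=2$, sum (with sign) over $J$-holomorphic disks $u\in\widetilde{\mathcal{M}}(A,J)/H$ whose boundary evaluation at $-1$ lies on a generic cycle representing $C_j$ and with $u(+1)=y$ for a generic fixed point $y\in L$, weighted by $\rho(A)$. The central geometric assertion is that for fixed $A$ this count equals $\nu(A)\cdot(\partial A)_j$. To see it, fix generic $y$ and consider the $1$-dimensional oriented fiber $\mathcal{N}_A^y := e_{\scriptscriptstyle +1}^{-1}(y)\subset\widetilde{\mathcal{M}}(A,J)/H$. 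Using the natural identification $\widetilde{\mathcal{M}}(A,J)/H\cong(\widetilde{\mathcal{M}}(A,J)\times(\partial D)^{2}_{\ast})/G$, where $(\partial D)^{2}_{\ast}$ stands for ordered pairs of distinct boundary points, one presents $\mathcal{N}_A^y$ as a disjoint union over the $\nu(A)$ signed pairs $(u,\xi)\in(\widetilde{\mathcal{M}}(A,J)\times\partial D)/G$ with $u(\xi)=y$ (exactly the degree count of the evaluation from~\S\ref{sb:superpotential}), of the 1-parameter families obtained by letting a second boundary marked point $\eta\in\partial D\setminus\{\xi\}$ vary. Under $e_{\scriptscriptstyle -1}$ each such family traces the boundary loop $u|_{\partial D}$, whose class in $H_1(L;\Z)$ is $\partial A$; hence $(e_{\scriptscriptstyle -1})_\ast[\mathcal{N}_A^y] = \nu(A)\,\partial A$. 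Intersecting with $C_j$ using the duality $C_i\cdot e_j = \delta_{ij}$ yields $\nu(A)(\partial A)_j$. Summing over $A$ with weights $\rho(A) = z_1^{(\partial A)_1}\cdots z_l^{(\partial A)_l}$ and comparing with~\eqref{eq:LG-2} gives
\begin{equation*}
c_j(\rho)=\sum_{\mu(A)=2}\nu(A)(\partial A)_j\rho(A)=z_j\frac{\partial\mathscr{P}}{\partial z_j}(\rho),
\end{equation*}
establishing (\ref{i:dC-i-1}).

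Part (\ref{i:dC-i-qh}) follows by contraposition: if some $\partial\mathscr{P}/\partial z_j(\rho)\neq 0$, then by (\ref{i:dC-i-1}), $d^\rho_{min}(C_j)=\kappa[L]t$ with $\kappa\in\C^\ast$; passing to the $\La^\rho$-extension of the minimal complex where $t$ is invertible, $[L]=d^\rho_{min}(\kappa^{-1}t^{-1}C_j)$ is a boundary, so the unit of $QH(L;\La^\rho)$ vanishes and hence so does the whole ring. The inclusion $\mathcal{W}_1\subset\textnormal{Crit}(\mathscr{P})$ is immediate. For the reverse inclusion, assume $\rho'\in\textnormal{Crit}(\mathscr{P})$ and that $H^\ast(L;\R)$ is generated as a ring by $H^1$; by Poincar\'e duality, $H_\ast(L;\R)$ is then generated as a ring under the classical intersection product by $\{C_1,\dots,C_l\}$. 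Given $d^\rho_{min}(C_j)=0$ for all $j$, I show by descending induction on degree that $d^\rho_{min}\equiv 0$ on $H_\ast(L;\C)$. The base cases are $[L]$ (for dimensional reasons: the target $H_{n-1+2k}$ vanishes for $k\geq 1$) and the $C_j$ (by hypothesis). For the inductive step at degree $n-k$ with $k\geq 2$, every class is an $\R$-linear combination of $k$-fold intersection products $C_{i_1}\cdot\cdots\cdot C_{i_k}$; compare each with the chain-level quantum product $\gamma_I:=C_{i_1}\ast\cdots\ast C_{i_k}$, which is a $d^\rho_{min}$-cycle by Leibniz (using $d^\rho_{min}(C_{i_m})=0$), and whose expansion
\begin{equation*}
\gamma_I = C_{i_1}\cdots C_{i_k} + \sum_{s\geq 1} t^s\beta_I^{(s)},\qquad \beta_I^{(s)}\in H_{n-k+2s}(L;\C),
\end{equation*}
has all correction terms $\beta_I^{(s)}$ of strictly higher classical degree than $n-k$, hence $d^\rho_{min}$-cycles by the induction hypothesis. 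Therefore $C_{i_1}\cdot\cdots\cdot C_{i_k}$ is itself a cycle, closing the induction. Thus $d^\rho_{min}\equiv 0$ and $L$ is $\La^\rho$-wide, so $\rho'\in\mathcal{W}_1$. The main obstacle is the oriented count $c_{A,j}=\nu(A)(\partial A)_j$ in part (\ref{i:dC-i-1}): the orientations on $\widetilde{\mathcal{M}}(A,J)/H$, on $\mathcal{N}_A^y$, on the boundary loop, and on the intersection pairing must all be tracked coherently using the conventions of the Appendix on orientations, and the precise sign in the formula depends on their compatibility.
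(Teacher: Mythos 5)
Your proof is correct and follows the paper's own approach closely. For part~(1), your geometric identity $(e_{\scriptscriptstyle -1})_*[\mathcal{N}_A^y] = \nu(A)\,\partial A$ is, up to the $(-1)^n$ that you explicitly defer, precisely what the paper records as equation~\eqref{eq:ev-cycle}; that sign is subsequently cancelled by the $(-1)^{|y|}=(-1)^n$ that the pearl differential~\eqref{eq:diff-pearl} carries in front of its quantum terms, so the coefficient in part~(1) comes out to $+1$ as stated, and you are right that this bookkeeping has to be done against the Appendix conventions (the paper itself only says the identity ``can be checked by a straightforward computation''). Parts~(2) and the forward inclusion $\mathcal{W}_1\subset\textnormal{Crit}(\mathscr{P})$ are handled identically to the paper, merely contraposed. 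Where you genuinely supply more than the paper does is the reverse inclusion $\textnormal{Crit}(\mathscr{P})\subset\mathcal{W}_1$: the paper simply invokes the wide/narrow dichotomy from~\cite{Bi-Co:rigidity}, whereas you prove $d^{\rho}_{min}\equiv 0$ directly by a downward induction on classical degree, using the Leibniz formula~\eqref{eq:leibniz-prod} (transported to $\mathcal{C}_{min}$ because $\phi,\psi$ are chain maps) to make any fixed parenthesization of $\gamma_I = C_{i_1}*\cdots*C_{i_k}$ a cycle, observing that its $t^0$ part is the classical $k$-fold intersection product and that the $t^{\geq 1}$ corrections sit in strictly higher classical degree where the inductive hypothesis already forces $d^{\rho}_{min}$ to vanish. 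This is a correct and self-contained unpacking of the fact the paper merely references; it is a good feature of your argument that chain-level associativity of $*$ is never needed, since associativity of the classical intersection product alone identifies the $t^0$ terms across parenthesizations.
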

\begin{proof} In case $L$ admits a perfect Morse function, the proof
   of~(\ref{i:dC-i-1}) follows immediately from the definition of the
   pearl complex together with our orientation conventions.
   Concerning the orientations the main point to verify here is the
   following. Given $B\in H_{2}^{D}$, denote $$Q =
   \mathcal{M}_2(B,J)\, \lrsub{e_{\scriptscriptstyle +1}}{\times}{i}
   \,\{ m\}.$$ Here we use the fiber product and its orientation as
   defined in~\S\ref{sbsb:fiber-product}, and $i:\{m\} \to L$ stands
   for the inclusion of a point. Consider now the evaluation map
   $e_{\scriptscriptstyle -1}: Q \longrightarrow L$. Then we have in
   homology:
   \begin{equation} \label{eq:ev-cycle}
      (e_{\scriptscriptstyle -1})_*([\overline{Q}]) =
      (-1)^n \nu(B) \partial B,
   \end{equation}
   where $\partial : H_{2}(M,L;\Z)\to H_{1}(L;\Z)$ is the connectant.
   This can be checked by a straightforward computation based on the
   conventions described in~\S\ref{a:orientations}.

   If $L$ does not admit a perfect Morse function we use a minimal
   pearl complex together with its structural maps $\phi$ and $\psi$
   as in Proposition~\ref{prop:min_pearls} :
   $$\mathcal{C}(\mathscr{D})\stackrel{\phi}{\longrightarrow}
   \mathcal{C}_{min}(L) \stackrel{\psi}{\longrightarrow}
   \mathcal{C}(\mathscr{D})$$ where $\mathscr{D}$ is a generic set of
   data required to define the pearl complex. By using the fact that
   both these maps induce an isomorphism in Morse homology the result
   is again immediate.

   To prove~(\ref{i:dC-i-qh}), recall that $[L] \in
   \mathcal{C}_{min}(L)$ is a cycle whose homology class is the unity
   of the ring $QH(L; \Lambda^{\rho})$. Thus $QH(L;\Lambda^{\rho})
   \neq 0$ iff $[L]$ is not a boundary. In view of~(\ref{i:dC-i-1}),
   if $QH(L; \Lambda^{\rho}) \neq 0$ we must have $\frac{\partial
     \mathscr{P}}{\partial z_j}(\rho)=0$ for every $j$.

   The last statement follows immediately from the following fact:
   {\sl If $H^*(L)$ is generated by $H^1(L)$ then $L$ is either
     $\Lambda^{\rho}$--narrow or $\Lambda^{\rho}$--wide.  Moreover,
     the second case occurs iff $d^{\rho}=0$ on $H_{n-1}(L)$.} The
   proof of this can be essentially found in~\cite{Bi-Co:rigidity}
   where it is proved for the ground ring $K = \mathbb{Z}_2$ and
   without any representations $\rho$, but the same proof with obvious
   changes extends to our setting.
\end{proof}

\begin{rem} \label{r:wide-var-arith} Both varieties $\mathcal{W}_1$
   and $\mathcal{W}_2$ are arithmetic in the sense that in some
   coordinate system they are cut by a system of equations with
   integral coefficients.
\end{rem}

\begin{rem}\label{rem:Floer-potential}
   Given that $QH(L;\mathcal{W}_{i})$ is isomorphic to Floer homology
   with coefficients in $\mathcal{O}(\mathcal{W}_{i})\otimes
   \C[t,t^{-1}]$ (as discussed in \S\ref{sbsb:rel-hf}) and in view of
   equation (\ref{eq:iso-with-singular}) it follows that for a
   Lagrangian with $\mathcal{W} \neq \emptyset$ any $L'$ transverse to
   $L$ and Hamiltonian isotopic to it, intersects $L'$ in at least
   $\sum_{i} \dim (H_{i}(L;\C))$ intersection points. Note that when
   $L$ is a torus, checking that $\mathcal{W} \neq \emptyset$ can be
   done by verifying that $\textnormal{Crit}(\mathscr{P}) \neq
   \emptyset$, according to Proposition~\ref{p:dC-i}.
\end{rem}

We now turn to the relation between the quantum product and the
superpotential. Recall that when $L$ is $\mathcal{R}$--wide,
$QH(L;\mathcal{R})$ is not in general {\em canonically} isomorphic to
$H(L;\mathbb{C}) \otimes \mathcal{R}$.  However, there exist canonical
embeddings $H_i(L;K) \hooklongrightarrow QH_i(L;\mathcal{R})$ for
every $n-N_L+1 \leq i \leq n$. (See~\S 4.5 in~\cite{Bi-Co:rigidity}
and Proposition~4.5.1 there.)  As $N_L\geq 2$, we view $H_{n-1}(L;K)$
as a subspace of $QH_{n-1}(L; \mathcal{R})$ and $H_n(L;K)$ as a
subspace of $QH_n(L;\mathcal{R})$. The following proposition gives
information on the quantum product of elements in this special
subspace in terms of the superpotential.

\begin{prop} \label{p:prod-Ci-Cj} Consider $H_{n-1}(L;\mathbb{C})$ as
   a subset of $QH_{n-1}(L;\mathcal{W}_1)$. Then we have:
   \begin{equation} \label{eq:cicj}
      C_i * C_j + C_j * C_i =(-1)^{n} z_i z_j
      \frac{\partial^2 \mathscr{P}}{\partial z_i \partial z_j} \,
      [L]t,
   \end{equation}
   where $[L] \in H_n(L;\mathbb{C}) \subset QH_n(L; \mathcal{W}_1)$ is
   the unity. In other words, for every $\rho = (z_1, \ldots, z_l) \in
   \mathcal{W}_1$ we have the identity~\eqref{eq:cicj}, where $C_i,
   C_j, [L]$ are all viewed as elements of $QH(L; \Lambda^{\rho})$.
\end{prop}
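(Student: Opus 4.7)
The plan is to verify \eqref{eq:cicj} at the chain level in the minimal pearl complex $\mathcal{C}_{\min}(L)$, identifying the coefficient of $[L]t$ on the left-hand side with an explicit count of Maslov $2$ disk configurations weighted by $\rho$. Given Proposition \ref{p:dC-i}(\ref{i:dC-i-1}) and the structure of the pearl product \eqref{eq:prod_min}, the formula \eqref{eq:cicj} should follow from a direct moduli computation that extends the one used to compute the first derivative of $\mathscr{P}$.

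First I would reduce, via Proposition \ref{prop:min_pearls} and formula \eqref{eq:prod_min}, to computing the coefficient of $[L]t$ in $C_i * C_j$ on a minimal model associated to a generic datum $\mathscr{D} = (f, (\cdot,\cdot), J)$. Use Remark \ref{rem:min_mod} to work with a perfect Morse function $f$ whenever possible (and otherwise pass through the structural maps $\phi, \psi$). Let $x_0$ be the unique maximum of $f$ (representing $[L]$) and represent $C_i$ by the closure of the unstable manifold $W^u(c_i)$ of the index-$(n-1)$ critical point dual to $e_i$. Since $N_L = 2$ and the coefficient of interest sits in the Maslov $2$ component, the relevant pearl moduli reduces to configurations involving a single $J$-holomorphic disk $u:(D,\partial D)\to(M,L)$ in a class $B$ with $\mu(B)=2$, equipped with three boundary marked points $(z_i, z_j, z_0)$ in cyclic order modulo $G$, satisfying $u(z_0) = x_0$, $u(z_i) \in W^u(c_i)$, $u(z_j) \in W^u(c_j)$; each rigid configuration contributes $\pm \rho(\partial B)$.

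Next I would symmetrize in $i \leftrightarrow j$. For the sum $C_i * C_j + C_j * C_i$ the cyclic-order constraint on $(z_i, z_j)$ disappears and we are left counting unordered pairs of distinct boundary points with the stated incidence conditions. For any disk $u$ with $u(z_0) = x_0$ and transverse boundary, the boundary loop $u|_{\partial D}$ has homology class $\partial B$, so by the duality $C_i \cdot e_j = \delta_{ij}$ its signed intersection with $W^u(c_i)$ (respectively $W^u(c_j)$) equals $(\partial B)_i$ (respectively $(\partial B)_j$). For $i \neq j$ the signed count of admissible pairs per disk is therefore $(\partial B)_i (\partial B)_j$, and summing over $u$ with $u(z_0) = x_0$ yields $\nu(B)\,(\partial B)_i (\partial B)_j$ per class. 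Combining with the weight $\rho(\partial B) = z_1^{(\partial B)_1}\cdots z_l^{(\partial B)_l}$ and using \eqref{eq:LG-2} gives
\[
\sum_{B \in \mathcal{E}_2} \nu(B)\,(\partial B)_i (\partial B)_j\, \rho(\partial B) \;=\; z_i z_j \frac{\partial^2 \mathscr{P}}{\partial z_i \partial z_j},
\]
up to the global sign. The prefactor $(-1)^n$ is inherited from the orientation convention \eqref{eq:ev-cycle}, which governs the degree of the evaluation map at a single fixed boundary point, together with a routine sign bookkeeping for the additional fiber product with the second constraint, in the spirit of Appendix \ref{a:orientations}.

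The main obstacle is twofold. First, a careful orientation and fiber-product sign analysis is needed to confirm the $(-1)^n$ factor: the second marked point introduces an additional evaluation map whose sign must be aligned with the first, and the order of fiber products with the cycles and with the point $\{x_0\}$ affects the resulting sign. Second, the case $i = j$ requires handling the fact that the two marked points must lie at distinct boundary intersection points; here the combinatorial factor becomes $(\partial B)_i((\partial B)_i - 1)$ rather than $(\partial B)_i^2$, which is consistent with $z_i^2\,\partial_{z_i}^2 \mathscr{P}$ but requires excluding (or correcting for) contributions from the diagonal stratum $z_i = z_j$ in the compactified moduli. Once these sign and combinatorial issues are settled, the identity follows by direct comparison with the expansion of the superpotential and restriction to $\rho \in \mathcal{W}_1$.
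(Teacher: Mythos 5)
Your proposal follows essentially the same route as the paper's proof: reduce to perfect Morse functions via minimal models, identify the coefficient of $[L]t$ as a count of Maslov-$2$ disks with three cyclically ordered boundary marked points hitting the maximum and the two unstable manifolds, symmetrize in $i\leftrightarrow j$ to kill the cyclic-order constraint, and express the result through $\partial B$ and $\rho$.

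The treatment of the case $i=j$, however, is not right as stated. You claim that the two boundary marked points must lie at distinct intersection points of $u(\partial D)$ with a single cycle, so that the combinatorial count per disk becomes $(\partial B)_i\bigl((\partial B)_i-1\bigr)$, which you note matches $z_i^2\,\partial_{z_i}^2\mathscr{P}$ ``directly.'' But in the pearl product the two inputs $C_i$ and $C_j$ are represented by unstable manifolds of critical points of \emph{two distinct Morse functions} $f$ and $f'$. Even when $i=j$, the chains $W^u_f(a)$ and $W^u_{f'}(b)$ are different cycles (only homologous to each other), so generically their intersections with $u(\partial D)$ are disjoint finite sets, and no diagonal stratum arises. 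The moduli count per disk is therefore $(\partial B)_i\cdot(\partial B)_i = (\partial B)_i^2$, not $(\partial B)_i\bigl((\partial B)_i-1\bigr)$. This forces an additional step: to reconcile $\sum_B \nu(B)\,(\partial B)_i^2\,\rho(\partial B)$ with $z_i^2\,\partial_{z_i}^2\mathscr{P}(\rho)$ one must subtract $\sum_B\nu(B)\,(\partial B)_i\,\rho(\partial B) = z_i\,\partial_{z_i}\mathscr{P}(\rho)$, and this vanishes precisely because $\rho\in\mathcal{W}_1 = \textnormal{Crit}(\mathscr{P})$ --- which is what Proposition~\ref{p:dC-i}(\ref{i:dC-i-1}) supplies. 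You mention ``restriction to $\rho\in\mathcal{W}_1$'' only as a closing formality, but for $i=j$ it is the load-bearing step; without it the chain-level count does not equal $z_i^2\,\partial_{z_i}^2\mathscr{P}$, even with all signs sorted.

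A second, smaller point: the phrase ``unordered pairs of distinct boundary points'' used in the $i\neq j$ paragraph again suggests a single ambient cycle being hit twice, whereas there the correct picture is an \emph{ordered} product of two (disjoint) intersection sets from two distinct cycles, summed over the two cyclic orderings relative to the point at the maximum. It gives the same number $(\partial B)_i(\partial B)_j$, so no harm is done there, but the same mental picture is what leads you astray for $i=j$.
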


\begin{proof} In case $L$ admits perfect Morse functions this follows
   from the definition of the quantum product together with our
   orientation conventions.  Indeed, in this case, assume that $f$,
   $f'$, $f''$ are three perfect Morse functions and that $a$, $b$ are
   critical points of $f$ and $f'$ of index $n-1$, and let $w$ be the
   maximum of $f''$.  The critical points $a$, $b$ are canonically
   identified with singular homology classes in $H_{n-1}(L;\C)$ and
   obviously $w$ is canonically identified with the fundamental class
   $[L]$.  The product in question (defined over
   $\widetilde{\La}^{+}$) is given by:
   $$a\ast b =a\cdot b + \sum_{\la\in\mathcal{E}_{2}}
   k_{ab}(\la)[L]T^{\la}$$ where $a\cdot b$ is the singular
   intersection product and $k_{ab}(\la)\in \Z$ is the number of
   $J$-holomorphic disks $u$ in the class $\la$ that pass through $w$
   and intersect the unstable manifolds of $a$ and of $b$ is such a
   way that along the boundary of the disk the order of the
   intersection points is $w, W^{u}_{f'}(b)\cap u(\partial D),
   W^{u}_{f}(a)\cap u(\partial D)$.  Obviously, the order requirement
   shows that this intersection condition is not purely homological: a
   different choice of functions $f$ and $f'$ might change the
   coefficient $k_{ab}(\la)$ here. However, the sum $a\ast b + b\ast
   a$ is invariant as the order is now irrelevant. From this
   description the formula claimed is obvious for $i\not=j$ -- it
   simply claims that if for $\lambda \in H_2^D$ with $\mu(\lambda)=2$
   we have algebraically $\nu(\lambda)$ disks in the class $\lambda$
   passing through $w$ then the contribution of these disks to
   $C_{i}\ast C_{j}+C_{j}\ast C_{i}$ is $\tau \nu(\lambda) (\partial
   \lambda)_i (\partial \lambda)_j$, where $(\partial \lambda)_i$ are
   the coefficients of $\partial \lambda$ in the basis $\mathbf{e}$,
   i.e.  $\partial \la = \sum_i (\partial \lambda)_i e_i$, and $\tau =
   \pm 1$ is a sign that depends on the orientations conventions.  In
   other words:
   $$C_i*C_j + C_j*C_i = \tau
   \sum_{\substack{\lambda \in H_2^D \\ \mu(\lambda)=2}} \nu(\lambda)
   (\partial \lambda)_i (\partial \lambda)_j z_1^{(\partial
     \lambda)_1} \cdots z_l^{(\partial \lambda)_j} [L] t.$$

   When $i=j$, we note that contribution of the disks in the class
   $\lambda$ to $2 C_i * C_i$ is $\tau \nu(\lambda)(\partial
   \lambda)_i^2$. We now use that fact that $\rho \in \mathcal{W}_1$
   hence $\partial \mathscr{P}/\partial z_{k}=0$ for every $k$, and so
   $\sum_k \nu(\lambda) (\partial \lambda)_k = 0$ by the
   point~(\ref{i:dC-i-1}) in Proposition~\ref{p:dC-i}.

   This implies the claimed formula up to showing that
   $\tau=(-1)^{n}$. In turn, this is a simple consequence of the
   orientation conventions for the quantum product
   (see~\S\ref{sbsb:or-prod}) and equation~\eqref{eq:ev-cycle} (with
   $\lambda$ instead of $B$).

   In case $L$ does not admit perfect Morse functions the proof uses
   minimal pearl complexes in a rather straightforward way.
\end{proof}

\begin{rem} It might seem slightly surprising that the coefficient
   $k_{ab}(\la)$ above is not necessarily invariant but still the
   quantum product $$QH(L;\widetilde{\La}^{+})\otimes
   QH(L;\widetilde{\La}^{+})\longrightarrow
   QH(L;\widetilde{\La}^{+})$$ is well defined. The explanation is
   that while $H_{n-1}(L;K)$ is canonically embedded in
   $QH(L;\widetilde{\La}^{+})$ this is no longer true for
   $H_{n-2}(L;K)$. Clearly $a\cdot b$ belongs precisely to
   $H_{n-2}(L;K)$ and so, even if $a\ast b$ is well defined and
   independent of choices, the class $a\cdot b$ is not canonically
   identified with a quantum class.  This is why the coefficient
   $k_{ab}(\la)$ is also, in general, not independent of the choices
   of $f,f',f''$.  On the other hand, the independence of
   $k_{ab}(\la)+k_{ba}(\la)$ of all choices can also be seen as an
   immediate consequence of the fact that $a\cdot b+b\cdot a=0$.
\end{rem}

\subsubsection{Relation to previous works} \label{sbsb:prev-works} The
relation of the superpotential to the non-vanishing of Floer
homological was first pointed out in the physics literature
in~\cite{Ho-Va:ms}. Versions of Propositions~\ref{p:dC-i}
and~\ref{p:prod-Ci-Cj} were later proved in~\cite{Cho:products,
  Cho-Oh:Floer-toric} for the case of Lagrangian torus fibres in toric
manifolds and in the setting of Floer homology. The toric case has
been further studied in~\cite{FO3:toric-1, FO3:toric-2}.

\subsubsection{Different versions of the superpotential}
\label{sbsb:different-potentials}

Different authors use different versions of the superpotential
functions, as well as different coordinate systems on
$\textnormal{Hom}_0(H_2^D, \mathbb{C}^*)$. For example, Fukaya, Oh,
Ohta and Ono use in~\cite{FO3:toric-1} coordinates $x_1, \ldots, x_l$
whose relation to ours is that $x_i = \log z_i$ (so that the $x$
coordinates are defined only modulo some periods). The superpotential
is then written as
$$\mathscr{P}'(x_1, \ldots, x_l) =
\sum_{\lambda, \mu(\lambda)=2} e^{x_1 (\partial \lambda)_1 + \cdots +
  x_l(\partial \lambda)_l}.$$ The formula at point~(\ref{i:dC-i-1}) of
Proposition~\ref{p:dC-i} then becomes $d^{\rho}_{min}(C_j) =
\frac{\partial \mathscr{P}'}{\partial x_j}\, [L]t$. Similarly,
formula~\eqref{eq:cicj} becomes now $ C_i * C_j + C_j * C_i =(-1)^{n}
\frac{\partial^2 \mathscr{P}'}{\partial x_i
  \partial x_j} \, [L]t$.

Other authors, e.g.~\cite{Aur:t-duality, Aur:slag-1, Aur:slag-2} work
only with unitary representations, i.e. $\textnormal{Hom}_0(H_1, S^1)$
but allow the Lagrangian $L$ to move in a family of special Lagrangian
submanifolds. The superpotential is in this case a function of two
sets of real variables: the representation and the parameter of the
Lagrangian. However, these two sets of variables can be put together
to form a complex system of coordinates in which the superpotential
becomes holomorphic. The relation between this superpotential and ours
is rather straightforward.

\medskip

There is a more general but also less transparent definition of a
superpotential that also expresses $\mathcal{W}_{2}$ in a way similar
to the one described above.  Moreover, this description also works for
$N_{L}>2$. We indicate it here.

Let $C_1, \ldots, C_k$ be a basis of $H_{n-N_{L}+1}(L;K)$, and $f_1,
\ldots, f_s$ a basis for $(H_2^D)_{\textnormal{free}}$.  Fix a point
$P$ in $L$. Define a function $\mathscr{P}:(\mathbb{C}^*)^{\times k}
\times (\mathbb{C}^*)^{\times s}$ by
\begin{equation}\label{eq:potential1}
   \mathscr{P}(z_{1},\ldots,
   z_{k}, w_1, \ldots, w_s) =
   \sum_{\substack{\alpha\in (H_2^D)_{\textnormal{free}};
       \\ \mu(\alpha)=N_L}} z_{1}^{r_{1}(\alpha)}\cdots
   z_{k}^{r_{k}(\alpha)}w_{1}^{(\alpha)_1} \cdots
   w_{s}^{(\alpha)_s}.
\end{equation}
The exponents $r_{i}, (\alpha)_{j} \in \mathbb{Z}$ are related to
$\alpha$ as follows: $\alpha=\sum_{j} (\alpha_j) f_{j}$ and
$r_{i}(\alpha)$ is the intersection number of the homology class
$C_{i}$ with the class $D_{\alpha} \in H_{N_L-1}(L;K)$ which is
defined as follows. Put $Q = \mathcal{M}_2(\alpha,J)
\lrsub{e_{\scriptscriptstyle +1}}{\times}{i}\{P\}$, where $i: \{P\}
\to L$ is the inclusion (see~\S\ref{sbsb:fiber-product} for the
definitions of the orientation on the fiber product). The closure of
$Q$ is an oriented compact manifold $\overline{Q}$ without boundary.
Moreover, the second evaluation map $e_{\scriptscriptstyle -1}$
extends to $\overline{Q}$. We define $D_{\alpha} =
(e_{\scriptscriptstyle -1})_* [\,\overline{Q}\,]$.

This potential is independent of $P$ as well as of (the generic
choice of) $J$. For convenience we put $\mathbf{z}=(z_{1},\ldots,
z_{k})$.

Similarly to the case $N_{L}=2$ previously discussed we have that if
the real cohomology of $L$ is generated as an algebra by $H^{<
  N_{L}}(L;\mathbb{R})$, then
$$\mathcal{W}_{2}(L)=\{\rho \in
\textnormal{Hom}_0(H_2^D,\mathbb{C}^*) \mid
d_{\mathbf{z}}\mathscr{P}(1,\ldots,1;\rho(f_{1}),
\ldots,\rho(f_{s}))=0\}~.~$$

\section{Quadratic forms} \label{s:qforms}

Let $L^n \subset M^{2n}$ be a monotone Lagrangian. We continue to
assume that $L$ is oriented and spin and fix once and for all a spin
structure. We will introduce now a quadratic form associated to $L$
from which we can derive new invariants. The construction works best
when $N_L=2$, so we first describe it in this case and then do the
general case.

\subsection{The case $N_L=2$} \label{sb:qf-NL=2} Let $\mathcal{R}$ be
a $\widetilde{\Lambda}^{+}$--algebra for which $L$ is
$\mathcal{R}$--wide. Assume in addition that $\mathcal{R}_k = 0$ for
every $k>0$ (e.g. $\mathcal{R} = \mathcal{O}(\mathcal{W}) \otimes
\Lambda^+$). We will denote by $\mathcal{W}$ any of the two wide
varieties $\mathcal{W}_{1}$ and $\mathcal{W}_{2}$ as long as the
distinction among them is not important.

We have a canonical isomorphism:
\begin{equation} \label{eq:qh-iso1}
   QH_{n-1}(L;\mathcal{R}) \cong H_{n-1}(L;K) \otimes_K
   \mathcal{R}_{0},
\end{equation}
as well as a canonical exact sequence:
\begin{equation} \label{eq:ex-seq-qh_n-2} 0 \longrightarrow [L]
   \mathcal{R}_{-2} \stackrel{i}{\longrightarrow}
   QH_{n-2}(L;\mathcal{R}) \stackrel{\pi} \longrightarrow H_{n-2}(L;K)
   \otimes_K \mathcal{R}_0 \longrightarrow 0.
\end{equation}
See~\S 4.5 in~\cite{Bi-Co:rigidity} for the details. From now on we
will make the identification~\eqref{eq:qh-iso1} and also view
$[L]\mathcal{R}_{-2}$ as a subspace of $QH_{n-2}(L;\mathcal{R})$ via
$i$. A simple computation shows that:
$$\pi(a*b) = a \cdot b, \quad \forall a,b \in H_{n-1}(L;K),$$ where
$\cdot$ is the classical intersection product
(see~\S\ref{sbsb:intersection-prod}). In particular we have $\pi(a*a)
= 0$ for every $a \in H_{n-1}(L;K)$, and so we can define a map:
\begin{equation} \label{eq:a2-R} \widetilde{\varphi}: H_{n-1}(L;K)
   \longrightarrow \mathcal{R}_{-2}, \quad \textnormal{by the equation
   } \; a*a = \widetilde{\varphi}(a) [L].
\end{equation}
Obviously $\widetilde{\varphi}$ is a quadratic form, i.e. it is
homogeneous of degree $2$ over $K$.

We now restrict to the case $\mathcal{R} =R\otimes \Lambda^{+}$ with
$R$ some $K$-algebra.  In this case $\mathcal{R}_{-2}=tR$ and
$\widetilde{\varphi}$ induces an $R$-valued quadratic form $\varphi$
by putting $\varphi =t^{-1}\widetilde{\varphi}$. A particular case of
interest will be $R= \mathcal{O}(\mathcal{W})$ with $K=\mathbb{C}$ and
$\mathcal{W}$ is any of the wide varieties $\mathcal{W}_1$ or
$\mathcal{W}_2$. In this case we denote the resulting quadratic form
by:
$$\varphi_{_{\mathcal{W}}}:
H_{n-1}(L;\mathbb{C}) \longrightarrow \mathcal{O}(\mathcal{W}).$$ We
can also specialize to a particular $\rho \in \mathcal{W}$, i.e.
compose with the evaluation morphism
$e_{\rho}:\mathcal{O}(\mathcal{W}) \longrightarrow \mathbb{C}$,
$e_{\rho}(f) = f(\rho)$. We write: $\varphi_{\rho} = e_{\rho} \circ
\varphi_{_{\mathcal{W}}}$.

There is an important integral structure in this picture.
Consider the inclusion $H_{n-1}(L;\mathbb{Z}) \subset
H_{n-1}(L;\mathbb{C})$. The restriction of the quadratic form
$\varphi_{_{\mathcal{W}}}$ to $H_{n-1}(L;\mathbb{Z})$, which will
still be denoted by $\varphi_{_{\mathcal{W}}}$ will play an important
role in the sequel.

\begin{rem} \label{r:quad-z} Whenever the trivial representation
   $\rho_0 \equiv 1$ is in $\mathcal{W}$ the quadratic form
   $\varphi_{\rho_0}$ is integral. By this we mean that its
   restriction to $H_{n-1}(L;\mathbb{Z})$ gives values in
   $\mathbb{Z}$.

   It often happens that the variety $\mathcal{W}_1$ is
   $0$-dimensional (e.g. when the superpotential $\mathscr{P}$ has
   isolated critical points. See Proposition~\ref{p:dC-i}.) It follows
   from Remark~\ref{r:wide-var-arith} that in such cases for every
   $\rho \in \mathcal{W}_1$ the image $\rho(H_1(L;\mathbb{Z}))$ lies
   inside a number field $F \subset \mathbb{C}$. It easily follows
   that for every $\rho \in \mathcal{W}_1$ the restriction of the
   quadratic form $\varphi_{\rho}$ to $H_{n-1}(L;\mathbb{Z})$ gives
   values in the same field $F$.
\end{rem}

\subsection{The case of $N_{L}>2$.}\label{subs:highNL-quad}
The definition in this case is based on viewing $\widetilde{\varphi}$
as a secondary operation in the sense that it is defined precisely
when the square of the intersection product vanishes. Now assume that
$N_{L}>2$. We continue to assume that $L$ is $\mathcal{R}$--wide and
that $\mathcal{R}_k=0$ for all $k>0$. Recall that $N_{L}$ is even
because $L$ is orientable and write $N_{L}=2s$. Notice that we still
have a canonical isomorphism $H_{n-s}(L;K)\otimes_{K} \mathcal{R}_0
\cong QH_{n-s}(L;\mathcal{R})$.  Denote by $ H_{n-s}^{\sqrt{0}}(L;K)$
the cone consisting of those elements $x\in H_{n-s}(L;K)$ with $x\cdot
x=0$ where $\cdot$ is the intersection product. We now define:
$$\widetilde{\varphi}^{s}:H_{n-s}^{\sqrt{0}}(L;K)\to 
\mathcal{R}_{-2s}$$ by the relation $$\forall x\in
H_{n-s}^{\sqrt{0}}(L;K),\ x\ast x= \widetilde{\varphi}^{s}(x)[L]t~.~$$
Note that $H_{n-s}^{\sqrt{0}}(L;K)$ is in general only a cone (over
$K$) and might fail to be a $K$-module. Still, in some cases (e.g.
when $s=$ odd), $H_{n-s}^{\sqrt{0}}(L;K)$ is a $K$-module. In the
general case $\widetilde{\varphi}^{s}$ restricts to a quadratic form
on any subset of $H_{n-s}^{\sqrt{0}}(L;K)$ which is a $K$-submodule.

As in the case $N_{L}=2$ for $K = \mathbb{C}$,
$\mathcal{R}=\mathcal{O}(\mathcal{W})\otimes \La^{+}$ we obtain a
quadratic form
$\varphi_{_{\mathcal{W}}}=t^{-1}\widetilde{\varphi}^{s}$ with values
in $\mathcal{O}(\mathcal{W})$. We can also restrict
$\varphi_{_{\mathcal{W}}}$ to $H_{n-s}^{\sqrt{0}}(L;\mathbb{Z})$.

\begin{rem}
   \begin{enumerate}
     \item[a.] The operation defined above seems to be the first step
      in a sequence of higher order operations, each defined whenever
      the previous ones vanish. While these higher order operations
      are of interest we will not further discuss them here.
     \item[b.] The quadratic forms discussed here  have first
      appeared in Cho in \cite{Cho:products}  for $N_{L}=2$, $L$
      a toric fibre and the trivial representation.
   \end{enumerate}
\end{rem}

\subsection{The discriminant} \label{sb:discr} Let $F$ be a free
abelian group and $\mathcal{A}$ a commutative ring. Let $\varphi: F
\longrightarrow \mathcal{A}$ be a quadratic form. Recall that
$\varphi$ has a well defined invariant $\Delta \in \mathcal{A}$ called
the discriminant which is defined as follows. Pick a basis for $F$ and
represent $\varphi$ by a symmetric matrix $A$ in that basis. Then the
discriminant of $\varphi$, $$\Delta_{\varphi}= -\det(A),$$ does not
depend on the choice of the basis because any automorphism of $F$ has
$\det = \pm 1$.
 
Now let $L$ be a Lagrangian with $N_L=2$ as in~\S\ref{sb:qf-NL=2}.  (A
similar computation is possible for $N_{L}>2$). The discriminant
$\Delta$ of the quadratic form $\varphi_{_\mathcal{W}}$ is an element
of $\mathcal{O}(\mathcal{W})$. We denote its value at $\rho$ by
$\Delta(\rho) \in \mathbb{C}$.

To compute $\Delta$ explicitly fix a basis $\mathbf{C} = \{C_1,
\ldots, C_l \}$ for $H_{n-1}(L;\mathbb{Z})$.  Define functions $a_{i
  j} \in \mathcal{O}(\mathcal{W})$ by the relations: $$C_i*C_j +
C_j*C_i = a_{ij}[L]t.$$ Then we clearly have
\begin{equation} \label{eq:discr-1} \varphi_{_\mathcal{W}}(X_1 C_1 +
   \cdots + X_l C_l) = \frac{1}{2}\sum_{i,j} a_{ij} X_i X_j, \quad
   \Delta = -\det (a_{ij}).
\end{equation}
The minus sign in front of the determinant appears here in order to
make our discriminant compatible with conventions common in number
theory. In the same spirit, we take in the determinant the constants
$a_{i j}$ (instead of $\frac{1}{2}a_{ij}$) so that whenever the
trivial representation $\rho_0 \equiv 1$ is in $\mathcal{W}_1$ the
discriminant $\Delta(\rho_0)$ will be an integer.

When $\mathcal{W} = \mathcal{W}_1$ we can express $\Delta$ in terms of
the super potential as follows. We now use the notation
from~\S\ref{sbsb:sp-formulae}. Fix a basis $\mathbf{e} = \{ e_1,
\ldots, e_l \}$ for $H_1(L;\mathbb{Z})_{\textnormal{free}}$ and a
basis $\mathbf{C} = \{C_1, \ldots, C_l \}$ for $H_{n-1}(L;\mathbb{Z})$
which is dual to $\mathbf{e}$ as in~\S\ref{sbsb:sp-formulae}.  Write
$\rho = (z_1, \ldots, z_l)$ with respect to $\mathbf{e}$. Then in view
of formulas~\eqref{eq:cicj} and ~\eqref{eq:discr-1} we have:
\begin{equation} \label{eq:discr-2} \Delta(z_1, \ldots, z_l) =
   (-1)^{ln+1}z_1^2 \cdots z_l^2 \det \left(\frac{\partial^2
        \mathscr{P}}{\partial z_i
     \partial_j}\right).
\end{equation}

\section{The deformation viewpoint} \label{s:defo} Let $L$ be a
monotone Lagrangian which is $\mathcal{R}$--wide, where $\mathcal{R}$
is of the following kind: $\mathcal{R} = R \otimes_K \Lambda^+ = R[t]$
for some $K$--algebra $R$. We grade $t$ as usual, $|t| = -N_L$, but do
not grade $R$. Of course, $\mathcal{R}$ is also assumed to be endowed
with a $\widetilde{\Lambda}^+$--algebra structure, but we do not make
any special assumptions on it. For example we can take $\mathcal{R} =
K[t]$ with the $\widetilde{\Lambda}^+$--algebra structure given
by~\eqref{eq:algebra-rho} for some $\rho \in \mathcal{W}_2$ (we often
denote this ring also by $(\Lambda^{\rho})^+$ to emphasize its
$\widetilde{\Lambda}^+$--algebra structure coming from $\rho$).
Another example is $\mathcal{R} = \mathcal{O}(\mathcal{W})[t]$ with
the $\widetilde{\Lambda}^+$--algebra structure given
by~\eqref{eq:wide-alg}.

As $L$ is wide there exists an isomorphism $QH(L;\mathcal{R}) \cong
H(L;K) \otimes \mathcal{R}$ and, as mentioned before, usually there is
no canonical one. On the other hand, there is a distinguished class of
isomorphisms $QH(L;\mathcal{R}) \longrightarrow H(L;K) \otimes
\mathcal{R}$ which we now describe.

For simplicity we will assume from now on that $L$ admits a perfect
Morse function. If this is not the case, the use of minimal models
allows essentially the same results to be formulated in full
generality (we remark however that the actual construction of the maps
$\psi$ and $\phi$ from Proposition \ref{prop:min_pearls} is required,
this construction appears in \cite{Bi-Co:rigidity} pages 2929-2933).

\subsection{The quantum product as deformation of the intersection
  product} \label{subs:product-defint}
Let $\mathscr{D} = (f, (\cdot, \cdot), J)$ be a regular triple
consisting of a perfect Morse function $f: L \longrightarrow
\mathbb{R}$, a Riemannian metric $(\cdot, \cdot)$ on $L$ and an almost
complex structure $J \in \mathcal{J}$. Denote by $CM(\mathcal{F})$ the
Morse complex (with coefficients in $K$) associated to the pair
$\mathcal{F} = (f, (\cdot,\cdot))$. Denote by
$\mathcal{C}(\mathscr{D}; \mathcal{R})$ the pearl complex. Note that
the Morse differential on $CM(\mathcal{F})$ vanishes (since $f$ is
perfect). The differential of the pearl complex vanishes too because
$L$ is wide. It follows that the obvious map
$$\widetilde{h}_{\mathscr{D}} : \mathcal{C}(\mathscr{D}; \mathcal{R})
\longrightarrow CM(\mathcal{F}) \otimes_K \mathcal{R}, \;\;
\textnormal{induced by} \; \; \widetilde{h}_{\mathscr{D}}(x) = x, \;
\forall x \in \textnormal{Crit}(f)$$ is a chain map (in fact a chain
isomorphism). Denote by $h_{\mathscr{D}}: QH(L;\mathcal{R})
\longrightarrow H(L;K) \otimes \mathcal{R}$ the induced map in
homology. The isomorphism $h_{\mathscr{D}}$ is of course not
canonical, it depends on $\mathscr{D}$. Denote by $\mathcal{K}$ the
set of all isomorphism $QH(L;\mathcal{R}) \longrightarrow H(L;K)
\otimes \mathcal{R}$ obtained in this way from all possible triples
$\mathscr{D}$.

\begin{prop} \label{p:class-K} Elements of $\mathcal{K}$ have the
   following properties:
   \begin{enumerate}
     \item Every $h_{\mathscr{D}} \in \mathcal{K}$ sends the unity of
      $QH(L;\mathcal{R})$ to the unity $[L]$ of $H(L;K)$.
      \label{i:unity}
     \item For every two elements $h_{\mathscr{D}}, h_{\mathscr{D}'}
      \in \mathcal{K}$ we have:
      $$h_{\mathscr{D}'} \circ {h_{\mathscr{D}}}^{-1} =
      \textnormal{id} + \phi_1 t + \phi_2 t^2 + \cdots$$ where
      $\phi_k: H_*(L;K) \otimes R \longrightarrow H_{*+k N_L}(L;K)
      \otimes R$, $k \geq 1$. In other words, $h_{\mathscr{D}'} \circ
      {h_{\mathscr{D}}}^{-1}$ is a deformation of the identity.
      \label{i:hdhd'}
   \end{enumerate}
\end{prop}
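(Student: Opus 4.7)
The plan is to verify (1) by direct chain-level inspection and to derive (2) from a pearl-level comparison chain map, expanded formally in the variable $t$.

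For (1), the unity of $QH(L;\mathcal{R})$ is represented at the pearl level by the Morse-theoretic fundamental cycle $\eta \in \mathcal{C}(\mathscr{D};\mathcal{R})$, i.e.\ the signed sum of the top-index critical points of the perfect Morse function $f$, which also represents $[L]$ in the Morse complex $CM(\mathcal{F})$. Since $\widetilde{h}_{\mathscr{D}}$ is defined to act as the identity on critical points, $\widetilde{h}_{\mathscr{D}}(\eta) = \eta$ as a chain in $CM(\mathcal{F}) \otimes_K \mathcal{R}$, and its Morse homology class is exactly $[L] \in H_n(L;K)$. Hence $h_{\mathscr{D}}$ sends the quantum unity to the singular unity, as claimed.

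For (2), the main input is a pearl-level comparison chain map
$$\Phi_{\mathscr{D}\mathscr{D}'}: \mathcal{C}(\mathscr{D};\mathcal{R}) \longrightarrow \mathcal{C}(\mathscr{D}';\mathcal{R}),$$
constructed along the lines of \cite{Bi-Co:rigidity} by counting pearly trajectories defined using a generic homotopy between the two triples. This map is a chain homotopy equivalence inducing the canonical identification on $QH(L;\mathcal{R})$. Separating its contributions according to the total Maslov class of the disks appearing along each trajectory, and using $|t| = -N_L$, one obtains a formal expansion
$$\Phi_{\mathscr{D}\mathscr{D}'} = \Phi_0 + \Phi_1 t + \Phi_2 t^2 + \cdots,$$
where $\Phi_0 : CM(\mathcal{F}) \longrightarrow CM(\mathcal{F}')$ is the pure Morse continuation chain map (corresponding to trajectories with no disks), and $\Phi_k$ records the pearly configurations of total Maslov weight $k N_L$.

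Since both $f$ and $f'$ are perfect Morse functions, both Morse differentials vanish, so $\Phi_0$ is itself a chain isomorphism and induces the canonical identification of $H_\ast(CM(\mathcal{F}))$ with $H_\ast(CM(\mathcal{F}'))$; after identifying both sides with $H(L;K)$, this is the identity. Hence the chain-level composition $\widetilde{h}_{\mathscr{D}'} \circ \Phi_{\mathscr{D}\mathscr{D}'} \circ \widetilde{h}_{\mathscr{D}}^{-1}$ passes to homology as $\mathrm{id} + \phi_1 t + \phi_2 t^2 + \cdots$, where $\phi_k$ is the homology-level version of $\Phi_k$. The required degree shift $\phi_k : H_\ast(L;K) \otimes R \longrightarrow H_{\ast + kN_L}(L;K) \otimes R$ is forced by the fact that $h_{\mathscr{D}'} \circ h_{\mathscr{D}}^{-1}$ is a graded isomorphism and $|t^k| = -kN_L$.

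The main obstacle is the careful construction of $\Phi_{\mathscr{D}\mathscr{D}'}$ together with its expansion: one must check that the $t=0$ stratum of the relevant parametrized moduli spaces is exactly the set of Morse-continuation trajectories (so that $\Phi_0$ is genuinely the Morse comparison map), and track orientations so that the expansion has the stated form. Both facts are essentially contained in the standard pearl-theoretic machinery of \cite{Bi-Co:rigidity}, with orientation signs supplied by the conventions of Appendix \ref{a:orientations}.
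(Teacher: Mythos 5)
Your argument is correct and follows essentially the same route the paper takes: use the pearl comparison chain map between $\mathcal{C}(\mathscr{D})$ and $\mathcal{C}(\mathscr{D}')$, expand it in powers of $t$, and identify the $t^0$ term with the Morse continuation map, which is the identity on homology under the canonical identifications $CM(\mathcal{F})\cong H(L;K)\cong CM(\mathcal{F}')$ (valid since both complexes have trivial differential). You spell out more detail than the paper does — in particular you give an explicit verification of item~(1), which the paper leaves implicit, and you make explicit why the $t^0$ term is the identity — but the essential mechanism (comparison chain map plus $t$-adic expansion, with the vanishing of both Morse differentials guaranteeing well-definedness at chain level) is the same.
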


\begin{proof}
   Let $\mathscr{D}' = (f', (\cdot, \cdot)', J')$ be antoher triple
   with $f'$ a perfect Morse function and put $\mathcal{F}' = (f',
   (\cdot,\cdot)')$ . Denote by $F_0: CM(\mathcal{F}) \longrightarrow
   CM(\mathcal{F}')$ the comparison map between the Morse complexes
   and by $F: \mathcal{C}(\mathscr{D}) \longrightarrow
   \mathcal{C}(\mathscr{D}')$ the comparison between the pearl
   complexes. We have: $$F(x) = F_0(x) + F_1(x)t + F_2(x) t^2 +
   \cdots, \; \forall x \in \textnormal{Crit}(f),$$ for some maps $F_k
   : CM_*(\mathcal{F}) \otimes R \longrightarrow CM_{*+k
     N_L}(\mathcal{F}') \otimes R$.  See~\cite{Bi-Co:Yasha-fest,
     Bi-Co:rigidity} for more details.  Notice that the comparison
   chain morphism $F$ is defined by using appropriate homotopies
   relating the data $\mathscr{D}$ and $\mathscr{D}'$ and is unique,
   in general, only up to chain homotopy. In this case however, the
   differentials of the two involved complexes vanish so that $F$
   itself is canonical.
\end{proof}
For further use denote by $$\mathscr{G}_{L}=\{h_{\mathscr{D}'} \circ
{h_{\mathscr{D}}}^{-1} \mid \mathscr{D}, \mathscr{D}' \,
\textnormal{generic triples}\} \subset \textnormal{Aut}(H(L;K) \otimes
R).$$ This is a subgroup of the group of automorphisms of the
$\mathcal{R}$--module $H(L;K)\otimes \mathcal{R}$. It corresponds to
the subgroup generated by all morphisms associated to changes in
choices of data $\mathscr{D}$.

\subsubsection{General deformation theory} \label{sb:gnrl-def} The
previous considerations fit into the general framework of classical
deformation theory of algebras (see for example
Gerstenhaber~\cite{Gerst}). Algebras in this section are assumed to be
associative, unital, but not necessarily commutative.

Let $(A,\cdot)$ be an algebra over the commutative ring $R$ (which is
also a $K$-algebra). We denote by $ - \cdot -$ the product of $A$. A
deformation of $A$ is a structure of an algebra over $R[t]$ on the
module $A \otimes_R R[t]$
$$(A \otimes_R R[t]) \otimes_{R[t]} (A \otimes_R R[t]) \longrightarrow
(A \otimes_R R[t]), \quad x \otimes y \longmapsto x * y,$$ which
satisfies the following conditions:
\begin{enumerate}
  \item $A \otimes_R R[t]$ endowed with $*$ is an (associative unital)
   algebra over $R[t]$.
  \item $1 \in A$ continues to be the unit for $*$.
  \item $*$ reduces to product $\cdot$ for $t=0$.
\end{enumerate}
Sometimes insead of denoting the product on $A$ by $x \cdot y$ and a
deformation of it by $x*y$ we will write $m_0(x,y)$ and $m(x,y)$
respectively.

We will also need a graded version of the story. Our algebra $A =
\bigoplus_{k\geq 0} A^k$ will be cohomologically graded and the ring
$R$ should be regarded as having degree $0$ with respect to $A$, i.e.
$R$ is mapped by a morphism of rings to the center of $A$ in degree
$0$, $R \longrightarrow Z(A^0) \subset A^0$. Let $d \in \mathbb{Z}$.
We will consider deformations $*$ of $A$ where the formal parameter
$t$ has degree $|t|=d$. We denote the set of such deformations by
$\widetilde{\textnormal{Def}}_d(A)$.  Denote by
$\textnormal{Iso}_d(A)$ the group consisting of all $R[t]$-linear,
degree preserving, module isomorphisms $\phi : A \otimes_R R[t]
\longrightarrow A \otimes_R R[t]$ that have the following form:
$$\phi(x) = x + \phi_1(x)t + \phi_2(x)t^2 + \cdots, \;\; \forall x \in
A, \quad \textnormal{where } \phi_k: A^* \longrightarrow A^{*-dk}.$$
Two deformations $m', m'' \in \widetilde{\textnormal{Def}}_d(A)$ are
said to be equivalent if they are related by an element of
$\textnormal{Iso}_d(A)$, i.e. there exists $\phi \in
\textnormal{Iso}_d(A)$ such that $\phi(m''(x,y)) = m'(\phi(x),
\phi(y))$ for every $x, y \in A \otimes_R R[t]$. We denote by
$\textnormal{Def}_d(A) = \widetilde{\textnormal{Def}}_d(A) /
\textnormal{Iso}_d(A)$ the set of equivalence classes of deformations
of $A$. Similarly, when grading is not relevant we have
$\widetilde{\textnormal{Def}}(A)$, $\textnormal{Iso}(A)$ and
$\textnormal{Def}(A) = \widetilde{\textnormal{Def}}(A) /
\textnormal{Iso}(A)$.

We will also use a slight modification of this construction. Assume
$G\subset \textnormal{Iso}_d(A)$ is a subgroup. We then denote by
$$\textnormal{Def}^{G}_d(A)=\widetilde{\textnormal{Def}}_d(A) / G$$
the equivalence classes of deformations of $A$ with respect to
conjugation by elements of $G$.

\subsubsection{The main example} \label{sbsb:main-ex} Let $L$ be a
monotone Lagrangian and $\mathcal{R} = R[t]$ as explained at the
beginning of~\S\ref{s:defo} so that $L$ is $\mathcal{R}$-wide.  Let
$A$ be the singular homology algebra of $L$ (tensored with $R$), $A =
H(L;K) \otimes_K R$, endowed with the intersection product $\cdot$. We
grade $A$ cohomologically, i.e. we put $A^i = H_{n-i}(L;K) \otimes_K
R$ and here the degree of $t$ is $N_{L}$ (note that the unity $1 \in
A^0$ corresponds in the homological notation to $[L]$).

Next consider the quantum homology $QH(L;\mathcal{R})$. For
convenience, we grade it here cohomologically too, namely
$QH^{i}(L;\mathcal{R}) := QH_{n-i}(L; \mathcal{R})$ and whenever
working with $QH^*$ we change the degree of $t$ to be $N_L$ rather
then $-N_L$.

Recall the set of isomorphisms $\mathcal{K}$ introduced at the
beginning of~\S\ref{s:defo}. Pick $h \in \mathcal{K}$. By transferring
the quantum product $*$, via $h$, from $QH(L; \mathcal{R})$ to
$A\otimes_R R[t] = H(L;K)\otimes_K R[t]$ we obtain a deformation
$*_h\in \widetilde{\textnormal{Def}}_{N_{L}}(A)$ of the intersection
product $- \cdot -$. This is so because of point~(\ref{i:unity}) of
Proposition~\ref{p:class-K} and because the quantum product $*$
operation is obviously a deformation of the intersection product $-
\cdot -$ operation on the chain level.

It follows from point~(\ref{i:hdhd'}) of Proposition~\ref{p:class-K}
that $\mathscr{G}_{L}\subset \textnormal{Iso}_{N_{L}}(A)$ and so we
have quotient maps:
$$\widetilde{\textnormal{Def}}_{N_{L}}(A)
\stackrel{\Psi_{1}}{\longrightarrow}
\textnormal{Def}^{\mathscr{G}_{L}}_{N_{L}}(A)
\stackrel{\Psi_{2}}{\longrightarrow} \textnormal{Def}_{N_{L}}(A).$$ We
denote
$$*^{\mathscr{G}}_L=\Psi_{1}(\ast_{h}) \ \mathrm{and}\
\ast_{L}=\Psi_{2}(\ast^{\mathscr{G}}_{L})~.~$$ By the preceding
discussion neither $*^{\mathscr{G}}_L$ nor $\ast_{L}$ depend on the
choice of $h \in \mathcal{K}$. In other words, $(QH(L;\mathcal{R}),*)$
provides us with a well defined {\em class} of deformations of the
classical ring $(H(L)\otimes R, \cdot)$.

Notice that $\ast_{L}$
belongs to a purely algebraic object: indeed
$\textnormal{Def}_{N_{L}}(A)$ only depends on the algebra structure of
$A=H(L;R)$ and not on any properties of the specific Lagrangian
embedding $L\subset M$.  By contrast,
$\textnormal{Def}^{\mathscr{G}_{L}}_{N_{L}}(A)$ depends on this
embedding because $\mathscr{G}_{L}$ is strongly depended on it - for
instance, if $L$ is exact, then $\mathscr{G}_{L}$ reduces to the
identity element.

\subsection{Invariant polynomials in the structural constants of the
  quantum product} \label{subsubsec:struct-const} We pursue the
discussion in~\S\ref{sbsb:main-ex}. In particular, we continue to
write the various structures with cohomological grading. We use the
same assumptions on $K$, $R$ and $R[t]$ as at the beginning
of~\S\ref{s:defo}.  The main examples we have in mind are when $K$ is
a field, or when $K=\mathbb{Z}$. Moreover, for simplicity we will also
assume that $H(L;K)$ is a free $K$-module. (If this is not the case we
can always replace $H(L;K)$ by its free part over $K$,
$H(L;K)_{\textnormal{free}}$, and the discussion below continues to
hold with minor modifications.) As for the $K$--algebra $R$ we will
assume for simplicity that $K$ is {\em embedded} in $R$.

To shorten notation we set $\widetilde{A}=A\otimes_{R}
R[t]=H(L;K)\otimes_{K} R[t]$. Note that $\widetilde{A} = H(L;K) \oplus
(H(L;K) \otimes_K tR[t])$. Denote by $\textnormal{pr}_q: \widetilde{A}
\longrightarrow H(L;K) \otimes_K tR[t]$ the projection on the second
factor.  Put
$$\mathcal{H} = \textnormal{hom}_K^0(H(L;K) \otimes H(L;K),
\widetilde{A}), \quad \mathcal{H}_q = \textnormal{hom}_K^0(H(L;K)
\otimes H(L;K), H(L;K) \otimes_KtR[t]),$$ where $\textnormal{hom}_K^0$
stands for degree preserving $K$--linear homomorphisms. For degree
reasons both $\mathcal{H}$ and $\mathcal{H}_q$ are free $R$--modules
of finite rank. The projection $\textnormal{pr}_q$ induces a map $q:
\mathcal{H} \longrightarrow \mathcal{H}_q$. As explained above an
element $h\in\mathcal{K}$ induces an associative product: $\ast_{h}:
\widetilde{A}\otimes\widetilde{A}\to \widetilde{A}$. In particular we
also get an element which we still denote $*_h \in \mathcal{H}$. We
denote its image in $\mathcal{H}_q$ by $q(*_h)$.

Let $U$ be a finite rank free $K$--module and $V = U \otimes_K R$. By
a polynomial on $V$ with coefficients in $K$ we mean a function $P: V
\longrightarrow R$ for which there is a basis of $U$, $u_1, \ldots,
u_l$ such that $P$ can be written as a polynomial with coefficients in
$K$ in the $R$--basis $u_1 \otimes 1, \ldots, u_l \otimes 1$ of $V$.
Clearly this notion does not depend on the choice of the basis for
$U$. We denote these polynomials by $K[V]$.

Consider now polynomials $P \in K[\mathcal{H}]$ (where $\mathcal{H}$
is written as $U \otimes_K R$ in an obvious way). The purpose of this
section is to discuss polynomials $P$ which have the property that:
$$P(*_h) = P(*_{h'}) \; \textnormal{ for every }
h,h' \in \mathcal{K}.$$ Such polynomials will be called invariant
polynomials. Next let $\sigma \in \textnormal{Aut}^0_K H(L;K)$ be a
degree preserving automorphism. Clearly each such automorphism
$\sigma$ induces an automorphism $\sigma_{\mathcal{H}} \in
\textnormal{Aut}_K(\mathcal{H})$. We say that a polynomial $P$ is a
symmetric polynomial invariant if $P$ is an invariant polynomial and
moreover $P$ remains invariant under composition with
$\sigma_{\mathcal{H}}$ for every $\sigma \in \textnormal{Aut}^0_K
H(L;K)$. We will be particularly interested in invariant polynomials
(symmetric or not) that capture information on the quantum part of the
product, namely polynomials $P$ that factor through $q:\mathcal{H} \to
\mathcal{H}_q$, i.e. there exists a polynomial $Q \in
K[\mathcal{H}_q]$ such that $P(*_h) = Q(q(*_h))$.  We will call them
Lagrangian quantum polynomials.  Finally, we will be interested also
in universal invariant polynomials for $L$, namely those that do not
depend on the particular Lagrangian embedding of $L$.

We will now describe these notions in detail by using coordinates.
While the notation in coordinates might appear heavy, it is more
useful for applications and computations.

Fix a basis $\mathbf{a}=(a_{i})_{i\in I}$ for $H^{\ast}(L;K)$ and put
$\epsilon(i,j,s)=|a_{i}|+|a_{j}| - |a_{s}|$.  We will assume further
that the basis $\mathbf{a}$ is ordered in such a way that the first
element is $a_{0}=1\in H^{0}(L;K)$, the next elements form an ordered
basis of $H{^1}(L;K)$ the ones after that form a basis for
$H^{2}(L;K)$ etc.  Obviously, any graded change of basis leaves the
$\epsilon(i,j,s)$ invariant.

Any associative product $\ast\in
\widetilde{\textnormal{Def}}_{N_{L}}(A)$ is characterized by constants
$k^{i,j}_{s}\in R$ given by:
\begin{equation}\label{eq:struct_cst}
   a_{i}\ast  a_{j}=\sum_{\{\ s\  | \  \ N_{L} \textnormal{ divides }
     \epsilon(i,j,s) ,\ \epsilon(i,j,s)\geq 0\}}
   k^{i,j}_{s}a_{s}t^{\epsilon(i,j,s)/N_{L}}~.~
\end{equation}
The fact that the group $\mathscr{G}_{L}$ is in general non-trivial
implies that for a product $\ast_{h}$ associated to an element
$h\in\mathcal{K}$, the constants $k^{i,j}_{s}$ depend on $h$ (and thus
on $\mathscr{D}=(f,(\cdot,\cdot),J)$) and not only on
$\ast^{\mathscr{G}}_{L}$.  At the same time in the case of quantum
homology of the ambient manifold $M$ the structural constants of the
quantum product are in fact triple Gromov-Witten invariants (see
e.g.~\cite{McD-Sa:Jhol-2}).  This suggests that even if these
structural constants are not themselves invariant in our Lagrangian
setting, it might very well happen that -- as a ``next best case'' --
there exist invariants that are polynomial expressions in these
constants.

Define:
\begin{equation}\label{eqn:pick}
   \mathcal{I}_{L}= \{(i,j,s)\in I\times I\times I \  \ |\  \
   \epsilon(i,j,s)\geq 0\ , \ N_{L} \textnormal{ divides }
   \epsilon(i,j,s)\}~.~
\end{equation}
Notice that the number of elements of $\mathcal{I}_{L}$ only depends
on $H(L;K)$ and $N_{L}$ (and not on the actual basis $\mathbf{a}$).
We let $K[z_{r} ; r\in \mathcal{I}_{L}]$ be the polynomial ring with
coefficients in $K$ and variables $z_{r}, r=(i,j,s)\in
\mathcal{I}_{L}$.  Given any polynomial $P\in K[z_{r} ; r\in
\mathcal{I}_{L}]$ and any product $\ast\in
\widetilde{\textnormal{Def}}_{N_{L}}(A)$ we can evaluate $P$ on the
structural constants associated to this product in the basis
$\mathbf{a}$: we assign to $z_{(i,j,s)}$ the value $k^{i,j}_{s}\in R$.
We denote the value of $P$ computed in this way by $P(\ast
;\mathbf{a})\in R$ and we call it the value of $P$ on the product
$\ast$ in the basis $\mathbf{a}$.

\begin{dfn} \label{def:poly_inv} Fix a smooth closed and oriented
   manifold $L_0$ endowed with a spin structure. Let $N \geq 2$ be an
   integer. Let $i: L_0 \hooklongrightarrow M$ be an $R$--wide
   monotone Lagrangian embedding with minimal Maslov number $N$. Put
   $L=i(L_0)$.
   \begin{enumerate}
     \item[i.] A {\em Lagrangian polynomial invariant} for $L$ is a
      polynomial $P\in K[z_{r} ; r\in \mathcal{I}_{L}]$ so that for
      every $h \in \mathcal{K}$, the value $P(*_h;\mathbf{a})$ is
      independent of $h$ for any basis $\mathbf{a}$ (in other words
      $P(*_h;\mathbf{a})$ only depends on $P$, $*^{\mathcal{G}}_L$ and
      $\mathbf{a}$).
     \item[ii.] A universal Lagrangian polynomial invariant of $L_0$
      is a polynomial $P$ as in point i which has the property that it
      is a polynomial invariant for every wide Lagrangian embedding
      $i: L_0 \hooklongrightarrow M$ (in any $M$) as above.
   \end{enumerate}
   Polynomials as above are called {\em symmetric} if the value
   $P(*_h;\mathbf{a})$ is independent of the basis $\mathbf{a}$. They
   are called {\em quantum} if they depend only on $z_{(i,j,s)}$ with
   $\epsilon(i,j,s)>0$.
\end{dfn}

\begin{ex}\label{eq:trivial} We start with the trivial example.
   Notice that the structural constants $k^{i,j}_{s}$ for
   $\epsilon(i,j,s)=0$ are simply the structural constants of the
   algebra $A$ and thus do not depend on $\mathscr{D}$. Thus, any
   polynomial $P\in K[z_{r}: r\in \mathcal{I}_{L}, \ \epsilon (r)=0]$
   is invariant (and even universal) once it does not depend on the
   chosen basis.

   From now on we will refer to this example as being \emph{trivial}
   and we will eliminate it from any further discussion by focusing on
   quantum polynomial invariants.
\end{ex}

\begin{ex}\label{eq:quad-coeff}
   For this example it is relevant to work with $K=\mathbb{Z}$.
   Furthermore, we assume $N_{L}=2$ and put $l = \textnormal{rank\,}
   H^{1}(L;\mathbb{Z})$. This means that for a basis $\mathbf{a}$ as
   before, the first element is $a_{0}=1$ and the next elements,
   $a_{1},\ldots, a_{l}$, form a basis of $H^{1}(L;\mathbb{Z})$. We
   consider the elements of $(i,j,0)\in \mathcal{I}_{L}$ with $1 \leq
   i, j \leq l$ (hence $\epsilon(i,j,0)=2$) and for each such element
   we define polynomials:
   $$P_{ij}=z_{(i,j,0)}, \quad
   \bar{P}_{ij}=P_{ij}+P_{ji}, \quad P_{\Delta}=-det
   (\bar{P}_{ij})~.~$$ The point of this example is to discuss the
   invariance of these polynomials.

   Let $h\in \mathcal{K}$ with associated product $\ast_{h}$. Then we
   have:
   \begin{itemize}
     \item[i.]  $P_{ij}(\ast_{h};\mathbf{a})=c_{ij}\in R$ where
      $a_{i}\ast_{h} a_{j}=a_{i}\cdot a_{j}+c_{ij}t$ (recall that we
      are using cohomological notation);
     \item[ii.] $\bar{P}_{ij}(\ast_{h};\mathbf{a})=a_{ij}\in R$ where
      $a_{i}\ast_{h} a_{j}+a_{j}\ast_{h} a_{i}=a_{ij}t$ (compare with
      ~\eqref{eq:discr-1} from~\S\ref{s:qforms});
     \item[iii.] $P_{\Delta}(\ast_{h};\mathbf{a})=\Delta$ with
      $\Delta$ the discriminant from~\S\ref{s:qforms}.
   \end{itemize}
   This shows that the polynomials $\bar{P}_{ij}$ are universal
   quantum invariants because by evaluation they provide the
   coefficients of the quadratic form discussed in~\S\ref{s:qforms},
   and this quadratic form is invariant with respect to $\mathscr{D}$.
   However, the $\bar{P}_{ij}$ are not symmetric polynomials since the
   coefficients of the quadratic form depend on the basis in which it
   is written. On the other hand, for obvious reasons, $P_{\Delta}$ is
   a universal, symmetric, Lagrangian quantum invariant.

   Note that in contrast to $\bar{P}_{ij}$, the polynomials $P_{ij}$
   are not quantum invariants, as the example of the $2$-dimensional
   Clifford torus in ${\mathbb{C}}P^2$ shows.
\end{ex}

\begin{rem} \label{rem:uniqueness-discriminant} For $K = \mathbb{Z}$,
   the polynomial $P_{\Delta}$ is (up to composition with a polynomial
   of one variable) the \emph{only} universal quantum invariant that
   depends only on the $z_{(i,j,0)}$'s with $\epsilon(i,j,0)=2$.
   Indeed, any polynomial quantum invariant depending on the variables
   $z_{(i,j,0)}$'s with $\epsilon(i,j,0)=2$ is a polynomial in the
   $\bar{P}_{ij}$'s. In other words, it is a polynomial in the
   coefficients of the quadratic form $\varphi$ defined
   in~\eqref{eq:a2-R} of~\S\ref{s:qforms}. By definition, the values
   of this polynomial in the coefficients of $\varphi$ should be
   independent of the basis in which $\varphi$ is expressed. On the
   other hand it is known since the work of Hilbert~\cite{Hilbert}
   that the ring of polynomial invariants of a quadratic form is
   generated by a single element which can be taken to be the
   discriminant.
\end{rem}

\subsubsection{The variety of algebras} \label{subsubsec:var-alg}
We describe here a more conceptual point of view on the invariant
polynomials introduced in the previous section. We continue to use the
notation from \S\ref{sbsb:main-ex} and \S\ref{subsubsec:struct-const}
and, in particular, continue to use cohomological notation. A survey
of deformation theory from this perspective can be found in
\cite{Makh} for instance.

We begin by noticing that the set
$\widetilde{\textnormal{Def}}_{N_{L}}(A)$ of deformations of the
intersection product on $A=H(L;R)$ has the structure of an algebraic
set.  Indeed, fix a basis $\mathbf{a}=(a_{i})_{i\in I}$ for $H(L;K)$.
The structural constants $k^{i,j}_{s}\in R$ associated to any element
$\nu\in \widetilde{\textnormal{Def}}_{N_{L}}(A)$ by writing the
product structure in the basis $\mathbf{a}$ as
in~\eqref{eq:struct_cst} verify a series of algebraic equations.
First, we have linear equations reflecting the fact that the product
is graded:
\begin{equation}\label{eq:linear1} k^{i,j}_{s}=0 \
   \mathrm{if}\
   \epsilon(i,j,s)\leq 0 \ \ \mathrm{or}\ \ N_{L} \
   \mathrm{does\ not\
     divide}\ \ \epsilon(i,j,s).
\end{equation}
Next, the existence of a unit translates to:
\begin{equation}\label{eq:linear2}
   k^{0,i}_{j}=k^{i,0}_{j}=\delta_{i,j}, \forall i,j\in I.
\end{equation}
The fact that the operation is a deformation of the intersection
product in $A$ gives:
\begin{equation}\label{eq:linear3}
   k^{i,j}_{s}=v^{i,j}_{s} \ \mathrm{if} \ \epsilon(i,j,s)=0,
\end{equation}
where $v^{i,j}_{s}$ are the structural constants of the intersection
product in $A$. Finally we have some quadratic equations that reflect
the associativity of the product:
\begin{equation}\label{eq:quadr}
   \sum_{s}k^{i,j}_{s}k^{s,l}_{m}=\sum_{r}k^{j,l}_{r}k^{i,r}_{m} \quad
   \forall \; i,j,l,m\in I~.~
\end{equation}
Consider variables $z^{i,j}_{s}\in R$ with $i,j,s\in I$ and define the
algebraic set $\mathcal{V}_{N_{L}}(A)$ by demanding that the
$z^{i,j}_{s}$ verify~\eqref{eq:linear1},~\eqref{eq:linear2}
and~\eqref{eq:quadr}. Clearly this set is independent of the basis
$\mathbf{a}$. Denote by $\mathcal{V}_{N_{L}}(A;\mathbf{a})$ the
algebraic set obtained by demanding that the $z^{i,j}_{s}$ verify
additionally~\eqref{eq:linear3}.  We have an identification
$$\Psi_{\mathbf{a}}:\widetilde{\textnormal{Def}}_{N_{L}}(A)
\to\mathcal{V}_{N_{L}}(A;\mathbf{a})\subset
\mathcal{V}_{N_{L}}(A)~.~$$ The group $\mathscr{G}_{L}$ acts on
$\mathcal{V}_{N_{L}}(A)$ and this action restricts to an action on
$\mathcal{V}_{N_{L}}(A;\mathbf{a})$ for each basis $\mathbf{a}$.

Given that $R$ is a $K$-algebra, there is a canonical embedding
$K[z_{r};r \in \mathcal{I}_{L}]\to R[z_{r}; r\in \mathcal{I}_{L}]$ so
that to any polynomial in $K[z_{r};r\in \mathcal{I}_{L}]$ we can
associate one in $R[z_{r};r\in \mathcal{I}_{L}]$.  In this language, a
Lagrangian polynomial invariant is a polynomial in $K[z_{r}; r\in
\mathcal{I}_{L}]$ whose associated regular function on
$\mathcal{V}_{N_{L}}(A)$ is constant on the $\mathscr{G}_{L}$-orbit of
$\ast_{h}\in\mathcal{V}_{N_{L}}(A;\mathbf{a})$ for all $h\in
\mathcal{K}$ and such that this holds for each basis $\mathbf{a}$. It
is symmetric if the value of the respective constant is independent of
the basis $\mathbf{a}$.

\begin{rem} \label{rem:identities} An important point which is an
   immediate consequence of the discussion in this section is that two
   Lagrangian invariant polynomials $P_{1}, P_{2}\in K[z_{r} : r\in
   \mathcal{I}_{L}]$ as defined in Definition \ref{def:poly_inv} can
   be equal, $P_{1}=P_{2}$, as regular functions on
   $\mathcal{V}_{N_{L}}(A)$ without being the same polynomials: the
   polynomial expressions of $P_{1}$ and $P_{2}$ can be different but,
   due to the relations among the variables $k^{i,j}_{s}$, the
   respective regular functions may agree.  Notice also that if we
   have an equality $P_{1}=P_{2}$ over $\mathcal{V}_{N_{L}}(A)$ for
   two polynomials in $K[z_{r}; r]$ and we know that just one of the
   polynomials is invariant, then the second one is necessarily also
   invariant.
\end{rem}

\subsection{Hochschild cohomology} \label{sb:hochschild} The classical
algebraic approach to deformation theory is via Hochschild cohomology.
We recall it here.  We use the standard Hochschild cohomology theory
for associative algebras \cite{Gerst}. We start with a
brief description of this classical construction.

Let $A$ be a graded algebra over a commutative ring $R$. As before we
view $R$ as having degree $0$ with respect to $A$, i.e. $R$ is mapped
by a morphism of rings to the center of $A$ in degree $0$, $R
\longrightarrow Z(A^0) \subset A^0$.

The Hochschild complex of $A$ (with coefficients in $A$) is defined by
$$C^k(A;A) = \textnormal{Hom}_{R} (A^{\otimes k}, A)$$ endowed with the
differential $d: C^k(A;A) \longrightarrow C^{k+1}(A;A)$:
\begin{equation} \label{eq:HH-diff}
   \begin{aligned}
      df(a_1 \otimes \cdots \otimes a_{k+1}) = & a_1 f(a_2 \otimes
      \cdots \otimes a_{k+1}) \\
      & + \sum_{i=1}^k (-1)^i f(a_1 \otimes \cdots \otimes (a_i
      a_{i+1}) \otimes \cdots
      \otimes a_{k+1}) \\
      & + (-1)^{k+1} f(a_1 \otimes \cdots \otimes a_k)a_{k+1}.
   \end{aligned}
\end{equation}
The homology of this cochain complex is called the Hochschild
cohomolgoy of $A$ (with coefficients in $A$) and is denoted by
$HH^*(A;A)$. The second $A$ here should be regarded as the
``coefficients module''. It can be replaced by any $A$-module $M$
yielding $HH^*(A;M)$, but we will not need this in the sequel.

We incorporate the grading into this construction (without  modifying the
formula for the differential). Simply consider for every
$k, l \in \mathbb{Z}$ the following submodule $$C^{k,l}(A;A) =
\textnormal{Hom}_R^l(A^{\otimes k}, A) \subset C^k(A;A),$$ where
$\textnormal{Hom}_R^l$ stands for $R$-linear homomorphisms that shift
degree by $l$. Here, this means that $f \in C^{k,l}(A;A)$ if $f$ is
$R$-linear and for every $k$ homogeneous elements $a_1, \ldots, a_k \in
A$ we have $$|f(a_1 \otimes \cdots \otimes a_k)| = |a_1 \otimes \cdots
\otimes a_k| + l = \sum_{i=1}^k |a_i| + l.$$ Clearly $d(C^{k,l})
\subset C^{k+1,l}$. Put $$HH^{k,l}(A;A) = \frac{\ker (d|_{C^{k,l}})}
{d(C^{\scriptscriptstyle k-1,l})}.$$

Classical deformation theory provides a map:
\begin{equation}
   T^{1}: \textnormal{Def}_d(A) \longrightarrow HH^{2,-d}(A;A) ~.~
\end{equation}
The definition of $T^1$ is straightforward. Given a deformation $* \in
\widetilde{\textnormal{Def}}_d(A)$, we can write $$x*y = x \cdot y +
m_1(x,y)t + \cdots, \quad \forall x, y \in A.$$ A simple computation
shows that $m_1 \in \textnormal{Hom}^{-d}_R(A \otimes A, A)$ is a
Hochschild cycle (this is due to the associativity of $*$), hence we
have an element $[m_1] \in HH^{2,-d}(A;A)$. Moreover, equivalent
deformation $*' \sim *$ give rise to cohomologous cycles: $[m'_1] =
[m_1] \in HH^{2,-d}(A;A)$.  Thus setting $T^1([*]) = [m_1]$ provides a
well defined map.

\subsubsection{Quadratic forms and Hochschild cohomolgoy}
\label{sb:quad-HH}

Let $A$ be an $R$--algebra and $S \subset A$ an $R$-submodule. Denote
by $Q^2(S,R)$ the space of $R$--valued quadratic forms $\varphi:S
\longrightarrow R$. Put $S_{\sqrt{0}} = \{s \in S \mid s \cdot s = 0\}
\subset S$, and consider the restriction map
$\textnormal{rest}:Q^2(S,R) \longrightarrow
\textnormal{Func}(S_{\sqrt{0}}\, , R)$ to the space of $R$--valued
functions on the set $S_{\sqrt{0}}$.  Denote by $Q^2_0(S,R)$ the image
of this map.

Assume from now on that our graded $R$--algebra $A$ is non-trivial
only in degrees between $0$ and $n$. Moreover assume that $A^0=R$.
Then we have a map:
\begin{equation} \label{eq:HH-Q_0} \Theta : HH^{2,-d}(A;A)
   \longrightarrow Q^2_0(A^{d/2}, R),
\end{equation}
defined as follows. Let $\alpha \in HH^{2,-d}(A;A)$. Choose a cocycle
$f_{\alpha} \in C^{2,-d}(A;A)$ so that $[f_{\alpha}] = \alpha$ and
view $f_{\alpha}$ as a map $f_{\alpha}: A\otimes A \longrightarrow A$
of degree $-d$.  Consider the quadratic form $\widehat{f}_{\alpha}:
A^{d/2} \longrightarrow R$, defined by $\widehat{f}_{\alpha}(a) := f(a
\otimes a) \in A^0 = R$.  Finally, define $\Theta(\alpha) =
\textnormal{rest}(\widehat{f}_{\alpha})$, where $\textnormal{rest}$ is
the restriction map $Q^2(A^{d/2},R) \longrightarrow Q^2_0(A^{d/2},R)$.

We claim that the map $\Theta$ is well defined. To see this it is
enough to show that if $f=dg$, where $g \in C^{1,-d}(A;A)$ then $f(a
\otimes a)=0$ for every $a \in A^{d/2}$ with $a \cdot a=0$. Indeed,
let $a \in (A^{d/2})_{\sqrt{0}}$. By the definition of the Hochschild
differential we have $$f(a\otimes a) = dg (a\otimes a) = a\cdot g(a)
-g(a \cdot a) + g(a) \cdot a.$$ But $g:A \longrightarrow A$ has degree
$-d$ hence $g(a) \in A^{-d/2}=0$ and by assumption we also have $a
\cdot a =0$. It follows that $f(a \otimes a)=0$. This proves that
$\Theta$ is well defined.

Consider now the composition
\begin{equation} \label{eq:def-quad} \Gamma : \textnormal{Def}_d(A)
   \longrightarrow Q^2_0(A^{d/2},R), \quad \Gamma = \Theta \circ T^1.
\end{equation}
In case $(A^{d/2})_{\sqrt{0}} = A^{d/2}$, $\Gamma$ assigns to every
graded deformation equivalence class of $A$ a quadratic form on
$A^{d/2}$.

\begin{ex}\label{ex:Hochshild-quadr}
   Let $L^n \subset M^{2n}$ be a monotone Lagrangian with $N_L=2s$ and
   a non-empty wide variety $\mathcal{W}$. Take
   $$R=\mathcal{O}(\mathcal{W}), \quad \textnormal{and } \;
   A^* = H_{n-*}(L;\mathbb{C}) \otimes \mathcal{O}(\mathcal{W}).$$ As
   explained in~\S\ref{sbsb:main-ex}, $Q^+H(L;\mathcal{W}) = QH(L;
   R[t])$ gives rise to a class of deformations $*_L \in
   \textnormal{Def}_{d}(A)$. Applying the map $T^1$ we obtain an
   invariant $T^1(*_L) \in HH^{2,-d}(A;A)$.

   Next, applying the map $\Gamma$ to $*_L$ we obtain a quadratic form
   $\Gamma(*_L) \in Q^{2}_{0}(A^{s}, R)$ on $A^{s}_{\sqrt{0}} \cong
   (H_{n-s}(L;\mathbb{C}) \otimes
   \mathcal{O}(\mathcal{W}))^{\sqrt{0}}$ with values in
   $\mathcal{O}(\mathcal{W})$. A straightforward computation shows
   that this $\Gamma(*_L)$, when restricted to
   $H_{n-s}^{\sqrt{0}}(L;\mathbb{C})$, coincides with the quadratic
   form $\varphi^{s}_{_{\mathcal{W}}}:H_{n-s}^{\sqrt{0}}(L;\mathbb{C})
   \longrightarrow \mathcal{O}(\mathcal{W})$ constructed
   in~\S\ref{subs:highNL-quad}.
\end{ex}

\begin{ex}\label{ex:HH-torus}
   We have a particular interest in the free graded exterior algebra
   $\Lambda_{n}(R)$ which is generated as algebra by $n$ generators
   $a_{1},\ldots, a_{n}\in \Lambda^{1}_{n}(R)$ which we will think of
   as the the singular cohomology of the $n$-torus (with coefficients
   in $R$).  We put $A=\Lambda_{n}(R)$, $d=2$ and consider the
   resulting map:
   $$\Gamma: \textnormal{Def}_{2}(A)\to Q^2(A^{1},R)~.~$$
\end{ex}

\begin{lem}\label{lem:def-quadr-torus} Let $K$ be a
   field of characteristic $0$ or $\mathbb{Z}$. For
   $A=\Lambda_{n}(K)\otimes R$, $n\geq 2$, the map $\Gamma$ is an
   isomorphism.
\end{lem}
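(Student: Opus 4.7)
The plan is to establish the bijection by producing an explicit section of $\Gamma$ via Clifford-type deformations and then showing every deformation is equivalent to one of these. First, since $v\wedge v = 0$ for every $v\in V\otimes R$ where $V = \mathrm{span}_{K}\{a_{1},\ldots,a_{n}\}$, we have $(A^{1})_{\sqrt 0} = A^{1}$, so the target of $\Gamma$ simplifies to $Q^{2}(V\otimes R, R)$.

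\emph{Surjectivity.} For each quadratic form $q\in Q^{2}(V\otimes R, R)$, define $*_{q}$ to be the Clifford algebra
\begin{equation*}
 \mathrm{Cl}_{tq}(V\otimes R[t]) = T_{R[t]}(V\otimes R[t]) / \bigl\langle v\otimes v - tq(v)\bigr\rangle.
\end{equation*}
The Poincar\'e--Birkhoff--Witt theorem for Clifford algebras, valid over any commutative base ring (in particular for $K=\mathbb{Z}$), identifies $\mathrm{Cl}_{tq}$ as a free $R[t]$-module on ordered exterior monomials and shows that the product reduces to the exterior one modulo $t$. With $|t|=2$ we obtain a class in $\widetilde{\mathrm{Def}}_{2}(A)$; since $a*_{q}a = tq(a)$ for every $a\in V\otimes R$, we get $\Gamma([*_{q}]) = q$. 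Thus $\Psi : q\mapsto [*_{q}]$ is a set-theoretic section of $\Gamma$.

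\emph{Injectivity.} Fix $*\in\widetilde{\mathrm{Def}}_{2}(A)$. Because $|t|=2$ and $A^{<0}=0$, one must have $a*b = a\wedge b + m_{1}(a,b)t$ on $V\otimes V$ with no higher $t$-corrections, so $*$ is controlled on generators by a single $R$-bilinear form $m_{1}:V\otimes V\to R$. Set $q(v) := m_{1}(v,v) = \Gamma(*)(v)$ and let $m_{1}^{(q)}$ denote the corresponding form for $*_{q}$, which also satisfies $m_{1}^{(q)}(v,v) = q(v)$. The difference $\delta := m_{1} - m_{1}^{(q)}$ obeys $\delta(v,v)=0$ for all $v$; polarizing gives $\delta(v,w) + \delta(w,v) = 0$, so $\delta$ is antisymmetric and we may define $\phi_{1}(v\wedge w) := \delta(v,w)$, well-defined on $\Lambda^{2}V$. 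Extended by zero on the other graded pieces of $A$, the gauge $\phi = \mathrm{id} + \phi_{1} t \in \mathrm{Iso}_{2}(A)$ produces an equivalent deformation $*'$ with $m_{1}' = m_{1}^{(q)}$ on $V\otimes V$. Since $V$ generates $A$ as an algebra, associativity together with PBW for $\mathrm{Cl}_{tq}$ then forces $*' = *_{q}$ on all of $A\otimes R[t]$, giving $* \sim *_{q} = \Psi(\Gamma(*))$.

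\emph{Main obstacle.} The key technical input is PBW for Clifford algebras: that $T_{R[t]}(V\otimes R[t])/\langle v\otimes v - tq(v)\rangle$ has the exterior-monomial basis as an $R[t]$-module. This is classical over any commutative base, but is essential for the uniqueness part of the injectivity argument, especially in the $K=\mathbb{Z}$ setting where convenient splittings available in characteristic $0$ fail. Once PBW is granted, the rest is a direct degree-counting exercise.
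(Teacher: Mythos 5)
Your surjectivity argument is essentially the paper's (build the Clifford deformation and invoke PBW), and the preliminary gauge you perform is harmless (modulo a sign: with $\delta=m_1-m_1^{(q)}$ and $\phi=\mathrm{id}+\phi_1 t$, the gauged deformation has $m_1'=m_1+\delta$, so you should take $\phi_1(v\wedge w)=-\delta(v,w)$). The real problem is the final sentence of the injectivity step.

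The claim ``associativity together with PBW for $\mathrm{Cl}_{tq}$ forces $*'=*_q$ on all of $A\otimes R[t]$'' is false. Agreement of $m_1'$ and $m_1^{(q)}$ on $V\otimes V$ does \emph{not} pin down the product on higher tensors. Concretely, take any $R$-linear $g:A^3\to A^1$ of degree $-2$ (nonzero as soon as $n\geq 3$), extend by zero on other degrees, and set $\phi=\mathrm{id}+gt\in\mathrm{Iso}_2(A)$. The gauged deformation $*''=\phi\cdot *_q$ still satisfies $m_1''|_{V\otimes V}=m_1^{(q)}|_{V\otimes V}$ (because $g$ vanishes on $A^2$), yet $m_1''$ differs from $m_1^{(q)}$ on $A^1\otimes A^2$ by $g(v\wedge\omega)$. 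So $*''\neq *_q$ while agreeing with it on $V\otimes V$. What is true, and what you actually need, is that two such deformations are \emph{equivalent}, not equal. That requires a further argument: the PBW/dimension count gives you two $R[t]$-algebra isomorphisms $c_{*'},c_{*_q}:\mathrm{Cl}_{tq}\to A[t]$ both sending $a_i\mapsto a_i$; the composite $\xi=c_{*'}\circ c_{*_q}^{-1}:(A[t],*_q)\to(A[t],*')$ is an algebra isomorphism which is the identity on $A^{\leq 1}$, hence its $t^0$-part is an endomorphism of $(\Lambda_n,\wedge)$ fixing the generators and so is the identity, which puts $\xi$ in $\mathrm{Iso}_2(A)$ and gives $*'\sim *_q$. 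This is exactly how the paper closes the argument; your gauge step actually strengthens the conclusion to $\xi|_{A^{\leq 2}}=\mathrm{id}$ but is not needed, and it does not by itself yield $\xi=\mathrm{id}$.
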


\begin{proof}
   Let $* \in \widetilde{\textnormal{Def}_{2}}(A)$. The quadratic form
   $\Gamma(\ast)$ can be described as follows. Pick a basis
   $a_{1},\ldots a_{n}\in A^{1}$ and notice that as $\Lambda_{n}(K)$
   is an exterior algebra we have
   \begin{equation}\label{eq:clifford-alg}a_{i}\ast a_{j}+a_{j}\ast
      a_{i}= \alpha_{ij} t, \textnormal{ for some } \alpha_{ij} \in R.
   \end{equation}
   The quadratic form in question is
   $$\Gamma(*)(X_{1}a_{1}+\ldots X_{n}a_{n})=
   \frac{1}{2}\sum_{i,j} \alpha_{ij}X_{i}X_{j}~.~$$

   Recall now the definition of the Clifford algebra. Let $Q =
   (q_{ij})$ be a symmetric $n \times n$ matrix with coefficients in
   $R$. The Clifford algebra associated to $Q$ is by definition
   $$\textnormal{Cliff}(Q) = \bigl(\Lambda_n(R) \otimes_R R[t]\bigr)
   \big / I,$$ where $|t|=2$ and $I$ is the ideal generated by the
   relations
   $$a_i a_j + a_j a_i = 2q_{ij} t.$$
   For degree reasons this algebra is a deformation of $\Lambda_n(R)$
   endowed with the standard exterior product structure.

   Coming back to our situation we see that $(A[t], *)$ can be
   described as the Clifford algebra associated to the quadratic form
   $\Gamma(*)$. (This has been previously remarked by Cho
   in~\cite{Cho:products}.) More precisely, if we take $Q$ to be the
   matrix corresponding to $\Gamma(*)$ in the basis $a_1, \ldots, a_n$
   (i.e.  take $q_{ij} = \frac{1}{2} \alpha_{ij}$) then the map
   $$c_{{\scriptscriptstyle Q},*}:
   \textnormal{Cliff}(Q) \longrightarrow (A[t], *), \quad
   \textnormal{induced by } c_{{\scriptscriptstyle Q},*}(a_i) = a_i$$
   is an isomorphism of algebras. This follows by a simple dimension
   checking, degree by degree. This shows that the map $\Gamma$ is
   surjective.

   To show that $\Gamma$ is also injective assume that
   $\Gamma(*)=\Gamma(*')$ for some $*, *' \in
   \textnormal{Def}_{2}(A)$. Let $Q$ be the $n \times n$ matrix
   corresponding to $\Gamma(*)=\Gamma(*') \in Q^2(A^1,R)$ in the basis
   $a_1, \ldots, a_n$. We have an isomorphism of algebras
   $$\xi = c_{{\scriptscriptstyle Q},*'}
   \circ c_{{\scriptscriptstyle Q},*}^{-1} : (A[t], *) \longrightarrow
   (A[t], *').$$ This composition is the identity on $A^{\leq 1}$.
   Together with the fact that $\xi$ is an isomorphism of algebras
   this implies immediately that $\xi$ is an equivalence of
   deformations and finishes the proof.
\end{proof}

An interesting consequence of this Lemma is obtained if we assume that
for some wide $n$-dimensional Lagrangian torus $L \subset M$ the group
of geometric equivalences $\mathscr{G}_{L}$ coincides with the group
of algebraic equivalences $\textnormal{Iso}_{2}(A)$, for
$A=H(T^{n};R)=\La_{n}(R)$. In this case we have two identifications
$$\textnormal{Def}^{\mathscr{G}_{L}}_{N_{L}}(A)
\to\textnormal{Def}_{N_{L}}(A)\to Q^{2}(A^{1},R)~.~$$ This implies
that any quantum polynomial invariant of $L$ has to agree as regular
function over $\mathcal{V}_{2}(A)$ with an expression that can be read
off from the coefficients of the quadratic form $\Gamma(\ast_{L})$.
Moreover, by Remark \ref{rem:uniqueness-discriminant} it verifies
$P=\mathcal{F}(P_{\Delta})$ for some polynomial of a singular
variable $\mathcal{F}$, where the equality here is over $\mathcal{V}_{2}(A)$.
Now such Lagrangian tori do exist. For example, it is easy to check
that for the $2$-dimensional Clifford torus $L \subset
{\mathbb{C}}P^2$ we do indeed have
$\mathscr{G}_{L}=\textnormal{Iso}_{2}(A)$. Another way to formalize
this is the following.

\begin{cor} \label{cor:torus}For the torus $\mathbb{T}^{2}$ any
   universal, Lagrangian quantum polynomial invariant $P$ agrees (as
   regular a function, in the sense of Remark~\ref{rem:identities})
   with a polynomial belonging to the subring generated by the
   discriminant.
\end{cor}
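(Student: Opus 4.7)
The plan is to combine Lemma~\ref{lem:def-quadr-torus} with Remark~\ref{rem:uniqueness-discriminant}, specialising to the Clifford torus $L = \mathbb{T}^2 \subset \mathbb{C}P^2$, which (as is stated just before the corollary) satisfies $\mathscr{G}_L = \textnormal{Iso}_2(A)$. Throughout, $A = H^*(\mathbb{T}^2; R) \cong \Lambda_2(R)$ is cohomologically graded with $R = \mathcal{O}(\mathcal{W})$ taken to be an appropriate wide coefficient ring.

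First I would observe that a universal Lagrangian quantum polynomial invariant $P$ is, in particular, a Lagrangian invariant for the Clifford torus. Since $\mathscr{G}_L = \textnormal{Iso}_2(A)$ in this case, the value $P(\ast_h; \mathbf{a})$ is constant as $h$ ranges over $\mathcal{K}$, so $P$ is constant on a full $\textnormal{Iso}_2(A)$-orbit in $\mathcal{V}_2(A; \mathbf{a})$.

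Next, by Lemma~\ref{lem:def-quadr-torus} the map $\Gamma : \textnormal{Def}_2(A) \to Q^2(A^1, R)$ is a bijection, and every deformation of $A$ is equivalent to the Clifford algebra $\textnormal{Cliff}(Q)$ of its associated quadratic form. In this model every structural constant of the deformed product is expressible as a polynomial in the matrix entries $q_{ij}$ of $Q$ (equivalently in the $\bar{P}_{ij} = 2q_{ij}$ of Example~\ref{eq:quad-coeff}). In particular, a direct Clifford-algebra calculation gives $[pt]\ast[pt] = -\det(Q)\, t^2 = \tfrac14 P_\Delta\, t^2$, so even the only non-trivial $\epsilon = 4$ structural constant on the $2$-torus is proportional to the discriminant. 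Combining this with the first step, $P$ agrees as a regular function on $\mathcal{V}_2(A)$ with some polynomial in the $\bar{P}_{ij}$'s.

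Third, I would use the universality of $P$ to upgrade this polynomial in the $\bar{P}_{ij}$'s to a basis-invariant expression on $Q^2(A^1, R)$. One route is to pre-compose the Clifford embedding with self-diffeomorphisms of $\mathbb{T}^2$, which realise $GL(2,\mathbb{Z})$-changes of basis on $H^1$ and produce new wide embeddings; universality of $P$ across all such embeddings forces the polynomial in the $\bar{P}_{ij}$'s to be invariant under these basis changes. Once basis-independence is established, Hilbert's classical classification of polynomial invariants of quadratic forms, as invoked in Remark~\ref{rem:uniqueness-discriminant}, shows that the ring of such invariants is generated by the discriminant. Hence $P = \mathcal{F}(P_\Delta)$ as regular functions on $\mathcal{V}_2(A)$, for some one-variable polynomial $\mathcal{F}$.

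The main obstacle is the third step. The Clifford torus alone provides only $\textnormal{Iso}_2(A)$-invariance inside a fixed basis, whereas Hilbert's theorem needs full basis-change invariance on $Q^2(A^1, R)$. The cleanest way to close this gap is to observe that Remark~\ref{rem:uniqueness-discriminant} already packages this basis-independence reduction into its formulation, so once Step~2 has expressed $P$ as a polynomial in the $\bar{P}_{ij}$'s, invoking the remark directly finishes the argument. A sharper version would explicitly exhibit enough wide embeddings of $\mathbb{T}^2$ (for instance other monotone tori in $4$-dimensional toric manifolds) to realise every desired change of basis, but this refinement is not needed for the corollary as stated.
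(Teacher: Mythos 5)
Your proposal follows the paper's argument in its essentials: specialise to the Clifford torus in $\mathbb{C}P^2$ where $\mathscr{G}_L=\textnormal{Iso}_2(A)$, use Lemma~\ref{lem:def-quadr-torus} to identify deformations with quadratic forms on $A^1$ (via the Clifford algebra model), and then invoke Remark~\ref{rem:uniqueness-discriminant} / Hilbert. So the core is correct. Two caveats, though.

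First, the Clifford-algebra computation you quote is off: on the $2$-torus with $a_i a_j + a_j a_i = 2q_{ij}t$ one finds
\begin{equation*}
[\textnormal{pt}]\ast[\textnormal{pt}] \;=\; (a_1a_2)\ast(a_1a_2) \;=\; 2q_{12}\,[\textnormal{pt}]\,t \;-\; q_{11}q_{22}\,[L]\,t^2,
\end{equation*}
so there is a middle term in $[\textnormal{pt}]t$ whenever $q_{12}\neq 0$. The correct statement, consistent with Proposition~\ref{p:delta-sig-tau}, is that $\sigma=2q_{12}$, $\tau=-q_{11}q_{22}$, and it is only the combination $\sigma^2+4\tau=-4\det(Q)=P_\Delta$ that is basis-independent. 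Your conclusion that the $\epsilon=4$ structural constant is a polynomial in the $q_{ij}$'s still stands, but not in the clean form $-\det(Q)\,t^2$.

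Second, the proposed ``sharper version'' via pre-composing the Clifford embedding with self-diffeomorphisms of $\mathbb{T}^2$ does not actually produce new constraints: if $\sigma:\mathbb{T}^2\to\mathbb{T}^2$ is a diffeomorphism then $i$ and $i\circ\sigma$ have the \emph{same} image Lagrangian $L\subset\mathbb{C}P^2$, and the Lagrangian-invariance condition in Definition~\ref{def:poly_inv} is a condition on $L$ (it ranges over $h\in\mathcal{K}$ and over bases $\mathbf{a}$ of $H^*(L)$), not on the parametrisation. So universality over $\{i\circ\sigma\}_\sigma$ adds nothing beyond what the Clifford torus alone gives. You correctly labelled this as optional and fell back on Remark~\ref{rem:uniqueness-discriminant} to close the argument, which is exactly what the paper does; just be aware that the ``self-diffeomorphism'' route you sketched is a dead end rather than a genuine strengthening.
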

We expect this corollary to remain true also for higher dimensional
tori.

\begin{rem} \label{rem:jones-map} The information contained in the
   superpotential from \S\ref{sb:superpotential} can be encoded in a
   representation of the moduli spaces
   $\widetilde{\mathcal{M}}(\la,J)$ with values in the free loop space
   $\La (L)=L^{S^{1}}$. By taking the sum of the cycles represented by
   all these moduli spaces one gets a homology class $\alpha \in
   H_{\ast}(\La (L); K)$. There exists a well known isomorphism $\phi$
   constructed by Jones (see e.g.~\cite{Co-Jo:string}) between
   $H_{\ast}(\La(L), K)$ and the Hochschild cohomology
   $HH^{\ast}(C^{\cdot}(L);C^{\cdot}(L))$ where $C^{\cdot}(-)$ is the
   singular cochain complex. (Note however that one has to adjust the
   grading and the sign conventions here, for example see
   \cite{Fe-Th-Vi}.)  In favorable cases we also have an isomorphism
   $q : HH^{\ast}(C^{\cdot}(L);C^{\cdot}(L))\approx
   HH^{\ast}(H^{\cdot}(L;K), H^{\cdot}(L;K))$.  We point out here
   that, for instance, for Lagrangian tori if we project the class
   $q\circ \phi(\alpha)$ onto $HH^{2,-2}$ we obtain precisely the
   Hochschild cohomology class $T^{1}(\ast_{L})$ that is associated to
   the quantum product when viewed as deformation of the intersection
   product.
\end{rem}

\section{The discriminant and enumerative geometry}
\label{s:enum}
The purpose of this section is to use the machinery introduced before
to address the problem described at the beginning
of~\S\ref{sec:intro}. Thus we  consider one of the simplest,
non-trivial, enumerative problem in Lagrangian topology: counting
$J$-holomorphic disks $u:(D,\partial D)\to (M,L)$ passing through $3$
distinct points in $L$. As before, we will assume the closed
Lagrangian $L^{n}\subset (M^{2n},\omega)$ to be monotone with
$N_{L}\geq 2$.

Ideally, one would like to be able to estimate the number of disks in
question by separating them according to their homotopy class -- this
is where the wide varieties will be of help.

\subsection{Holomorphic disks through three points}
As in the introduction, let $P,Q,R\in L$ be three distinct points. We
are interested in the number of disks $u$ of Maslov index $2n$
passing, in order, through $P$, $Q$ and $R$. We will count these disks
with coefficients in $\mathcal{O}(\mathcal{W})$ where $\mathcal{W}$ is
one of our two wide varieties for $L$. This will lead to more refined
formulae than working only over $\La^{+}$.

To be more precise, given a class $\la \in H_2^D$ with $\mu(\la)=2n$
consider the map $$\mathbf{e}: \widetilde{\mathcal{M}}(\la,J)
\longrightarrow L \times L \times L, \quad \mathbf{e}(u) = \bigl(u(1),
u(e^{2\pi i/3}), u(e^{4 \pi i/3})\bigr),$$ where
$\widetilde{\mathcal{M}}(\la,J)$ is the moduli space of parametrized
$J$-disks in the homotopy class $\la$.  Note that both the source and
target of this map are $3n$ dimensional. Standard arguments show that
once we fix the points $P, Q, R$ then for generic $J$ the tuple
$(P,Q,R)$ is a regular value of this map and moreover the set
$\mathbf{e}^{-1}(P, Q, R)$ is finite (although the space
$\widetilde{\mathcal{M}}(\la, J)$ is not compact).  We associate to
each $u \in \mathbf{e}^{-1}(P, Q, R)$ a sign $\varepsilon(u;
P,Q,R)=\pm 1$ by comparing orientations via $\mathbf{e}$.  For
$\rho\in \textnormal{Hom}_0(H_2^D, \mathbb{C}^*)$ define now
\begin{equation}\label{eq:npqr}
   n_{PQR} = \sum_{\{\la \mid \mu(\la)=2n\}} \; \sum_{u \in
     {\mathbf{e}}^{-1}(P,Q,R)} \varepsilon(u; P,Q,R) \rho(\la)~.~
\end{equation}
The numbers $n_{PQR}(\rho)$ are neither invariant with respect to
$P,Q,R$ nor to $J$.

\subsubsection{Splitting polynomials}
The approach to estimating $n_{PQR}$ that we will discuss here is
based on the following simple idea: instead of showing that
$n_{-,-,-}$ is a numerical invariant (which it is not) show that there
exists a polynomial $Q\in K[\xi_{1},\ldots, \xi_{q}]$ and a subvariety
$\mathcal{W}\subset \textnormal{Hom}_0(H_2^D, \mathbb{C}^*)$ both
independent of $J, P, Q, R$ so that:
\begin{equation}\label{eq:recursion}n_{PQR}(\rho)=Q(\xi_{1},\ldots
   \xi_{q}) \ , \ \xi_{j}=\#_{\rho}(\mathcal{M}_{j}) \ , \ \forall
   \rho\in \mathcal{W}~.~
\end{equation}
Here $\mathcal{M}_{j}$ is a $0$-dimensional moduli space of pearl-like
trajectories involving only disks of Maslov index {\em at most
  $2n-2$}. Of course, the number $\#_{\rho}(\mathcal{M}_{j})$ depends
on the various data involved (e.g. Morse functions, metric and almost
complex structure), however the equations defining $\mathcal{M}_{j}$
are fixed.  By definition, the counting giving the $\xi_{j}$ is given
by:
$$\#_{\rho}(\mathcal{M}_{j})=\sum_{\la}\#
(\mathcal{M}_{j}(\la))\rho(\la)~.~$$ where $\mathcal{M}_{j}(\la)$ are
the configurations in $\mathcal{M}_{j}$ whose total homology class is
$\la$.

A polynomial $Q$ as above is called a \emph{splitting polynomial over
  $\mathcal{W}$}. Equation (\ref{eq:recursion}) can be interpreted as
an equality in $\mathcal{O}(\mathcal{W})$.

As we will see next such splitting polynomials often exist.  As
in~\S\ref{s:defo} we will assume here that $L$ admits a perfect Morse
functions but if this is not the case the minimal model technique
from~\S\ref{subsubsec:minimal-mod} may be used instead with minor
modifications.

\begin{thm} \label{thm:main-split} Monotone Lagrangians $L$ with
   $N_{L}\geq 2$ that are not rational homology spheres admit
   splitting polynomials $Q$ over their wide varieties
   $\mathcal{W}_{i}(L)$, $i=1,2$.  Moreover, there are such splitting
   polynomials that are universal in the sense that they are
   independent of the particular Lagrangian embedding of $L$.
\end{thm}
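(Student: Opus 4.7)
The strategy is to compute a suitable coefficient of the triple quantum product $\Pi(\rho) := [pt]\ast[pt]\ast[pt]\in QH(L;\mathcal{W})$ in two different ways and equate the results. Since $\Pi(\rho)$ is an invariant class, any coefficient of its expansion in a fixed basis is an element of $\mathcal{O}(\mathcal{W})$ independent of the auxiliary Morse/metric/almost-complex data $\mathscr{D}$. I would work in the pearl complex with coefficients $\mathcal{O}(\mathcal{W})\otimes\Lambda^{+}$, assuming for simplicity that $L$ admits a perfect Morse function with unique minimum $m$ (representing $[pt]$) and unique maximum $M$ (representing $[L]$); otherwise the same argument runs on the minimal pearl complex of Proposition~\ref{prop:min_pearls}.

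First, at chain level a suitably chosen coefficient $c(\rho)\in\mathcal{O}(\mathcal{W})$ of $\Pi(\rho)$ is a signed count of zero-dimensional pearl configurations with three incoming boundary constraints at generic representatives $P,Q,R$. Stratifying these configurations by their number of $J$-holomorphic disk components, the top stratum consists of a single disk of Maslov $2n$ passing through $P,Q,R$ and contributes exactly $\pm n_{PQR}(\rho)$ (the sign being fixed by the conventions of \S\ref{a:orientations}); every other stratum involves two or more disks connected by Morse trajectories and therefore has each individual disk of Maslov at most $2n-N_L\leq 2n-2$. This yields a decomposition
\[
c(\rho) \;=\; \pm\, n_{PQR}(\rho) \;+\; \mathcal{P}_{\mathrm{geom}}(\xi_1,\dots,\xi_p),
\]
where $\mathcal{P}_{\mathrm{geom}}$ is a universal integer polynomial in the counts $\xi_j=\#_\rho(\mathcal{M}_j)$ of the lower-strata pearl moduli. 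Second, the same $c(\rho)$ is computed algebraically by iterating $\ast$ and expanding via structural constants of the quantum product. These structural constants are counts of pearl moduli with at most two external boundary constraints; those involving disks of Maslov $<2n$ are among the $\xi_j$, and the Maslov-$2n$ two-point constants are eliminated using the WDVV-type cobordism identities obtained from the boundary of one-parameter families of pearl moduli with two external points. These identities express each such Maslov-$2n$ two-point count as an integer polynomial in Maslov $\leq 2n-2$ counts, yielding $c(\rho)=\mathcal{P}_{\mathrm{alg}}(\xi_1,\dots,\xi_p)$ for another universal integer polynomial.

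Equating the two expressions gives $n_{PQR}(\rho) = \pm(\mathcal{P}_{\mathrm{alg}}-\mathcal{P}_{\mathrm{geom}})(\xi_1,\dots,\xi_p) =: Q(\xi_1,\dots,\xi_p)$, the desired splitting polynomial. Universality is automatic: $Q$ is built from the combinatorics of pearl configurations and their bubbling boundaries, both of which depend only on the abstract data $(L_0,\text{spin},N_L)$ and not on the embedding $L\subset M$. The rational-homology-sphere hypothesis enters in the algebraic step, where the WDVV-style elimination of Maslov-$2n$ two-point constants requires the presence of intermediate-degree classes in $H_\ast(L;\mathbb{Q})$ to ``absorb'' the bubbling contributions; when $L$ is a rational homology sphere this mechanism degenerates. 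The main obstacle is precisely this last elimination: carefully matching the bubbling boundaries of one-parameter pearl moduli with the algebraic relations required to remove \emph{every} residual Maslov-$2n$ two-point constant, with the correct signs and multiplicities. This demands the full orientation bookkeeping of \S\ref{a:orientations} and a systematic stratification of the pearl moduli according to combinatorial type of the underlying trees.
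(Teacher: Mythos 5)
Your outline---compute an invariant coefficient of a quantum product two ways, one geometric (giving $n_{PQR}$ plus lower-Maslov data) and one algebraic (lower-Maslov data only), and equate---has the right shape. But the step you call ``eliminate the Maslov-$2n$ two-point constants using WDVV-type cobordism identities'' is exactly where the content of the theorem sits, and as written it is an assertion, not a proof. Associativity of the quantum product (the algebraic shadow of your WDVV cobordism) relates structure constants to one another; it does not on its own rewrite the Maslov-$2n$ coefficient $\langle [\mathrm{pt}]*[\mathrm{pt}], [L]t^n\rangle$ in terms of lower-Maslov data. One needs an additional structural input to break this self-reference, and your proposal never produces one.

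That input is the concrete way the non-rational-homology-sphere hypothesis is used: the paper picks basis elements $a,b$ of \emph{strictly intermediate} degree with $a\cdot b=[\mathrm{pt}]=a_0$, records $a^f*b^f = a_0^f + E(t)t$, and then squares and reassociates, $(a^f*b^f)*(a^f*b^f)=((a^f*b^f)*a^f)*b^f$. On the right-hand side every binary product $\alpha^f*\beta^f$ that appears has $(\alpha,\beta)\neq(a_0,a_0)$, so by Lemma~\ref{l:low} all of its coefficients (including at $[L]$) come from configurations of total Maslov $\leq 2n-2$. Chasing the $[L]t^n$ coefficient of both expansions of $(a^f*b^f)^2$ then yields the splitting polynomial; if $L$ were a rational homology sphere there would be no intermediate-degree $a,b$ with $a\cdot b=[\mathrm{pt}]$, and the reassociation has nothing to act on. Your proposal gestures at intermediate classes ``absorbing bubbling,'' but without the explicit Poincar\'e factorization $[\mathrm{pt}]=a\cdot b$ and the input of Lemma~\ref{l:low}, there is no mechanism that actually removes the residual Maslov-$2n$ constant. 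A secondary issue: the paper accesses $n_{PQR}$ as the $[L]t^n$ coefficient of the \emph{two-fold} chain-level product $a_0^f*a_0^g$, the third point being absorbed as the maximum of $f$ (the output); starting from an abstract triple product $[\mathrm{pt}]^{*3}$ changes both the total degree and the combinatorics of the leading stratum, so the geometric identification with $n_{PQR}$ would have to be redone.
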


As we will see in the proof, this is a rather immediate reflection of
three facts: Poincar\'e duality in singular homology, the fact that
$Q^{+}H(L;\mathcal{W}_{i})$ -- as defined in~\S\ref{sb:reg-functions}
-- is a deformation of the singular homology algebra as discussed
in~\S\ref{s:defo}, and finally the fact that the quantum product is an
associative operation. Splitting polynomials are closely related to
the invariant polynomials in \S\ref{subsubsec:struct-const}. We prefer
to avoid making explicit use of invariant polynomials in the proof of the
theorem but we refer to Remark \ref{eq:assoc-poly} i. for further
discussion of this relationship.

\subsubsection{Proof of Theorem~\ref{thm:main-split}}
For simplicity we assume that $N_L=2$ (the arguments for $N_{L}>2$ are
similar). We will use in this proof homological notation.

Recall that we have assumed that $L$ admits a perfect Morse function,
hence $H_*(L;\mathbb{Z})$ is free. Fix a basis
$\mathbf{a}=(a_{0},a_{1},\ldots, a_{m})$ for $H_{\ast}(L;\mathbb{Z})$,
consisting of elements of pure degree and so that $a_0 =
[\textnormal{pt}]$, $|a_i| \leq |a_j|$ for every $i<j$ and
$a_{m}=[L]$.

Pick two generic perfect Morse functions $f,g$ on $L$ a Riemannian
metric $(\cdot, \cdot)$ and an almost complex structure $J$ on $M$ so
that the pearl complexes associated to $\mathscr{D}_f = (f,(\cdot,
\cdot),J)$ and to $\mathscr{D}_g = (g, (\cdot, \cdot), J)$ are well
defined as well as the chain level quantum product. We also require
that the minimum of $f$ is $x_{0}=R$, the maximum of $f$ is $x_{m}=Q$
and the minimum of $g$ is $y_{0}=P$.

Denote by $\mathcal{W}$ be the wide variety of $L$ (either
$\mathcal{W}_1$ or $\mathcal{W}_2$). The data $\mathscr{D}_f$ and
$\mathscr{D}_g$ give us two identifications
$$h_f, h_g:
Q^+H(L;\mathcal{W}) \longrightarrow H(L;\mathbb{C}) \otimes
\mathcal{O}(\mathcal{W}) \otimes \mathbb{C}[t].$$ For $c \in
H(L;\mathbb{C})$ we write $c^f = h_f^{-1}(c), \, c^g = h_g^{-1}(c) \in
Q^+H(L;\mathcal{W})$. The relation between $a_i^f$ and $a_i^g$ is
given by
\begin{equation} \label{eq:af-ag} a_i^f = a_i^g + \sum_{j>i}
   \sigma^i_j a_j^g t^{r^i_j}, \quad \textnormal{with } \sigma^i_j \in
   \mathcal{O}(\mathcal{W}), r^i_j \geq 1.
\end{equation}
Moreover, the coefficients $\sigma^i_j$ are all determined by counting
pearly moduli spaces involving only configurations of disks with total
Maslov index $\leq n/2$. This follows from the comparison maps
described in~\S\ref{sbsb:invariance} below. Recall also that $a_m =
[L]$ is transformed canonically to the unit of $Q^+H(L;\mathcal{W})$
and we have $a_m^f = a_m^g$. We therefore denote the latter by $a_m$
too.

Next, given $\alpha,\beta \in H(L;\mathbb{Z})$, denote by $x_{\alpha}
\in \mathbb{Z} \langle \textnormal{Crit}(f) \rangle$ the linear
combination of critical points representing in Morse homology the
class $\alpha$. Similarly, denote by $y_{\beta} \in \mathbb{Z} \langle
\textnormal{Crit}(g) \rangle$ the Morse cycle representing $\beta$.

Recall now the chain level product
$\mathcal{C}(\mathscr{D}_f;\mathcal{O}(\mathcal{W})[t]) \otimes
\mathcal{C}(\mathscr{D}_g; \mathcal{O}(\mathcal{W})[t])
\longrightarrow
\mathcal{C}(\mathscr{D}_f;\mathcal{O}(\mathcal{W})[t])$. We will
denote it here by $x \otimes y \longmapsto x \widetilde{*} y$, for $x
\in \textnormal{Crit}(f)$, $y \in \textnormal{Crit}(g)$, in order to
distinguish it from the induced product on homology which is denoted
by $*$. The relation between $*$ and $\widetilde{*}$ is given by:
$\alpha^f * \beta^g = [x_{\alpha} \widetilde{*} y_{\beta}]$. Of
course, in order to calculate $\alpha^f * \beta^f$ (rather than
$\alpha^f * \beta^g$) one needs now to appeal to
formula~\eqref{eq:af-ag}.

The following lemma follows immediately from the discussion above.
\begin{lem} \label{l:low} Let $\alpha, \beta \in \mathbf{a}$. Write
   \begin{equation} \label{eq:alpha*betta} \alpha^f * \beta^f =
      \sum_{i=0}^m s_i a_i^f t^{\nu_i}, \quad s_i \in
      \mathcal{O}(\mathcal{W}), s_i \neq 0.
   \end{equation}
   Then the following holds:
   \begin{enumerate}
     \item $\nu_i \leq n$ for every $i$. Moreover, if $\nu_i=n$ for
      some $i$, then $i=m$ and $\alpha=\beta=a_0$.
     \item The coefficients $s_i$ for $i<m$ are all determined by
      counting pearly moduli spaces that involve configurations of
      disks with total Maslov index strictly smaller than $2n$. This
      continues to hold also for $s_m$ if $\alpha \neq a_0$ or $\beta
      \neq a_0$.
   \end{enumerate}
\end{lem}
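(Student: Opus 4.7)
The lemma is essentially a grading computation, once one notes that in our pearl-complex setup the exponent of $t$ in any monomial records exactly $N_L = 2$ times the total Maslov index of the contributing pearl configurations. My plan is to first extract the value of $\nu_i$ forced by the grading, and then translate the resulting bound into a Maslov bound on the disks involved.

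The quantum product is a graded map $Q^+H_i \otimes Q^+H_j \to Q^+H_{i+j-n}$ with $|t| = -N_L = -2$, so matching degrees in \eqref{eq:alpha*betta} forces
\[
\nu_i \;=\; \tfrac{1}{2}\bigl(|a_i| - |\alpha| - |\beta| + n\bigr).
\]
For point (1), the bound $\nu_i \leq n$ is then immediate from $|a_i| \leq n$ and $|\alpha|, |\beta| \geq 0$. Equality requires $|a_i| = n$ and $|\alpha|+|\beta|=0$; since $L$ is connected and orientable, $H_0(L;\mathbb{Z})$ and $H_n(L;\mathbb{Z})$ are each of rank one, so the unique basis element of degree $0$ is $a_0 = [\textnormal{pt}]$ and the unique one of degree $n$ is $a_m = [L]$. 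Hence $i = m$ and $\alpha = \beta = a_0$, as asserted.

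For point (2), I would compute $\alpha^f * \beta^f$ at the chain level by first rewriting $\beta^f$ in the $g$-basis via \eqref{eq:af-ag}, namely
\[
\beta^f = \beta^g + \sum_{j > i_\beta}\sigma^{i_\beta}_j\, a_j^g\, t^{r^{i_\beta}_j},
\]
so that $\alpha^f * \beta^f$ is a sum of terms of the shape $t^{r^{i_\beta}_j}\,[x_\alpha \,\widetilde{*}\, y_{a_j}]$. Each chain-level bracket is an $\mathcal{O}(\mathcal{W})[t]$-linear combination of critical points whose coefficients count (signed, $\rho$-weighted) pearl trajectories, and the $t$-power of each such coefficient equals exactly half of the total Maslov index of the underlying configuration. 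Re-expressing the result in the $f$-basis, any monomial $s_i a_i^f t^{\nu_i}$ in the final expansion is therefore assembled from configurations whose cumulative $t$-weight equals $\nu_i$, and hence whose total Maslov index is precisely $2 \nu_i$. The degree formula now supplies the bound: for $i < m$ one has $|a_i| \leq n-1$ and hence $\nu_i \leq \tfrac{1}{2}((n-1)+n) < n$, while for $i = m$ we get $\nu_m = n - (|\alpha|+|\beta|)/2 < n$ whenever $(\alpha,\beta) \neq (a_0,a_0)$; in both cases $2\nu_i < 2n$, as required.

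\textbf{Main obstacle.} The only subtle step is the bookkeeping that guarantees the total $t$-power of any composite contribution to $s_i$ honestly records the total Maslov index of the underlying concatenated configuration (rather than being partly absorbed by the change-of-basis factors $t^{r^{i_\beta}_j}$ in a way that decouples it from Maslov). This should follow cleanly from the fact that (\ref{eq:af-ag}) is an identity inside the graded $\widetilde{\Lambda}^+$-algebra $Q^+H(L;\mathcal{W})$, so that the chain-level product $\widetilde{*}$ together with multiplication by $t^{r^{i_\beta}_j}$ respect the grading, and additivity of Maslov index under concatenation of pearl configurations translates directly into additivity of $t$-exponents.
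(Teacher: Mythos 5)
Your proof is correct and follows the same route the paper intends: the paper remarks that the lemma ``follows immediately from the discussion above,'' i.e.\ from the grading conventions ($|t|=-N_L$) together with the change-of-basis formula~\eqref{eq:af-ag} and the chain-level product, and your argument makes precisely that grading computation explicit. The one detail worth noting is that when $i=m$ and $|\alpha|+|\beta|$ is odd the exponent $\nu_m$ would not be an integer, which just forces $s_m=0$, so the conclusion still holds; everything else is exactly the paper's reasoning.
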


For a class $c \in Q^+H(L;\mathcal{W})$ denote by $\langle c, a_m
t^k\rangle_f$ the coefficient of $a_m t^k$ when writing $c$ in the
basis $a_0^f, \ldots, a_m^f$.

Consider now the product $a_0^f * a_0^g$. By the definition of the
product we have $$\langle a_0^f * a_0^g, a_m t^n \rangle_f =
n_{PQR}+\theta_{PQR},$$ where $\theta_{PQR}$ counts pearly
configurations as in~\S\ref{sbsb:or-prod} in which more than a single
$J$-holomorphic disk is present (by contrast, $n_{PQR}$ counts the
configurations given by a single disk through the three points).  The
total Maslov number of the configurations counted by $\theta$ is $2n$
and as there are more than two disks present, each such disk is of
Maslov at most $2n-2$. Thus in order to prove our theorem, it is
enough to show that $\langle a_0^f * a_0^g, a_m t^n \rangle_f$ can be
determined as a polynomial expression in variables that count pearly
configurations with total Maslov index $< 2n$.

For this purpose write $a_0^f = a_0^g + \sum_{j \geq 1}\sigma^0_j
a_j^g t^{r^0_j}$ with $1 \leq r^0_j\leq n/2$, as in~\eqref{eq:af-ag}.
We have:
$$\langle a_0^f*a_0^f, a_m t^n \rangle_f =
\langle a_0^f * a_0^g, a_m t^n \rangle_f + \sum_{j\geq 1} \sigma^0_j
\langle a_0^f * a_j^g, a_m t^{n-r^0_j} \rangle_f.$$ All the elements
in the second summand are determined by configurations with total
Maslov $\leq 2n-2$ (note that $r^0_j \geq 1$). Thus it is enough to
prove the same assertion for the left hand side $\langle a_0^f*a_0^f,
a_m t^n \rangle_f$.

We now use the assumption that $L$ is not a rational homology sphere.
Under this assumption it is possible to choose the basis $\mathbf{a}$
so that there exist $a,b \in \bf{a}$ with $0<|a|,|b|<m$ and $a\cdot b
= a_0$. We now have $$a^f * b^f = a_0^f + E(t)t,$$ where $E(t)$ is a
polynomial in $t$ whose coefficients are linear combinations of
$a^f_1, \ldots, a^f_m$, but $E(t)$ has no term containing $a_0^f$.
Note also that the second summand here is $E(t)t$, hence it has no
free term. It follows now that
\begin{align*}
   \langle (a^f * b^f)*(a^f*b^f), a_mt^n \rangle_f = \langle a_0^f
   * a_0^f, a_m t^n\rangle_f & + \langle a_0^f * E_1(t) +
   E_1(t)*a_0^f, a_m t^{n-1}\rangle_f \\
   & + \langle E(t)*E(t), a_m t^{n-2}\rangle_f.
\end{align*}
The last two summands on the right-hand side are clearly determined by
configurations with total Maslov $\leq 2n-2$ hence we are reduced to
showing that the same holds for the left-hand side.

We now use the fact that the quantum product is associative.  This
implies that
\begin{equation}\label{eq:assoc-poly}
(a^f * b^f)*(a^f*b^f) = ((a^f*b^f)*a^f)*b^f =
(a_0^f*a^f + E(t)t*a^f)*b^f.
\end{equation} By Lemma~\ref{l:low} $a_0^f*a^f$ has no
term with $t^n$ and the same holds also for $E(t)*a^f$. Moreover when
writing $a_0^f*a^f$ and $E(t)*a^f$ in the basis $\mathbf{a}^f$ all the
coefficients are determined by configurations with Maslov $\leq 2n-2$.
By Lemma~\ref{l:low} again, the same holds also for $(a_0^f*a^f +
E(t)t*a^f)*b^f$. It follows that all the coefficients (and in
particular that of $a_m t^n$ in $(a^f * b^f)*(a^f*b^f)$ depend on
configurations of Maslov $\leq 2n-2$. This concludes the proof.  \qed

\begin{rem} \label{rem:simple-split}\begin{itemize}
     \item[i.] Splitting polynomials of the type constructed above
      have a close relationship with the invariant ploynomials
      discussed in \S\ref{subsubsec:struct-const}.  Indeed, in the
      language of that section, suppose that the homology basis
      $(a_{i})$ is so that $a_{0}=[pt]$, $a_{s}=[L]$. Any invariant
      polynomial of the form $P= k^{0,0}_{s}+P'$ with $P'$ depending
      on variables different from $k^{0,0}_{s}$ produces a splitting
      polynomial $Q$.  To see this we first express the coefficients
      $k^{i,j}_{l}$ in terms of the coefficients $w^{i,j}_{l}$ of the
      ``geometric'' product $\widetilde{\ast}$.  We then express $P'$
      in the $w^{i,j}_{l}$'s (there are also other variables appearing
      here as in~\eqref{eq:af-ag}) thus obtaining a new polynomial
      $Q'$.  We then define $Q=-Q'- \theta +P(\ast)$ where
      $\theta=\theta_{PQR}+ \cdots$, and $\cdots$ stands for other
      terms resulting from the expression of $k^{0,0}_{s}$ in terms of
      the $w$'s and $P(\ast)$ is the value of the invariant polynomial
      on the product $\ast$.  The construction in the proof of the
      theorem is precisely of this type with $P$ a particular
      polynomial deduced from the associativity relation as it appears
      in (\ref{eq:assoc-poly}).

     \item[ii.] It would be interesting to know
      what is the ``simplest'' (in some sense yet to be defined)
      splitting polynomial $Q$ that one can produce by these methods.
   \end{itemize}
\end{rem}

\subsection{Lagrangian $2$--tori}\label{subsec:2-torus} In case of the
$2$--torus all the discussion above becomes much simpler and more
elegant. Moreover, we will deduce a splitting formula in terms of some
configurations that have some nice geometric meaning.

Consider three distinct points $P, Q, R \in L$. Choose a smooth
oriented path $\overrightarrow{PQ}$ starting from $P$ and ending at
$Q$. Similarly connect $Q$ to $R$ and $R$ to $P$ by such paths,
denoted $\overrightarrow{QR}$ and $\overrightarrow{RP}$ respectively.
We will refer to this triple of points connected by these curves as a
``triangle'' on the tours.

We will use now the notation from~\S\ref{sb:superpotential}, in
particular the set of classes $\mathcal{E}_2$ and the evaluation map
$ev: (\widetilde{\mathcal{M}}(B,J) \times \partial D)/ G
\longrightarrow L$. By taking $J$ generic we may assume that all three
points $P$, $Q$, $R$ are regular values of $ev$. Given $(u,z) \in
ev^{-1}(P)$ set $\varepsilon(u,z;P)= \pm 1$ according to whether $ev$
preserves or reverses orientations at $(u,z)$. Let $\rho \in
\mathcal{W}$. Define now the following (complex) number:
\begin{equation} \label{eq:np}
   n_P(\rho) = \sum_{B \in \mathcal{E}_2} \;
   \sum_{(u,z) \in ev^{-1}(P)} \rho(B) \varepsilon(u,z;P) \,
   \#\bigl(u(\partial D) \cap \overrightarrow{QR}\bigr),
\end{equation}
where $\#\bigl(u(\partial D) \cap \overrightarrow{QR}\bigr)$ stands
for the intersection number between the oriented curves $u(\partial
D)$ and $\overrightarrow{QR}$. The number $n_P(\rho)$ can be thought
of as the number of $J$-holomorphic disks of Maslov index $2$ whose
boundaries pass through $P$ and the ``edge'' $QR$ of the triangle
$PQR$, only that the count of the disks is weighted by the
representation $\rho$.  We also have the numbers $n_Q(\rho)$ and
$n_R(\rho)$ analogously defined.

\begin{rem} \label{r:n_P} The number $n_P(\rho)$ does not depend on
   the choice of the path $\overrightarrow{QR}$ connecting $Q$ to $R$,
   but only on the points $Q$ and $R$.  The reason for this is that
   the following $1$-dimensional cycle:
   $$\sum_{B \in \mathcal{E}_2} \;
   \sum_{(u,z) \in ev^{-1}(P)} \rho(B) \varepsilon(u,z;P) u(\partial
   D)$$ is null homologous in $H_1(L;\mathbb{C})$.  Indeed, it has
   been shown in \cite{Bi-Co:rigidity} \S 4.2 that if this cycle is
   not null-homologous, then the associated quantum homology vanishes
   (the proof was in fact only done for $\rho$ the identity
   representation but it is immediate to see that the argument also
   applies to any other representation).  In our case we are only
   considering this cycle for $\rho \in \mathcal{W}$ so that this
   forces the respective $1$-cycle to vanish in homology.

   Thus the intersection number of this cycle with the path
   $\overrightarrow{QR}$ depends only on its end points $Q$ and $R$.
   Nevertheless, $n_{P}(\rho)$ is far from being an invariant in any
   sense since it depends on the choice of the almost complex
   structure $J$ as well as on the points $P$, $Q$, $R$.
\end{rem}

As in the previous section we are interested to evaluate the number
$n_{PQR}$ of Maslov $2n=4$ disks through $P,Q,R$. Similarly to $n_P$,
the number $n_{PQR}$ is not an invariant too.

\subsubsection{Triangles on the torus and the discriminant}
To simplify notation we omit the $\rho$'s from the notation, i.e.
abbreviate $n_{PQR} = n_{PQR}(\rho)$, $n_P = n_P(\rho)$, $n_Q =
n_Q(\rho)$, $n_R = n_R(\rho)$.

\begin{thm} \label{t:discr-triangle}Let $PQR$ be a triangle on $L$.
   Then for every $\rho \in \mathcal{W}$ we have:
   \begin{equation}\label{eq:high-Maslov}
      \Delta(\rho) = 4 n_{PQR} +
      n_P^2 + n_Q^2 +
      n_R^2 - 2n_P n_Q - 2n_Q n_R - 2n_R n_P.\end{equation}
\end{thm}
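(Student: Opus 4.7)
The plan is to compute the element $[\mathrm{pt}] \ast [\mathrm{pt}] \in Q^+H_{-2}(L; \mathcal{W})$ in two different ways and equate them. Since $L = \mathbb{T}^2$ admits perfect Morse functions (Remark~\ref{rem:min_mod}), the relevant pieces of the quantum homology embed canonically into $QH$ and we have $QH_{-2}(L;\mathcal{W}) = \mathbb{C}[\mathrm{pt}]\,t \oplus \mathbb{C}[L]\,t^2$ as an $\mathcal{O}(\mathcal{W})$-module. The content of the theorem will follow from comparing an intrinsic algebraic expression for the two coefficients of $[\mathrm{pt}]\ast[\mathrm{pt}]$ with their enumerative interpretation via the pearl complex.

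\emph{Algebraic side.} Choose a basis $C_1, C_2 \in H_1(L;\mathbb{Z})$ with $C_1 \cdot C_2 = [\mathrm{pt}]$ (so that $C_2 \cdot C_1 = -[\mathrm{pt}]$ by \S\ref{sbsb:intersection-prod}). Write $C_i \ast C_j = C_i \cdot C_j + \gamma_{ij}[L]t$; by Proposition~\ref{p:prod-Ci-Cj} the symmetric combinations $a_{ij} = \gamma_{ij} + \gamma_{ji}$ are the entries of the matrix defining $\varphi_{\mathcal{W}}$, so $\Delta = a_{12}^2 - a_{11}a_{22}$. Substitute $[\mathrm{pt}] = C_1\ast C_2 - \gamma_{12}[L]t$ into $[\mathrm{pt}]\ast[\mathrm{pt}]$, apply the associativity rewriting $(C_1 \ast C_2)^{\ast 2} = C_1 \ast (C_2 \ast C_1) \ast C_2$, and use $C_i \ast C_i = \gamma_{ii}[L]t$ together with $C_2 \ast C_1 = -[\mathrm{pt}] + \gamma_{21}[L]t$. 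A direct computation yields
\[
[\mathrm{pt}]\ast[\mathrm{pt}] = (\gamma_{21} - \gamma_{12})[\mathrm{pt}]\,t + (\gamma_{12}\gamma_{21} - \gamma_{11}\gamma_{22})[L]\,t^2,
\]
whence the clean identity
\[
4\,\langle [\mathrm{pt}]\ast[\mathrm{pt}],\, [L]t^2\rangle + \bigl(\langle [\mathrm{pt}]\ast[\mathrm{pt}],\, [\mathrm{pt}]t\rangle\bigr)^2 = (\gamma_{12}+\gamma_{21})^2 - 4\gamma_{11}\gamma_{22} = \Delta.
\]

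\emph{Enumerative side.} Fix perfect Morse functions $f, g, h$ on $L$ with $\min f = R$, $\min g = P$, $\max h = Q$ and compute the pearl product $[\mathrm{pt}]^f \,\widetilde{\ast}\, [\mathrm{pt}]^g$. The $[L]t^2$ coefficient splits as $n_{PQR} + M$: the single-disk term reduces, by dimension count (using $u(-1) = R$, $u(+1) = P$, $u(0) = Q$ as in $W^s_h(Q) = \{Q\}$), exactly to the count of Maslov-$4$ disks through $R, P, Q$, i.e.\ $n_{PQR}$; the remainder $M$ collects contributions from Y-shaped pearl configurations with two Maslov-$2$ disks connected by Morse flows. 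For the $[\mathrm{pt}]t$ coefficient, the difference $\gamma_{21} - \gamma_{12}$ is identified, via the comparison between chain-level representatives of $[\mathrm{pt}]$ associated to different choices of minima and the null-homology of the cycles $Z_P, Z_Q, Z_R$ (Remark~\ref{r:n_P}), with a specific linear combination of the edge-intersection numbers such as $\pm(n_P + n_R - n_Q)$, the exact form depending on how the representing $1$-cycles $\Gamma_1, \Gamma_2$ are placed relative to the triangle edges.

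\emph{Multi-disk analysis and conclusion.} The multi-disk contribution $M$ is enumerated by combinatorial type of the two-disk Y-tree: each Maslov-$2$ disk carries a boundary constraint at one of $\{R, P, Q\}$, with the remaining marked points connected to the other disk by Morse flow lines whose intersections with cycles homologous to the edges yield $n_X$-factors via Remark~\ref{r:n_P}. Summing with signs dictated by the orientation conventions of~\S\ref{a:orientations} and the fiber product orientation of~\S\ref{sbsb:fiber-product} gives
\[
4M + \bigl(\langle [\mathrm{pt}]\ast[\mathrm{pt}],\, [\mathrm{pt}]t\rangle\bigr)^2 = n_P^2 + n_Q^2 + n_R^2 - 2n_Pn_Q - 2n_Qn_R - 2n_Pn_R,
\]
which combined with the algebraic identity produces the desired equality $\Delta = 4 n_{PQR} + n_P^2 + n_Q^2 + n_R^2 - 2n_Pn_Q - 2n_Qn_R - 2n_Pn_R$. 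The main obstacle is the precise enumeration and sign analysis of the two-disk configurations contributing to $M$, together with the identification of $\gamma_{21} - \gamma_{12}$ with a specific linear combination of the $n_X$; both rely crucially on the null-homology of $Z_P, Z_Q, Z_R$ over $\mathcal{W}$, which is what permits path-intersection counts to depend only on endpoints and makes the identification with the triangle data well-posed.
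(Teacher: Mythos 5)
Your overall strategy is the same as the paper's: establish the purely algebraic identity $\Delta = \sigma^2 + 4\tau$ with $\widetilde{p} * \widetilde{p} = \sigma\widetilde{p}\,t + \tau[L]t^2$ (your ``algebraic side'' reproduces Proposition~\ref{p:delta-sig-tau} exactly, with $\gamma_{21}-\gamma_{12}=\sigma$ and $\gamma_{12}\gamma_{21}-\gamma_{11}\gamma_{22}=\tau$), and then identify $\sigma$ and $\tau$ enumeratively through a chain-level computation of $[\mathrm{pt}]^f\,\widetilde{\ast}\,[\mathrm{pt}]^g$. That matches the paper's two-step decomposition via \S\ref{sb:discr-struct} and \S\ref{sbsb:enum-sig-tau}.

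However, the enumerative step is where essentially all of the content of the theorem lies, and you have left it as an acknowledged gap rather than carried it out. The paper closes this gap by a very specific geometric setup that you do not describe: the Morse functions $f$ and $g$ and the metric are chosen so that the three edges $\overrightarrow{PQ}$, $\overrightarrow{QR}$, $\overrightarrow{RP}$ of the triangle are literally traced by unbroken $-\nabla f$ and $-\nabla g$ flow lines (Figure~\ref{f:triangle}), with $x_0=Q$, $x_2=R$, $y_0=P$, and with $y_2$ placed close enough to $R$ that no Maslov-$2$ disk through $P$ meets the short segment $\gamma$. It is exactly this choice that makes the pearl fiber products $\alpha_{I,A}$, $\alpha_{II,A}$, $\beta_{II}$, and the comparison coefficient $\kappa$ collapse to the signed edge-intersection counts $n_P$, $n_Q$, $n_R$ of~\eqref{eq:np}, yielding $\sigma=\alpha+\kappa=n_P-n_Q-n_R$ and $\tau=\beta=n_{PQR}-n_Qn_R$. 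Without such a choice, there is no direct mechanism for a flow line appearing in a Y-shaped pearl configuration to contribute an $n_X$ factor, and the ``identification with the triangle data'' you invoke from Remark~\ref{r:n_P} only tells you that the answer depends just on the endpoints, not what the answer is.

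Two more concrete issues. First, your heuristic guess $\gamma_{21}-\gamma_{12}\sim\pm(n_P+n_R-n_Q)$ has the wrong sign pattern: the correct expression is $\sigma = n_P - n_Q - n_R$ (a circular-symmetry argument, Remark~\ref{r:eps-symmetry}, already forces exactly one of the three signs to differ, not two). Second, the coefficient $\kappa$ comes not from ``different choices of minima'' in the abstract but from the explicit comparison chain map $\Psi_{\mathscr{D}',\mathscr{D}}(x_0)=y_0+\kappa y_2 t$ of~\S\ref{sbsb:invariance}, whose defining moduli space (Figure~\ref{f:kappa}) is what produces the $-n_R$ contribution; this needs to be exhibited rather than asserted, since it is precisely this extra term that completes $\alpha = n_P - n_Q$ to $\sigma = n_P - n_Q - n_R$.
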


The proof of this result is contained in the next couple of sections.
The first expresses the discriminant as a polynomial in certain
coefficients appearing in the expansion of the Lagrangian quantum
product. The second section continues with the combinatorial work
needed to relate these coefficients to the enumerative
expressions~\eqref{eq:high-Maslov}.

\subsubsection{The discriminant and higher quantum products}
\label{sb:discr-struct}
We continue here with the assumption that $L \subset M$ is a
$2$-dimensional Lagrangian torus with $N_L=2$. We also assume that the
wide variety $\mathcal{W}_2$ is not empty.

Let $\mathcal{W}$ be any of the wide varieties, $\mathcal{W}_1$ or
$\mathcal{W}_2$. Working with the ring $\mathcal{R} =
\mathcal{O}(\mathcal{W}) \otimes \Lambda^+$ we obtain
from~\eqref{eq:ex-seq-qh_n-2}:
\begin{equation} \label{eq:ex-seq-qh_0} 0 \longrightarrow
   \mathcal{O}(\mathcal{W})[L]t \stackrel{i}{\longrightarrow}
   Q^+H_{0}(L;\mathcal{W}) \stackrel{\pi} \longrightarrow
   H_0(L;\mathbb{C}) \otimes \mathcal{O}(\mathcal{W}) \longrightarrow
   0.
\end{equation}
Choose $\widetilde{p} \in Q^+H_0(L;\mathcal{W})$ with
$\pi(\widetilde{p})=[\textnormal{pt}] \in H_0(L;\mathbb{C})$.  Then
$\{\widetilde{p}, [L]t\}$ forms a basis for $Q^+H_0(L;\mathcal{W})$,
so we can write:
\begin{equation} \label{eq:p*p} \widetilde{p} * \widetilde{p} = \sigma
   \widetilde{p} t + \tau [L]t^2,
\end{equation}
where $\sigma, \tau \in \mathcal{O}(\mathcal{W})$. The coefficients
$\sigma, \tau$ depend on $\widetilde{p}$ as follows. If we replace
$\widetilde{p}$ by $\widetilde{p}\,' = \widetilde{p} + r [L]t$, for
some $r \in \mathcal{O}(\mathcal{W})$ then the corresponding
coefficients $\sigma'$ and $\tau'$ change as follows:
\begin{equation} \label{eq:sig-tau} \sigma' = \sigma + 2r, \quad \tau'
   = \tau - \sigma r -r^2.
\end{equation}
This can be verified by a direct computation from~\eqref{eq:p*p}.
Thus neither $\sigma$ nor $\tau$ are invariants. However it is easy to
see that $$\sigma^2 + 4\tau$$ is invariant in the sense that it does
not depend on $\widetilde{p}$ -- this is precisely an example of a
universal, symmetric polynomial Lagrangian invariant. In view of
Corollary \ref{cor:torus} we expect it to be related to the
discriminant. Indeed:
\begin{prop}\label{p:delta-sig-tau}
   We have the following identity in $\mathcal{O}(\mathcal{W})$:
   $\Delta = \sigma^2 + 4\tau$.
\end{prop}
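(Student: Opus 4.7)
The plan is to exploit the invariance of both sides of the desired identity in order to reduce the problem to a single direct associativity computation. A straightforward substitution in~\eqref{eq:sig-tau} shows that $(\sigma')^2 + 4\tau' = \sigma^2 + 4\tau$, so the left-hand side depends only on $L$ and $\mathcal{W}$ and not on the choice of lift $\widetilde{p}$. Similarly, the discriminant $\Delta$ is unchanged under a base change of $H_1(L;\mathbb{Z})$, so I am free to pick any convenient basis. Since $\dim L = 2$, the classical intersection pairing on $H_1(T^2;\mathbb{Z})$ is anti-symmetric (see the sign conventions of~\S\ref{sbsb:intersection-prod}), so I can choose a symplectic basis $C_1, C_2$ with $C_1 \cdot C_1 = C_2 \cdot C_2 = 0$, $C_1 \cdot C_2 = [\textnormal{pt}]$, and $C_2 \cdot C_1 = -[\textnormal{pt}]$.

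With this basis fixed, I make the convenient choice $\widetilde{p} := C_1 * C_2 \in Q^+H_0(L;\mathcal{W})$; since the quantum product deforms the intersection product, $\pi(\widetilde{p}) = C_1 \cdot C_2 = [\textnormal{pt}]$, so this is an admissible lift. Translating the defining relations for the $a_{ij}$'s, the facts $\pi(C_i * C_i) = 0$ and $C_i * C_i + C_i * C_i = a_{ii}[L] t$ force $C_i * C_i = \tfrac{a_{ii}}{2}[L] t$, and the relation $C_1 * C_2 + C_2 * C_1 = a_{12}[L] t$ together with $C_1 * C_2 = \widetilde{p}$ gives $C_2 * C_1 = a_{12}[L] t - \widetilde{p}$.

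The remaining step is a computation of $\widetilde{p} * \widetilde{p}$ by repeated use of associativity. Applying associativity twice,
\[
\widetilde{p} * \widetilde{p} = (C_1 * C_2) * (C_1 * C_2) = C_1 * \bigl((C_2 * C_1) * C_2\bigr),
\]
while a side computation yields $\widetilde{p} * C_2 = (C_1 * C_2) * C_2 = C_1 * (C_2 * C_2) = \tfrac{a_{22}}{2} C_1 t$. Substituting $C_2 * C_1 = a_{12}[L] t - \widetilde{p}$ and simplifying, one obtains
\[
\widetilde{p} * \widetilde{p} = a_{12}\, \widetilde{p}\, t - \tfrac{a_{11} a_{22}}{4}\, [L]\, t^2,
\]
so $\sigma = a_{12}$ and $\tau = -a_{11} a_{22}/4$; hence $\sigma^2 + 4\tau = a_{12}^2 - a_{11} a_{22} = \Delta$ by~\eqref{eq:discr-1}.

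I expect the manipulation itself to pose no serious difficulty. The only delicate point is to ensure that the signs coming from the paper's intersection and quantum product orientation conventions (\S\ref{sbsb:intersection-prod} and \S\ref{sbsb:or-prod}) are consistently applied throughout — in particular the anti-commutativity of the classical intersection pairing on $H_1(T^2)$ and the precise way in which the quantum product deforms it on middle-dimensional classes.
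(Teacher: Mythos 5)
Your proof is correct and follows essentially the same route as the paper: pick the dual basis $C_1, C_2$, compute $\widetilde{p}*\widetilde{p}$ by repeatedly reassociating through $C_2*C_2$ and $C_1*C_1$, and compare with the discriminant. The only cosmetic difference is that you invoke the invariance of $\sigma^2+4\tau$ up front to fix $\widetilde{p}=C_1*C_2$ (i.e.\ $a'=0$), whereas the paper keeps the offsets $a',a''$ general and lets them cancel; both reduce to the same associativity computation.
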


\begin{proof}
   Choose a basis $\{C_1, C_2\}$ for $H_1(L;\mathbb{Z})$ such that
   $C_1 \cdot C_2 = [\textnormal{pt}]$. Write:
   \begin{equation}  \label{eq:c1c2-etc}
      \begin{aligned}
         & C_1*C_1 = \frac{1}{2} a_{11}[L]t, &\quad & C_2*C_2 =
         \frac{1}{2} a_{22}[L]t, \\
         & C_1*C_2 = \widetilde{p} + a'[L]t, & \quad & C_2*C_1 =
         -\widetilde{p} + a''[L]t, \quad a_{12} = a'+a'',
      \end{aligned}
   \end{equation}
   with $a_{11}, a_{22}, a', a'' \in \mathcal{O}(\mathcal{W})$.  Then
   $\widetilde{p} = C_1 * C_2 -a'[L]t = -C_2*C_1 + a''[L]t$, hence:
   \begin{align*}
      \widetilde{p} * \widetilde{p} & = -C_1*C_2*C_2*C_1 + a'' C_1*C_2
      t +
      a' C_2*C_1t - a'a''[L]t^2 \\
      & = (a''-a')\widetilde{p}t + \bigl(-\tfrac{1}{4} a_{11}a_{22} +
      a'a''\bigr)[L]t^2.
   \end{align*}
   Thus $\sigma = a''-a'$ and $\tau = a' a''-
   \frac{1}{4}a_{11}a_{22}$.  It immediately follows that $$\sigma^2 +
   4 \tau = a_{12}^2-a_{11}a_{22} = -\det(a_{ij}) = \Delta.$$
\end{proof}

\subsubsection{Enumerative expressions for $\sigma$ and $\tau$ and
  proof of Theorem~\ref{t:discr-triangle}}
\label{sbsb:enum-sig-tau}
We will relate the two coefficients $\sigma$ and $\tau$ above to the
enumerative expressions $n_{P}, n_{Q}, n_{R}, n_{PQR}$.
Theorem~\ref{t:discr-triangle} will then follow immediately from
Proposition~\ref{p:delta-sig-tau}.

We will use here a method described in~\cite{Bi-Co:qrel-long} and
in~\cite{Bi-Co:Yasha-fest}. This consists in picking two perfect Morse
function $f,g:L\to \R$ with pairwise distinct critical points, a
Riemannian metric $(\cdot, \cdot)$ on $L$ as well as an almost complex
structure $J$ which is sufficiently generic so that all pearl
complexes, products etc are defined. These functions are required to
satisfy a number of additional properties as described below.

Let $x_{0}$ be the minimum of $f$, let $x_{2}$ be the maximum of $f$,
let $y_{0}$ be the minimum of $g$ and similarly let $y_{2}$ be the
maximum of $g$. We may assume that $y_{2}$ is as close as we want to
$x_{2}$ in $L$. We also assume that the choices of $f,g$ as well as
that of the Riemannian metric $(\cdot, \cdot)$ are such that
$y_{0}=P$, $x_{0}=Q$, $x_{2}=R$ and the edge $\overrightarrow{RP}$ is
the the unique flow line of $-\nabla f$ going from $x_{2}$ to $y_{0}$,
and (after slightly rounding the corner at $P$) the edge
$\overrightarrow{PQ}$ is the unique flow line of $-\nabla f$ going
from $y_{0}$ to $x_{0}$. Moreover, the edge $\overrightarrow{QR}$
contains the point $y_{2}$ and it consists of two pieces: one is the
unique flow line of $-\nabla g$ going from $y_{2}$ to $x_{0}$ - the
orientation of this flow line is opposite that of
$\overrightarrow{QR}$; the second consists of a very short flow line,
$\gamma$, of $-\nabla g$ joining $y_{2}$ to $x_{2}$ - the orientation
of this flow line coincides with that of $\overrightarrow{QR}$. The
points $y_{2}$ and $x_{2}$ are taken close enough so that no
$J$-holomorphic disk of Maslov index $2$ passing through $y_{0}$
intersects $\gamma$ (as the number of these disks is finite this is
not restrictive).
\begin{figure}[htbp]
   \begin{center}
      \epsfig{file=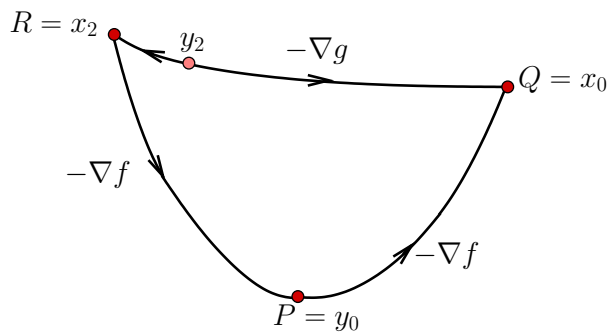, width=0.50\linewidth}
   \end{center}
   \caption{The triangle $PQR$ as drawn by negative flow lines of $f$
     and $g$ .}
   \label{f:triangle}
\end{figure}
Put $\mathscr{D} = (f,(\cdot, \cdot),J)$ and $\mathscr{D}'=(f,(\cdot,
\cdot),J)$ and consider the chain level quantum product
$$\mathcal{C}(\mathscr{D}) \otimes \mathcal{C}(\mathscr{D}')
\longrightarrow \mathcal{C}(\mathscr{D}), \quad x \otimes y
\longmapsto x*y.$$ By abuse of notation we denote by the same $*$ also
the induced product in homology. We work here with coefficients in
$\mathcal{O}(\mathcal{W}) \otimes \mathbb{C}[t]$, where $\mathcal{W}$
is one of the wide varieties $\mathcal{W}_1$ or $\mathcal{W}_2$.
Recall that we also have the comparison map $\Psi_{\mathscr{D}',
  \mathscr{D}}:\mathcal{C}(\mathscr{D}) \longrightarrow
\mathcal{C}(\mathscr{D}')$ whose definition is described
in~\S\ref{sbsb:invariance}.

Put $\widetilde{p} = [x_0] \in Q^+H_0(L;\mathcal{W})$, and write
$\widetilde{p} * \widetilde{p} = \sigma \widetilde{p} t + \tau [L]t^2$
as in~\eqref{eq:p*p}. Consider now the chain level product $x_0 *
y_0$, and write $$x_0*y_0 = \alpha x_0 t + \beta x_2t^2, \quad
\textnormal{for some } \alpha, \beta \in \mathcal{O}(\mathcal{W})$$
The relation between $x_0$ and $y_0$ is given by $\Psi_{\mathscr{D}',
  \mathscr{D}}$, namely $$\Psi_{\mathscr{D}', \mathscr{D}}(x_0) = y_0
+ \kappa y_2 t, \quad \textnormal{for some } \kappa \in
\mathcal{O}(\mathcal{W}).$$ It follows that $$\widetilde{p} *
\widetilde{p} = [x_0]*[x_0] = [x_0]*([y_0]+\kappa [L]t) =
(\alpha+\kappa) \widetilde{p} t + \beta [L]t^2,$$ hence we have
\begin{equation} \label{eq:sigma=alpha+kappa}
   \sigma = \alpha + \kappa, \quad \tau = \beta.
\end{equation}
We will now compute $\alpha, \kappa$ and $\beta$ explicitly.  We begin
with $\beta$. By the definition of the chain level product
(see~\S\ref{sbsb:or-prod}) we have $\beta = \beta_{I} + \beta_{II}$,
where $\beta_I$ counts configurations as in the left part of
figure~\ref{f:beta} with $\mu(\lambda)=4$ and $\beta_{II}$ counts the
configurations drawn in the right-hand side of that figure with
$\mu(\lambda_1) = \mu(\lambda_2) = 2$.

\begin{figure}[htbp]
   \begin{center}
      \epsfig{file=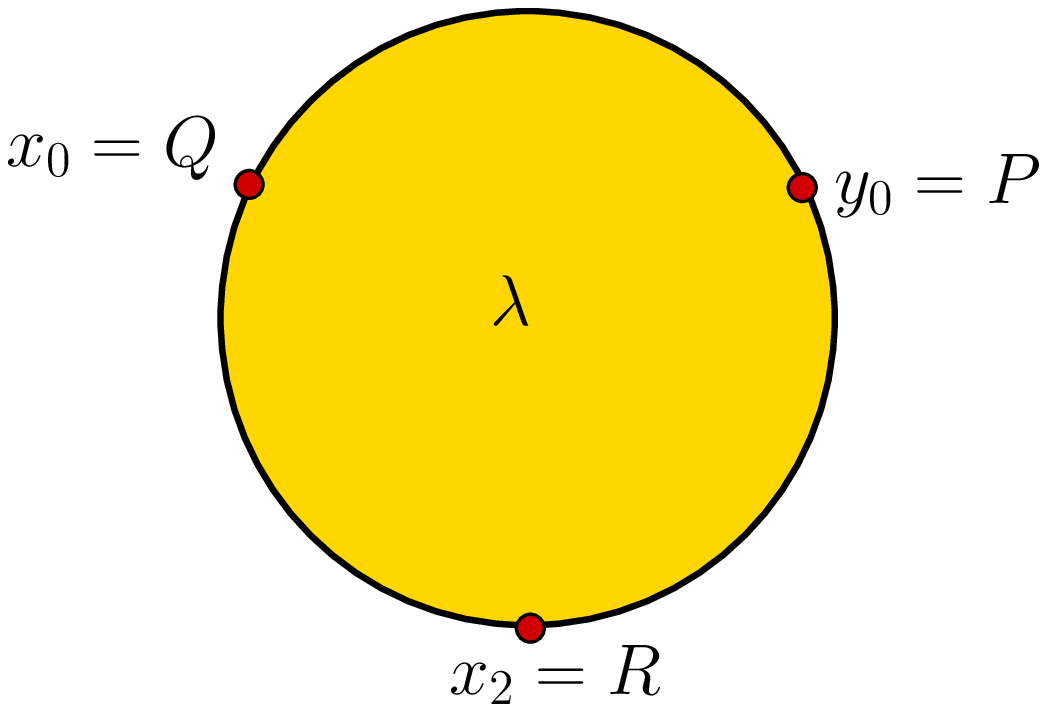, width=0.30\linewidth} \,\,
      \epsfig{file=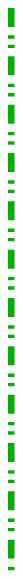, width=0.007\linewidth} \,\,
      \epsfig{file=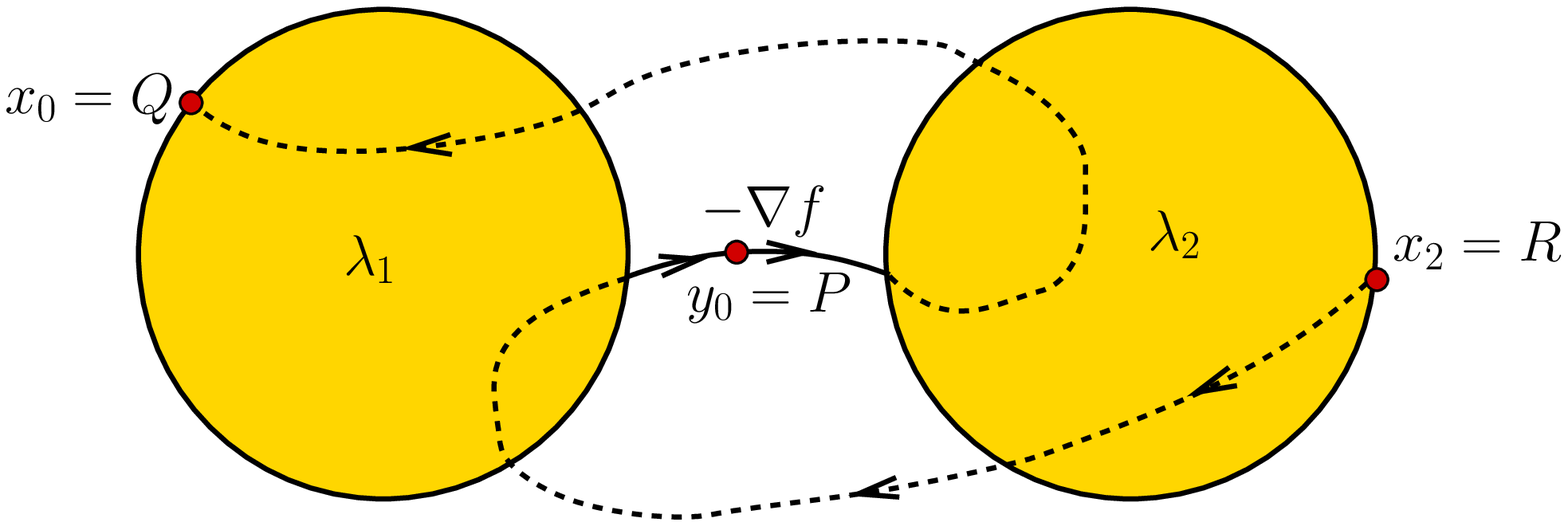, width=0.60\linewidth}
   \end{center}
   \caption{Configurations contributing to $\beta_{I}$ and
     $\beta_{II}$}
   \label{f:beta}
\end{figure}
To compute the precise values of $\beta_I$ and $\beta_{II}$ we use the
definition of the quantum product from~\S\ref{sbsb:or-prod}. We have
\begin{align}
   \beta_{I} = & \sum_{\lambda, \mu(\lambda)=4} \# \Bigl( \{x_0\}
   \times_L \bigl( \{y_0\} \times_L \widetilde{\mathcal{M}}(\lambda)
   \times_L \{x_2\}\bigr) \Bigr) \rho(\lambda)
   = \label{eq:beta_I} \\
   & \sum_{\lambda, \mu(\lambda)=4} \# \Bigl(
   \widetilde{\mathcal{M}}(\lambda) \times_{L \times L \times L}
   \{(R,P,Q)\}\Bigr) \rho(\lambda) = (-1)^n n_{PQR}= n_{PQR}. \notag
\end{align}

The last equality here hols because $n = \dim L = 2$.

We now compute $\beta_{II}$. For $\lambda_1, \lambda_2$ with
$\mu(\lambda_1) = \mu(\lambda_2) = 2$ put
$$\beta'_{II, \lambda_1} = \# \Bigl( \{x_0\} \times_L \bigl(
\mathcal{M}_2(\lambda_1) \times \mathbb{R}_+ \bigr) \times_L \{y_0\}
\Bigr), \quad \beta''_{II, \lambda_2} = \# \Bigl( (\{y_0\} \times
\mathbb{R}_+)\times_L \mathcal{M}_2(\lambda_2) \times_L \{x_2\}
\Bigr).$$ It follows easily from the definition of the quantum product
that
\begin{equation} \label{eq:beta_II} \beta_{II} = \sum_{\lambda_1,
     \lambda_2} \beta'_{II,\lambda_1} \beta''_{II,\lambda_2}
   \rho(\lambda_1)\rho(\lambda_2) = \Bigl(\sum_{\lambda_1}
   \beta'_{II,\lambda_1} \rho(\lambda_1)\Bigr)\Bigl(\sum_{\lambda_2}
   \beta''_{II,\lambda_2} \rho(\lambda_2)\Bigr),
\end{equation}
where the sums are over all $\lambda_1, \lambda_2$ with
$\mu(\lambda_1)=\mu(\lambda_2)=2$.  A straightforward computation
shows that
$$\beta'_{II,\lambda_1}= - \sum_{(u,z) \in ev^{-1}(Q)} \epsilon(u,z;Q)
\# (u(\partial D) \cap \overrightarrow{RP}),$$ where we use here the
notation from the beginning of~\S\ref{subsec:2-torus}.  It follows
from~\eqref{eq:np} that $\sum_{\lambda_1} \beta'_{II,\lambda_1}
\rho(\lambda_1) = -n_Q$. A similar computation gives $\sum_{\lambda_2}
\beta''_{II,\lambda_2} \rho(\lambda_2) = n_R$.  Substituting all this
into~\eqref{eq:beta_II} gives $\beta_{II} = -n_Q n_P$, hence
\begin{equation} \label{eq:beta}
   \beta(\rho) = n_{PQR} - n_Q n_R.
\end{equation}
We now turn to computing $\alpha$. For a class $A$ with $\mu(A)=2$
put:
\begin{align*}
   \alpha_{I,A} & = \{x_0\} \times_L \bigl(\mathcal{M}_2(A) \times
   \mathbb{R}_+ \bigr) \times_L \bigl(
   \{y_0\} \times_L L \times_L W^s_f(x_0) \bigr), \\
   \alpha_{II,A} & = \{x_0\} \times_L \bigl( \{y_0\} \times_L
   (\mathcal{M}_2(A) \times \mathbb{R}_+) \times_L W^s_f(x_0)\bigr).
\end{align*}
Then by the definition of the quantum product we have (see
figure~\ref{f:alpha}):
$$\alpha = \sum_{A} \alpha_{I,A} \rho(A) +
\sum_{A} \alpha_{II,A} \rho(A).$$
\begin{figure}[htbp]
   \begin{center}
      \epsfig{file=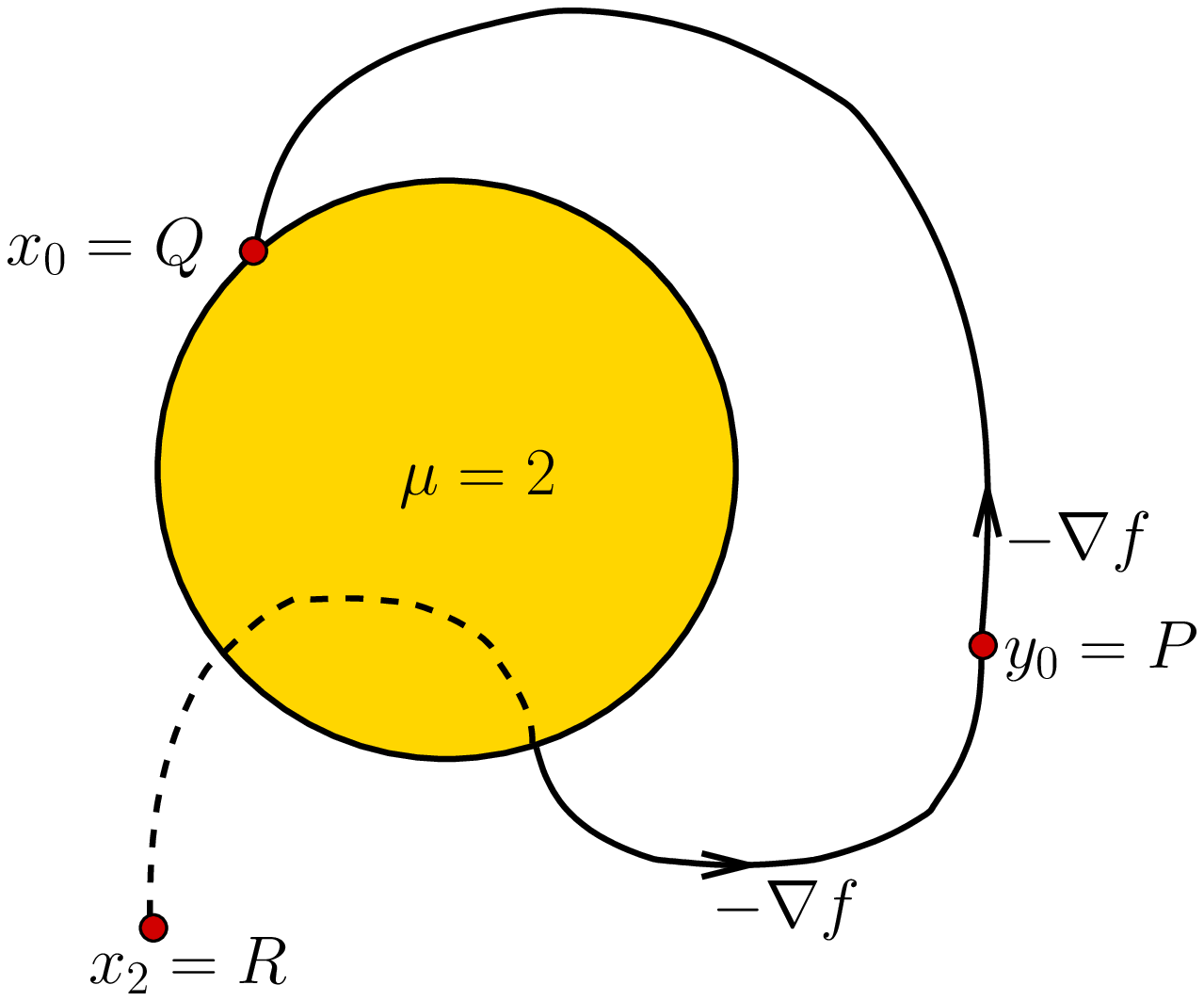, width=0.40\linewidth} \,\,
      \epsfig{file=separator.eps, width=0.007\linewidth} \,\,
      \epsfig{file=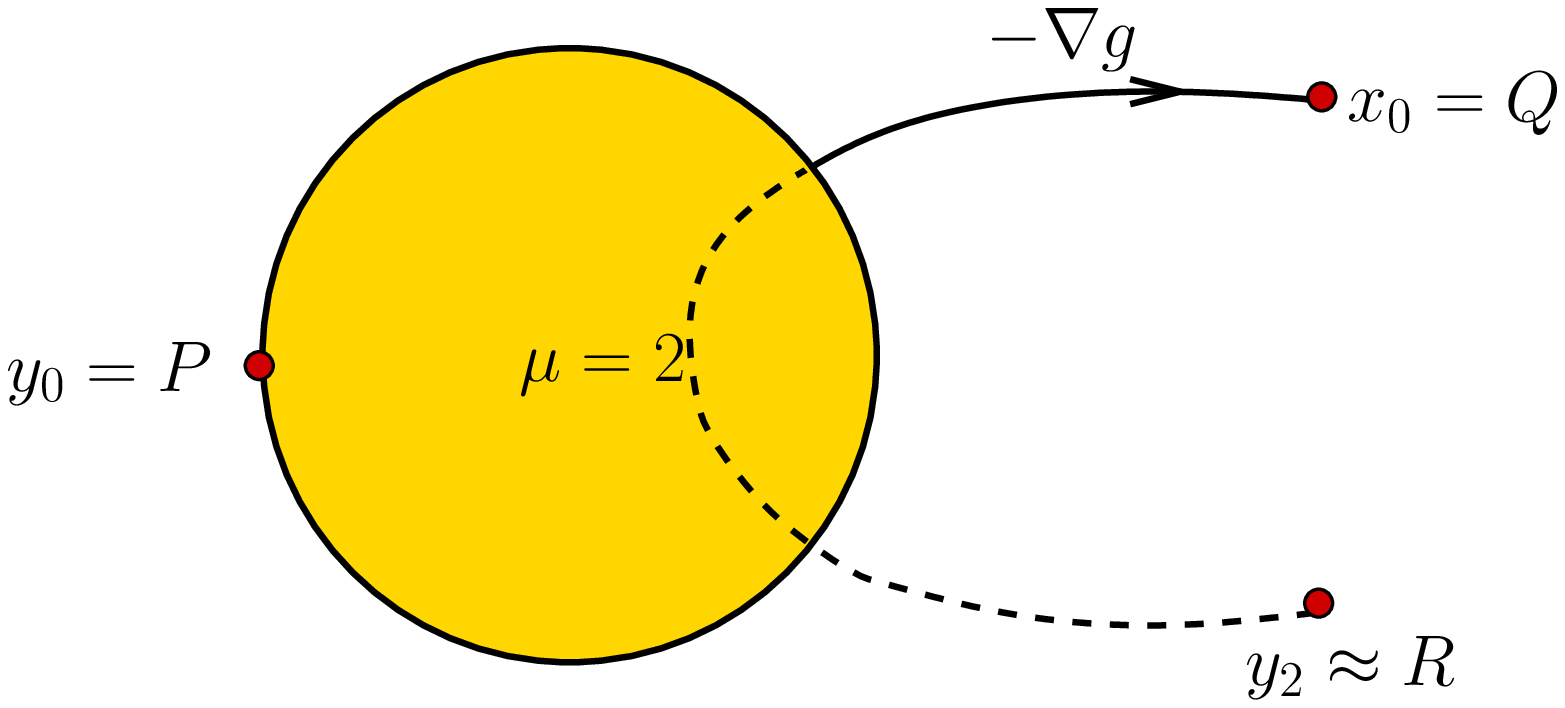, width=0.50\linewidth}
   \end{center}
   \caption{Configurations contributing to $\alpha_{I}$ and
     $\alpha_{II}$}
   \label{f:alpha}
\end{figure}

A straightforward computation shows that
$$\alpha_{I,A} = \{x_0\} \times_L
\bigl(\mathcal{M}_2(A) \times \mathbb{R}_+ \bigr) \times_L \{y_0\} =
-\sum_{(u,z) \in ev^{-1}(Q)} \epsilon(u,z;Q) \# (u(\partial D) \cap
\overrightarrow{RP}).$$ Summing over the $A$'s and weighting by $\rho$
we obtain that $\sum_{A} \alpha_{I,A} \rho(A) = -n_Q$.  A similar
computation gives: $\sum_{A}\alpha_{II,A} \rho(A) = n_P$.
It follows that
\begin{equation} \label{eq:alpha}
   \alpha = n_P - n_Q.
\end{equation}
It remains to compute $\kappa$. According to~\S\ref{sbsb:invariance},
$\kappa$ is computed by (see figure~\ref{f:kappa}):
$$\kappa = \sum_{A} \# \Bigl(\{x_0\}\times_L (L \times \mathbb{R}_+)
\times_L \mathcal{M}_2(A) \times_L\{y_2\}\Bigr) \rho(A).$$
\begin{figure}[htbp]
   \begin{center}
      \epsfig{file=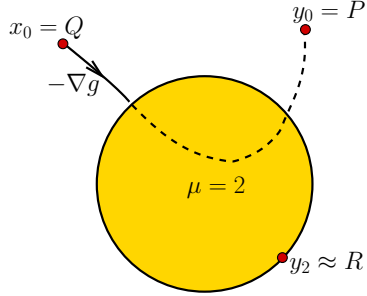, width=0.30\linewidth}
   \end{center}
   \caption{Configurations contributing to $\kappa$}
   \label{f:kappa}
\end{figure}
As $y_2$ was chosen close enough to $R$, a straightforward computation
gives $\kappa = -n_R$.

Substituting this together with~\eqref{eq:alpha} and~\eqref{eq:beta}
into~\eqref{eq:sigma=alpha+kappa} we get
\begin{equation} \label{eq:sig-tau-enum} \sigma = n_P - n_Q - n_R,
   \quad \tau = n_{PQR} - n_Q n_R.
\end{equation}
By Proposition~\ref{p:delta-sig-tau} we obtain:
$$\Delta = \sigma^2 + 4\tau = 4 n_{PQR} +
n_P^2 + n_Q^2 + n_R^2 - 2n_P n_Q - 2n_Q n_R - 2n_R n_P.$$
The proof of Theorem~\ref{t:discr-triangle} is complete. \Qed

\begin{rem} \label{r:eps-symmetry} Knowing the precise signs ($\pm$)
   appearing in the expressions for $\sigma$ and $\tau$ is not really
   necessary in order to prove Theorem~\ref{t:discr-triangle}. Here is
   the shortcut. It is enough to prove that there exist
   $\epsilon_{i}\in \{-1,1\}$, $i\in \{0,1,2,3\}$ so that
   $$\tau = n_{PQR} + \epsilon_0 n_Q n_R, \quad
   \sigma=\epsilon_{1}n_{P}+\epsilon_{2}n_{Q} + \epsilon_{3}n_{R}.$$
   Then by Proposition~\ref{p:delta-sig-tau} we get:
   $$\Delta=4n_{PQR}+4\epsilon_{0}n_{Q}n_{R} +
   (\epsilon_{1}n_{P}+\epsilon_{2}n_{Q}+\epsilon_{3}n_{R})^{2}~.~$$ We
   already know that $\Delta$ is an invariant and, in particular, it
   is left invariant by circular permutations of $P,Q,R$. This
   immediately implies that $\epsilon_{1}$, $\epsilon_{2}$,
   $\epsilon_{3}$ can not all have the same sign and so we may assume
   that just one of them is negative and the other two positive. If
   either one of $\epsilon_{2},\epsilon_{3}$ is negative this circular
   symmetry can not be satisfied. So we conclude that
   $\epsilon_{1}=-1$.  Again for symmetry reasons this implies
   $\epsilon_{0}=-1$ and proves the claim.
\end{rem}

\subsection{Modulo--$2$ invariants} \label{sb:mod-2}

More can be said about the discriminant as well as the enumerative
counts introduced in~\S\ref{subsec:2-torus} after reduction modulo $2$
(and modulo $4$). In the following theorem we focus for simplicity on the trivial representation.
We denote by $\Delta =
\Delta(1) \in \mathbb{Z}$ the discriminant computed at the trivial
representation $\rho \equiv 1$. Similarly, we denote by $n_P, n_Q,
n_R, n_{PQR} \in \mathbb{Z}$ the numbers defined
in~\S\ref{subsec:2-torus}, and by $\sigma, \tau \in \mathbb{Z}$ be the
structural constants defined at~\eqref{eq:p*p}, all computed at
$\rho\equiv 1$.

\begin{thm} \label{t:mod-2} Let $L^2 \subset M^4$ be a wide Lagrangian
   torus with $N_L=2$, where by ``wide'' we mean here that the trivial
   representation $\rho \equiv 1$ belong to $\mathcal{W}_2$. Then:
   \begin{enumerate}
     \item $\Delta \equiv \sigma \equiv n_P + n_Q +n_R \pmod{2}$.
     \item $\Delta \pmod{4}$ admits only the values $0$ or $1$.
   \end{enumerate}
   Moreover, if $\Delta \equiv 1 \pmod{2}$ then:
   \begin{enumerate}[(i)]
     \item The value of $\tau \pmod{2}$ is invariant in the sense that
      it does not depend on the choice of the element $\widetilde{p}$
      in~\eqref{eq:p*p}. \label{i:tau-mod-2}
     \item $n_P n_Q \equiv n_Q n_R \equiv n_R n_P \pmod{2}$.
      \label{i:n_pn_q}
     \item The value of $n_{PQR} + n_P n_Q \pmod{2}$ is invariant, i.e.
      does not depend neither on $P, Q, R$ nor on the almost complex
      structure. This number is congruent to $\tau \pmod{2}$.
      \label{i:n_4+n_pn_q}
   \end{enumerate}
\end{thm}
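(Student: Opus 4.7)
The plan is to deduce everything from three facts already established: the identity $\Delta = \sigma^2 + 4\tau$ of Proposition~\ref{p:delta-sig-tau}, the enumerative expressions $\sigma = n_P - n_Q - n_R$ and $\tau = n_{PQR} - n_Q n_R$ of~\eqref{eq:sig-tau-enum}, and the transformation law~\eqref{eq:sig-tau} for $(\sigma,\tau)$ under $\widetilde{p} \mapsto \widetilde{p} + r[L]t$. Everything will be evaluated at the trivial representation $\rho\equiv 1$, so all quantities are integers. For~(1), reducing $\Delta = \sigma^2 + 4\tau$ modulo $2$ together with $\sigma^2 \equiv \sigma \pmod 2$ gives $\Delta \equiv \sigma \pmod 2$, and then $\sigma \equiv n_P + n_Q + n_R \pmod 2$ is immediate from~\eqref{eq:sig-tau-enum}. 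For~(2), the same identity reduced modulo $4$ gives $\Delta \equiv \sigma^2 \pmod 4$, which is $0$ or $1$ according to the parity of $\sigma$.

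Now assume $\Delta$ is odd, equivalently $\sigma$ is odd. For~(\ref{i:tau-mod-2}), I apply the transformation law~\eqref{eq:sig-tau}: for an integer shift $r$ (which suffices since the pearl complex and the class $[\textnormal{pt}]$ have integral representatives at $\rho\equiv 1$, so any two choices of $\widetilde{p}$ lifting $[\textnormal{pt}]$ differ by an integer multiple of $[L]t$) we get
\[
\tau' - \tau \;=\; -r\sigma - r^2 \;=\; -r(\sigma + r).
\]
Since $\sigma$ is odd, the factors $r$ and $\sigma+r$ have opposite parities, so $r(\sigma+r)$ is even and $\tau' \equiv \tau \pmod 2$.

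For~(\ref{i:n_pn_q}), note that $\sigma$ odd forces $n_P + n_Q + n_R$ to be odd. Thus, among $n_P, n_Q, n_R$, either all three are odd, or exactly one is odd and the other two are even (the other two parity patterns make the sum even). In the first case each of $n_P n_Q, n_Q n_R, n_R n_P$ is odd; in the second each is even. In either case the three pairwise products are pairwise congruent modulo $2$. Finally, for~(\ref{i:n_4+n_pn_q}), reduce $\tau = n_{PQR} - n_Q n_R$ modulo $2$ to get $\tau \equiv n_{PQR} + n_Q n_R \pmod 2$; by~(\ref{i:n_pn_q}) we may replace $n_Q n_R$ with $n_P n_Q$, giving $\tau \equiv n_{PQR} + n_P n_Q \pmod 2$, and the invariance of this residue then follows from~(\ref{i:tau-mod-2}).

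There is no serious obstacle: the whole theorem is an arithmetic consequence of the structural formulas already proved. The only point requiring care is the justification in~(\ref{i:tau-mod-2}) that one may restrict to integral shifts $r$, which I would handle by noting that at $\rho\equiv 1$ the pearl complex is defined over $\mathbb{Z}$ and the short exact sequence~\eqref{eq:ex-seq-qh_0} admits integral splittings, so any two lifts $\widetilde{p}$ of $[\textnormal{pt}]$ differ by an element of $\mathbb{Z}\cdot[L]t$.
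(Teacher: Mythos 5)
Your argument is correct and matches the paper's proof: both rest on Proposition~\ref{p:delta-sig-tau} ($\Delta = \sigma^2 + 4\tau$), the enumerative formulas~\eqref{eq:sig-tau-enum}, and the transformation law~\eqref{eq:sig-tau}, deducing the congruences by elementary parity arguments. The one point you handle more explicitly than the paper does is the integrality of the shift $r$ in~(\ref{i:tau-mod-2}); the paper takes this for granted (having already fixed $\sigma, \tau \in \mathbb{Z}$ at $\rho\equiv 1$ just before the theorem, cf. Remark~\ref{rem:min_mod} and Remark~\ref{r:quad-z}), whereas you supply the justification via the integral splitting of~\eqref{eq:ex-seq-qh_0}, which is a welcome clarification rather than a deviation.
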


\begin{proof}
   Recall from proposition~\ref{p:delta-sig-tau} that $\Delta =
   \sigma^2 + 4 \tau$. Hence $\Delta \equiv \sigma \pmod{2}$.  The
   fact that $\sigma \equiv n_P + n_Q + n_R \pmod{2}$ follows
   from~\eqref{eq:sig-tau-enum}. Next note that $\Delta \equiv
   \sigma^2 \pmod{4}$, hence the latter can obtain only the values $0$
   and $1$ $\pmod{4}$. This proves the first two statements in the
   theorem.

   To prove the other statements, assume now that $\Delta \equiv 1
   \pmod{2}$, or equivalently that $\sigma \equiv 1 \pmod{2}$.  The
   fact that $\tau \pmod{2}$ is invariant follows immediately from
   formulae~\eqref{eq:sig-tau}. This proves~(\ref{i:tau-mod-2}).

   To prove the identity~(\ref{i:n_pn_q}) note that if $\sigma \equiv
   1 \pmod{2}$ then either the three numbers $n_P, n_Q, n_R$
   $\pmod{2}$ are all $1$, or exactly two of them are $0$ and one of
   them is $1$.  In both cases the identity in~(\ref{i:n_pn_q}) holds.

   Finally, point~(\ref{i:n_4+n_pn_q}) follows from the arguments
   of~\S\ref{sbsb:enum-sig-tau}. See~\eqref{eq:sig-tau-enum} as well
   as Theorem~7.2.2 in \cite{Bi-Co:Yasha-fest}.
\end{proof}

\begin{rems}
   \begin{enumerate}
     \item Some of the statements in Theorem~\ref{t:mod-2} (e.g.
      point~(\ref{i:n_4+n_pn_q})) do not seem to follow by just
      reducing $\pmod{2}$ the identity~\eqref{eq:high-Maslov}, but
      rather reveal more geometric information on the structure of the
      ``constants'' $n_P, n_Q, n_R$ and $n_{PQR}$.
     \item It seems that one could get more general congruences by
      allowing every representation $\rho \in \mathcal{W}_1$ (not just
      the trivial one). The point is that all the calculations
      involving an element $\rho \in \mathcal{W}_1$ can be done in a
      number field (i.e. a finite extension of $\mathbb{Q}$) and the
      values of $\Delta(\rho)$ and the constants $n_P(\rho),
      n_Q(\rho), n_R(\rho), n_{PQR}(\rho)$ belong to the ring of
      integers of this field. One expects some congruence relations
      (with respect to some ideal in this ring) to hold between these
      numbers.
   \end{enumerate}
\end{rems}

\section{Toric fibers} \label{s:toric} Here we work
out in detail the theory discussed in the previous sections for the
special case of Lagrangian tori that arise as fibres of the moment map
in a toric manifold. Below we will use in an essential way previous
results of Cho, Oh and of Fukaya-Oh-Ohta-Ono on Floer theory of torus
fibres in toric manifolds (see e.g.~\cite{Cho:Clifford,
  Cho-Oh:Floer-toric, Cho:products, FO3:toric-1, FO3:toric-2}). The
reader is referred to these papers for more details. For the
foundations of symplectic toric manifolds
see~\cite{Audin:torus-actions}, and for an algebro-geometric account
see~\cite{Fulton:toric-book}.

\subsection{Setting} \label{sb:toric-setting} Let $(M^{2n}, \omega)$
be a closed monotone toric manifold. Denote by $$\mathfrak{m}:M
\longrightarrow \textnormal{Lie}(\mathbb{T}^n)^* = \mathbb{R}^n$$ the
moment map and by $P = \textnormal{image\,}(\mathfrak{m})$ the moment
polytope. The symplectic manifold $(M, \omega)$ admits a canonical
$\omega$-compatible (integrable) complex structure $J_0$ which turns
$(M,J_0)$ into a complex algebraic manifold. We will refer to $J_0$ as
the {\em standard complex structure}.

Denote by $F_1, \ldots, F_r$ the codimension-$1$ facets of $P$ and by
$\overrightarrow{v_1}, \ldots, \overrightarrow{v_r} \in \mathbb{Z}^n$
the normal integral primitive vectors to the facets $F_1, \ldots, F_r$
respectively, pointing inwards $P$. Note that the number of
codimension-$1$ facets is $r = n + b_2(M)$. The fibres
$\mathfrak{m}^{-1}(p)$, $p \in P$, are Lagrangian tori. There is a
(unique) special point $p_* \in P$ for which the Lagrangian torus $L =
\mathfrak{m}^{-1}(p_*)$ is monotone (see
e.g.~\cite{Cho-Oh:Floer-toric, Cho:non-unitary, FO3:toric-1}).
Furthermore, we have $N_L = 2$. Note also that $H_2^D \cong
\pi_2(M,L)$, and that $r = \textnormal{rank}\, H_2^D$. We denote by
$\Sigma_i = \mathfrak{m}^{-1}(F_i) \subset M$.  These turn out to be
smooth $J_0$-complex hypersurfaces in $M$ and their sum $[\Sigma_1] +
\cdots + [\Sigma_r]$ represents the Poincar\'{e} dual of the first
Chern class $c_1$ of $M$ (which is by assumption a positive multiple
of $[\omega]$).

Since $L$ is an orbit of the $\mathbb{T}^n$-action we have a canonical
identification $H_1(L;\mathbb{Z}) = H_1(\mathbb{T}^n;\mathbb{Z})$ and
we denote by $\mathbf{e} = \{ e_1, \ldots, e_n \}$ the standard basis
corresponding to this identification.

\subsubsection{Holomorphic disks} \label{sbsb:toric-hol-disks} Due to
the $\mathbb{T}^n$-action the Lagrangian torus $L$ comes with a
preferred orientation as well as a spin structure. Fixing these two,
one can endow the space of holomorphic disks with boundary on $L$ with
a canonical orientation (see~\cite{FO3, Cho:products} for more
details).

We start with a description, due to Cho and
Oh~\cite{Cho-Oh:Floer-toric}, of the subset $\mathcal{E}_2 \subset
H_2^D$ of classes that can be represented by $J$-holomorphic disks
with Maslov index $2$ for generic $J$ (as well as for $J_0$). We use
here the notation from~\S\ref{sb:superpotential}.

\begin{prop}[Cho-Oh~\cite{Cho-Oh:Floer-toric}]
   \label{p:toric-mu=2-disks} The set $\mathcal{E}_2$ consists of
   exactly $r=\textnormal{rank}\, H_2^D$ classes $\mathcal{E}_2 = \{
   B_1, \ldots, B_r\}$ with the following properties:
   \begin{enumerate}
     \item $\# (B_i \cdot \Sigma_j) = \delta_{i,j}$ for every $i,j$.
     \item The set $\mathcal{E}_2$ is a $\mathbb{Z}$--basis for
      $H_2^D$.
     \item Denote by $\partial : H_2^D \longrightarrow
      H_1(L;\mathbb{Z})$ the boundary operator. Then writing $\partial
      B_i$ in the basis $\mathbf{e}$ we have $(\partial B_i) =
      \overrightarrow{v_i}$ for every $1 \leq i \leq r$.
     \item For every $i$, $\nu(B_i)=1$.
   \end{enumerate}
   Furthermore, the standard complex structure $J_0$ is regular for
   all classes $B \in H_2^D$ with $\mu(B)=2$. Moreover given a generic
   point $x \in L$ there exist precisely $r$ $J_0$-holomorphic disks
   $u_i: (D, \partial D) \longrightarrow (M,L)$, $i=1, \ldots, r$,
   upto parametrization, with $\mu([u_i])=2$ and $u(1)=x$. These disks
   satisfy $u_i([D])=B_i$, $i=1, \ldots, r$. The image of $u_i$ under
   the moment map $\mathfrak{m} \circ u_i$ is a straight segment going
   from $p_*$ to a point on the facet $F_i$.
\end{prop}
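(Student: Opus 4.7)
The plan is to follow the argument of Cho--Oh, exploiting the toric structure of $(M,J_0)$ in an essential way. I would split the proof into an \emph{indexing} step, a \emph{classification/existence} step, a \emph{regularity} step, and a \emph{boundary/count} step.

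First, I would use monotonicity of $(M,\omega)$ together with the identity $\mathrm{PD}(c_1(M)) = [\Sigma_1] + \cdots + [\Sigma_r]$, which is standard for toric manifolds (see e.g.~\cite{Audin:torus-actions, Fulton:toric-book}), to deduce the Maslov--intersection formula
\[
\mu(B) \;=\; 2 \sum_{i=1}^{r} \#(B \cdot \Sigma_i), \qquad \forall\, B \in H_2^D.
\]
Now fix any $J$ sufficiently close to $J_0$ in $\mathcal{J}$ for which the $\Sigma_i$ remain almost complex (this holds for $J_0$ itself since they are $J_0$-holomorphic). Any non-constant $J$-holomorphic disk with boundary on $L$ meets each $\Sigma_i$ in isolated points of positive local intersection number (by positivity of intersections), because $L \cap \Sigma_i = \varnothing$. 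Hence $\#(B \cdot \Sigma_i) \ge 0$ for each $i$, and for $\mu(B) = 2$ exactly one of the intersection numbers equals $1$ and the remaining vanish. This produces at most $r$ candidate classes $B_1, \ldots, B_r$ characterized by item (1), and proves (1) and (2) (the basis statement for (2) then follows from the fact that the $[\Sigma_i]$ span $H^2(M; \mathbb{Z})$ together with a standard duality argument using the long exact sequence of the pair $(M,L)$).

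Next, the heart of the argument is the explicit classification of $J_0$-holomorphic Maslov $2$ disks. I would realize $M$ as a GIT/symplectic quotient of $\mathbb{C}^r$ by a $(\mathbb{T}^{r-n})$-action, so that $J_0$-holomorphic disks $u : (D,\partial D) \to (M,L)$ lift to $\mathbb{C}^r$-valued maps whose components are holomorphic functions $D \to \mathbb{C}$ mapping $\partial D$ into a product of circles. A component that is constant corresponds to a hyperplane $\{z_i = 0\}$ in $\mathbb{C}^r$ missed by the disk, i.e.\ $\Sigma_i$ not hit; a component that is non-constant with zeros in the interior contributes at least one intersection with $\Sigma_i$. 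For $\mu(B_i) = 2$ it follows that precisely one component $z_i$ is a degree--$1$ Blaschke factor and all others are constants of modulus determined by $p_*$. Fixing $u(1) = x$ rigidifies the Blaschke factor completely, giving \emph{exactly one} unparametrized $J_0$-holomorphic disk in class $B_i$ through $x$, establishing the existence and uniqueness portion of the last paragraph. Applying $\mathfrak{m}$ to such a disk produces a straight radial segment from $p_*$ toward $F_i$, proving the moment map statement.

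For regularity of $J_0$ on Maslov $2$ classes, I would invoke the standard fact (Cho--Oh) that the linearized $\bar\partial$-operator along these explicit disks splits according to the holomorphic coordinates $z_1, \ldots, z_r$ into $n$ trivial factors and $r-n$ factors involving the $(\mathbb{T}^{r-n})$-moment map equations; in each factor the operator is surjective because the relevant index calculation matches the dimension of the cokernel space cut out by real boundary conditions. This gives item (6) and, combined with the explicit one-disk-through-each-point count in each class $B_i$, yields $\nu(B_i) = 1$ (item (5)); the sign is $+1$ because of the canonical spin structure and orientation coming from the $\mathbb{T}^n$-action (see~\cite{FO3, Cho:products}).

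Finally, for item (4), I would compute $\partial B_i \in H_1(L;\mathbb{Z}) = H_1(\mathbb{T}^n;\mathbb{Z})$ directly from the explicit disk: its boundary circle wraps once around the coordinate circle dual to $z_i$, and under the identification with the inward normal fan this is exactly $\overrightarrow{v_i}$. The main obstacle in this program is the classification/uniqueness step in the second paragraph: controlling \emph{all} $J_0$-holomorphic disks of Maslov $2$, not merely exhibiting the obvious ones, which is where the full toric/GIT description of $(M,J_0)$ and the Schwarz--reflection argument are indispensable.
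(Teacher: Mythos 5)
The paper does not prove this Proposition; it is quoted from Cho--Oh~\cite{Cho-Oh:Floer-toric} as a black box. Your sketch is a faithful reconstruction of the structure of that proof: the Maslov--intersection identity $\mu(B)=2\sum_i\#(B\cdot\Sigma_i)$ and positivity of intersections to pin down the candidate classes $B_i$, the Blaschke--factor classification of $J_0$-holomorphic disks via the toric quotient presentation to show each $B_i$ is realized by exactly one disk through a given boundary point, the cokernel computation via the coordinatewise splitting of the linearized $\bar\partial$-operator for regularity, and then $\partial B_i=\overrightarrow{v_i}$ read directly from the boundary winding of the Blaschke factor. You have also correctly identified where the real work sits: the completeness of the Blaschke classification (that \emph{every} Maslov~$2$ disk arises this way, not just the obvious ones), which is the genuine technical content of Cho--Oh's argument and cannot be replaced by a soft transversality statement.

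Two small things to fix. First, your item numbers drift: you write ``item (4)'' for $\partial B_i=\overrightarrow{v_i}$ and ``items (5) and (6)'' for $\nu(B_i)=1$ and regularity, but the Proposition has only items (1)--(4), with $\partial B_i=\overrightarrow{v_i}$ being (3), $\nu(B_i)=1$ being (4), and regularity appearing unnumbered in the final paragraph. Second, the parenthetical about taking $J$ ``sufficiently close to $J_0$'' is an unnecessary detour: positivity of intersections with the divisors $\Sigma_i$ is cleanest for $J_0$ itself, and since the Proposition establishes that $J_0$ is already regular for all Maslov~$2$ classes, one should argue directly at $J_0$ and then invoke the usual cobordism of moduli spaces to conclude that $\mathcal{E}_2$ and $\nu$ computed for any generic $J$ agree with the $J_0$ count; introducing a nearby generic $J$ at the outset obscures that logic and risks a circularity, since for generic $J$ the $\Sigma_i$ need not be $J$-holomorphic and positivity of intersections is not available.
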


\subsubsection{The superpotential, the wide variety and the
  discriminant} \label{sb:P-wide-discr-toric} The following is an
immediate corollary of Proposition~\ref{p:toric-mu=2-disks}:
\begin{cor}[Cho-Oh~\cite{Cho-Oh:Floer-toric}]
   \label{c:super-potential}
   The superpotential $\mathscr{P}$ has the following form in the
   coordinates induced by the basis $\mathbf{e}$
   (see~\S\ref{sbsb:sp-formulae}):
   \begin{equation} \label{eq:P-toric}
      \mathscr{P}(z_1, \ldots, z_n) = \sum_{i=1}^r
      z^{\overrightarrow{v_i}},
   \end{equation}
   where for a vector $\overrightarrow{v}=(v^1, \ldots, v^n) \in
   \mathbb{Z}^n$, $z^{\overrightarrow{v}}$ stands for the monomial
   $z^{\overrightarrow{v}} = z_1^{v^{1}} \cdots z_n^{v^{n}}$.
\end{cor}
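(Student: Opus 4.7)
The plan is to obtain the formula by direct substitution of the information in Proposition \ref{p:toric-mu=2-disks} into the coordinate expression (\ref{eq:LG-2}) for the superpotential. Recall that
$$\mathscr{P}(z_1, \ldots, z_l) = \sum_{B \in \mathcal{E}_2} \nu(B)\, z_1^{(\partial B)_1} \cdots z_l^{(\partial B)_l},$$
where $l = \textnormal{rank}\, H_1(L;\mathbb{Z})_{\textnormal{free}}$ and $(\partial B)_j$ denotes the $j$-th coordinate of $\partial B$ in the chosen basis $\mathbf{e}$ of $H_1(L;\mathbb{Z})$. Since $L = \mathfrak{m}^{-1}(p_*)$ is an orbit of the $\mathbb{T}^n$-action, it is diffeomorphic to $\mathbb{T}^n$, so $l = n$ and, as recorded in \S\ref{sb:toric-setting}, the basis $\mathbf{e} = \{e_1,\ldots,e_n\}$ is identified with the standard basis of $H_1(\mathbb{T}^n;\mathbb{Z})$.

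Next, I would invoke Proposition \ref{p:toric-mu=2-disks} to replace all ingredients appearing in the sum: by parts (1)--(2) the indexing set $\mathcal{E}_2$ is precisely $\{B_1,\ldots,B_r\}$; by part (3) the coordinate vector $(\partial B_i)$ in the basis $\mathbf{e}$ equals the inward primitive normal $\overrightarrow{v_i}$ to the facet $F_i$; and by part (4) we have $\nu(B_i)=1$ for every $i$. Plugging these into the formula yields
$$\mathscr{P}(z_1, \ldots, z_n) = \sum_{i=1}^{r} z_1^{v_i^{1}} \cdots z_n^{v_i^{n}} = \sum_{i=1}^{r} z^{\overrightarrow{v_i}},$$
which is the desired identity.

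In short, there is essentially no obstacle intrinsic to the corollary: the substantive content is all encoded in Proposition \ref{p:toric-mu=2-disks}, whose proof is the genuine work (classifying Maslov-$2$ disks via the toric complex structure, detecting $\partial B_i$ through intersections with the hypersurfaces $\Sigma_j$, and checking that exactly one $J_0$-disk in class $B_i$ passes through a generic point of $L$). The corollary itself amounts to transcribing those four bulleted facts into the definition of $\mathscr{P}$.
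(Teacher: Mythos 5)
Your proposal is correct and matches exactly what the paper does: the authors present this as an immediate corollary of Proposition~\ref{p:toric-mu=2-disks}, obtained by substituting $\mathcal{E}_2=\{B_1,\ldots,B_r\}$, $(\partial B_i)=\overrightarrow{v_i}$, and $\nu(B_i)=1$ into formula~\eqref{eq:LG-2} with $l=n$. Nothing more is needed.
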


Since $H^1(L;\mathbb{R})$ generates $H^*(L;\mathbb{R})$ (with respect
to the cup product) we obtain from Proposition~\ref{p:dC-i}:
\begin{cor}[Cho-Oh~\cite{Cho-Oh:Floer-toric}, see also
   Fukaya-Oh-Ohta-Ono~\cite{FO3:toric-1}] \label{c:wide-crit} We have
   $$\mathcal{W}_1 = \textnormal{Crit}(\mathscr{\mathscr{P}}),$$ where
   $\mathcal{W}_1 \subset \textnormal{Hom}(H_1, \mathbb{C}^*)$ is the
   wide variety as defined in~\S\ref{s:wide}.
\end{cor}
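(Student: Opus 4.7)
The plan is almost trivial given the machinery already in place: this is a direct application of the last assertion of Proposition~\ref{p:dC-i}. The only thing to verify is the hypothesis that the real cohomology ring of $L$ is generated by $H^{1}(L;\mathbb{R})$.

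First I would note that the Lagrangian fibre $L = \mathfrak{m}^{-1}(p_{*})$ is, as an orbit of the $\mathbb{T}^{n}$--action, diffeomorphic to the standard torus $\mathbb{T}^{n}$. Its real cohomology ring is therefore the exterior algebra $\Lambda_{\mathbb{R}}(e_{1}^{*},\ldots,e_{n}^{*})$ on the dual basis to $\mathbf{e} = \{e_{1},\ldots,e_{n}\}$, and in particular is generated as a graded $\mathbb{R}$--algebra by its degree one part $H^{1}(L;\mathbb{R})$.

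Next, by Corollary~\ref{c:super-potential} the superpotential in the coordinates $(z_{1},\ldots,z_{n})$ associated to $\mathbf{e}$ is $\mathscr{P}(z_{1},\ldots,z_{n}) = \sum_{i=1}^{r} z^{\overrightarrow{v_{i}}}$, which is a perfectly well--defined Laurent polynomial to which Proposition~\ref{p:dC-i} applies. The last statement of that proposition, under the hypothesis that $H^{*}(L;\mathbb{R})$ is generated by $H^{1}(L;\mathbb{R})$, gives the equality $\mathcal{W}_{1} = \textnormal{Crit}(\mathscr{P})$.

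Since both the wide variety $\mathcal{W}_{1}$ and the critical locus of $\mathscr{P}$ are viewed inside $\textnormal{Hom}_{0}(H_{1},\mathbb{C}^{*}) \cong (\mathbb{C}^{*})^{n}$ via the basis $\mathbf{e}$, this identification completes the proof. There is essentially no obstacle here; the only conceptual work was already done in establishing Proposition~\ref{p:dC-i} and in Cho--Oh's explicit computation of $\mathscr{P}$ as recorded in Corollary~\ref{c:super-potential}.
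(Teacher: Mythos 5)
Your argument is identical to the paper's: since the toric fibre $L$ is an $n$-torus, $H^*(L;\mathbb{R})$ is generated by $H^1(L;\mathbb{R})$, and the last assertion of Proposition~\ref{p:dC-i} then gives $\mathcal{W}_1 = \textnormal{Crit}(\mathscr{P})$ directly. The paper states this as a one-line corollary with exactly that justification, so there is nothing to add.
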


Choose a basis $\mathbf{C} = \{C_1, \ldots, C_n \}$ for
$H_{n-1}(L;\mathbb{Z})$ which is dual to $\mathbf{e}$ as
in~\S\ref{sbsb:sp-formulae}. We view $H_{n-1}(L;\mathbb{C})$ as a
subset of $QH_{n-1}(L;\mathcal{W}_1)$ as explained
in~\S\ref{sbsb:sp-formulae} just before the statement of
Proposition~\ref{p:prod-Ci-Cj}. The following corollary immediately
follows from Proposition~\ref{p:prod-Ci-Cj}.
\begin{cor}[Cho~\cite{Cho:products}] \label{c:prod} $$C_i * C_j + C_j
   * C_i = (-1)^n \Bigl(\sum_{k=1}^r v_k^{i} v_k^{j} \,
   z^{\overrightarrow{v_k}} \Bigr)[L] t, \; \forall z \in
   \mathcal{W}_1.$$
\end{cor}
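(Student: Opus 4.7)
The plan is to deduce this directly from Proposition~\ref{p:prod-Ci-Cj} by substituting the explicit form of the superpotential obtained in Corollary~\ref{c:super-potential}. So the bulk of the work has already been done, and what remains is essentially a derivative computation together with the critical-point condition $\rho \in \mathcal{W}_1$.

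First I would recall that Proposition~\ref{p:prod-Ci-Cj}, applied with the basis $\mathbf{C}$ dual to $\mathbf{e}$, gives the identity
\[
C_i * C_j + C_j * C_i = (-1)^n z_i z_j \frac{\partial^2 \mathscr{P}}{\partial z_i \partial z_j}\,[L]t
\]
in $QH(L;\mathcal{W}_1)$. Next I would substitute the toric superpotential $\mathscr{P}(z_1,\dots,z_n)=\sum_{k=1}^r z^{\overrightarrow{v_k}}$ from Corollary~\ref{c:super-potential} and compute the second partial derivatives of each monomial $z^{\overrightarrow{v_k}} = z_1^{v_k^1}\cdots z_n^{v_k^n}$. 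For $i\neq j$ a direct computation gives
\[
z_i z_j \frac{\partial^2 z^{\overrightarrow{v_k}}}{\partial z_i \partial z_j} = v_k^i v_k^j \, z^{\overrightarrow{v_k}},
\]
while for $i=j$ one gets $z_i^2\,\partial^2 z^{\overrightarrow{v_k}}/\partial z_i^2 = v_k^i(v_k^i-1)\,z^{\overrightarrow{v_k}} = (v_k^i)^2 z^{\overrightarrow{v_k}} - v_k^i z^{\overrightarrow{v_k}}$. Summing over $k$ in the diagonal case produces an extra term $-\sum_k v_k^i z^{\overrightarrow{v_k}} = -z_i\,\partial\mathscr{P}/\partial z_i$.

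The key observation is then that for $\rho=(z_1,\dots,z_n)\in\mathcal{W}_1$ this extra term vanishes: by Corollary~\ref{c:wide-crit} we have $\mathcal{W}_1 = \textnormal{Crit}(\mathscr{P})$, hence $\partial \mathscr{P}/\partial z_i=0$ for all $i$. Consequently, both for $i=j$ and $i\ne j$, the evaluation at any point of $\mathcal{W}_1$ simplifies to
\[
z_i z_j \frac{\partial^2 \mathscr{P}}{\partial z_i \partial z_j} = \sum_{k=1}^r v_k^i v_k^j \, z^{\overrightarrow{v_k}}.
\]
Plugging this into the formula from Proposition~\ref{p:prod-Ci-Cj} yields the claim.

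There is no serious obstacle here: the only subtle step is remembering to use the critical-point equations to kill the linear-in-$v_k^i$ term that appears in the diagonal case $i=j$, which is precisely why the identity is asserted only for $z\in\mathcal{W}_1$ rather than for all $z$.
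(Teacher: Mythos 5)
Your proof is correct and follows exactly the route the paper has in mind when it asserts that Corollary~\ref{c:prod} ``immediately follows from Proposition~\ref{p:prod-Ci-Cj}'': substitute the toric superpotential of Corollary~\ref{c:super-potential}, compute $z_iz_j\,\partial^2\mathscr{P}/\partial z_i\partial z_j$ term by term, and invoke the critical-point equations $\partial\mathscr{P}/\partial z_i=0$ on $\mathcal{W}_1$ to absorb the $-v_k^i z^{\overrightarrow{v_k}}$ contribution in the diagonal case. Your treatment of the $i=j$ case is also consistent with the remark made inside the proof of Proposition~\ref{p:prod-Ci-Cj} itself, where the same vanishing is used.
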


We now turn to the quadratic form defined in~\S\ref{s:qforms} and its
discriminant. Substituting~\eqref{eq:P-toric} in~\eqref{eq:discr-2} we
get:
\begin{equation} \label{eq:discr-toric-1} \Delta(z_1, \ldots, z_n) =
   (-1)^{n+1}\det \Bigl( \sum_{k=1}^r v_k^i v_k^j
   \,z^{\overrightarrow{v_k}} \Bigr)_{i,j}, \; \forall z \in
   \mathcal{W}_1.
\end{equation}
However, there is a nicer formula for the discriminant which we now
present. For a subset of indices $I \subset \{1, \ldots, r\}$ with
$\#I=n$, say $I = \{i_1, \ldots, i_n\}$, define an $n \times
n$\,-matrix $A_I$ whose rows consists of the vectors
$\overrightarrow{v_{i_1}}, \ldots, \overrightarrow{v_{i_n}}$, and a
vector $\overrightarrow{v_I}$ which is the sum of the
$\overrightarrow{v_{i_k}}$'s, i.e.
\begin{equation} \label{eq:A_I}
   A_I =
   \begin{pmatrix}
      \textnormal{-----------} & \overrightarrow{v_{i_1}} &
      \textnormal{-----------} \\
      & \vdots & \\
      \textnormal{-----------} & \overrightarrow{v_{i_k}} &
      \textnormal{-----------} \\
      & \vdots &  \\
      \textnormal{-----------} & \overrightarrow{v_{i_n}} &
      \textnormal{-----------}
   \end{pmatrix}
   , \qquad \overrightarrow{v_I} = \sum_{i \in I} \overrightarrow{v_i}.
\end{equation}
Note that $\det(A_I)^2$ does not depend on the ordering of the indices
$i_j$ in the set $I$.

\begin{prop}
   The discriminant verifies the following formula:
   \begin{equation} \label{eq:discr-toric-2} \Delta(z_1, \ldots, z_n)
      = (-1)^{n+1}\sum_{\substack{I \subset \{1, \ldots, r\} \\
          \#I=n}} z^{\overrightarrow{v_I}} \det(A_I)^2, \; \forall z
      \in \mathcal{W}_1.
   \end{equation}
\end{prop}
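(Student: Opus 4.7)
The plan is to derive the identity by applying the Cauchy--Binet formula to the matrix appearing in~\eqref{eq:discr-toric-1}. The key observation is that the symmetric matrix $M = \bigl(\sum_{k=1}^r v_k^i v_k^j\, z^{\overrightarrow{v_k}}\bigr)_{i,j}$ can be factored as a rank-decomposition. Indeed, letting $V$ denote the $r\times n$ matrix whose $k$-th row is $\overrightarrow{v_k}^{\,T}$ and $D$ the $r\times r$ diagonal matrix with entries $D_{kk}=z^{\overrightarrow{v_k}}$, one has
\begin{equation*}
   M \;=\; V^{T} D\, V \;=\; \sum_{k=1}^{r} z^{\overrightarrow{v_k}}\,\overrightarrow{v_k}\,\overrightarrow{v_k}^{\,T}.
\end{equation*}

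Next I would invoke the Cauchy--Binet formula for the product of a wide and a tall matrix. Applying it to $A=V^{T}D$ (which is $n\times r$) and $B=V$ (which is $r\times n$), and using the fact that $D$ is diagonal so that selecting $n$ columns of $V^{T}D$ amounts to selecting $n$ rows of $V$ and multiplying by the corresponding diagonal entries, yields
\begin{equation*}
   \det(M) \;=\; \sum_{\substack{I\subset\{1,\ldots,r\} \\ \#I = n}}
   \Bigl(\prod_{i\in I} z^{\overrightarrow{v_i}}\Bigr)\,\det(A_I)^{2}
   \;=\; \sum_{\substack{I\subset\{1,\ldots,r\} \\ \#I = n}} z^{\overrightarrow{v_I}}\,\det(A_I)^{2},
\end{equation*}
where in the second equality I use $\overrightarrow{v_I}=\sum_{i\in I}\overrightarrow{v_i}$ together with the multiplicative property $\prod_{i\in I} z^{\overrightarrow{v_i}} = z^{\overrightarrow{v_I}}$. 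Note that $\det(A_I)^2$ is independent of the ordering chosen on $I$, so the sum is unambiguous.

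Finally I would combine this with formula~\eqref{eq:discr-toric-1}, which gives $\Delta = (-1)^{n+1}\det(M)$, to obtain~\eqref{eq:discr-toric-2}. The only step that requires a bit of care is bookkeeping the indices in Cauchy--Binet to confirm that the matrix $V^I$ of rows of $V$ indexed by $I$ coincides with $A_I$ as defined in~\eqref{eq:A_I}; this is immediate from the definitions. There is no real obstacle here --- the proof is essentially a one-line linear-algebra identity once one recognizes $M$ as $V^{T}DV$.
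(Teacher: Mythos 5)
Your proof is correct, and it is essentially the same computation the paper alludes to: the paper's terse remark ``follows by direct computation by expanding the determinant in~\eqref{eq:discr-toric-1}'' is precisely what the Cauchy--Binet formula applied to the factorization $M = V^{T}DV$ formalizes. You have simply made the expansion explicit, which is the natural way to carry it out.
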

The proof follows by direct computation by expanding the determinant
in~\eqref{eq:discr-toric-1}.

\subsection{Formulae for $\mathcal{W}_2$} \label{sb:formulae-W_2}
Recall from Proposition~\ref{p:toric-mu=2-disks} that set
$\mathcal{E}_2 = \{B_1, \ldots, B_r\}$ forms a $\mathbb{Z}$--basis for
$H_2^D$. Using this basis we can identify $\textnormal{Hom}(H_2^D,
\mathbb{C}^*) \cong (\mathbb{C}^*)^{\times r}$. An element of the
latter space $\xi = (\xi_1, \ldots, \xi_r)$ will be identified with
the representation $\rho$ that satisfies $\rho(B_k) = \xi_k$, $k=1,
\ldots, r$.

We continue to work with the basis $\mathbf{e}=\{e_1, \ldots, e_n\}$
for $H_1$ introduced in~\S\ref{sb:toric-setting} and the dual basis
$\mathbf{C} = \{C_1, \ldots, C_{n-1}\}$ for $H_{n-1}(L;\mathbb{Z})$.

With this notation the following is straightforward calculation
resulting from Proposition~\ref{p:toric-mu=2-disks}.

\begin{prop} \label{p:formulae-W_2}
   \begin{enumerate}
     \item The wide variety $\mathcal{W}_2$ is cut by the following
      system of $n$ linear equation (with $r$ unknowns):
      $$\mathcal{W}_2 = \Bigl\{ \sum_{k=1}^r v_k^j \xi_k = 0 \bigm|
      j=1, \ldots, n\Bigr\}.$$ Here, $v_k^j$ is the $j$'th component
      of the vector $\overrightarrow{v_k}$, i.e. $\overrightarrow{v_k}
      = (v_k^1, \ldots, v_k^n)$.
     \item The natural map $\partial_{\mathcal{W}}: \mathcal{W}_1
      \longrightarrow \mathcal{W}_2$ induced by the boundary map
      $\partial: H_2^D \longrightarrow H_1$ is given by:
      $$\partial_{\mathcal{W}}(z_1, \ldots, z_n) =
      (z^{\overrightarrow{v_1}}, \ldots, z^{\overrightarrow{v_r}}).$$
     \item The product of elements of $\mathbf{C}$ satisfies:
      $$C_i*C_j + C_j*C_i = (-1)^n
      \Bigl(\sum_k^r v_k^i v_k^j \xi_k \Bigr)
      [L]t.$$
     \item The discriminant is given by:
      $$\Delta(\xi_1, \ldots, \xi_k) =
      (-1)^{n+1}
      \det \Bigl(\sum_k^r v_k^i v_k^j \xi_k \Bigr)_{i,j}.$$
   \end{enumerate}
\end{prop}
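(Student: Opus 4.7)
The four claims all follow by applying the general machinery of~\S\ref{s:wide} and~\S\ref{s:qforms} to the explicit list of Maslov-$2$ classes $\mathcal{E}_{2}=\{B_{1},\ldots,B_{r}\}$ supplied by Proposition~\ref{p:toric-mu=2-disks}, identifying $\rho\in\textnormal{Hom}_{0}(H_{2}^{D},\mathbb{C}^{*})$ with $\xi=(\xi_{1},\ldots,\xi_{r})$ via $\xi_{k}=\rho(B_{k})$. Part~(2) is pure bookkeeping: if $\rho'$ has coordinates $(z_{1},\ldots,z_{n})$ with respect to $\mathbf{e}$, then $(\rho'\circ\partial)(B_{k})=\rho'(\overrightarrow{v_{k}})=\prod_{j}z_{j}^{v_{k}^{j}}=z^{\overrightarrow{v_{k}}}$.

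For part~(1), I would repeat the calculation of Proposition~\ref{p:dC-i} (based on formula~\eqref{eq:ev-cycle}) using coefficients $\mathcal{O}(\mathcal{W}_{2})\otimes\Lambda^{+}$ instead of $\mathcal{O}(\mathcal{W}_{1})\otimes\Lambda^{+}$. The minimal pearl differential applied to the dual class $C_{j}$ is computed from Maslov-$2$ disks through a generic point, and in the toric case each such disk in class $B_{k}$ contributes $\nu(B_{k})(\partial B_{k})_{j}\rho(B_{k})=v_{k}^{j}\xi_{k}$ to the coefficient of $[L]t$. This gives $d^{\rho}_{\min}(C_{j})=\bigl(\sum_{k}v_{k}^{j}\xi_{k}\bigr)[L]t$. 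Since $H^{*}(L;\mathbb{R})$ is generated by $H^{1}(L;\mathbb{R})$, the narrow-or-wide dichotomy invoked at the end of the proof of Proposition~\ref{p:dC-i} applies, so $L$ is $\Lambda^{\rho}$-wide if and only if $d^{\rho}_{\min}$ vanishes on $H_{n-1}(L)$, which is exactly the stated linear system.

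Part~(3) is the $\mathcal{W}_{2}$-analogue of Proposition~\ref{p:prod-Ci-Cj}, proved by the same geometric argument but with the weighting monomial $\prod_{j}z_{j}^{(\partial\lambda)_{j}}$ replaced by $\rho(\lambda)$. For $\lambda=B_{k}$ this plugs in $\nu(B_{k})=1$, $(\partial B_{k})_{i}=v_{k}^{i}$, $\rho(B_{k})=\xi_{k}$, and the sign $\tau=(-1)^{n}$ of~\S\ref{sbsb:or-prod} to produce the formula. The awkward $i=j$ case in the proof of Proposition~\ref{p:prod-Ci-Cj}, where the $\mathcal{W}_{1}$-condition was used to discard a $z_{i}\partial\mathscr{P}/\partial z_{i}$ term, does not arise here because we work directly in the $\xi$-variables rather than in second derivatives of $\mathscr{P}$.

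Part~(4) is then immediate from part~(3) and the definition~\eqref{eq:discr-1}: the matrix coefficients are $a_{ij}=(-1)^{n}\sum_{k}v_{k}^{i}v_{k}^{j}\xi_{k}$, and pulling the scalar out of the $n\times n$ determinant yields $\Delta=-\det(a_{ij})=(-1)^{n+1}\det\bigl(\sum_{k}v_{k}^{i}v_{k}^{j}\xi_{k}\bigr)_{i,j}$, using $n^{2}\equiv n\pmod 2$. The only substantive work is the orientation bookkeeping needed to confirm the sign $\tau=(-1)^{n}$ in part~(3); everything else reduces to evaluating formulae already established in~\S\ref{sb:superpotential} or routine algebraic manipulation.
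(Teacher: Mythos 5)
Your proposal is correct and follows the same route the paper (implicitly) takes: the paper declares Proposition~\ref{p:formulae-W_2} to be a ``straightforward calculation resulting from Proposition~\ref{p:toric-mu=2-disks},'' and you supply exactly that calculation, substituting $\nu(B_k)=1$, $(\partial B_k)_j = v_k^j$, $\rho(B_k)=\xi_k$ into the machinery of Propositions~\ref{p:dC-i} and~\ref{p:prod-Ci-Cj} (and the wide--narrow dichotomy used at the end of the proof of Proposition~\ref{p:dC-i}). Your observation that the diagonal case $i=j$ in part~(3) is cleaner in the $\xi$-variables than in the proof of Proposition~\ref{p:prod-Ci-Cj}---because the extra first-derivative term in $z_i^2\partial^2_{z_i}\mathscr{P}$ never appears when one writes the sum $\sum_k v_k^i v_k^j\xi_k$ directly---is a valid and useful clarification, though of course both formulations still require $\xi\in\mathcal{W}_2$ for the quantum product (and hence parts~(3) and~(4)) to make sense.
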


\subsection{Wide varieties and quantum homology of the ambient
  manifold} \label{sb:wide-qh} Here we further study the other quantum
structures, such as the quantum algebra and quantum inclusion, and
their relations to the wide varieties on toric manifolds.

Let $L = \mathfrak{m}^{-1}(p_*) \subset M$ be the monotone torus fibre
in a monotone toric manifold. Assume that the wide variety
$\mathcal{W}_1$ is not empty. By Corollary~\ref{c:wide-crit} the wide
variety $\mathcal{W}_1$ coincides with the variety of critical points
of the superpotential function $\mathscr{P}$, $\mathcal{W}_1 =
\textnormal{Crit}(\mathscr{P})$, hence the ring or global algebraic
functions $\mathcal{O}(\mathcal{W}_1)$ can be written as
\begin{equation} \label{eq:O-W_1} \mathcal{O}(\mathcal{W}_1) =
   \frac{\mathbb{C}[z_1^{\pm 1}, \ldots, z_n^{\pm 1}]}{\langle
     \partial_{z_1} \mathscr{P}, \ldots, \partial_{z_n} \mathscr{P}
     \rangle},
\end{equation}
where the denominator stand for the ideal generated by the partial
derivatives of $\mathscr{P}$. This ring, or rather localizations of
it, plays an important role in singularity theory and is sometime
called the Jacobian ring of $\mathscr{P}$.

Interestingly, this ring appears in the symplectic picture also from a
different angle. Denote by $QH(M;\Lambda)$ the quantum homology of the
{\em ambient} manifold with coefficients in $\Lambda = \mathbb{C}[t,
t^{-1}]$, where for compatibility with the Lagrangian picture we put
$|t| = -N_L=-2$. It is well known that the classes $[\Sigma_i] =
\frak{m}^{-1}(F_i) \in QH_{2n-2}(M;\Lambda)$, $i=1, \ldots, r$,
generate $QH(M; \Lambda)$ with respect to the quantum product $*$,
see~\cite{Batyrev:qh-toric, McD-Sa:Jhol-2}. It turns out that
$QH(M;\Lambda)$ is isomorphic as a ring to $\mathcal{O}(\mathcal{W}_1)
\otimes \Lambda$. More precisely:
\begin{thm}[Batyrev, Givental, Fukaya-Oh-Ohta-Ono] \label{t:Bat-Giv}
   There exists an isomorphism of rings
   \begin{equation} \label{eq:iso-I} I: QH(M; \Lambda) \longrightarrow
      \mathcal{O}(\mathcal{W}_1) \otimes \Lambda,
   \end{equation}
   which satisfies $I([\Sigma_i]) = z^{\overrightarrow{v_i}}t$. This
   isomorphism shifts degrees by $-2n$. Here, the grading on the
   right-hand side comes from the $\Lambda$-factor only (i.e.
   $\mathcal{O}(\mathcal{W}_1)$ is not graded).
\end{thm}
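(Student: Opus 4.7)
The plan is to match two known ring presentations: Batyrev's presentation of $QH(M;\Lambda)$ and the Jacobian description of $\mathcal{O}(\mathcal{W}_1)$ coming from Corollary~\ref{c:wide-crit} and equation~\eqref{eq:O-W_1}. By Batyrev's theorem on the quantum cohomology of monotone toric manifolds (whose symplectic/Floer-theoretic incarnation is established in~\cite{FO3:toric-1}), one has a presentation
\begin{equation*}
QH(M;\Lambda)\;\cong\;\Lambda[X_1,\ldots,X_r]\big/\mathcal{J},
\end{equation*}
where each generator $X_i$ corresponds to $[\Sigma_i]\,t$ (the extra $t$ absorbs the degree discrepancy since $|t|=-2$ and $|[\Sigma_i]|=2n-2$), and $\mathcal{J}$ is generated by the \emph{linear relations} $\sum_{i=1}^r v_i^j X_i$ for $j=1,\ldots,n$ together with the \emph{quantum Stanley--Reisner relations}, one for each primitive collection $S\subset\{1,\ldots,r\}$ of the fan.

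First I would define $I$ on generators by $I([\Sigma_i])=z^{\overrightarrow{v_i}}t$ and extend $\Lambda$-multiplicatively. The grading is forced: $|z^{\overrightarrow{v_i}}t|=-2=|[\Sigma_i]|-2n$, so the global shift by $-2n$ is uniform, with the unit $[M]\in QH_{2n}(M;\Lambda)$ sent to $1\in \mathcal{O}(\mathcal{W}_1)$ in degree $0$. The linear relations are automatic from the explicit form~\eqref{eq:P-toric} of the superpotential: a direct computation gives
\begin{equation*}
z_j\,\frac{\partial \mathscr{P}}{\partial z_j}(z)\;=\;\sum_{i=1}^{r} v_i^j\, z^{\overrightarrow{v_i}},\qquad j=1,\ldots,n,
\end{equation*}
so each linear relation $\sum_i v_i^j X_i$ maps to $z_j\,\partial_{z_j}\mathscr{P}\cdot t$, which vanishes in $\mathcal{O}(\mathcal{W}_1)\otimes\Lambda$ by the very definition of the Jacobian ring~\eqref{eq:O-W_1}.

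The main obstacle is verifying the quantum Stanley--Reisner relations: for every primitive collection $S$ one must show that the monomial $\prod_{i\in S}z^{\overrightarrow{v_i}}\cdot t^{|S|}=z^{\sum_{i\in S}\overrightarrow{v_i}}\cdot t^{|S|}$ equals, modulo the Jacobian ideal, the prescribed quantum-corrected expression in $\mathcal{O}(\mathcal{W}_1)\otimes\Lambda$. The combinatorial input is that for a primitive collection $S$ the sum $\sum_{i\in S}\overrightarrow{v_i}$ lies in some cone of the fan and can be written as a positive integer combination $\sum_k c_k\overrightarrow{v_{i_k}}$ with indices disjoint from $S$; this yields an identity $\prod_{i\in S}z^{\overrightarrow{v_i}}=\prod_k (z^{\overrightarrow{v_{i_k}}})^{c_k}$ in $\mathbb{C}[z^{\pm 1}]$, and after multiplying by the appropriate power of $t$ dictated by $\mu(\sum B_i)=2|S|$ this becomes the required Q-SR relation in the Jacobian quotient. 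Alternatively, and this is probably the cleanest route, one invokes the closed--open map $\mathrm{CO}:QH(M;\Lambda)\to HF(L,L;\mathcal{O}(\mathcal{W}_1)\otimes\Lambda)$ of~\cite{FO3:toric-1}, which realizes $I$ geometrically as a ring homomorphism sending $[\Sigma_i]$ to the appropriate class; once one checks on geometric grounds that $\mathrm{CO}([\Sigma_i])=z^{\overrightarrow{v_i}}t$ (a one-disk count via Proposition~\ref{p:toric-mu=2-disks}), the Q-SR relations are enforced automatically.

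Finally, bijectivity follows by a rank count over $\Lambda$. Both sides are finitely generated free $\Lambda$-modules whose $\Lambda$-rank equals $\dim_{\mathbb{C}}H^*(M;\mathbb{C})$: for $QH(M;\Lambda)$ this is the defining isomorphism of vector spaces, and for $\mathcal{O}(\mathcal{W}_1)\otimes\Lambda$ it is the fact that in the monotone Fano case the Milnor number of $\mathscr{P}$ at its (isolated, nondegenerate) critical points sums to $\chi(M)$. Surjectivity of $I$ holds because the classes $[\Sigma_i]$ generate $QH(M;\Lambda)$ as a $\Lambda$-algebra (by Batyrev) while the $z^{\overrightarrow{v_i}}$, together with $t^{\pm 1}$, generate $\mathcal{O}(\mathcal{W}_1)\otimes\Lambda$. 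Combined with equality of $\Lambda$-ranks this forces $I$ to be the desired isomorphism.
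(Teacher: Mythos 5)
The paper does not prove Theorem~\ref{t:Bat-Giv}; immediately after the statement it cites Givental~\cite{Gi:hom-geom-ICM}, Batyrev~\cite{Batyrev:qh-toric}, Fukaya--Oh--Ohta--Ono~\cite{FO3:toric-1} and Ostrover--Tyomkin~\cite{Os-Ty:qh} as the sources, so there is no ``paper's own proof'' against which to measure your argument. What you have written is a reasonable sketch of the algebraic route (essentially Batyrev's presentation-matching, carried out more carefully in~\cite{Os-Ty:qh}), while the closed--open map you mention in passing is in fact the route taken by~\cite{FO3:toric-1} and is the one the paper singles out as ``rigorous and conceptual.'' Your checking of the linear relations via $z_j\partial_{z_j}\mathscr{P}=\sum_i v_i^j z^{\overrightarrow{v_i}}$ is correct and agrees with Proposition~\ref{p:dC-i}. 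Two steps are more than just bookkeeping, though. For the quantum Stanley--Reisner relations, the $z$-monomials on both sides of a primitive relation do agree as you say, but the powers of $t$ are $|S|$ on one side and $\sum_k c_k$ on the other; these are reconciled only after computing that the sphere class $\beta(S)=\sum_{i\in S}\beta_i-\sum_k c_k\beta_{i_k}$ has $c_1(\beta(S))=|S|-\sum_k c_k$, so that the specialization $T^{\beta(S)}\mapsto t^{\bar\mu(\beta(S))}$ supplies exactly the missing $t^{\,|S|-\sum_k c_k}$ -- a point you wave at but do not actually verify. For the rank count, the equality $\dim_{\mathbb{C}}\mathcal{O}(\mathcal{W}_1)=\dim_{\mathbb{C}}H^*(M;\mathbb{C})$ is not a free input: it is itself part of the Batyrev--Givental package you are trying to establish, and invoking ``the Milnor numbers sum to $\chi(M)$'' in the Morse case does not cover the theorem as stated (which has no Morse hypothesis on $\mathscr{P}$). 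As written, your argument is closer to a consistency check than an independent proof; to make it self-contained you would either need to carry out the SR and dimension verifications in full, or pass entirely to the closed--open map, which is what the cited reference does.
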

This theorem was first suggested by Givental~\cite{Gi:hom-geom-ICM}
and by Batyrev~\cite{Batyrev:qh-toric} and has been verified since
then at different levels of rigor. A rather rigorous and conceptual
proof has been recently carried out by
Fukaya-Oh-Ohta-Ono~\cite{FO3:toric-1}. See also~\cite{Os-Ty:qh} for a
more algebraically oriented proof. It is important to note that the
isomorphism~\eqref{eq:iso-I} does not send $QH(M;\Lambda^+)$ onto
$\mathcal{O}(\mathcal{W}_1) \otimes \Lambda^+$ but rather into a
subring of the latter.

We now consider the quantum module structure on $QH(L)$.  Recall
from~\ref{sb:qh} that for a $\widetilde{\Lambda}^+$-algebra
$\mathcal{R}$, $QH(L;\mathcal{R})$ is a module (in fact an algebra)
over $QH(M; \mathcal{R})$. We will use here $\mathcal{R} =
\mathcal{O}(\mathcal{W}_1) \otimes \Lambda$, but a similar discussion
holds for $\mathcal{O}(\mathcal{W}_1) \otimes \Lambda^+$ too. For $a
\in QH_j(M; \mathcal{O}(\mathcal{W}_1) \otimes \Lambda)$ and $\alpha
\in QH_k(L; \mathcal{W}_1)$ (see the notation in~\eqref{eq:qh-w}) we
denote by $a*\alpha \in QH_{j+k-2n}(L;\mathcal{W}_1)$ the quantum
module action.  Using the embedding $\Lambda = 1 \otimes \Lambda
\subset \mathcal{O}(\mathcal{W}_1) \otimes \Lambda$, we have a natural
inclusion $QH(M;\Lambda) \subset QH(M; \mathcal{O}(\mathcal{W}_1)
\otimes \Lambda)$. We will now take a closer look at the induced
module operation

\begin{equation} \label{eq:q-mod-W} QH(M; \Lambda) \otimes_{\Lambda}
   QH(L; \mathcal{W}_1) \longrightarrow QH(L;\mathcal{W}_1), \quad a
   \otimes \alpha \longmapsto a*\alpha.
\end{equation}
Note that $QH(L;\mathcal{W}_1)$ is also a module over
$\mathcal{O}(\mathcal{W}_1) \otimes \Lambda$ in an obvious way. For $c
\in \mathcal{O}(\mathcal{W}_1) \otimes \Lambda$, $\alpha \in
QH(L;\mathcal{W}_1)$ we denote this module operation as $c\alpha$. It
turns out that this module structure and the preceding ones are in
fact compatible:
\begin{prop} \label{p:a*alpha}
   For every $a \in QH(M; \Lambda)$ and $\alpha \in QH(L;
   \mathcal{W}_1)$ we have:
   \begin{equation} \label{eq:qmod-identity}
      a*\alpha = I(a)\alpha.
   \end{equation}
   The same continues to hold if we replace $\Lambda$ by $\Lambda^+$
   and $QH(L;\mathcal{W}_1)$ by $Q^+H(L;\mathcal{W}_1)$.
\end{prop}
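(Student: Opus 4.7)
My plan is to reduce the identity~\eqref{eq:qmod-identity} to a single geometric computation on a small generating set for $QH(M;\Lambda)$. By Batyrev's generation result (used in Theorem~\ref{t:Bat-Giv}), the classes $[\Sigma_1],\ldots,[\Sigma_r]$ generate $QH(M;\Lambda)$ as a $\Lambda$-algebra, so by $\Lambda$-bilinearity in $a$ it suffices to prove~\eqref{eq:qmod-identity} for $a=[\Sigma_i]$ acting on an arbitrary $\alpha$. The extension from generators to their products uses the associativity of the module structure $(a*b)*\alpha = a*(b*\alpha)$, together with the ring-homomorphism property of $I$ and the commutativity of $\mathcal{O}(\mathcal{W}_1)$. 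Next, since $QH(L;\mathcal{W}_1)$ is an \emph{algebra} over $QH(M;\mathcal{W}_1)$ (see~\S\ref{subsubsec:module}), we have the compatibility $a*(\alpha*\beta)=(a*\alpha)*\beta$. Taking $\alpha = [L]$ (the unit of $QH(L;\mathcal{W}_1)$) yields $a*\beta = (a*[L])*\beta$, which then equals $I(a)[L]*\beta = I(a)\beta$ as soon as $a*[L] = I(a)[L]$. Thus the whole proposition reduces to verifying the single family of identities
\begin{equation*}
    [\Sigma_i]*[L] \;=\; z^{\overrightarrow{v_i}}\,t\cdot[L] \quad\text{in }\ QH_{n-2}(L;\mathcal{W}_1),\qquad i=1,\ldots,r.
\end{equation*}

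To carry out this core computation, I first slice $QH_{n-2}(L;\mathcal{W}_1)$ using the exact sequence~\eqref{eq:ex-seq-qh_n-2}. Because $\dim L = n$, one has $H_k(L;\mathbb{C})=0$ for $k>n$, so the only nonzero summands of $QH_{n-2}(L;\mathcal{W}_1)$ are the classical piece $H_{n-2}(L;\mathbb{C})\otimes\mathcal{O}(\mathcal{W}_1)$ and the top quantum piece $\mathcal{O}(\mathcal{W}_1)\cdot[L]t$. The projection $\pi$ of $[\Sigma_i]*[L]$ onto the classical piece is the classical intersection $[\Sigma_i]\cdot[L]$, which vanishes because $\Sigma_i=\mathfrak{m}^{-1}(F_i)$ lies over a boundary facet of $P$ while $L=\mathfrak{m}^{-1}(p_*)$ is an interior fibre, so $\Sigma_i\cap L=\emptyset$. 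Hence $[\Sigma_i]*[L] = \lambda_i\,[L]t$ for a unique $\lambda_i\in\mathcal{O}(\mathcal{W}_1)$.

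To determine $\lambda_i$, I use the chain-level pearl definition of the module action with a perfect Morse function on $L$ (whose maximum represents $[L]$) and a Morse cocycle on $M$ representing $[\Sigma_i]$. By degree bookkeeping only Maslov-$2$ disks can contribute to the coefficient of $[L]t$: a Maslov-$2k$ contribution would carry a factor $t^k$ and would have to live in a summand $H_{n-2+2k}(L;\mathbb{C})\otimes t^k$, which is zero for $k\geq 2$. For Maslov-$2$ disks, the relevant pearly moduli space consists of $J$-holomorphic disks $u$ with $[u]=B$, $\mu(B)=2$, carrying one interior marked point mapped to a cycle representing $[\Sigma_i]$ and boundary evaluation at a generic point; its count equals $(B\cdot\Sigma_i)\,\nu(B)$. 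By Proposition~\ref{p:toric-mu=2-disks}, the Maslov-$2$ basis is $\{B_1,\ldots,B_r\}$ with $B_k\cdot\Sigma_i=\delta_{ki}$ and $\nu(B_k)=1$. Summing against $\rho$ gives $\lambda_i=\rho(B_i)=z^{\overrightarrow{v_i}}$, as desired. The $\Lambda^+$/$Q^+H$ version follows by restriction since all structures are defined over $\widetilde{\Lambda}^+$.

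The main obstacle is the orientation/sign verification at the chain level: establishing that the contribution of the disk in class $B_i$ is exactly $+z^{\overrightarrow{v_i}}$ rather than $-z^{\overrightarrow{v_i}}$, and excluding any boundary or bubbling contributions to the coefficient $\lambda_i$. This requires appealing to the orientation conventions of Appendix~\ref{a:orientations} and the toric-specific orientation results of~\cite{FO3,Cho:products} (in particular the preferred spin structure induced by the $\mathbb{T}^n$-action); the cancellation of potential bubbled configurations should follow from the monotonicity assumption $N_L=2$ combined with the fact that both intermediate stages of any hypothetical two-disk bubbling would have Maslov index $2$ and hence the bubbled limit would sit in a stratum of codimension $\geq 1$ in the ambient moduli, contributing nothing generically.
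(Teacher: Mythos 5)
Your proposal is correct and follows essentially the same route as the paper's own proof: reduce by generation to $a=[\Sigma_i]$ and by the module-algebra structure to $\alpha=[L]$, use the exact sequence~\eqref{eq:ex-seq-qh_n-2} plus $\Sigma_i\cap L=\emptyset$ to show the classical projection vanishes, then identify the coefficient of $[L]t$ via the Maslov-$2$ disk count from Proposition~\ref{p:toric-mu=2-disks}. The only cosmetic difference is that the paper writes the Maslov-$2$ contribution directly over $\widetilde{\Lambda}^+$ as $\sum_{B\in\mathcal{E}_2}\#(B\cdot\Sigma_i)\nu(B)T^B[L]$ rather than spelling out the degree-bookkeeping exclusion of $k\geq 2$; both are fine, and your concluding worry about bubbling is handled implicitly by the fact that for $N_L=2$ and a degree-$0$ pearly moduli space the generic count already excludes such strata.
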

\begin{proof}
   Since $QH(M;\Lambda)$ is generated by the classes $[\Sigma_i] =
   \frak{m}^{-1}(F_i)$, $i=1, \ldots, r$, it is enough to
   check~\eqref{eq:qmod-identity} for $a = [\Sigma_i]$. Since $*$ is a
   module action, it is also enough to restrict to the case $\alpha =
   [L]$ which is the unity of $QH(L;\mathcal{W}_1)$.

   Next note that $[\Sigma_i]*[L]$ lies in the image of the natural
   map $Q^+H(L;\mathcal{W}_1) \longrightarrow QH(L;\mathcal{W}_1)$
   hence it is enough to show that~\eqref{eq:qmod-identity} holds in
   $Q^+H(L;\mathcal{W}_1)$.

   Recall from~\eqref{eq:ex-seq-qh_n-2} that we have the following
   exact sequence:
   \begin{equation} \label{eq:ex-seq-W_1-qh_n-2} 0 \longrightarrow
      \mathcal{O}(\mathcal{W}_1) [L]t \stackrel{i}{\longrightarrow}
      Q^+H_{n-2}(L;\mathcal{W}_1) \stackrel{\pi} \longrightarrow
      H_{n-2}(L;\mathbb{C}) \otimes \mathcal{O}(\mathcal{W}_1)
      \longrightarrow 0.
   \end{equation}
   Moreover, it follows from the definition of the quantum module
   action that $\pi([\Sigma_i]*[L]) = [\Sigma_i]\cdot [L]$, where
   $[\Sigma_i] \cdot [L]$ stand for the classical intersection product
   in singular homology. But $L$ is disjoint from $\Sigma_i$, hence
   $\pi([\Sigma_i]*[L])= [\Sigma_i] \cdot [L] = 0$. It follows
   from~\eqref{eq:ex-seq-W_1-qh_n-2} that $[\Sigma_i]* [L] =c[L]t$ for
   some function $c \in \mathcal{O}(\mathcal{W}_1)$.

   To determine $c$ note that if we work with coefficients in
   $\widetilde{\Lambda}^+$ we have:
   \begin{equation} \label{eq:qmod-toric-wtld} [\Sigma_i]*[L] =
      \sum_{B \in \mathcal{E}_2} \#(B \cdot \Sigma_i) \nu(B) T^B [L].
   \end{equation}
   Substituting the information from
   Proposition~\ref{p:toric-mu=2-disks}
   into~\eqref{eq:qmod-toric-wtld} we immediately obtain
   $$[\Sigma_i]*[L] = z^{\overrightarrow{v_i}} [L]t.$$ Since
   $z^{\overrightarrow{v_i}} t = I([\Sigma_i])$ this concludes the
   proof.
\end{proof}

Next we consider the quantum inclusion map. Let $f:L \longrightarrow
\mathbb{R}$ be a perfect Morse function having exactly one minimum,
$x_0 \in L$. Let $(\cdot, \cdot)$ be a Riemannian metric on $L$ and
$J$ an $\omega$-compatible almost complex structure on $M$. Put
$\mathscr{D} = (f, (\cdot, \cdot), J)$ and assume the elements of this
triple have been chosen to be generic so that the pearl complex
$\mathcal{C}(\mathscr{D};\mathcal{O}(\mathcal{W}_1)\otimes \Lambda)$
is well defined. Under these assumption $x_0 \in
\mathcal{C}(\mathscr{D};\mathcal{O}(\mathcal{W}_1)\otimes \Lambda)$ is
a cycle and we denote by $[x_0] \in QH(L;\mathcal{W}_1)$ its homology
class. Note that in general $[x_0]$ strongly depends on the choice of
$\mathscr{D}$. (See~\S 4.5 in~\cite{Bi-Co:rigidity}.) Nevertheless, it
turns out that its image under the quantum inclusion~\eqref{eq:qinc}
is well defined.
\begin{prop} \label{p:iL-x_0} Let $a_1, \ldots, a_m \in
   H_*(M;\mathbb{C})$ be elements of pure degree which consist of a
   basis for the total homology $H_*(M;\mathbb{C})$. Denote by
   $a_1^{\#}, \ldots, a_m^{\#}$ the dual basis with respect to
   intersection product.  Then:
   \begin{equation} \label{eq:iL-x_0-1} i_L([x_0]) =
      \sum_{i=1}^{m}I(a_i^{\#})a_i \; \in \; QH(M;
      \mathcal{O}(\mathcal{W}_1) \otimes \Lambda).
   \end{equation}
\end{prop}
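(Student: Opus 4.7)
My plan is to combine three ingredients: the fact that the quantum inclusion $i_L$ is a $QH(M;\mathcal{O}(\mathcal{W}_1)\otimes \Lambda)$-module morphism, Proposition~\ref{p:a*alpha}, and the Frobenius (Poincar\'e) pairing on $QH(M)$. Writing $E := i_L([x_0])$, the first step is to derive the eigen-equation $a * E = I(a) \cdot E$ for every $a \in QH(M;\Lambda)$, viewed inside $QH(M;\mathcal{O}(\mathcal{W}_1)\otimes\Lambda)$. This is immediate: the module property gives $a * E = i_L(a * [x_0])$; Proposition~\ref{p:a*alpha} yields $a * [x_0] = I(a)\,[x_0]$ in $QH(L;\mathcal{W}_1)$; applying $i_L$ and using its $\mathcal{O}(\mathcal{W}_1)\otimes\Lambda$-linearity gives $a * E = I(a)\,E$.

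Next, I would read off the coefficients in the expansion $E = \sum_j c_j a_j$ via the Poincar\'e pairing $g(\alpha,\beta) := \tau(\alpha * \beta)$ on $QH(M;\mathcal{O}(\mathcal{W}_1)\otimes\Lambda)$, where $\tau$ extracts the coefficient of $[\textnormal{pt}]$. For degree reasons the quantum corrections to $g(a_i, a_j^{\#})$ vanish (any such correction would involve a homology class of positive degree, which has no $[\textnormal{pt}]$-component), hence $g(a_i, a_j^{\#}) = \delta_{ij}$ and $c_i = g(E, a_i^{\#})$. Plugging in the eigen-equation with $a = a_i^{\#}$ produces
\[
c_i \;=\; \tau\bigl(a_i^{\#} * E\bigr) \;=\; \tau\bigl(I(a_i^{\#})\,E\bigr) \;=\; I(a_i^{\#})\cdot \tau(E).
\]

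The third step is to verify $\tau(E) = 1$. At the chain level $i_L(x_0) = [\textnormal{pt}] + (\textnormal{terms with non-trivial $J$-disk contributions})$; the first summand is the Morse cycle representing the classical inclusion $i_*^{\textnormal{cl}}([x_0]) = [\textnormal{pt}] \in H_0(M)$ and contributes $1$ to $\tau$. The other summands each carry a non-trivial power of $t$ and, for the total degree to vanish, must be paired with a class in positive homological degree, which does not contribute to the $[\textnormal{pt}]$-coefficient. Combining the three steps yields $c_i = I(a_i^{\#})$, which is~\eqref{eq:iL-x_0-1}. As a by-product, since the right-hand side is manifestly independent of $\mathscr{D}$, one obtains a posteriori the independence of $i_L([x_0])$ from the Morse data asserted in the statement. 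The main obstacle is the precise bookkeeping in Step~3: one must carefully match the gradings between $QH(L)$, $QH(M)$ and $\mathcal{O}(\mathcal{W}_1)\otimes\Lambda$, and verify at the chain level that the classical part of $i_L(x_0)$ is indeed the cycle representing $[\textnormal{pt}]$. A minor secondary point is to observe that non-degeneracy of the Poincar\'e pairing is preserved under extending scalars from $\Lambda$ to $\mathcal{O}(\mathcal{W}_1)\otimes\Lambda$.
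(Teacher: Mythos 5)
Your proof is correct in outline and takes a genuinely different route from the paper's, working entirely on the ambient side. You push the eigenvalue relation through $i_L$: the module--morphism property together with Proposition~\ref{p:a*alpha} gives $a*E = I(a)E$, and you then read off coefficients using the quantum Frobenius pairing $g(\alpha,\beta) = \tau(\alpha*\beta)$ plus the normalization $\tau(E)=1$. The paper instead expands $i_L([x_0]) = \sum \varphi_i a_i$ and extracts $\varphi_j$ by pairing with $\textnormal{PD}(a_j^\#)$ under the \emph{classical} Kronecker pairing, then uses the augmentation--duality identity~\eqref{eq:aug-1} to convert $\langle \textnormal{PD}(a_j^\#), i_L([x_0])\rangle$ into $\epsilon_L(a_j^\# * [x_0])$ and only then applies Proposition~\ref{p:a*alpha}; there the normalization $\epsilon_L([x_0])=1$ is the counterpart of your $\tau(E)=1$. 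Both proofs turn on the same key input (Proposition~\ref{p:a*alpha}), but the paper never needs to know anything about the \emph{quantum} Poincar\'e pairing on $QH(M)$, whereas you do.

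That is precisely where your Step~2 has a gap. You justify $g(a_i,a_j^\#)=\delta_{ij}$ by ``degree reasons,'' claiming any quantum correction lands on a class of positive homological degree. But $a_i*a_j^\#$ has total degree $|a_i|-|a_j|$, so its $[\textnormal{pt}]$-coefficient is forced to be a monomial $c\,t^k$ with $kN_L = |a_j|-|a_i|$; when $|a_i| < |a_j|$ the exponent $k$ is positive and degree counting alone does not force $c=0$ -- the quantum correction at level $k$ carries a coefficient in $H_{|a_i|-|a_j|+kN_L}(M) = H_0(M)$, which certainly can have a $[\textnormal{pt}]$-component. That this coefficient actually vanishes is exactly Lemma~\ref{l:dual-basis}, which rests on the fundamental-class axiom for closed Gromov--Witten invariants and is proved in~\cite{Bi-Co:qrel-long}; you should cite it rather than try to re-derive it. Once that citation is in place the rest of your argument is sound: in particular your Step~3 (that the $t^0$ part of the chain-level $i_L(x_0)$ is the Morse $[\textnormal{pt}]$-cycle and that the higher-order terms land on $H_{>0}$, so $\tau(E)=1$) is correct, and your closing remark that the formula yields, \emph{a posteriori}, the independence of $i_L([x_0])$ from the data $\mathscr{D}$ is a nice observation. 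So the obstacle you flag in Step~3 is minor; the actual work you glossed over is in Step~2.
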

Note that we can always take $a_1 = [\textnormal{pt}] \in
H_0(M;\mathbb{C})$ to be the class of a point and $a_m = [M] \in
H_{2n}(M;\mathbb{C})$ to be the fundamental class. We will then have
$a_1^{\#}=[M]$ and $a_m^{\#} = [\textnormal{pt}]$ and
formula~\eqref{eq:iL-x_0-1} becomes:
\begin{equation} \label{eq:iL-x_0-2} i_L([x_0]) = [\textnormal{pt}] +
   \sum_{i=2}^{m-1}I(a_i^{\#})a_i + I([\textnormal{pt}])[M].
\end{equation}

In order to prove Proposition~\ref{p:iL-x_0} we will use the
augmentation map $\epsilon_L : QH(L;\mathcal{R}) \longrightarrow
\mathcal{R}$, defined for every $\widetilde{\Lambda}^+$--algebra
$\mathcal{R}$. The precise definition and properties of this map can
be found in~\cite{Bi-Co:rigidity} (see e.g. Theorem~A in that paper).
The augmentation map $\epsilon_L$ is induced by a map
$\widetilde{\epsilon_L} : \mathcal{C}(f, \rho, J; \mathcal{R})
\longrightarrow \mathcal{R}$ which is defined as follows.  Assume that
$f$ has a unique minimum $x_0$, then $\widetilde{\epsilon_L}(x_0) = 1$
and for every $x \in \textnormal{Crit}(f)$, $x \neq x_0$,
$\widetilde{\epsilon_L}(x)=0$. It satisfies the following identity
\begin{equation} \label{eq:aug-1} \langle \textnormal{PD}(b),
   i_L(\beta) \rangle = \epsilon_L(b * \beta), \quad \forall \; b \in
   H_*(M;\mathbb{C})\subset QH(M;\mathcal{R}), \; \beta \in
   QH(L;\mathcal{R}),
\end{equation}
where $\textnormal{PD}$ stand for Poincar\'{e} duality and $\langle
\cdot, \cdot \rangle$ for the obvious $\mathcal{R}$--linear extension
of the Kronecker pairing.

We are now ready to prove Proposition~\ref{p:iL-x_0}.
\begin{proof}[Proof of Proposition~\ref{p:iL-x_0}]
   Write $i_L([x_0]) = \sum_{i=1}^{m}\varphi_i a_i$, with $\varphi_i
   \in \mathcal{O}(\mathcal{W}_1) \otimes \Lambda$. Apply now
   formula~\eqref{eq:aug-1} with $b=a_j^{\#}$, $\beta=[x_0]$. We
   obtain $$\varphi_j = \langle \textnormal{PD}(a_j^{\#}), i_L([x_0])
   \rangle = \epsilon_L(a_j^{\#} * [x_0]) =
   \epsilon_L(I(a_j^{\#})[x_0]) = I(a_j^{\#}),$$ where the one to last
   equality follows from Proposition~\ref{p:a*alpha}.
\end{proof}

\begin{rem} \label{r:Ia} In a similar way one can prove that
   \begin{equation} \label{eq:Ia} I(a) = \langle \textnormal{PD}(a),
      i_L([x_0])\rangle, \quad \forall \; a \in H_*(M;\mathbb{C}).
   \end{equation}
\end{rem}

\subsection{The Frobenius structure and the quantum Euler class}
\label{sb:frob} The quantum homology $QH(M;\Lambda)$ has the structure
of a Frobenius algebra. In this section we explain how to translate
this structure via the isomorphism $I$ to the Jacobian ring
$\mathcal{O}(\mathcal{W}_1) \otimes \Lambda$. We remark that this
translation has been recently established by Fukaya, Oh, Ohta and
Ono~\cite{FO3:toric-3}. Below we explain our point of view on the
subject and how it is related to our theory.

\subsubsection{Generalities on Frobenius algebras}
\label{sbsb:frob-gnrl} We first recall some basic notions about
Frobenius algebras. The reader is referred
to~\cite{Abrams:semi-simple} and the references therein for the
general theory of Frobenius algebras.

Let $\mathcal{A}$ be an algebra over a ring $\mathcal{R}$ and assume
that $\mathcal{A}$ is a free finite-rank module over $\mathcal{R}$. A
Frobenius structure on $\mathcal{A}$ is an $\mathcal{R}$--linear map
$F:\mathcal{A} \longrightarrow \mathcal{R}$ such that the associated
bilinear pairing
$$\mathcal{A} \otimes_{\mathcal{R}} \mathcal{A} \longrightarrow
\mathcal{R}, \quad a\otimes b \longmapsto F(ab)$$ is
non-singular in the sense that the induced map $\mathcal{A}
\longrightarrow \textnormal{Hom}_{\mathcal{R}}(\mathcal{A},
\mathcal{R})$, $a \longmapsto F(a \cdot -)$, is invertible (or put in
different terms, the associated matrix of the pairing is invertible in
some basis of $\mathcal{A}$ over $\mathcal{R}$). Of course, the
associated bilinear pairing of a Frobenius structures can be viewed as
a generalization of the notion of Poincar\'{e} duality.  Note that
when the ring $\mathcal{R}$ is not a field some authors (e.g.
Abrams~\cite{Abrams:semi-simple}) use the notion of Frobenius extension
rather than Frobenius structure.

To a Frobenius structure one can associate an invariant called the
Euler class, introduced by Abrams~\cite{Abrams:semi-simple}. This is
defined as follows. Pick a basis $a_1, \ldots, a_m$ of $\mathcal{A}$
over $\mathcal{R}$. Let $a_1\spcheck, \ldots, a_m\spcheck$ be the dual
basis with respect to the Frobenius pairing. The Euler class
$\mathscr{E}(\mathcal{A}, F)$ is defined as:
\begin{equation} \label{eq:euler-A} \mathscr{E}(\mathcal{A}, F) =
   \sum_{i=1}^m a_i a_i\spcheck.
\end{equation}
It is straightforward to check that $\mathscr{E}(\mathcal{A}, F)$ does
not depend on the choice of the basis. The importance of the Euler
class comes form the following theorem.
\begin{thm}[Abrams~\cite{Abrams:semi-simple, Abrams:thesis}]
\label{t:Abrams} Let
   $\mathcal{A}$ be a finite dimensional algebra over a field
   $\mathcal{R}$ of characteristic $0$. Then:
   \begin{enumerate}
     \item For every two Frobenius structures $F'$ and $F''$ on
      $\mathcal{A}$ there exists an invertible element $u \in
      \mathcal{A}$ such that $F'' = u F'$. Moreover we have:
      $\mathscr{E}(\mathcal{A}, F'') = u^{-1} \mathscr{E}(\mathcal{A},
      F'')$. Thus the Euler class does not depend on the Frobenius
      structure upto multiplication by an invertible element. In
      particular, whether or not the Euler class is a zero divisor, or
      whether or not it is invertible, does not depend on the
      particular choice of the Frobenius structure.
     \item Suppose that the Euler class of some (hence for every)
      Frobenius structure on $\mathcal{A}$ is not a zero divisor.
      Then the Euler classes determine the Frobenius structures on
      $\mathcal{A}$ in the sense that there exists a unique Frobenius
      structure $F$ on $\mathcal{A}$ with a given Euler class.
     \item The algebra $\mathcal{A}$ is semi-simple iff the Euler
      class $\mathscr{E}(\mathcal{A}, F)$ is invertible for some
      Frobenius structures $F$ on $\mathcal{A}$.
   \end{enumerate}
\end{thm}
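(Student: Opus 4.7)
The plan is to establish the three assertions in sequence, using part~(1) as the algebraic backbone. First I would fix $F'$ and exploit its non-singular pairing to identify $\textnormal{Hom}_{\mathcal{R}}(\mathcal{A}, \mathcal{R})$ with $\mathcal{A}$ itself via $a \mapsto F'(a \cdot -)$; under this identification, the linear functional $F''$ corresponds to a unique $u \in \mathcal{A}$ satisfying $F''(x) = F'(ux)$ for all $x \in \mathcal{A}$. The key point is that $u$ must be a unit: if $u$ were a zero divisor, pick $y \neq 0$ with $uy = 0$; then $F''(xy) = F'(uxy) = F'(x \cdot uy) = 0$ for every $x$ (in the commutative case; the noncommutative case uses the analogous one-sided annihilator argument), contradicting non-singularity of $F''$. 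With $u$ invertible, a direct basis computation yields the Euler class identity: for any $\mathcal{R}$-basis $\{a_i\}$ with $F'$-dual $\{a_i^{\vee}\}$, the $F''$-dual basis is $\{u^{-1} a_i^{\vee}\}$, so $\mathscr{E}(\mathcal{A}, F'') = \sum_i a_i u^{-1} a_i^{\vee} = u^{-1}\, \mathscr{E}(\mathcal{A}, F')$.

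Part~(2) then follows by a one-line algebraic manipulation: if two Frobenius structures $F', F''$ yield the same Euler class, part~(1) produces an invertible $u \in \mathcal{A}$ with $F'' = uF'$ and $u^{-1}\mathscr{E}(\mathcal{A}, F') = \mathscr{E}(\mathcal{A}, F'')$; equality of Euler classes then gives $(u^{-1} - 1)\,\mathscr{E}(\mathcal{A}, F') = 0$, and the non-zero-divisor hypothesis forces $u = 1$, hence $F'' = F'$.

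For part~(3), my approach would be to pass to the canonical trace form $\tau(a) = \textnormal{tr}(L_a)$, where $L_a$ denotes left multiplication by $a$; since invertibility of the Euler class is basis-independent and, by part~(1), Frobenius-structure-independent up to units, one may compute with whichever structure is most convenient. For the forward implication (semi-simple $\Rightarrow$ invertible Euler class), decompose $\mathcal{A}$ via Artin--Wedderburn into a product of simple factors (using $\textnormal{char}(\mathcal{R}) = 0$ for separability), note that both the Frobenius structure and the Euler class respect this decomposition, and verify the latter is a nonzero scalar multiple of the identity on each factor. For the converse, assume a nonzero nilpotent two-sided ideal $N \subset \mathcal{A}$; for $n \in N$ one shows, using the symmetry $F(ab) = F(ba)$ together with formula~\eqref{eq:euler-A}, that $n \cdot \mathscr{E}(\mathcal{A}, F) = \tau(L_n) \cdot 1_{\mathcal{A}} = 0$, since $L_n$ is nilpotent and traces of nilpotent operators vanish in characteristic zero. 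This exhibits $\mathscr{E}(\mathcal{A}, F)$ as a zero divisor, proving the contrapositive.

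The main obstacle, in my view, is the identity $a \cdot \mathscr{E}(\mathcal{A}, F) = \tau(L_a) \cdot 1_{\mathcal{A}}$ underlying part~(3). This is a form of the classical ``Casimir'' identity for Frobenius algebras, and it requires a careful interplay between the Frobenius pairing, the multiplication, and the left/right behavior of dual bases, especially in the noncommutative setting. Once this bridge to the trace is set up, the forward direction reduces to a factorwise computation on simple matrix algebras and the reverse direction to the classical vanishing of traces of nilpotent endomorphisms.
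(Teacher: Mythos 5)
The paper does not prove this theorem; it states it as a citation to Abrams's papers, so there is no in-paper proof to compare against. Evaluating your argument on its own merits: parts (1) and (2) are essentially sound, and your flagged commutativity shortcut in (1) is harmless since the cleaner version (nondegeneracy of both pairings forces $L_u$ to be bijective, hence $u$ a unit) handles the general case directly. Part (2) as you note is a one-liner from (1). However, part (3) rests on a false identity, and this is a genuine gap.

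The formula $a \cdot \mathscr{E}(\mathcal{A}, F) = \tau(L_a)\cdot 1_{\mathcal{A}}$ is not a valid Casimir-type identity. Counterexample: take $\mathcal{A} = \mathbb{C}[x]/(x^2)$ with $F(1)=0$, $F(x)=1$. The dual basis to $\{1,x\}$ is $\{x,1\}$, so $\mathscr{E} = 1\cdot x + x\cdot 1 = 2x$. With $a = 1$ your formula would give $2x = \tau(L_1)\cdot 1 = 2\cdot 1$, which is false. The correct identity, obtained by the same dual-basis bookkeeping you gesture at, is
\begin{equation*}
F\bigl(a\,\mathscr{E}(\mathcal{A},F)\bigr) = \textnormal{tr}(L_a) \quad \text{for all } a\in\mathcal{A},
\end{equation*}
i.e., the trace of left multiplication is recovered only after applying $F$, not as an equality in $\mathcal{A}$. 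Your argument for the reverse implication of (3) can be repaired with this: if $n$ lies in the Jacobson radical, then $an$ is nilpotent for every $a$ (the radical is a nil ideal), so $\textnormal{tr}(L_{an})=0$ for all $a$, hence $F(a\cdot n\mathscr{E})=0$ for all $a$, and nondegeneracy of $F$ forces $n\mathscr{E}=0$, exhibiting $\mathscr{E}$ as a zero divisor. The forward direction via Artin--Wedderburn is fine as sketched (on a simple factor the trace Frobenius form gives $\mathscr{E}$ as a nonzero scalar, and part (1) transports invertibility to any other Frobenius form), but you should verify it independently rather than lean on the incorrect Casimir identity.
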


We also have the following result that will be relevant for our
purposes.
\begin{thm}[Scheja-Storch~\cite{Sch-St:spur}] \label{t:scheja-storch}
   Let $(\mathcal{A}, F)$ be a Frobenius algebra over a field
   $\mathcal{R}$ of characteristic $0$. Suppose that $\mathcal{A}$ can
   be written as $\mathcal{A} = \mathcal{R}[x_1, \ldots, x_r]/
   \mathcal{I}$ for some ideal $\mathcal{I}$ which is generated by $r$
   elements $f_1, \ldots, f_r \in \mathcal{R}[x_1, \ldots, x_r]$. Put
   $$J = \det \Bigl(\frac{\partial f_i}{\partial x_j}\Bigr)_{i,j} \in
   \mathcal{A}.$$ If $J \neq 0$ then $J = u \mathscr{E}(\mathcal{A},
   F)$ for some invertible element $u \in \mathcal{A}$.
\end{thm}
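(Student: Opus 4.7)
The plan is to exhibit $J$ as the Euler class of a particular canonical Frobenius structure on $\mathcal{A}$, the Scheja-Storch trace $\sigma_{SS}$, and then to compare this structure with an arbitrary Frobenius form $F$ via Theorem~\ref{t:Abrams}(1). Since $\mathcal{A}$ is finite-dimensional of the form $\mathcal{R}[x_1,\ldots,x_r]/(f_1,\ldots,f_r)$, the sequence $f_1,\ldots,f_r$ is necessarily regular (a complete intersection in codimension $r$), a fact that supplies the homological input for the construction.

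The canonical structure is built from the Bezoutian. Working in $\mathcal{R}[x_1,\ldots,x_r]\otimes_{\mathcal{R}}\mathcal{R}[y_1,\ldots,y_r]$, write each difference as
\[
f_i(x) - f_i(y) = \sum_{j=1}^{r} a_{ij}(x,y)(x_j - y_j)
\]
for suitable polynomials $a_{ij}$, and set $\Delta(x,y) = \det(a_{ij}(x,y))$. Let $\bar{\Delta} \in \mathcal{A}\otimes_{\mathcal{R}} \mathcal{A}$ be its class. Differentiating the defining identity in $y_j$ and evaluating at $y=x$ gives $a_{ij}(x,x) = \partial f_i/\partial x_j$, so that applying the multiplication map $\mu : \mathcal{A}\otimes\mathcal{A} \to \mathcal{A}$ one finds $\mu(\bar{\Delta}) = J$.

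The main obstacle, which is the substance of Scheja and Storch's analysis in~\cite{Sch-St:spur}, is to prove that $\bar{\Delta}$ is the Casimir element of a uniquely determined Frobenius form $\sigma_{SS}$ on $\mathcal{A}$: concretely, that $\bar{\Delta}$ induces an isomorphism $\mathcal{A}^{*}\cong \mathcal{A}$, and that if one expands $\bar{\Delta} = \sum_i e_i\otimes e_i'$, then the form $\sigma_{SS}$ determined by $\sigma_{SS}(e_i e_j') = \delta_{ij}$ is a Frobenius form. The standard route to this non-degeneracy goes through the Koszul complex on $(f_1,\ldots,f_r)$, which resolves $\mathcal{A}$ over $\mathcal{R}[x_1,\ldots,x_r]$ and lets one identify $\bar{\Delta}$ with a distinguished generator of the top Koszul homology. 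Once this is granted, the very definition of the Euler class gives
\[
\mathscr{E}(\mathcal{A},\sigma_{SS}) = \sum_i e_i e_i' = \mu(\bar{\Delta}) = J.
\]

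To conclude, apply Theorem~\ref{t:Abrams}(1) with $F' = F$ and $F'' = \sigma_{SS}$: there is an invertible $v \in \mathcal{A}$ with $\sigma_{SS} = vF$, and the stated transformation rule for Euler classes yields $J = \mathscr{E}(\mathcal{A},\sigma_{SS}) = v^{-1}\mathscr{E}(\mathcal{A}, F)$. Setting $u = v^{-1}$ is the unit claimed in the theorem. The assumption $J\neq 0$ is used only to rule out the vacuous case, and is in fact equivalent to $\mathscr{E}(\mathcal{A}, F) \neq 0$ by Theorem~\ref{t:Abrams}(1).
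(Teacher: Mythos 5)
The paper itself does not prove Theorem~\ref{t:scheja-storch}; it is cited directly from Scheja--Storch~\cite{Sch-St:spur}, so there is no in-paper argument against which to compare yours. Your sketch faithfully reproduces the standard line of reasoning: finite dimensionality of $\mathcal{A}$ over the field $\mathcal{R}$ forces $\dim\mathcal{A}=0$, so the $r$ generators $f_1,\ldots,f_r$ of $\mathcal{I}$ in the $r$-dimensional Cohen--Macaulay ring $\mathcal{R}[x_1,\ldots,x_r]$ form a regular sequence; the Bezoutian $\bar{\Delta}\in\mathcal{A}\otimes_{\mathcal{R}}\mathcal{A}$ satisfies $\mu(\bar{\Delta})=J$ by differentiating the factorization of $f_i(x)-f_i(y)$ at the diagonal; the substantial content of Scheja--Storch is the nondegeneracy of $\bar{\Delta}$ (via the Koszul resolution of $\mathcal{A}$), which exhibits it as the Casimir element of a canonical Frobenius form $\sigma_{SS}$ with $\mathscr{E}(\mathcal{A},\sigma_{SS})=\mu(\bar{\Delta})=J$; and Theorem~\ref{t:Abrams}(1) then converts $\sigma_{SS}$ to the given $F$ at the cost of an invertible unit, giving $J=u\,\mathscr{E}(\mathcal{A},F)$.

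Two minor remarks. First, the transformation rule you invoke from Theorem~\ref{t:Abrams}(1) is stated in the paper with a typographical slip ($\mathscr{E}(\mathcal{A},F'')=u^{-1}\mathscr{E}(\mathcal{A},F'')$ should read $\mathscr{E}(\mathcal{A},F'')=u^{-1}\mathscr{E}(\mathcal{A},F')$); you have used the corrected form. Second, your closing observation that the hypothesis $J\neq 0$ is equivalent to $\mathscr{E}(\mathcal{A},F)\neq 0$ is correct given the identity $J=u\,\mathscr{E}(\mathcal{A},F)$; in fact the Scheja--Storch theory shows $J$ generates the socle of $\mathcal{A}$ and is automatically nonzero for a complete intersection, so the hypothesis is indeed only to keep the statement from being vacuous. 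The step that genuinely requires the cited reference is the nondegeneracy of $\bar{\Delta}$ and the well-definedness of $\sigma_{SS}$; you correctly isolate this and do not attempt to reprove it, which is appropriate given that the paper treats the theorem as a black box.
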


\subsubsection{The main examples} \label{sbsb:frob-exp} Here are two
examples that are relevant in our context. The first one is classical.
Let $M$ be a closed manifold and $\mathcal{R}$ any ring.  Assume for
simplicity that $H_i(M;\mathcal{R})=0$ for every odd $i$.  Let
$\mathcal{A} = H_*(M;\mathcal{R})$ endowed with the intersection
product $\cdot$. Write $$H_*(M;\mathcal{R}) = \mathcal{R}
[\textnormal{pt}] \; \bigoplus \bigoplus_{j=1}^{\dim M/2}
H_{2j}(M;\mathcal{R}),$$ where $[\textnormal{pt}]$ is the class of a
point. The Frobenius structure $F$ is defined by the projection onto
the $\mathcal{R} [\textnormal{pt}]$ factor. In other words, $F(a)$ is
defined to be the coefficient of $a$ at $[\textnormal{pt}]$. The
associated bilinear pairing is precisely the intersection pairing. A
simple computation shows that the Euler class $\mathscr{E} =
\mathscr{E}(\mathcal{A}, F)$ in this case is exactly
$\chi(M)[\textnormal{pt}]$.

The second example, which is the one we will focus on, is the quantum
cohomology of a symplectic manifold $M$. We assume that $(M, \omega)$
is a closed monotone symplectic manifold and that
$H_i(M;\mathbb{C})=0$ for every odd $i$. Put
$\mathcal{A}=QH(M;\Lambda)$ endowed with the quantum product $*$ and
let $\mathcal{R} = \Lambda$. The Frobenius structure is taken as in
the preceding example, i.e. for $a \in QH(M;\Lambda)$ we set $F(a)\in
\Lambda$ to be the coefficient of $a$ at $[\textnormal{pt}]$. We
denote it from now on by $F_Q$ to emphasize the relation to quantum
homology. The fact that this is indeed a Frobenius structure is not
immediate.  It is proved e.g. in~\cite{Abrams:semi-simple}.

We now turn to the Euler class of the Frobenius structure $F_Q$ on the
quantum homology. We denote it for simplicity by $\mathscr{E}_{Q}$ and
call it the {\em quantum Euler class}. Under the assumptions that $(M,
\omega)$ is monotone and $H_{\textnormal{odd}}(M;\mathbb{C})=0$ we
have the following:
\begin{lem} \label{l:dual-basis} Let $\mathbf{a}= \{a_1, \ldots,
   a_m\}$ be a basis for $H_*(M;\mathbb{C})$ consisting of elements of
   pure degree and $\mathbf{a}^{\#} = \{a_1^{\#}, \ldots, a_m^{\#}\}
   \in H_*(M;\mathbb{C})$ be the dual basis with respect to the {\em
     classical} intersection product. Then $\mathbf{a}^{\#}$ is also a
   dual basis with respect to the quantum product $*$. In particular
   we have
   \begin{equation} \label{eq:EQ} \mathscr{E}_Q = \sum_{i=1}^m
      a_i*a_i^{\#}.
   \end{equation}
   This class belongs to $QH_0(M;\Lambda)$ and is a deformation of the
   classical Euler class, i.e. $\mathscr{E}_Q =
   \chi(M)[\textnormal{pt}] + h.o.(t)$, where $h.o.(t)$ stands for
   higher order terms in $t$.
\end{lem}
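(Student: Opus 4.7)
The plan is to show that the quantum Frobenius pairing $(a,b)\mapsto F_Q(a*b)$ agrees with the classical Poincar\'e intersection pairing on $H_{\ast}(M;\mathbb{C})$, extended $\Lambda$--linearly. Granted this, the dual--basis property $F_Q(a_i*a_j^{\#})=\delta_{ij}$ is immediate, formula~\eqref{eq:EQ} follows directly from the general definition~\eqref{eq:euler-A} of the Euler class, and the remaining assertions reduce to a classical computation of the Euler class of $M$.

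The core identity
\begin{equation*}
F_Q(a*b)\;=\;\langle a,b\rangle_{\mathrm{cl}}\qquad\forall\,a,b\in H_{\ast}(M;\mathbb{C})
\end{equation*}
will be proved using the standard description of the quantum product on the monotone symplectic manifold $M$ in terms of three--point genus--zero Gromov--Witten invariants. Arranging the basis so that $a_1=[\mathrm{pt}]$ (and hence $a_1^{\#}=[M]$), the coefficient of $[\mathrm{pt}]$ in the basis expansion of $a*b$ is a sum, over classes $A\in H_2(M;\mathbb{Z})$, of the form $\langle a,b,[M]\rangle_A$ weighted by the appropriate power of $t$. The $A=0$ summand equals the classical triple intersection $\int_M a\cup b\cup 1=\langle a,b\rangle_{\mathrm{cl}}$, while for $A\neq 0$ each summand vanishes by the fundamental class axiom for Gromov--Witten invariants: inserting the unit $[M]$ imposes no constraint on the moduli space $\overline{\mathcal{M}}_{0,3}(M,A)$, so the resulting integral sits over a space of strictly positive virtual dimension. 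Specializing to $b=a_j^{\#}$ then yields $F_Q(a_i*a_j^{\#})=\delta_{ij}$, establishing the first two parts of the lemma.

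For the remaining assertions, each summand $a_i*a_i^{\#}$ has degree $|a_i|+(2n-|a_i|)-2n=0$, so $\mathscr{E}_Q\in QH_0(M;\Lambda)$. Setting $t=0$ in $\mathscr{E}_Q$ recovers the classical Euler class $\sum_i a_i\cdot a_i^{\#}$; under the assumption $H_{\mathrm{odd}}(M;\mathbb{C})=0$, each summand $a_i\cdot a_i^{\#}\in H_0(M;\mathbb{C})$ equals $[\mathrm{pt}]$, and the number of basis elements is $\dim_{\mathbb{C}}H_{\ast}(M;\mathbb{C})=\chi(M)$, yielding $\chi(M)[\mathrm{pt}]$ exactly as in the classical example of~\S\ref{sbsb:frob-exp}. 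All higher--order contributions are absorbed into $t\, QH_{\ast}(M;\Lambda)$ by monotonicity of $M$.

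The main obstacle is essentially one of bookkeeping: one must confirm that the pearl--complex definition of $QH(M;\Lambda)$ used throughout this paper matches the standard Gromov--Witten description invoked above, and carefully track sign and grading conventions (in particular the correspondence between $|t|=-2$ and $c_1$). In the monotone setting all of these compatibilities are well known but tedious to spell out in full.
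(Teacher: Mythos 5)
Your proof is correct, and the approach is the standard one. The key identity is $F_Q(a*b) = a \cdot b$ for all $a,b \in H_*(M;\mathbb{C})$, i.e.\ that the quantum Frobenius pairing coincides with the classical intersection pairing; once this is established the dual-basis claim, and hence formula~\eqref{eq:EQ}, follow immediately from the definition~\eqref{eq:euler-A}. Your appeal to the fundamental class axiom is exactly the right tool: writing $\langle a*b,\, [M]\rangle = \sum_A GW_A(a,b,[M])\,t^{c_1(A)}$, each summand with $A\neq 0$ vanishes because inserting the unit $[M]$ lets one forget the corresponding marked point, so that the integrand descends to the moduli space with one fewer marked point (of one lower dimension), while the $A=0$ term reproduces $a\cdot b$. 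The degree count in the last paragraph and the identification $\sum_i a_i\cdot a_i^{\#} = \chi(M)[\textnormal{pt}]$ under the hypothesis $H_{\textnormal{odd}}(M;\mathbb{C})=0$ are also fine.

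Note that the paper does not spell out an in-text proof of this lemma; it defers to Proposition~6.5.7 and the proof of Proposition~6.5.8 in~\cite{Bi-Co:qrel-long}, so there is no internal argument against which to compare, but your derivation is the expected one. One small correction to the concern you raise at the end: there is no ``pearl-complex definition of $QH(M;\Lambda)$'' to reconcile with the Gromov--Witten one. The pearl complex in this paper is used only for the Lagrangian quantum homology $QH(L;\mathcal{R})$; the ambient $QH(M;\Lambda)$ appearing in the lemma is the ordinary small quantum homology of the closed monotone manifold $M$, defined via $J$-holomorphic spheres (the paper cites~\cite{McD-Sa:Jhol-2} for this, and explicitly notes in~\S\ref{subsubsec:struct-const} that its structure constants are triple Gromov--Witten invariants). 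So that bookkeeping worry simply does not arise.
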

The proof can be found in~\cite{Bi-Co:qrel-long} (see
Proposition~6.5.7 and the proof of Proposition~6.5.8 in that paper).

Note that $\Lambda$ is not a field hence Theorems~\ref{t:Abrams}
and~\ref{t:scheja-storch} do not apply for $QH(M;\Lambda)$. To go
around this difficulty we can work with the completion
$\widehat{\Lambda} = \mathbb{C}[t^{-1},t]]$ consisting of formal
Laurent series in $t$ with finitely many terms having negative powers
of $t$. Note that $\widehat{\Lambda}$ is a field. We can define in a
straightforward way $QH(M;\widehat{\Lambda})$, endowed with the
quantum product and we have an inclusion of rings $QH(M;\Lambda)
\subset QH(M; \widehat{\Lambda})$. Obviously the preceding Frobenius
structure $F_Q$ extends to $QH(M;\widehat{\Lambda})$ and the quantum
Euler class remains exactly the same.

\subsubsection{Back to toric manifolds} \label{sbsb:back-to-toric}
We now return to the case of toric manifolds.

Suppose that the superpotential $\mathscr{P}:(\mathbb{C}^*)^{\times n}
\longrightarrow \mathbb{C}$ is a Morse function (in the holomorphic
sense), i.e. it has only isolated critical points and at each such
point the holomorphic Hessian is non-degenerate. In this case
$\mathcal{W}_1$ is a scheme consisting of a finite number of points
each coming with multiplicity $1$. Therefore
$\mathcal{O}(\mathcal{W}_1) = \bigoplus_{z \in \mathcal{W}_1}
{\mathbb{C}}$, hence by the isomorphism~\eqref{eq:iso-I} from
Theorem~\ref{t:Bat-Giv} it follows that the quantum cohomology
$QH(M;\Lambda)$ splits as: $$QH(M;\Lambda) \cong \bigoplus_{z \in
  \mathcal{W}_1} \Lambda,$$ and similarly for
$QH(M;\widehat{\Lambda})$. It follows that $QH(M;\widehat{\Lambda})$
is semi-simple. It turns out that the converse direction is also true,
hence $QH(M;\widehat{\Lambda})$ is semi-simple iff $\mathscr{P}$ is
Morse (see~\cite{Os-Ty:qh} for the proof and for more on
semi-simplicity of $QH$ for toric manifolds).

We now address the question of how does the isomorphism $I$ from
Theorem~\ref{t:Bat-Giv} translate the quantum Frobenius structure from
$QH(M;\widehat{\Lambda})$ to $\mathcal{O}(\mathcal{W}_1) \otimes
\widehat{\Lambda}$.  Theorem~\ref{t:scheja-storch} provides a partial
answer. Write
$$\mathcal{O}(\mathcal{W}_1) \otimes \widehat{\Lambda} =
\widehat{\Lambda}[z_1, u_1, \ldots, z_n, u_n] / \mathcal{I},$$ where
$\mathcal{I}$ is the ideal generated by $$
\partial_{z_1}\mathscr{P}, \ldots, \partial_{z_n} \mathscr{P}, z_1
u_1-1, \ldots, z_n u_n-1.$$ Applying Theorem~\ref{t:scheja-storch} we
obtain that there exists an invertible element $u \in
QH(M;\widehat{\Lambda})$ such that:
$$I(\mathscr{E}_Q) = u z_1 \cdots z_n
\det \Bigl(\frac{\partial^2 \mathscr{P}}{\partial z_i \partial
  z_j}\Bigr)_{i,j}.$$ Since $z_1 \cdots z_n$ is invertible we obtain
from~\eqref{eq:discr-2} that there exists an invertible element $v \in
QH(M;\widehat{\Lambda})$ such that $$I(\mathscr{E}_Q) = v \Delta,$$
where $\Delta$ is the discriminant introduced in~\S\ref{sb:discr}.
Since $I(\mathscr{E}_Q)$ has degree $-2n$ so must have $v$. Since $v$
has pure degree it follows that both $v$ as well as its inverse
$v^{-1}$ in fact lie in $QH(M; \Lambda)$ (i.e. we do not need the
larger field of coefficients $\widehat{\Lambda}$). These
considerations are still far from determining the precise value of
$v$. The following theorem provides this additional information.

\begin{thm} \label{t:qh-frob-struct} Suppose that $\mathscr{P}$ is
   Morse. Then:
   \begin{enumerate}
     \item $I(\mathscr{E}_Q) = (-1)^{n+1}\Delta t^n$, where $\Delta
      \in \mathcal{O}(\mathcal{W}_1)$ is the discriminant introduced
      in~\eqref{eq:discr-2} of~\S\ref{sb:discr}.
     \item Via the isomorphism $I$, the quantum Frobenius structure on
      $\mathcal{O}(\mathcal{W}_1) \otimes \Lambda$ has the form:
      \begin{equation} \label{eq:q-frob} F_Q(I^{-1}(\sigma)) =
         \frac{(-1)^{n+1}}{t^n} \sum_{z \in \mathcal{W}_1}
         \frac{\sigma(z)}{\Delta(z)}, \quad \forall \; \sigma \in
         \mathcal{O}(\mathcal{W}_1) \otimes \Lambda.
      \end{equation}
   \end{enumerate}
\end{thm}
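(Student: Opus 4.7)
The plan is to reduce part~(2) to part~(1) using a general algebraic fact about semisimple Frobenius algebras, and then to establish part~(1) by combining Scheja-Storch's theorem with a normalization argument that pins down a specific invertible factor.

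For the reduction of~(2) to~(1): since $\mathscr{P}$ is Morse, $\mathcal{W}_{1}$ is a finite reduced scheme and $\mathcal{O}(\mathcal{W}_{1})\otimes\widehat{\Lambda}\cong\bigoplus_{z\in\mathcal{W}_{1}}\widehat{\Lambda}\,e_{z}$ as a semisimple $\widehat{\Lambda}$-algebra with orthogonal idempotents $e_{z}$. For any such product-of-fields Frobenius algebra, the Frobenius-dual basis to the idempotents is $e_{z}^{\#}=e_{z}/F(e_{z})$ (from $F(e_{z}e_{w}^{\#})=\delta_{zw}F(e_{z})/F(e_{w})$), so the Euler class is $\mathscr{E}=\sum_{z}e_{z}/F(e_{z})$ and hence $F(e_{z})=1/\mathscr{E}(z)$. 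Extending $\widehat{\Lambda}$-linearly gives $F(\sigma)=\sum_{z}\sigma(z)/\mathscr{E}(z)$. Applying this to $F=F_{Q}\circ I^{-1}$ with the Euler class $I(\mathscr{E}_{Q})=(-1)^{n+1}\Delta t^{n}$ provided by~(1) yields exactly~\eqref{eq:q-frob} after using $1/(-1)^{n+1}=(-1)^{n+1}$.

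For~(1), I would present $\mathcal{O}(\mathcal{W}_{1})\otimes\widehat{\Lambda}$ as the complete-intersection quotient
$$\mathcal{A}=\widehat{\Lambda}[z_{1},u_{1},\ldots,z_{n},u_{n}]\big/\bigl\langle\partial_{z_{1}}\mathscr{P},\ldots,\partial_{z_{n}}\mathscr{P},\,z_{1}u_{1}-1,\ldots,z_{n}u_{n}-1\bigr\rangle,$$
where the auxiliary variables $u_{i}=z_{i}^{-1}$ turn the Laurent presentation of the Jacobian ring into a genuine complete intersection of $2n$ generators in $2n$ variables. Since $\mathscr{P}$ does not involve the $u_{i}$, the Jacobian matrix is block-triangular with diagonal blocks $(\partial^{2}\mathscr{P}/\partial z_{i}\partial z_{j})$ and $\mathrm{diag}(z_{1},\ldots,z_{n})$, so the Scheja-Storch Jacobian is $J=z_{1}\cdots z_{n}\det(\partial^{2}\mathscr{P}/\partial z_{i}\partial z_{j})\in\mathcal{O}(\mathcal{W}_{1})$. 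Comparing with~\eqref{eq:discr-2} (with $l=n$) gives the identity $\Delta=(-1)^{n+1}(z_{1}\cdots z_{n})J$. Theorem~\ref{t:scheja-storch} then supplies an invertible $u\in\mathcal{A}^{\times}$ with $J=u\cdot I(\mathscr{E}_{Q})$. Because $I$ shifts degrees by $-2n$ (Theorem~\ref{t:Bat-Giv}) and $\mathscr{E}_{Q}\in QH_{0}(M;\Lambda)$ (Lemma~\ref{l:dual-basis}), $I(\mathscr{E}_{Q})$ lies in the component of degree $-2n$, namely $\mathcal{O}(\mathcal{W}_{1})\cdot t^{n}$, while $J$ sits in degree $0$; this forces $u=u_{0}(z)t^{-n}$ with $u_{0}\in\mathcal{O}(\mathcal{W}_{1})^{\times}$. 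The claim of~(1) is therefore equivalent to the single identity $u_{0}=1/(z_{1}\cdots z_{n})$.

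The main obstacle is nailing down this $u_{0}$, since Scheja-Storch determines the Euler class only up to a unit. My plan is to match $F_{Q}\circ I^{-1}$ with the Grothendieck residue pairing on $\mathcal{A}$ expressed in logarithmic coordinates $x_{i}=\log z_{i}$. At a critical point the Hessian in the $x$-coordinates is $(z_{i}z_{j}\partial^{2}\mathscr{P}/\partial z_{i}\partial z_{j})$, whose determinant is exactly $(-1)^{n+1}\Delta(z)$, so the logarithmic Grothendieck residue pairing is $\mathrm{Res}^{x}(\sigma)=\sum_{z}(-1)^{n+1}\sigma(z)/\Delta(z)$; it remains to verify that $F_{Q}\circ I^{-1}=(1/t^{n})\mathrm{Res}^{x}$. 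This is the geometric content beyond Scheja-Storch. I would establish it by combining Proposition~\ref{p:iL-x_0} with the augmentation identity~\eqref{eq:aug-1} and the Cho-Oh description of Maslov-$2$ disks (Proposition~\ref{p:toric-mu=2-disks}) to compute the $[\mathrm{pt}]$-coefficient of products $I^{-1}(\sigma)\ast I^{-1}(\tau)$ directly as sums of local logarithmic residue contributions at critical points of $\mathscr{P}$; alternatively the identification has been carried out via the open-closed map by Fukaya-Oh-Ohta-Ono~\cite{FO3:toric-1}. Once $F_{Q}\circ I^{-1}=(1/t^{n})\mathrm{Res}^{x}$ is in hand, the product-of-fields formula from the first paragraph forces $I(\mathscr{E}_{Q})(z)=(-1)^{n+1}\Delta(z)t^{n}$ at each critical point, yielding both~(1) and~(2) simultaneously.
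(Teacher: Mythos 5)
Your reduction of~(2) to~(1) via the product-of-fields/idempotent computation, and your use of Scheja--Storch plus the degree argument to reduce~(1) to the identity $u_0 = 1/(z_1\cdots z_n)$, reproduce exactly the paper's preparatory analysis in the paragraph preceding the statement of the theorem (there $u$ is called $v$). The paper's own point~(2)-from-(1) reduction is phrased through Theorem~\ref{t:Abrams}, which is equivalent to your explicit calculation.

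Where you genuinely diverge from the paper is at the final, hard step: pinning down the unit. The paper's route (\S\ref{sbsb:frob-more}) is to establish Proposition~\ref{p:j-circ-i-E_Q}, namely $j_L\circ i_L([x_0]) = I(\mathscr{E}_Q)[L]$, which converts~(1) into the purely Lagrangian identity $j_L\circ i_L([x_0]) = (-1)^{n+1}\Delta t^n[L]$, and then attacks this by a chain-homotopy formula from~\cite{Bi-Co:Yasha-fest} together with an explicit pearl-moduli-space argument — sketched only for $2n=4$ and with an unincluded combinatorial verification at the end. Your proposal instead aims to identify $F_Q\circ I^{-1}$ directly with the logarithmic Grothendieck residue pairing. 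That is a legitimate and more "closed-string" route, and your computation of the log-coordinate Hessian $(\partial^2\mathscr{P}/\partial x_i\partial x_j) = (z_iz_j\partial^2\mathscr{P}/\partial z_i\partial z_j)$ and its determinant $(-1)^{n+1}\Delta$ at critical points is correct. However, the plan for establishing $F_Q\circ I^{-1} = (1/t^n)\mathrm{Res}^x$ is essentially a restatement of the theorem: that the quantum pairing on $M$ localizes to a sum of residues at critical points of $\mathscr{P}$ is precisely the deep content one must prove, and invoking Proposition~\ref{p:iL-x_0} and~\eqref{eq:aug-1} would, when unwound, bring you back to the same $j_L\circ i_L([x_0])$ computation the paper performs (these two ingredients are exactly how the paper derives Proposition~\ref{p:j-circ-i-E_Q}). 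So your route and the paper's converge on the same geometric crux, which both you and the paper leave at the level of a sketch and a citation to Fukaya--Oh--Ohta--Ono (and Givental). It would strengthen your write-up to make the intermediate reduction $j_L\circ i_L([x_0]) = I(\mathscr{E}_Q)[L]$ explicit, since it isolates cleanly the moduli-theoretic statement that remains to be verified.
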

Theorem~\ref{t:qh-frob-struct} (stated in a slightly different form)
has been recently proved by Fukaya, Oh, Ohta and
Ono~\cite{FO3:toric-3} by methods of Floer theory. It seems to be
known for a long time to specialists in quantum homology theory.  In
fact, Givental has pointed out to us~\cite{Gi:private-com} that this
theorem follows from his work~\cite{Gi:elliptic-GW} (see
Proposition~1.1 in that paper). Below in~\S\ref{s:exp} we verify
Theorem~\ref{t:qh-frob-struct} by direct computation on all toric
monotone $4$--manifolds. We sketch in \S\ref{sbsb:frob-more} a more
conceptual proof of this Theorem.

Denote by $F_1, \ldots, F_r$ the codimension-$1$ facets of the moment
polytope $P = \textnormal{image\,}(\mathfrak{m})$ and by
$\overrightarrow{v_1}, \ldots, \overrightarrow{v}_r$ the inwards
pointing normal integral primitive vectors to these facets as
in~\S\ref{sb:toric-setting}. For a subset of indices $I \subset \{1,
\ldots, r\}$ write $\overrightarrow{v_I} = \sum_{i \in I}
\overrightarrow{v_i}$. The following identities, which seem to bear
some arithmetic nature, follow immediately from
Theorem~\ref{t:qh-frob-struct}.
\begin{cor} \label{c:frob-sums} Assume that $\mathscr{P}$ is Morse.
   Let $I \subset \{1, \ldots, r\}$ be a subset of indices.  If \, $\#
   I < n$ then:
   \begin{equation} \label{eq:frob-sums-1} \sum_{z \in \mathcal{W}_1}
      \frac{z^{\overrightarrow{v_I}}}{\Delta(z)} = 0.
   \end{equation}
   If \, $\#I = n$ then:
   \begin{equation} \label{eq:frob-sums-2} \sum_{z \in \mathcal{W}_1}
      \frac{z^{\overrightarrow{v_I}}}{\Delta(z)} =
      \begin{cases}
         0, & \textnormal{if } \; \cap_{i \in I} F_i = \emptyset, \\
         (-1)^{n+1},& \textnormal{if } \; \cap_{i \in I} F_i \neq
         \emptyset.
      \end{cases}
   \end{equation}
\end{cor}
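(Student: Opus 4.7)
The plan is to apply the Frobenius-structure formula from Theorem~\ref{t:qh-frob-struct}(2) to the element $\alpha_I := [\Sigma_{i_1}] * \cdots * [\Sigma_{i_k}] \in QH(M;\Lambda)$ and compare with a direct computation of $F_Q(\alpha_I)$. Since $I$ is a ring homomorphism and $I([\Sigma_i]) = z^{\overrightarrow{v_i}} t$ by Theorem~\ref{t:Bat-Giv}, we have $I(\alpha_I) = z^{\overrightarrow{v_I}} t^k$. Substituting into~\eqref{eq:q-frob} yields
$$F_Q(\alpha_I) \;=\; (-1)^{n+1}\, t^{k-n} \sum_{z \in \mathcal{W}_1} \frac{z^{\overrightarrow{v_I}}}{\Delta(z)}.$$

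Next, I will compute $F_Q(\alpha_I)$ geometrically by analyzing the $[\textnormal{pt}]$-component of $\alpha_I$. Each $[\Sigma_i]$ lies in $H(M;\mathbb{C}) \subset QH(M;\Lambda^+)$, and the quantum product preserves $QH(M;\Lambda^+)$ (quantum corrections come with non-negative powers of $t$), so $\alpha_I$ is a polynomial in $t$. Because $\alpha_I$ is homogeneous of degree $2n-2k$, its $[\textnormal{pt}]$-coefficient must be a scalar multiple of $t^{k-n}$. For $k<n$, this is a strictly negative power of $t$, which is absent from $\Lambda^+$; hence $F_Q(\alpha_I)=0$, and comparison with the formula above gives~\eqref{eq:frob-sums-1}.

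For $k=n$ the coefficient is a constant. Every non-trivial quantum correction contributes a strictly positive power of $t$, so the $t^0$-part of the $[\textnormal{pt}]$-coefficient equals the classical intersection $[\Sigma_{i_1}] \cdot \cdots \cdot [\Sigma_{i_n}]$ in $H_*(M;\mathbb{C})$. When $\bigcap_{i \in I} F_i = \emptyset$ the $\Sigma_{i_j}$'s have empty common intersection and this vanishes; when $\bigcap_{i \in I} F_i \neq \emptyset$ the intersection consists of the single $\mathbb{T}^n$-fixed point corresponding to that vertex, and since the $\Sigma_{i_j}$ are complex hypersurfaces meeting transversely there, the oriented intersection number is $+1$, giving $+[\textnormal{pt}]$. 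Equating $1 = (-1)^{n+1} \sum_z z^{\overrightarrow{v_I}}/\Delta(z)$ (resp.\ $0$) yields~\eqref{eq:frob-sums-2}.

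The main obstacle I anticipate is verifying the sign $(-1)^{n+1}$ in the case $k=n$ against the sign conventions of~\S\ref{sbsb:intersection-prod}: one must check that the orientation conventions used to define $F_Q$, the normalization in Theorem~\ref{t:qh-frob-struct}(2), and the paper's convention $a\cdot b = [B\cap A]$ combine to give exactly $+1$ (rather than another sign) for the transverse complex intersection at a vertex. Since the relevant codimensions are all even, the Koszul-type signs from $(-1)^{(n-\dim A)(n-\dim B)}$ are trivial, so the computation should go through directly, but this is the step deserving the most care.
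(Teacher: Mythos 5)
Your proposal follows the same route as the paper's proof: apply $I$ to $*_{i\in I}[\Sigma_i]$, invoke the Frobenius formula~\eqref{eq:q-frob}, and then compute $F_Q(*_{i\in I}[\Sigma_i])$ by observing that for degree reasons the $[\textnormal{pt}]$-coefficient of the quantum product equals that of the classical intersection product. You are a bit more explicit than the paper in separating the case $\#I<n$ (where positivity over $\Lambda^+$ forces vanishing) from $\#I=n$ (where the classical intersection number of the $\Sigma_i$'s is $0$ or $+1$), but the content is the same.
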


\begin{proof}
   Write $\Sigma_i = \mathfrak{m}^{-1}(F_i)$. The $\Sigma_i$'s are
   codimension--$2$ symplectic submanifolds of $M$.  Recall that by
   the isomorphism $I$ of Theorem~\ref{t:Bat-Giv} we have
   $I([\Sigma_i]) = z^{\overrightarrow{v_i}} t$, hence
   $$I(*_{i \in I} [\Sigma_i]) =
   z^{\overrightarrow{v_I}}t^{\# I}.$$ By formula~\eqref{eq:q-frob},
   the value of the sum
   \begin{equation} \label{eq:sum-W_1}
      \sum_{z \in \mathcal{W}_1}
      \frac{z^{\overrightarrow{v_I}}}{\Delta(z)}
   \end{equation}
   is determined by the value of $F_Q(*_{i \in I} [\Sigma_i])$, i.e.
   by whether or not $*_{i \in I} [\Sigma_i]$ contains
   $[\textnormal{pt}]$. But by degree reasons the coefficient of
   $[\textnormal{pt}]$ in $*_{i \in I} [\Sigma_i]$ is the same as the
   coefficient of $[\textnormal{pt}]$ in $\cdot_{i \in I} [\Sigma_i]$
   where $\cdot$ is the classical intersection product.  The rest of
   the proof now follows from basic intersection properties of the
   $\Sigma_i$'s.
\end{proof}

Finally, putting together Theorem~\ref{t:qh-frob-struct} with
formulae~\eqref{eq:discr-toric-1},~\eqref{eq:discr-toric-2} we obtain
the following:
\begin{cor} \label{c:Euler-class} Suppose that $\mathscr{P}$ is Morse.
   Then the quantum Euler class admits the following expressions:
   \begin{equation} \label{eq:eu-1} \mathscr{E}_Q = \sum_{\substack{I
          \subset \{1, \ldots, r\} \\ \#I=n}} \bigl(*_{i \in I}
      [\Sigma_i]\bigr) \det(A_I)^2,
   \end{equation}
   where $A_I$ is defined in~\eqref{eq:A_I} and $*$ stands for the
   quantum product.
   \begin{equation} \label{eq:eu-2} \mathscr{E}_Q = \det
      \Bigl(\sum_{k=1}^r v_k^i v_k^j [\Sigma_k]\Bigr)_{i,j},
   \end{equation}
   where the determinant here should be evaluated in the quantum
   homology ring.
\end{cor}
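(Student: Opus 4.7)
The plan is to derive both identities as direct consequences of Theorem~\ref{t:qh-frob-struct}, which provides $I(\mathscr{E}_Q) = (-1)^{n+1}\Delta\, t^n$, together with the two explicit expressions~\eqref{eq:discr-toric-1} and~\eqref{eq:discr-toric-2} for the discriminant $\Delta$ already established in~\S\ref{sb:P-wide-discr-toric}. The essential observation is that the Batyrev--Givental isomorphism $I$ of Theorem~\ref{t:Bat-Giv} is a ring isomorphism sending $[\Sigma_k] \longmapsto z^{\overrightarrow{v_k}}\, t$, so once $(-1)^{n+1}\Delta\, t^n$ is rewritten as $I$ applied to a polynomial expression in the $[\Sigma_k]$'s (using the quantum product), applying $I^{-1}$ completes the argument.

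For formula~\eqref{eq:eu-1}, I will begin from the expansion~\eqref{eq:discr-toric-2} and write
$$(-1)^{n+1}\Delta\, t^n \;=\; \sum_{\substack{I \subset \{1,\ldots,r\}\\ \#I = n}} z^{\overrightarrow{v_I}}\, t^n\, \det(A_I)^2.$$
For each subset $I$ with $\#I = n$, the monomial $z^{\overrightarrow{v_I}}\, t^n$ factors as $\prod_{i\in I}\bigl(z^{\overrightarrow{v_i}}\, t\bigr) = \prod_{i\in I} I([\Sigma_i]) = I\bigl(*_{i\in I}[\Sigma_i]\bigr)$, where we use that $I$ is a ring homomorphism and that the quantum product in $QH(M;\Lambda)$ is commutative on the classes $[\Sigma_i]$ (they all have even degree), so the iterated product requires no choice of ordering. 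Applying $I^{-1}$ to both sides yields~\eqref{eq:eu-1}.

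For formula~\eqref{eq:eu-2}, the plan is analogous, starting from the determinantal expression~\eqref{eq:discr-toric-1}:
$$(-1)^{n+1}\Delta\, t^n \;=\; \det\Bigl(\sum_{k=1}^r v_k^i v_k^j\, z^{\overrightarrow{v_k}}\Bigr)_{i,j}\, t^n.$$
Using the identity $t^n \det(A) = \det(t A)$ for an $n \times n$ matrix $A$, the factor $t^n$ can be absorbed one copy of $t$ into each row, giving $\det\bigl(\sum_k v_k^i v_k^j\, z^{\overrightarrow{v_k}}\, t\bigr)_{i,j} = \det\bigl(\sum_k v_k^i v_k^j\, I([\Sigma_k])\bigr)_{i,j}$. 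Since $I$ is a ring homomorphism and $QH(M;\Lambda)$ is commutative on these even-degree classes, this equals $I\bigl(\det(\sum_k v_k^i v_k^j [\Sigma_k])_{i,j}\bigr)$, where the determinant on the right is evaluated with the quantum product. Applying $I^{-1}$ yields~\eqref{eq:eu-2}. The whole argument is essentially bookkeeping; the only points to verify are that the grading matches (which it does, since $\mathscr{E}_Q \in QH_0(M;\Lambda)$ corresponds under the degree-$(-2n)$ shift to an element of degree $-2n$, matching $\Delta\, t^n$) and that commutativity of $QH(M;\Lambda)$ on even-degree classes makes both the iterated quantum product and the quantum determinant unambiguous. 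No genuine obstacle arises beyond these checks.
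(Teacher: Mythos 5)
Your proof is correct and follows exactly the route the paper intends: the paper states the corollary directly after "putting together Theorem~\ref{t:qh-frob-struct} with formulae~\eqref{eq:discr-toric-1},~\eqref{eq:discr-toric-2}," and your write-up simply fills in the bookkeeping of factoring $z^{\overrightarrow{v_I}}t^n$ and absorbing $t^n$ into the determinant, then applying the ring isomorphism $I^{-1}$. No gaps.
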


\subsubsection{Further remarks on Theorem~\ref{t:qh-frob-struct} and
  its proof} \label{sbsb:frob-more}

Note that by Theorem~\ref{t:Abrams} the Frobenius structure $F_Q$ is
determined by its associated Euler class $\mathcal{E}_Q$. Therefore
point~(2) of Theorem~\ref{t:qh-frob-struct} follows immediately from
point~(1). The next Proposition shows that $\mathcal{E}_Q$ is indeed
very much related to the ``Lagrangian picture''.

Consider the morphism:
$$j_L: QH_*(M;\mathcal{O}(\mathcal{W}_1) \otimes \Lambda)
\longrightarrow QH_{*-n}(L;\mathcal{W}_1), \quad a \longmapsto
a*[L].$$ Consider $[x_0] \in QH_*(L;\mathcal{W}_1)$ as in the
discussion before Proposition~\ref{p:iL-x_0}. We have:
\begin{prop} \label{p:j-circ-i-E_Q} $j_L \circ i_L ([x_0]) =
   I(\mathcal{E}_Q) [L]$.
\end{prop}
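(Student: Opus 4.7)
The strategy is to chain together the two main computational results already established in~\S\ref{sb:wide-qh}, namely Proposition~\ref{p:iL-x_0} (which computes $i_L([x_0])$) and Proposition~\ref{p:a*alpha} (which translates the module action of $QH(M;\Lambda)$ on $Q^+H(L;\mathcal{W}_1)$ into multiplication by elements of $\mathcal{O}(\mathcal{W}_1)$). The identification of the resulting sum with $I(\mathcal{E}_Q)$ is then a formal consequence of the fact that $I$ is a ring isomorphism together with Lemma~\ref{l:dual-basis}.

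Concretely, fix a basis $a_1,\ldots,a_m \in H_*(M;\mathbb{C})$ of pure degree and let $a_1^{\#},\ldots,a_m^{\#}$ be the dual basis with respect to the classical intersection product (this uses the standing assumption $H_{\textnormal{odd}}(M;\mathbb{C})=0$, so that the intersection pairing is non-degenerate). The first step is to substitute the formula
\[
i_L([x_0]) \;=\; \sum_{i=1}^m I(a_i^{\#})\, a_i
\]
from Proposition~\ref{p:iL-x_0} and apply $j_L$ term by term. Since $j_L(a) = a*[L]$ is $\mathcal{O}(\mathcal{W}_1)\otimes\Lambda$--linear, the scalars $I(a_i^{\#})\in \mathcal{O}(\mathcal{W}_1)\otimes\Lambda$ factor out, yielding
\[
j_L\circ i_L([x_0]) \;=\; \sum_{i=1}^m I(a_i^{\#})\,(a_i*[L]).
\]

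The second step is to invoke Proposition~\ref{p:a*alpha} applied to each $a_i$, which gives $a_i*[L] = I(a_i)[L]$. Plugging this in and using that $\mathcal{O}(\mathcal{W}_1)\otimes\Lambda$ is commutative, we obtain
\[
j_L\circ i_L([x_0]) \;=\; \Bigl(\sum_{i=1}^m I(a_i^{\#})\,I(a_i)\Bigr)[L] \;=\; I\Bigl(\sum_{i=1}^m a_i^{\#}*a_i\Bigr)[L],
\]
where in the last equality we used that $I:QH(M;\Lambda)\to \mathcal{O}(\mathcal{W}_1)\otimes\Lambda$ is a ring homomorphism.

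The third and final step is to recognize the inner sum as $\mathcal{E}_Q$. By Lemma~\ref{l:dual-basis}, $\{a_i^{\#}\}$ is also the dual basis of $\{a_i\}$ with respect to the quantum Frobenius pairing $F_Q(\alpha*\beta)$, so by the definition~\eqref{eq:EQ} of the quantum Euler class (and the supercommutativity of $*$, which reduces to ordinary commutativity under the assumption $H_{\textnormal{odd}}(M;\mathbb{C})=0$) we have $\sum_i a_i^{\#}*a_i = \sum_i a_i*a_i^{\#} = \mathcal{E}_Q$. This gives $j_L\circ i_L([x_0]) = I(\mathcal{E}_Q)[L]$, completing the proof. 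There is no real obstacle here: the only mildly delicate point is verifying that the chosen dual basis is legitimate for computing $\mathcal{E}_Q$, which is exactly the content of Lemma~\ref{l:dual-basis}.
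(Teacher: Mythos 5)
Your proof is correct and is exactly the unpacking of what the paper dismisses as ``This follows at once from Propositions~\ref{p:iL-x_0} and~\ref{p:a*alpha}'': substitute the formula for $i_L([x_0])$, pull the scalars $I(a_i^{\#})$ out by linearity of $j_L$, apply Proposition~\ref{p:a*alpha} to each $a_i*[L]$, and use that $I$ is a ring map together with Lemma~\ref{l:dual-basis} (which justifies~\eqref{eq:EQ}) to recognize $\sum_i a_i^{\#}*a_i = \mathcal{E}_Q$. The one small point worth flagging — and you do flag it — is the reordering $a_i^{\#}*a_i = a_i*a_i^{\#}$, which is legitimate here precisely because the standing assumption $H_{\textnormal{odd}}(M;\mathbb{C})=0$ makes $QH(M;\Lambda)$ genuinely commutative.
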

\begin{proof}
   This follows at once from Propositions~\ref{p:iL-x_0}
   and~\ref{p:a*alpha}.
\end{proof}

Thus, the proof of Theorem \ref{t:qh-frob-struct} reduces to showing
that
\begin{equation}\label{eq:i-j-Frob}
j_{L}\circ i_{L}([x_{0}])=(-1)^{n+1}\Delta t^{n}[L]~.~
\end{equation}

We sketch here our argument for this identity in dimension $2n=4$.
Recall from \cite{Bi-Co:Yasha-fest} \S 8.3 that given two Lagrangians
$L$ and $L'$ there is a particular formula allowing to express
$j_{L'}\circ i_{L}$.  In our case, we ultimately want to study $L=L'$
so it is sufficient to assume that $L$ is Hamiltonian isotopic to $L'$
(and $L$ is transverse to $L'$) so that the formula has the form:
\begin{equation}
   \label{eq:chain_htpy-j-i}
   j_{L'}\circ i_{L}-\chi_{L,L'}=\Phi_{L,L'}\circ d 
   +d'\circ \Phi_{L,L'}.
\end{equation}

We now explain the formula (\ref{eq:chain_htpy-j-i}). We will then
notice that from this formula we can easily deduce a closely related
one that directly computes $j_{L}\circ i_{L}$ in terms of some pearly
like configurations. Identity (\ref{eq:i-j-Frob}) follows from further
identities involving these configurations.

The notation in (\ref{eq:chain_htpy-j-i}) is as follows:
$(\mathcal{C}(L;f), d)$ is a pearl complex for $L$,
$(\mathcal{C}(L';f'), d')$ is a pearl complex for $L'$ (we assume
appropriate Riemannian metrics fixed on $L$ and $L'$), $\Phi_{L,L'}$
is a certain chain homotopy and $\chi_{L,L'}$ is a chain map that we
will describe in more detail below.  In our case we may assume that
$f$ and $f'$ are perfect Morse functions so that $d=0=d'$ because $L$
and $L'$ are wide tori.  Thus we deduce $j_{L'}\circ
i_{L}([x_{0}])=\chi_{L,L'}([x_{0}])$.  The map $\chi_{L,L'}$ is
described in \S 8.3 of \cite{Bi-Co:Yasha-fest}. Explicitly, it is
defined as follows. For $x\in \Crit{f}$
$$\chi_{L,L'}(x)=\sum_{p,y}\#(\mathcal{N}(p,p;x,y))yt^{k_{y}}$$
where $y\in \Crit(f')$, $p\in L\cap L'$ , $|y|-2k_{y}=|x|-4$ and the
moduli spaces $\mathcal{N}(p,p;x,y)$ are formed by configurations $(u,
v,v')$ where: $u$ is a Floer strip joining the intersection point $p$
to itself and with $u(\R\times \{0\})\subset L$, $u(\R
\times\{1\})\subset L'$; $v$ is a chain of pearls on $L$ joining $x$
to the point $u(0,0)$; $v'$ is a chain of pearls on $L'$ joining $x$
to the point $u(0,1)$.  Because $\chi_{L,L'}$ is a chain map it is
easily seen that we may apply the PSS construction to return from $L'$
back to $L$. This gives rise to another map $\bar{\chi}_{L,L}$ with
two properties:
\begin{itemize}
  \item[-] it verifies a formula similar to (\ref{eq:chain_htpy-j-i})
   except that only involving $L$:
   \begin{equation}
      \label{eq:j-i-L}
      j_{L}\circ i_{L}=\bar{\chi}_{L,L}
   \end{equation}
  \item[-] the definition of $\bar{\chi}_{L,L}$ is similar to that of
   $\chi_{L,L'}$ with the following modifications: $v$ is a string of
   pearls associated to the function $f:L\to \R$, $v'$ is a string of
   pearls associated to the function $f':L\to \R$, $u$ is now also a
   string of pearls associated to a third function $f''$ and joining a
   critical point $p\in\Crit(f'')$ {\em to the same $p$}.  The
   incidence conditions among these three strings of pearls are that
   there is a disk in $u$ (possibly trivial) so that $v$ ends at
   $u(-i)$ and $v'$ ends at $u(i)$.
\end{itemize}
In our case, we are interested in the case when $x$=$x_{0}=\min(f)$.
For degree reasons we see that the only term that matters corresponds
to $y=z_{2}=\max(f')$. Moreover, there is a single disk involved which
is of Maslov class $4$.  In short, $j_{L}\circ i_{L}([x_{0}])$ is
estimated by the number of elements in the moduli space $W(f'',x_{0},
z_{2},J)$ of configurations formed by a single $J$-holomorphic disk
$u$ of Maslov class $4$ and so that $u(-i)=x_{0}$, $u(+i)=z_{2}$ and
there is a critical point $p\in \Crit(f'')$ with the property that a
negative gradient trajectory of $f''$ exiting $p$ reaches $u(-1)$ and
there is a negative gradient trajectory of $f''$ that carries $u(+1)$
to $p$ again (any one of these trajectories can also be degenerate).
The next step is to include $W(f'',x_{0},z_{2},J)$ as boundary in a
$1$-dimensional moduli space whose other end had $-\Delta$ elements.
The first step is rather easy - the $1$-dimensional moduli space in
question, $W'(f'',x_{0}, z_{2},J)$, corresponds to gluing at the point
$p$ - so that the configurations contained in this moduli space are
like the ones in $W(f'',x_{0},z_{2},J)$ except that the two flow lines
there are replaced by a single one that joins $u(+1)$ to $u(-1)$
without breaking at $p$. Finally, it is essentially a delicate
combinatorial verification - that we will not include here - to see
that the number of the other boundary components of
$W'(f'',x_{0},z_{2},J)$ gives precisely $-\Delta$.

\section{Examples} \label{s:exp}

Here we work out examples of the various objects and invariants
constructed in the previous sections, mainly in the case of toric
manifolds. We use here the notation introduced in~\S\ref{s:toric} and
in particular for $\mathcal{W}_1$ we use the coordinates $(z_1,
\ldots, z_r)$ introduced in~\S\ref{sbsb:sp-formulae} and for
$\mathcal{W}_2$ we use the coordinates $(\xi_1, \ldots, \xi_r)$
introduced in~\S\ref{sb:formulae-W_2}.

\subsection{The complex projective space} \label{sb:cpn} Consider
${\mathbb{C}}P^n$ endowed with its standard Fubini-Study K\"{a}hler
structure $\omega_{\textnormal{FS}}$ normalized so that
$\int_{\mathbb{C}P^1} \omega_{\textnormal{FS}} = 1$. Consider the
Hamiltonian torus action $(\theta_1, \ldots, \theta_n) \cdot [z_0:
\cdots: z_n] = [z_0:e^{-2\pi i \theta_1}: \cdots: e^{-2 \pi i
  \theta_n}z_n]$. The moment polytope is the standard simplex
$$P=\Bigl\{(x_1, \ldots, x_n) \in \mathbb{R}^n \Bigm|
0 \leq x_k \, \forall k, \sum_{i=1}^n x_i \leq 1 \Bigr\}.$$ It has
$n+1$ codimension--$1$ facets with normal vectors
$\overrightarrow{v_i} = (0, \ldots, 1, \ldots, 0)$ (where the $1$ is
in the $i$'th coordinate), $i=1, \ldots, n$ and
$\overrightarrow{v_{n+1}} = (-1, \ldots, -1)$. (See
e.g.~\cite{Audin:torus-actions, McD-Sa:Intro}). The monotone torus $$L
= \mathfrak{m}^{-1}\Bigl(\frac{1}{n+1}, \ldots, \frac{1}{n+1}\Bigr) =
\bigl\{[z_0: \ldots: z_n] \bigm| \, |z_0| = \ldots = |z_n| \bigr\}$$
is the Clifford torus. The wide variety $\mathcal{W}_2$ is given in
this case by
$$\mathcal{W}_2 = \{ (\xi, \ldots, \xi) \mid \xi \in \mathbb{C}^*\}
\cong \mathbb{C}^*.$$
The superpotential is:
$$\mathscr{P}(z_1, \ldots, z_n) =
\sum_{i=1}^n z_i + \frac{1}{z_1 \cdots z_n}.$$ A simple computation
shows that $\mathscr{P}$ is Morse. The wide variety $\mathcal{W}_1$
consists of the following $n+1$ points:
$$\mathcal{W}_1 = \{(z, \ldots, z) \mid z^{n+1}=1\},$$
and each of them comes with multiplicity $1$.  The quadratic form
(see~\eqref{eq:discr-1}) $\varphi_{_{\mathcal{W}}}$ is given in the
basis $\{C_1, \ldots, C_n\}$ by $$\varphi_{_{\mathcal{W}}}(X_1,
\ldots, X_n) = \xi \Bigl(\sum_{i=1}^n X_i^2 + \sum_{i<j} X_i X_j
\Bigr), \; \forall \xi \in \mathcal{W}_2.$$ A simple computation shows
that the discriminant of the quadratic form (on $\mathcal{W}_2$ and
$\mathcal{W}_1$ respectively) is:
$$\Delta(\xi) = (-1)^{n+1}(n+1)\xi^n, \; 
\forall \xi \in \mathcal{W}_2, \quad \Delta(z) = (-1)^{n+1}(n+1) z^n,
\; \forall z \in \mathcal{W}_1.$$ Denote by
$H=[{\mathbb{C}}P^{n-1}]\in QH_{2n-2}({\mathbb{C}}P^n;\Lambda)$ the
class of a linear hyperplane and by $[{\mathbb{C}}P^l] \in
QH_{2l}(M;\Lambda)$ the class of a linear projective $l$-dimensional
plane. The quantum homology of ${\mathbb{C}}P^n$ is given by
\begin{equation*}
   H^{*k} =
   \begin{cases}
      [{\mathbb{C}}P^{n-k}], & \textnormal{if }
      \; 0 \leq k \leq n, \\
      [{\mathbb{C}}P^n]t^{n+1}, & \textnormal{if } \; k=n+1.
   \end{cases}
\end{equation*}
A simple computation shows that the quantum Euler class equals the
topological one: $$\mathscr{E}_Q = \mathscr{E}_{\textnormal{top}} =
(n+1)[\textnormal{pt}].$$

The ring $\mathcal{O}(\mathcal{W}_1)$ is:
$$\mathcal{O}(\mathcal{W}_1) \cong \mathbb{C}[z^{\pm 1}] /
\langle z^{n+1}=1 \rangle,$$ and the isomorphism $I$
from~\eqref{eq:iso-I} satisfies $I([{\mathbb{C}}P^l])=z^{n-l}t^{n-l}$.
One can easily verify that $I(\mathscr{E}_Q) = -t^n \Delta(z)$.

The identities of Corollary~\ref{c:frob-sums} now read:
\begin{equation*}
   \frac{1}{n+1} \sum_{\{z \mid z^{n+1}=1\}} z^k =
   \begin{cases}
      0, & \textnormal{if } \; 1 \leq k \leq n,\\
      1, & \textnormal{if } \;  k=n+1.
   \end{cases}
\end{equation*}
Finally, the quantum inclusion of $[x_0]$ is given by:
$$i_L([x_0])=[\textnormal{pt}] +
\sum_{k=1}^n z^k[{\mathbb{C}}P^{k}]t^k, \quad \forall z \in
\mathcal{W}_1.$$

\medskip Next we will exemplify our theory on all the monotone toric
$4$-manifolds. Recall that apart from ${\mathbb{C}}P^2$ there are
exactly four of them, namely $S^2 \times S^2$ and the blow up of
${\mathbb{C}}P^2$ at $1\leq k \leq 3$ points.

\subsection{$S^2 \times S^2$ } \label{sb:S2xS2} Consider $M = S^2
\times S^2$ with the balanced symplectic form $\omega = \omega_{S^2}
\oplus \omega_{S^2}$ and with the obvious Hamiltonian torus action
coming from circle actions on both factors. The moment polytope is
$$P=\{(x_1, x_2) \in \mathbb{R}^2 \mid 0 \leq x_1 \leq 1, \; \; 0\leq x_2
\leq 1\}.$$ The monotone torus is $L = \mathfrak{m}^{-1}(\tfrac{1}{2},
\tfrac{1}{2})$ which is the product of two equators coming from each
$S^2$--factor. The integral normal vectors to the four facets are
$\overrightarrow{v_1}=(1,0)$, $\overrightarrow{v_2}=(0,1)$,
$\overrightarrow{v_3} = (-1,0)$, $\overrightarrow{v_4}=(0,-1)$.  The
wide variety $\mathcal{W}_2$ is given by: $$\mathcal{W}_2 = \{(\xi_1,
\xi_2, \xi_1, \xi_2) \mid \xi_1, \xi_2 \in \mathbb{C}^*\} \cong
\mathbb{C}^* \times \mathbb{C}^*.$$ The superpotential is:
$$\mathscr{P}(z_1, z_2) = z_1 + z_2 + \frac{1}{z_1} + \frac{1}{z_2}.$$
This function is Morse and its critical points are: $$\mathcal{W}_1 =
\{(1,1), (1,-1), (-1, 1), (-1,-1)\}.$$ The quadratic form is
$$\varphi_{_{\mathcal{W}}}(X_1, X_2) = \xi_1 X_1^2 + \xi_2 X_2^2, \; \forall
(\xi_1, \xi_2) \in \mathbb{C}^* \times \mathbb{C}^*.$$ The
discriminant on $\mathcal{W}_2$ and $\mathcal{W}_1$, respectively, is:
$$\Delta(\xi_1, \xi_2) = -4\xi_1 \xi_2, \quad \Delta(z_1, z_2) = -4z_1
z_2.$$

To describe the quantum homology $QH(M;\Lambda)$ of $M$, put $A = [S^2
\times \textnormal{pt}], B=[\textnormal{pt} \times S^2] \in
QH_2(M;\Lambda)$. Then we have:
$$A*B = \textnormal{pt}, \quad A*A = B*B = [M]t^2.$$
The isomorphism $I$ satisfies:
$$I(A)=z_2t, \quad I(B)=z_1t, \quad I([\textnormal{pt}])=z_1 z_2 t^2.$$

The quantum Euler class equals in this case to the topological one:
$\mathscr{E}_Q = 4 [\textnormal{pt}]$. The quantum inclusion
satisfies:
$$i_L([x_0]) = [\textnormal{pt}] +z_1At + z_2Bt+z_1z_2[M]t^2.$$ The
arithmetic identities of Corollary~\ref{c:frob-sums} can be verified
by a straightforward direct substitution.

\subsection{Blow ups of ${\mathbb{C}}P^2$} \label{sb:bl-cp2} Consider
the standard Hamiltonian torus action on ${\mathbb{C}}P^2$ and let $p$
be a fixed point of the action. This action has exactly three fixed
points $p_1, p_2, p_3$. By blowing up $p_1, \ldots, p_k$, $1 \leq k
\leq 3$, we obtain a manifold $M_k$ which can be endowed with a
monotone symplectic form $\omega$ in such a way that the torus action
on ${\mathbb{C}}P^2$ lifts to a Hamiltonian torus action on $M_k$
(see~\cite{Audin:torus-actions, McD-Sa:Intro} for details). Denote by
$E_i \in H_2(M_k;\mathbb{Z})$ the exceptional divisor over $p_i$ and
by $L \in H_2(M_k;\mathbb{Z})$ the homology class of a projective line
not passing through the exceptional divisors. We denote by $[M_k] \in
H_4(M_k;\mathbb{Z})$ the fundamental class. The Poincar\'{e} dual of
the cohomology class of $\omega$ satisfies: $\textnormal{PD}[\omega] =
L - \tfrac{1}{3}\sum_{i=1}^k E_i$. We will now go over each of the
cases $k=1, 2, 3$.

\subsection{The blow-up of ${\mathbb{C}}P^2$ at one point}
\label{sb:bl-1-cp2}
Denote by $M_1 = \textnormal{Bl}_{p_1}({\mathbb{C}}P^2)$ the blow-up
of ${\mathbb{C}}P^2$ at $p_1$. The moment polytope and the normal
vectors to the facets are depicted in figure~\ref{f:bl-1-cp2}. Note
that: $$[\mathfrak{m}^{-1}(F_1)]=E, \quad
[\mathfrak{m}^{-1}(F_2)]=L-E, \quad [\mathfrak{m}^{-1}(F_3)]=L, \quad
[\mathfrak{m}^{-1}(F_4)]=L-E.$$

\begin{figure}[htbp]
      \psfig{file=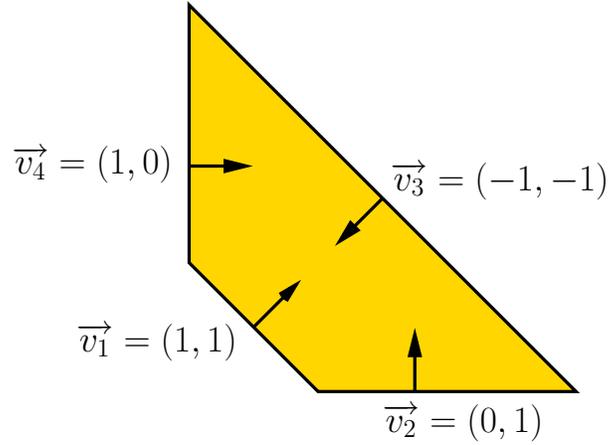, width=0.5 \linewidth}
      \caption{The moment polytope of the blow-up of ${\mathbb{C}}P^2$
        at one point.}
      \label{f:bl-1-cp2}
\end{figure}

The wide variety $\mathcal{W}_2$ is:
$$\mathcal{W}_2 = \{(\xi_1, \xi_2, \xi_1+\xi_2, \xi_2)
\mid \xi_1, \xi_2 \in \mathbb{C}^*, \xi_1 \neq -\xi_2\}.$$ Note that
the trivial representation $(1,1,1,1)$ does not belong to
$\mathcal{W}_2$, so $L$ is narrow with respect to this representation.
The superpotential is:
$$\mathscr{P}(z_1, z_2) = z_1 + z_2 + z_1 z_2 + \frac{1}{z_1 z_2}.$$
The wide variety $\mathcal{W}_1$ consists of $4$ points, all with
multiplicity $1$, and is given by
$$\mathcal{W}_1 = \{(z,z) \mid z^4 +z^3-1=0\}.$$
The ring of functions over $\mathcal{W}_1$ is therefore:
$$\mathcal{O}(\mathcal{W}_1) \cong \mathbb{C}[z,z^{-1}]/
\langle z^4 +z^3-1=0 \rangle.$$

The quadratic form is $$\varphi_{_{\mathcal{W}}}(X_1, X_2) = (\xi_1 +
\xi_2)X_1^2 + (2\xi_1 + \xi_2)X_1 X_2 + (\xi_1 + \xi_2)X_2^2, \;
\forall (\xi_1, \xi_2) \in \mathbb{C}^* \times \mathbb{C}^*.$$ The
discriminant on $\mathcal{W}_2$ and $\mathcal{W}_1$ respectively is:
$$\Delta(\xi_1, \xi_2) = -(4\xi_1 \xi_2 + 3 \xi_2^2), \quad \Delta(z)
= -z^2(4z+3).$$ The quantum product is given by
(see~\cite{Cra-Mir:QH}): $$E*E = -[\textnormal{pt}] + Et + [M_1]t^2,
\quad E*L= [M_1]t^2, \quad L*L = [\textnormal{pt}] + [M_1]t^2.$$ The
quantum Euler class is: $$\mathscr{E}_Q = 4[\textnormal{pt}] - Et.$$

The isomorphism $I$ is given by:
$$I(L) = \frac{1}{z^2}t, \quad I(E)=z^2t, \quad
I([\textnormal{pt}])=(\frac{1}{z^4}-1)t^2.$$

The fact that $I(\mathscr{E}_Q) = -\Delta t^2$ on $\mathcal{W}_1$ can
be verified here by a direct (though long) computation.

The quantum inclusion satisfies:
$$i_L([x_0]) = [\textnormal{pt}] + \frac{1}{z^2}Lt - z^2 Et +
(\frac{1}{z^4}-1)[M_1]t^2.$$

We now turn to the arithmetic identities of
Corollary~\ref{c:frob-sums}. In the following identity $a(z)$ stands
for the function $z^k$, where $-2 \leq k \leq 3$. We have:
\begin{equation} \label{eq:arith-id-M_1} \sum_{\{z:z^4+z^3-1=0\}}
   \frac{a(z)}{4z^3+3z^2} =
   \begin{cases}
      0, & \textnormal{if } a(z) \; \textnormal{is one of } 1, z, z^2,
      \frac{1}{z^2}, \\
      1, & \textnormal{if } a(z) \; \textnormal{is one of } z^3,
      \frac{1}{z}.
   \end{cases}
\end{equation}
These identities seem non-trivial to obtain by a direct computation,
though they can be verified using a numerical mathematical program
such as Matlab, Mathematica or Octave. An alternative elementary
(albeit non-direct) verification of these identities via computations
of residues of rational functions, has been recently pointed out to us
by Andrew Granville~\cite{Granville:private}.

\subsection{The blow-up of ${\mathbb{C}}P^2$ at two points}
\label{sb:bl-2-cp2}
Let $M_2 = \textnormal{Bl}_{p_1, p_2}({\mathbb{C}}P^2)$ be the blow-up
of ${\mathbb{C}}P^2$ at the two points $p_1, p_2$. The moment polytope
and the normal vectors to the facets are depicted in
figure~\ref{f:bl-2-cp2}. Note that:
\begin{align*}
   & [\mathfrak{m}^{-1}(F_1)]=L-E_1, \quad
   [\mathfrak{m}^{-1}(F_2)]=L-E_2,
   \quad [\mathfrak{m}^{-1}(F_3)]=E_2,\\
   & [\mathfrak{m}^{-1}(F_4)]=L-E_1-E_2, \quad
   [\mathfrak{m}^{-1}(F_5)]=E_1.
\end{align*}

\begin{figure}[htbp]
      \psfig{file=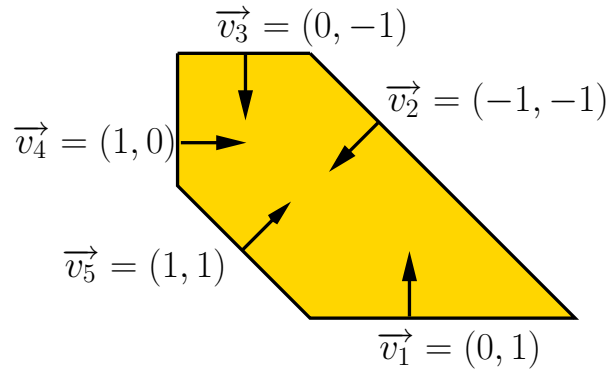, width=0.5 \linewidth}
      \caption{The moment polytope of the blow-up of ${\mathbb{C}}P^2$
        at two points.}
      \label{f:bl-2-cp2}
\end{figure}

The wide variety $\mathcal{W}_2$ is:
$$\mathcal{W}_2 = \{(\xi_1, \xi_2, \xi_3, \xi_1-\xi_3, -\xi_1+\xi_2+\xi_3)
\mid \xi_1, \xi_2, \xi_3 \in \mathbb{C}^*, \xi_1 \neq \xi_3, \xi_1
\neq \xi_2 + \xi_3\}.$$ Note that the point $(1,1,1,1,1)$ does not
belong to $\mathcal{W}_2$, thus $L$ is narrow with respect to the
trivial representation $\rho \equiv 1$.

The superpotential is:
$$\mathscr{P}(z_1, z_2) = z_1 + z_2 + z_1z_2 + \frac{1}{z_2} + \frac{1}{z_1z_2}.$$
The wide variety $\mathcal{W}_1$ consists of $5$ points, all with
multiplicity $1$, and is given by
$$\mathcal{W}_1 = \Bigl\{ \bigl(-1, \frac{-1 \pm \sqrt{5}}{2}\bigr) \Bigr\} \cup
\Bigl\{ \bigl(\frac{1}{z^2},z \bigr) \mid z^3-z-1=0 \Bigr\}.$$ The
quadratic form is
$$\varphi_{_{\mathcal{W}}}(X_1, X_2) = \xi_2 X_1^2 +
(-\xi_1 + 2\xi_2+\xi_3)X_1X_2 + (\xi_2+\xi_3)X_2^2, \; \forall (\xi_1,
\xi_2, \xi_3) \in (\mathbb{C}^*)^{\times 3}.$$ The discriminant on
$\mathcal{W}_2$ and $\mathcal{W}_1$ respectively is:
$$\Delta(\xi_1, \xi_2, \xi_3) = (\xi_1-\xi_3)^2 - 4\xi_1 \xi_2,
\quad \Delta(z_1, z_2) = \bigl( z_2-\frac{1}{z_2} \bigr)^2
-\frac{4}{z_1}.$$

The quantum product is given by (see~\cite{Cra-Mir:QH}):
\begin{align*}
   & L*L = [\textnormal{pt}] + (L-E_1-E_2)t + 2[M_2]t^2, \quad E_1*E_2
   = (L-E_1-E_2)t, \\
   & E_1*E_1 = -[\textnormal{pt}]+(L-E_2)t+[M_2]t^2, \quad
   L*E_1=L*E_2 = (L-E_1-E_2)t + [M_2]t^2, \\
   & E_2*E_2 = -[\textnormal{pt}]+(L-E_1)t+[M_2]t^2.
\end{align*}
The quantum Euler class turns out to be: $$\mathscr{E}_Q =
5[\textnormal{pt}]-Lt.$$

The isomorphism $I$ is given by:
$$I(L)=(z_1z_2+z_2)t, \quad I(E_1)=z_1z_2t, \quad
I(E_2)=\frac{1}{z_2}t, \quad I([\textnormal{pt}]) =
(1+z_2-\frac{1}{z_2^2})t.$$

The quantum inclusion satisfies:
$$i_L([x_0]) = [\textnormal{pt}] -z_1z_2E_1t -
\frac{1}{z_2}E_2t + (z_1z_2+z_2)Lt +
(1+z_2-\frac{1}{z_2^2})[M_2]t^2.$$

The arithmetic identities of Corollary~\ref{c:frob-sums} become:
\begin{equation} \label{eq:arith-id-M_2} \sum_{(z_1, z_2) \in
     \mathcal{W}_1} \frac{a(z_1,
     z_2)}{(z_2-\frac{1}{z_2})^2-\frac{4}{z_1}} =
   \begin{cases}
      0, & \textnormal{if } a(z_1, z_2) \; \textnormal{is one of } 1,
      z_1,
      z_2, z_1z_2, \frac{1}{z_2}, \frac{1}{z_1z_2}, \\
      -1, & \textnormal{if } a(z_1,z_2) \; \textnormal{is one of }
      \frac{1}{z_1}, z_1^2 z_2, z_1 z_2^2, \frac{z_1}{z_2},
      \frac{1}{z_1 z_2^2}.
   \end{cases}
\end{equation}
As with the previous case, $M_1$, it seems non-trivial to verify these
identities by a direct computation.

\subsection{The blow-up of ${\mathbb{C}}P^2$ at three points}
\label{sb:bl-3-cp2}
The moment polytope and the normal vectors to the facets are depicted
in figure~\ref{f:bl-3-cp2}. Note that:
\begin{align*}
   & [\mathfrak{m}^{-1}(F_1)]=L-E_1-E_2, \quad
   [\mathfrak{m}^{-1}(F_2)]=E_2,
   \quad [\mathfrak{m}^{-1}(F_3)]=L-E_2-E_3,\\
   & [\mathfrak{m}^{-1}(F_4)]=E_3, \quad
   [\mathfrak{m}^{-1}(F_5)]=L-E_1-E_3, \quad
   [\mathfrak{m}^{-1}(F_6)]=E_1.
\end{align*}

\begin{figure}[htbp]
      \psfig{file=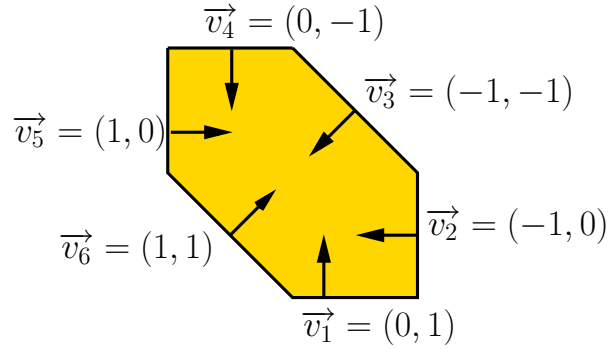, width=0.5 \linewidth}
      \caption{The moment polytope of the blow-up of ${\mathbb{C}}P^2$
        at two points.}
      \label{f:bl-3-cp2}
\end{figure}

The wide variety $\mathcal{W}_2$ is:
$$\mathcal{W}_2 = \{(\xi_1, \xi_2, \xi_3, \xi_4, \xi_1+\xi_2-\xi_4,
-\xi_1+\xi_3+\xi_4) \mid \xi_1, \xi_2, \xi_3, \xi_4 \in \mathbb{C}^*,
\xi_4 \neq \xi_1+\xi_2, \xi_1 \neq \xi_3 + \xi_4\}.$$

The superpotential is:
$$\mathscr{P}(z_1, z_2) = z_1 + z_2 + z_1z_2 + \frac{1}{z_1} +
\frac{1}{z_2} + \frac{1}{z_1z_2}.$$
The wide variety $\mathcal{W}_1$ consists of $6$ points, all with
multiplicity $1$, and is given by
$$\mathcal{W}_1 = \Bigl\{(1,1), (1,-1), (-1,1), (-1,-1),
(e^{2\pi i/3}, e^{2 \pi i/3}), (e^{4 \pi i/3}, e^{4 \pi i/3})
\Bigr\}.$$

The quadratic form is $$\varphi_{_{\mathcal{W}}}(X_1, X_2) =
(\xi_2+\xi_3)X_1^2 + (-\xi_1+2\xi_3 + \xi_4)X_1 X_2 +
(\xi_3+\xi_4)X_2^2, \;\; \forall \,(\xi_1, \xi_2, \xi_3, \xi_4) \in
(\mathbb{C}^*)^{\times 4}.$$ The discriminant on $\mathcal{W}_2$ is:
$$\Delta(\xi_1, \xi_2, \xi_3, \xi_4) = (\xi_1-\xi_4)^2 -
4(\xi_1 \xi_3 + \xi_2 \xi_3 + \xi_2 \xi_4),$$ and on $\mathcal{W}_1$
it is: $$\Delta(z_1, z_2) = \Bigl(z_2-\frac{1}{z_2}\Bigr)^2 -
\frac{4}{z_1}(1+z_2+z_1z_2).$$

The quantum product is given by (see~\cite{Cra-Mir:QH}):
\begin{align*}
   & L*L = [\textnormal{pt}] + (3L-2E_1-2E_2-2E_3)t + 3[M_3]t^2, \quad
   E_i * E_j = (L-E_i-E_j)t \;\; \forall i\neq j, \\
   & E_i*E_i = -[\textnormal{pt}] + (2L-E_1-E_2-E_3)t + [M_3]t^2,
   \quad L*E_i = (2L-E_1-E_2-E_3-E_i)t + [M_3]t^2.
\end{align*}
The quantum Euler class turns out to be: $$\mathscr{E}_Q =
6[\textnormal{pt}]-(3L - E_1 - E_2 - E_3)t.$$

The isomorphism $I$ is given by:
\begin{align*}
   & I(L)=(z_2+z_1 z_2 + \frac{1}{z_1})t, \quad I(E_1)=z_1 z_2 t,
   \quad I(E_2) = \frac{1}{z_1}t, \quad I(E_3)=\frac{1}{z_2}t,\\
   & I([\textnormal{pt}]) = ((1+z_1)(1+z_2)-z_1^2 z_2^2)t^2.
\end{align*}
The quantum inclusion satisfies:
$$i_L([x_0]) = [\textnormal{pt}] + (z_2+z_1 z_2 + \tfrac{1}{z_1})L t
-z_1 z_2 E_1t - \tfrac{1}{z_1}E_2 t - \tfrac{1}{z_2}E_3 t +
((1+z_1)(1+z_2) - z_1^2 z_2^2)[M_3]t^2.$$

The arithmetic identities of Corollary~\ref{c:frob-sums} become:
\begin{equation} \label{eq:arith-id-M_3} \sum_{(z_1, z_2) \in
     \mathcal{W}_1} \frac{a(z_1,
     z_2)}{(z_2-\frac{1}{z_2})^2-\frac{4}{z_1}(1+z_2+z_1z_2)} =
   \begin{cases}
      0, & \textnormal{if } a(z_1, z_2) \; \textnormal{is one of } 1,
      z_1,
      z_2, z_1z_2, \frac{1}{z_1}, \frac{1}{z_2}, \frac{1}{z_1z_2}, \\
      -1, & \textnormal{if } a(z_1,z_2) \; \textnormal{is one of }
      \frac{z_2}{z_1}, \frac{z_1}{z_2}, \frac{1}{z_1^2, z_2},
      \frac{1}{z_1 z_2^2}, z_1^2 z_2, z_1 z_2^2.
   \end{cases}
\end{equation}

\subsection{The Chekanov torus in ${\mathbb{C}}P^2$}
\label{sb:chekanov-torus}

This is a non-toric example. The Lagrangian torus which we will
describe below was discovered by
Chekanov~\cite{Chekanov:exotic-torus} (where he proved that a version
of this torus, lying in $\mathbb{R}^4$ is not Hamiltonianly isotopic
to the standard split torus). Further studies of holomorphic disks
with boundary on this torus were later carried out by
Eliashberg-Polterovich~\cite{El-Po:lag-knots} and more recently by
Chekanov and Schlenk~\cite{Che-Schl:tori}. A very nice exposition of
the subject and calculations of the related superpotential have been
carried out by Auroux~\cite{Aur:t-duality}. Below we partially follow
the notation from the latter paper.

Let $\gamma \subset \mathbb{C}$ be a closed embedded curve which does
not enclose $0 \in \mathbb{C}$. The Chekanov torus
$\mathbb{T}_{\gamma} \subset {\mathbb{C}}P^2$ is:
$$\mathbb{T}_{\gamma} = \bigl\{[x:y:1]
\in {\mathbb{C}}P^2 \mid xy \in \gamma \; \& \; |x|=|y|\bigr\}.$$ To
simplify the story, in what follows we will use a specific choice of
$\gamma$, namely $$\gamma = \bigl\{p_0 + re^{i\theta} \mid \theta \in
[0,2 \pi]\bigr\},$$ where $p_0 \in (0,\infty)$ and $0<|r|<p_0$. See
figure~\ref{f:gamma}.

\begin{figure}[htbp]
   \psfig{file=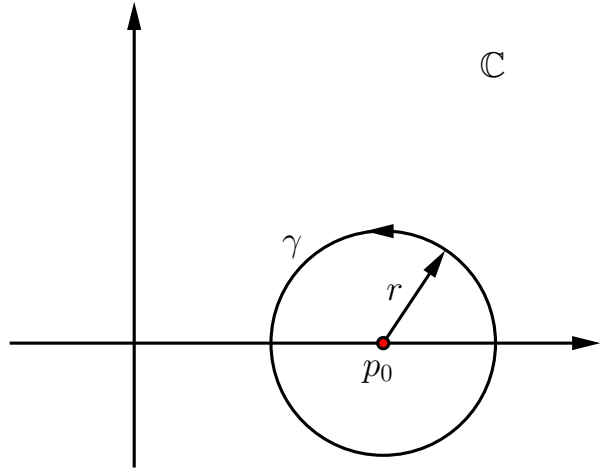, width=0.5 \linewidth}
      \caption{Construction of the Chekanov torus.}
      \label{f:gamma}
\end{figure}

We now fix a basis for $H_2^D = H_2^D({\mathbb{C}}P^2,
\mathbb{T}_{\gamma})$. For this end, let $\mathcal{U} \subset
\mathbb{C}$ be the bounded domain with $\partial
\overline{\mathcal{U}} = \gamma$ and choose a domain $\mathcal{V}
\subset \mathbb{C}$ such that $\{x^2 \mid x \in
\overline{\mathcal{V}}\} = \overline{\mathcal{U}}$. The map
$$\overline{\mathcal{V}} \ni x \longmapsto [x:x:1] \in
{\mathbb{C}}P^2$$ parametrizes a disk with boundary in
$\mathbb{T}_{\gamma}$. We denote the homology class of this disk by
$\beta \in H_2^D$. Next, consider the disk $\{[x:\bar{x}:1] \mid
|x|\leq p_0 - r\}$. This is a Lagrangian disk with boundary on
$\mathbb{T}_{\gamma}$. Denote its homology class by $\alpha \in
H_2^D$. Finally, denote by $L=[\mathbb{C}P^1]$ the homology class of a
line in ${\mathbb{C}}P^2$, viewed as an element of $H_2^D$.  The
Maslov indices of these three elements are: $\mu(\alpha)=0$,
$\mu(\beta) = 2$, $\mu(L)=6$.

According to Chekanov-Schlenk~\cite{Che-Schl:tori}
(see~\cite{Aur:t-duality} for a detailed proof) the set
$\mathcal{E}_2$ of homology classes represented by holomorphic disks
of Maslov index $2$ consists of $4$ elements and is as follows:
$\mathcal{E}_2 = \{A,B,C,D\}$, where
\begin{align*}
   & A = L-2\beta, \;\; \nu(A)=2, \quad B=L-2\beta+\alpha, \;\;
   \nu(B)=1, \\
   & C = \beta, \;\; \nu(C)=1, \quad \quad \quad \;
   D = L-2\beta-\alpha,
   \;\; \nu(D)=1.
\end{align*}
Here $\nu$ stands for the degree of the corresponding evaluation map,
as described in~\S\ref{sb:superpotential}.

Note that $\{A, B, C\}$ form a $\mathbb{Z}$--basis for $H_2^D$ and we
will use this basis to identify $\textnormal{Hom}(H_2^D, \mathbb{C}^*)
\cong (\mathbb{C}^*)^{\times 3}$. We will denote elements in this
space by $(\xi_A, \xi_B, \xi_C)$.

A straightforward calculation now show that
$$\mathcal{W}_2 = \{(\xi_A, \xi_A, 8\xi_A) \mid
\xi_A \in \mathbb{C}^*\}.$$ Note that the trivial representation
$(1,1,1)$ does not belong to $\mathcal{W}_2$. To write down the
quadratic form we will use the basis $\{a, b\}$ for
$H_1(\mathbb{T}_{\gamma};\mathbb{Z})$, where $a=\partial \alpha$,
$b=\partial \beta$.  In this basis, the quadratic form (on
$\mathcal{W}_2$) is:
$$\varphi_{_{\mathcal{W}_2}}(X_1, X_2) = 12 \xi_A X_1^2 + \xi_A X_2^2,
\quad \forall \; (\xi_A, \xi_A, 8\xi_A) \in \mathcal{W}_2,$$ and the
discriminant is $\Delta(\xi_A) = -48\xi_A^2$.

Next we describe $\mathcal{W}_1$. For this we choose as a basis for
$H_1=H_1(\mathbb{T}_{\gamma};\mathbb{Z})$ the elements $a=\partial
\alpha$, $b = \partial \beta$ and via this basis we write elements of
$\textnormal{Hom}(H_1, \mathbb{C}^*)$ as $(z_a, z_b) \in \mathbb{C}^*
\times \mathbb{C}^*$.
With this notation the superpotential is given by:
$$\mathscr{P}(z_a, z_b) = \frac{2}{z_b^2} + \frac{z_a}{z_b^2} +
\frac{1}{z_a z_b^2} + z_b.$$ A straightforward computation shows that
$\mathcal{W}_1 = \{(1, 2), (1, 2e^{2 \pi i/3}), (1, 2e^{4 \pi
  i/3})\}$. The quadratic form, in the basis $\{a,b\}$ (not in the
dual basis $\{b,-a\}$ !) is:
$$\varphi_{_{\mathcal{W}_1}}(X_1, X_2) = 12z_b^{-2}X_1^2 +
z_b^{-2}X_2^2, \quad \forall \; z_b \in \{1, e^{2\pi i/3}, e^{4 \pi
  i/3}\},$$ and the discriminant is $\Delta(z_b) = -\frac{6}{z_b}$.

\appendix
\section{Orientations} \label{a:orientations}
\subsection{Orientations -- general conventions} \label{sb:orient} In
order to define the pearl complex over a general ground ring we now
describe how to orient the moduli space of pearl trajectories.  

Below we denote orientations on vector spaces or manifolds $V$ by
$o_V$. We often denote dimensions of manifolds $V$ by $|V|$.

\subsubsection{Exact sequences}
Let $0 \longrightarrow F \stackrel{i}{\longrightarrow}
E\stackrel{p}{\longrightarrow} B \longrightarrow 0$ be a short exact
sequence of finite dimensional vector spaces. Orientations on any two
of these spaces induces an orientation on the third as follows.  Pick
a right inverse $s: B \longrightarrow E$ of $p$, so that $E = s(B) +
i(F)$. We require that $o_E = s(o_B) + i(o_F)$. Clearly the definition
is independent of the choice of $s$. Thus we orient exact sequence by
reading them from ``right to left'' rather than vice-versa.  We remark
that this is consistent with the standard orientation on products,
i.e. $o_{(B \times F)} = o_B + o_F$.

\subsubsection{Fibrations}
Orienting exact sequence implies a convention for the orientation of fibrations.
Namely, let $\pi: E \longrightarrow B$ be a (locally trivial smooth)
fibration with fiber $F$.  Given orientations on two of $F, E, B$ we
orient the third according to the exact sequence $0 \longrightarrow
TF \stackrel{Di}{\longrightarrow} TE\stackrel{Dp}{\longrightarrow} TB
\longrightarrow 0$, where $i$ is the inclusion of the fiber in $E$.

\subsubsection{Group actions and quotients} \label{sbsb:group-actions}
A special important case of orientations on fibrations is the
following. Let X be an oriented manifold and $K$ an oriented Lie group
acting freely on X. We orient the quotient space $X/K$ by viewing $X
\longrightarrow X/K$ as a fibration. Equivalently, we use the
exact sequence: $0 \longrightarrow T_x(K \cdot x) \longrightarrow
T_x X \longrightarrow T_{[x]}(X/K) \longrightarrow 0$.

\subsubsection{Orienting boundaries of manifolds}
Let $W$ be an oriented manifold with boundary, then the orientation of
$\partial W$ is such that $\overrightarrow{n}+o_{\partial W}  =
o_{W}$, where $\overrightarrow{n}$ is an exterior pointing vector to
$\partial W$.

\subsubsection{Normal bundles}
Let $W$ be an oriented manifold and $V \subset W$ an oriented
submanifold. We orient the normal bundle $\leftexp{\nu}{V} = TW/TV$ of
$V$ by the exact sequence $0 \longrightarrow TV \longrightarrow TW
\longrightarrow \leftexp{\nu}{V} \longrightarrow 0$, or by abuse of
notation $o_{(\leftexp{\nu}{V})} + o_V = o_W$.

\subsubsection{Preimages}
Let $U$ and $W$ be oriented manifolds and $V \subset W$ an oriented
submanifold.  Let $f: U \longrightarrow W$ be a map transverse to $V$.
We orient $f^{-1}(V)$ as follows. We first orient the normal bundle of
$f^{-1}(V)$ in $U$, by pulling back the orientation of
$\leftexp{\nu}{V}$ via the isomorphism $Df: \leftexp{\nu}{f^{-1}(V)}
\longrightarrow \leftexp{\nu}{V}$. The orientation on
$\leftexp{\nu}{f^{-1}(V)}$ induces an orientation on $f^{-1}(V)$.

\subsubsection{Intersections} \label{sbsb:intersections}
If $U,V$ are two transverse oriented submanifolds of an oriented
manifold $W$.  We orient $U \cap V$ via the exact sequence $0
\longrightarrow T(U \cap V) \longrightarrow TW \longrightarrow
\leftexp{\nu}{U} \oplus \leftexp{\nu}{V} \longrightarrow 0$. In other
words we have $\leftexp{\nu}U \oplus \leftexp{\nu}V \oplus T(U\cap
V)=TW$ as oriented vector spaces.

\subsubsection{Fiber products} \label{sbsb:fiber-product} Here we use
a convention taken from~\cite{FO3}, though our presentation is
somewhat different. Let $e_{i}: V_{i}\to X$, $i=1,2$, be two
transverse smooth maps, where $V_1, V_2, X$ are oriented manifolds.
Denote by $\Delta\subset X\times X$ the diagonal.  We denote by
$V_{1}\times_{X} V_{2}$ the submanifold
$(e_{1},e_{2})^{-1}(\Delta)\subset V_{1}\times V_{2}$ endowed with the
following orientation -- which is, in general, \emph{different} from
the standard preimage orientation. At the level of tangent spaces
there exists an exact sequence
$$0 \longrightarrow K\longrightarrow TV_{1}\oplus TX\oplus TV_{2}
\stackrel{h}{\longrightarrow} TX\oplus TX \longrightarrow 0$$ where
$h(v_1,x,v_2)=(De_{1}(v_1)-x,x-De_{2}(v_2))$, and $K$ is the kernel of
$h$. Note that $K$ is canonically identified with the tangent space of
$(e_{1},e_{2})^{-1}(\Delta)$ under the map $(v_1,x,v_2)\to (v_1,v_2)$.
Following our conventions above, the kernel $K$ above inherits an
orientation from those of $V_{1}, V_{2}, X$.  The \emph{fiber product}
orientation of $V_{1}\times_{X} V_{2}$ is induced by that of $K$.  We
will sometimes denote this fiber product also by $V_{1} \;
\lrsub{e_1}{\times}{e_2} V_{2}$ in case we need to make explicit the
maps $e_{1}, e_{2}$.

It is easy to see that our fiber product convention coincides with
that in~\cite{FO3}. In case $V_{1}$ and $V_{2}$ are oriented
submanifolds of $X$ and the two evaluations are just the respective
inclusions one can check that, as oriented submanifolds,
$V_{1}\times_{X} V_{2}=V_{2}\cap V_{1}$.

The motivation for introducing the fiber product orientation is that
it verifies an important associativity property. If $e_{1}:U\to X$,
$e_{2}:V\to X$, $f_1 :V\to Y$, $f_2:W\to Y$ are smooth maps with the
appropriate transversality conditions, then we have an oriented
equality $$(U \times_{X} V) \times_{Y} W = U \times_{X} (V \times_Y
W).$$ This is easily seen by noticing that both orientations can be
viewed as induced by the kernel orientation in the short exact
sequence:
$$0 \longrightarrow K \longrightarrow  TU \oplus TX \oplus TV \oplus 
TY\oplus TW\stackrel{h'}{\longrightarrow} TX \oplus TX \oplus TY
\oplus TY \longrightarrow 0$$ with
$h'(u,x,v,y,w)=(De_{1}(u)-x,x-De_{2}(v), Df_{1}(v)-y,y-Df_{2}(w))$.
Obviously, a similar formula remains valid for longer iterated fiber
products.

Another important feature of the fiber product is its behavior with
respect to taking boundaries (see~\cite{FO3}). Ler $U$, $V$ be
oriented manifolds possibly with boundary and $X$ an oriented manifold
without boundary.  Let $e:U \longrightarrow X$, $f:V \longrightarrow
X$ be two transverse maps. Then we have the following ``Leibniz''
formula for fiber products:
\begin{equation} \label{eq:fiber-Leibniz}
   \partial (U \times_X V) = (\partial U) \times_X V \; \coprod \;
   (-1)^{\scriptscriptstyle |X|-|U|}\, U \times_X \partial V.
\end{equation} 

\subsubsection{Lagrangian submanifolds}
Throughout the paper, by a Lagrangian $L\subset (M,\omega)$ we mean an
oriented Lagrangian submanifold together with a fixed spin structure.

\subsubsection{The group of biholomorphisms of the disk $Aut(D)$} 
\label{sbsb:aut-D}
Denote by $D \subset \mathbb{C}$ the closed unit disk. We orient its
boundary $\partial D$ by the counterclockwise orientation.

Denote by $G = Aut(D)$ the group of biholomorphisms of the disk. We
orient $G$ as follows.  Every element in $G$ can be written uniquely
as $$\sigma_{\theta, \alpha} (z) = e^{i \theta} \frac{z+\alpha}{1+
  \bar{\alpha}z}, \quad \textnormal{with } \theta \in [0,2 \pi),
\alpha \in \textnormal{Int\,}D.$$ This gives an identification between
$G$ and $[0, 2\pi) \times \textnormal{Int\,}D$ according to which we
orient $G$.

Denote by $H \subset G$ the subgroup of elements that preserve the two
points $-1, +1 \in \partial D$. This $1$-dimensional subgroup consists
of the elements $\sigma_{0,\alpha}$ with $\alpha \in (-1,1)$. We
orient $H$ by the orientation of the interval $(-1,1)$. Note that here
our conventions are different from those of~\cite{FO3}. In our case
$\sigma_{0,\alpha}(0) \longrightarrow +1$ (respectively $-1$) when
$\alpha \longrightarrow +1$ (respectively $-1$), whereas in~\cite{FO3}
it is vice-versa, thus our orientation of $H$ is the opposite of the
one used in~\cite{FO3}.

\subsubsection{Moduli spaces of holomorphic disks}
\label{sbsb:moduli-of-disks} Fix a generic almost complex structure $J
\in \mathcal{J}$. Let $B \in H_2^D$.  Denote by
$\widetilde{\mathcal{M}}(B,J)$ the space of (parametrized)
$J$-holomorphic disks $u:(D, \partial D) \longrightarrow (M,L)$ with
$u_*([D]) = B$. It is well-known by the work~\cite{FO3} that a spin
structure on L induces orientations on the moduli spaces
$\widetilde{\mathcal{M}}(B,J)$. Given $\zeta \in D$ (resp. $\partial
D$) we denote by $e_{\zeta}:\widetilde{\mathcal{M}}(B,J)
\longrightarrow M$ (resp. $L$) the evaluation map given by
$e_{\zeta}(u) = u(\zeta)$.

Let $p, q \geq 0$ and consider the space of (parametrized)
$J$-holomorphic disks with $p$-marked points on the boundary and $q$
marked points in the interior: $\widetilde{\mathcal{M}}_{p,q}(B,J) =
\widetilde{\mathcal{M}}_{p,q}(B,J) \times T_{p,q}$, where $T_{p,q}
\subset (\partial D)^{\times p} \times (\textnormal{Int\,}(D))^{\times
  q}$ is the open set consisting of all tuples of points
$(\underline{z}, \underline{\xi}) = (z_1, \ldots, z_p, \xi_1, \ldots,
\xi_q)$ with the properties that the $z_i$'s are all distinct, the
$\xi_j$ are all distinct and in addition if $p \geq 3$ the points
$z_1, \ldots, z_q$ are required to be in cyclic order along $\partial
D$ with respect to the standard (counterclockwise) orientation. As
$T_{p,q}$ is an open subset of $(\partial D)^{\times p} \times
(\textnormal{Int\,}(D))^{\times q}$ it inherits an orientation from
the latter. Apart from that we will require that $B \neq 0$ when $p
\leq 2$ and $q=0$ or when $p=0$ and $q=1$.

We let $G = Aut(D)$ (as wells as subgroups of it) act on
$\widetilde{\mathcal{M}}_{p,q}(B,J)$ as follows. If $\sigma \in G$ and
$(u, z_1, \ldots, z_p, \xi_1, \ldots, \xi_q) \in
\widetilde{\mathcal{M}}_{p,q}(B,J)$ define $$\sigma \cdot (u, z_1,
\ldots, z_p, \xi_1, \ldots, \xi_q) = (u \circ \sigma^{-1},
\sigma(z_1), \ldots, \sigma(z_p), \sigma(\xi_1), \ldots,
\sigma(\xi_q)).$$ We denote the space of disks with marked points by
$\mathcal{M}_{p,q}(B,J) = \widetilde{\mathcal{M}}_{p,q}(B,J)/G$, with
the orientation induced from the preceding conventions.  This space
comes with evaluation maps $E_{i,-}: \mathcal{M}_{p,q}(B,J)
\longrightarrow L$ and $E_{-,j}: \mathcal{M}_{p,q}(B,J)
\longrightarrow M$ defined by $E_{i,-}[u,\underline{z},
\underline{\xi}] = u(z_i)$ and $E_{-,j}[u,\underline{z},
\underline{\xi}] = u(\xi_j)$.

In what follows it will be often useful to deal with quotients by the
group $H \subset G$ of those elements that fix the points $-1,1 \in
D$, namely with $\widetilde{\mathcal{M}}(B,J)/H$. Recall that we have
oriented $H$ in~\S\ref{sbsb:aut-D} above. The space
$\widetilde{\mathcal{M}}(B,J)/H$ comes with two evaluation maps
$e_{\scriptscriptstyle -1}, e_{\scriptscriptstyle +1} :
\widetilde{\mathcal{M}}(B,J)/H \longrightarrow L$, defined by
$e_{\scriptscriptstyle -1}[u] = u(-1)$ and $e_{\scriptscriptstyle
  +1}[u] = u(+1)$.

With these conventions it is not hard to verify that the following
maps are orientation preserving diffeomorphisms:
\begin{equation} \label{eq:marked-pts}
   \begin{aligned}
      & \widetilde{\mathcal{M}}(B,J)/H \longrightarrow
      \mathcal{M}_{2,0}(B,J),
      \quad [u] \longmapsto [u,1,-1], \\
      & \widetilde{M}(B,J) \longrightarrow \mathcal{M}_{1,1}(B,J),
      \quad u \longmapsto [u,1,0], \\
      & \widetilde{M}(B,J) \longrightarrow \mathcal{M}_{3,0}(B,J),
      \quad u \longmapsto [u,1,e^{2\pi i/3}, e^{4 \pi i/3}].\\
   \end{aligned}
\end{equation}
In view of the first map above we will identify
$\mathcal{M}_{2,0}(B,J)$ with $\widetilde{\mathcal{M}}(B,J)/H$ and
view $e_{\scriptscriptstyle -1}, e_{\scriptscriptstyle +1}$ as maps
defined on $\mathcal{M}_{2,0}(B,J)$.

To simplify the notation, when $q=0$, we will sometimes write
$\mathcal{M}_p(B,J)$ instead of $\mathcal{M}_{p,0}(B,J)$. We will
especially use $\mathcal{M}_2(B,J)$.

\subsubsection{Bubbling and gluing} \label{sbsb:blgl} Let $B, B', B''
\in H_2^D$ with $B = B' + B''$. Consider the fiber product
$$\mathcal{M}_{2}(B',J) \lrsub{e_{\scriptscriptstyle +1}}{\times}{
  e_{\scriptscriptstyle -1}} \mathcal{M}_{2}(B'',J),$$ where
$e_{\scriptscriptstyle \pm 1}$ are the evaluation maps at $\pm 1 \in
\partial D$. By compactness, gluing, as well as further regularity
assumptions, this spaces can be embedded into the main stratum of the
boundary of the compactification of the space
$\mathcal{M}_{2}(B,J)$:
\begin{equation} \label{eq:bndry-disks-1} \mathcal{M}_{2}(B',J)
   \lrsub{e_{\scriptscriptstyle +1}}{\times}{e_{\scriptscriptstyle
       -1}} \mathcal{M}_{2}(B'',J) \hooklongrightarrow \partial
   \overline{\mathcal{M}_{2}(B,J)}.
\end{equation}
This embedding is so that the pair of marked points $-1\in
\textnormal{dom}(u')$ and $+1\in \textnormal{dom}(u'')$ with
$(u',u'')\in \mathcal{M}_{2}(B',J)\times\mathcal{M}_{2}(B'',J)$
corresponds after gluing to the pair of marked points $-1, +1\in
\partial D$ in the domain of the glued disk $u'\#_{\tau}
u''\in\mathcal{M}_{2}(B,J)$ for all gluing parameters $\tau$.

The embedding~\eqref{eq:bndry-disks-1} is in general not orientation
preserving. In fact the orientations on the left and right hand sides
differ by $(-1)^{n-1}$. This can be proved by a direct computation
based on~\cite{FO3}. We write this fact as:
\begin{equation} \label{eq:bndry-disks-2}
   \partial_{\scriptscriptstyle \textnormal{bubble}} \,
   \overline{\mathcal{M}_{2}(B,J)} \; = \coprod_{B'+B''=B}
   (-1)^{n-1} \mathcal{M}_{2}(B',J)
   \lrsub{e_{\scriptscriptstyle +1}}{\times}
   {e_{\scriptscriptstyle -1}} \mathcal{M}_{2}(B'',J).
\end{equation}
There is a slight abuse of notation here, since the right hand side is
just part of the boundary of $\mathcal{M}_{2}(B,J)$. However for the
purpose of the pearl complex the other boundary components are not
relevant. We will also write $\partial^{\scriptscriptstyle
  (B',B'')}_{\scriptscriptstyle \textnormal{bubble}} \,
\overline{\mathcal{M}_{2}(B,J)}$ for the boundary component
in~\eqref{eq:bndry-disks-2} that corresponds to bubbling of the type
$(B', B'')$.

\begin{rem}\label{rem:comp-orient-FOOO}
   There is a subtle difference between our conventions for gluing and
   those in~\cite{FO3}. In our case for the first moduli space in the
   fiber product we evaluate at the point $+1$ and for the second at
   the point $-1$ while~\cite{FO3} use the opposite convention.
   Furthermore, our conventions for the orientation on $H$ are
   opposite to theirs. These different sign conventions turn out to
   cancel each other in this case, hence our sign $(-1)^{n-1}$
   coincides with the one that appears in~\cite{FO3}.
\end{rem}

\subsubsection{Orientations in Morse theory} \label{sbsb:orient-morse}
Let $V$ be an oriented manifold, $f:V \longrightarrow \mathbb{R}$ a
Morse function and $(\cdot, \cdot)$ a Riemannian metric. Stable and
unstable submanifolds are always taken with respect to the {\em
  negative} gradient flow of $f$ which we denote by $\Phi_t: V
\longrightarrow V$.

For every $x \in \textnormal{Crit}(f)$ fix an orientation on the
unstable submanifold $W^u(x)$. This induces an orientation on the
stable submanifolds $W^s(x)$ by requiring that $o_{W^s(x)} +
o_{W^u(x)} = o_V$.

Assume now that the pair $(f, (\cdot, \cdot))$ is Morse-Smale. Given
$x, y \in \textnormal{Crit}(f)$ we have the following spaces of
gradient trajectories connecting $x$ to $y$:
$$\widetilde{m}(x,y) = W^s(y) \cap W^u(x), \quad m(x,y) =
\widetilde{m}(x,y) / \mathbb{R},$$ where $\mathbb{R}$ acts on
$\widetilde{m}(x,y)$ by $t \cdot p = \Phi_t(p)$. All spaces here are
oriented by the conventions we have described so far. The Morse
complex (with coefficients in $\mathbb{Z}$) is now defined by $CM =
\mathbb{Z} \langle \textnormal{Crit}(f) \rangle$, $\partial: CM_*
\longrightarrow CM_{*-1}$, where
$$\partial(x) = \sum_{|y| = |x|-1}  \# m(x,y)y, \quad 
\forall x \in \textnormal{Crit}(f).$$

\subsubsection{Some useful identities for boundaries}
\label{sbsb:bnd-ident} 
We start with two useful formulae for the boundary of the stable and
unstable submanifolds of critical points. Recall that these manifold
admit a natural compactification in terms of stable and unstable
submanifolds of lower indices. Here are the signs that appear in these
boundaries. Let $(f, (\cdot, \cdot))$ be a Morse-Smale pair as
in~\S\ref{sbsb:orient-morse}. Let $x \in \textnormal{Crit}(f)$ and $x'
\in \textnormal{Crit}(f)$ with $|x'|=|x|-1$. Then the part of the
boundary of $\overline{W^u(x)}$ that involves the critical point $x'$
satisfies:
\begin{equation} \label{eq:bndry-Wu}
   \partial \overline{W^u(x)} = m(x,x') \times W^u(x').
\end{equation}
Similarly, if $y,y' \in \textnormal{Crit}(f)$ with $|y'|=|y|+1$ then
the part of the boundary of $\overline{W^s(y)}$ that involves $y'$
satisfies:
\begin{equation} \label{eq:bndry-Ws}
   \partial \overline{W^s(y)} = (-1)^{|V|-|y|} \, m(y',y) \times W^s(y').
\end{equation}

\

Next we derive some general formulas for boundaries of moduli spaces
of gradient trajectories ``connecting'' two manifolds. Consider two
oriented manifolds $X$ and $Y$ with maps $e_{\scriptscriptstyle X}:X
\longrightarrow L$ and $e_{\scriptscriptstyle Y}:Y \longrightarrow L$.
Let $\Phi_t$ be the negative gradient flow of $f$ and consider the map
$e'_{\scriptscriptstyle X}: X \times \mathbb{R}_+ \longrightarrow L$,
given by $(x,t) \longmapsto \Phi_t \circ e_{\scriptscriptstyle X}(x)$.
Finally, consider the fiber product $Z = (X \times \mathbb{R}_+)
\times_L Y$, where the first factor is mapped to $L$ by
$e'_{\scriptscriptstyle X}$ and the second one by
$e_{\scriptscriptstyle Y}$. One might think of $Z$ as the space of
gradient trajectories connecting $X$ to $Y$. Ignoring orientations for
a moment, we note that part of the boundary of $Z$ is formed by broken
trajectories, i.e. by elements of the space $(X \times_L W^s(z))
\times (W^u(z) \times_L Y)$, where $z \in \textnormal{Crit}(f)$. Here
the (un)stable submanifolds are mapped to $L$ by inclusion and $X$,
$Y$, by the maps $e_X$, $e_Y$ respectively. We denote this component
of the boundary by $\partial_z ((X \times \mathbb{R}_+) \times_L Y)$.
Taking now orientations into account one obtains by straightforward
computation the following identity:
\begin{equation} \label{eq:del-broken}
   \partial_z \Bigl((X \times \mathbb{R}_+) \times_L Y \Bigr) 
   = (-1)^{|X|} \bigl(X \times_L W^s(z) \bigr)
   \times \bigl( W^u(z) \times_L Y \bigr).
\end{equation}

Another boundary component of $(X \times \mathbb{R}_+) \times_L Y$
arises when the gradient trajectory between $X$ and $Y$ shrinks to
zero length. Ignoring orientations, the corresponding part of the
boundary can be written as $X \times_L Y$, where $X$, $Y$ are mapped
to $L$ by $e_{\scriptscriptstyle X}$, $e_{\scriptscriptstyle Y}$
respectively. We denote it by $\partial_{\scriptscriptstyle
  \textnormal{shrink}} \bigl( (X \times \mathbb{R}_+) \times_L Y
\bigr)$. Taking orientations into account, one obtains the following
identity:
\begin{equation} \label{eq:shrink}
   \partial_{\scriptscriptstyle \textnormal{shrink}} 
   \bigl( (X \times \mathbb{R}_+) \times_L Y \bigr) = (-1)^{|X|+1}(X \times_L Y).
\end{equation}

\subsection{Orientation conventions for the pearl complex}
\label{sb:or-pcomplex} Our purpose now is to describe the orientation
conventions for the various pearly moduli spaces needed. With the
conventions that we will describe, the various algebraic structures
described in~\S\ref{sb:qh} verify the usual identities in
non-commutative differential graded homological algebra. We will only
justify here some of these facts, they are relatively straightforward
but tedious exercises. We remark that for the constructions below to
work with our orientation conventions it is important that the
algebraic structures discussed here are only defined by counting
elements of $0$-dimensional moduli spaces. In our case, the main
equations of interest concern the product from (\ref{eq:qprod}) that
verifies {\em at the chain level} the equation:
\begin{equation}\label{eq:leibniz-prod} d(x\ast y)= d(x)\ast y +
   (-1)^{n-|x|} x \ast d(y)
\end{equation}
and the module action from~\S\ref{subsubsec:module} that verifies a
similar identity. Besides this we claim that the other identities:
$d^{2}=0$, associativity of the product etc are all verified with
signs as well.

\subsubsection{Orienting the space of pearly trajectories}
\label{sbsb:orient-pearl} We first recall that a string of pearls
associated to the data $\mathscr{D}=(f,(\cdot, \cdot), J)$ and joining
two points $x,y\in \Crit(f)$ can be viewed as a sequence
$(a,u_{1},t_{1},u_{2},t_{2},\ldots, u_{k}, b)$ where $a\in W^{u}(x)$,
$b\in W^{s}(y)$, $u_{i}\in \mathcal{M}_{2}(B_{i},J)$, $B_i \neq 0$,
$t_{i}\in \mathbb{R}_+$, subject to the following incidence conditions
$\Phi_{t_{i}}(u_{i}(+1))=u_{i+1}(-1)$ for $1\leq i<k$, $u_{1}(-1)=a$,
$u_{k}(+1)=b$. Here $\Phi_t$ is the negative gradient flow of $f$.
Appropriate genericity conditions are required to insure the
transversality of the relevant evaluation maps.  The resulting pearl
moduli space is denoted $\mathcal{P}(x,y;\mathscr{D};(B_{1},\ldots,
B_{k}))$. When $k=1$ we also allow $B_1=0$ and put
$\mathcal{P}(x,y;\mathscr{D}, 0) = m(x,y)$ i.e. the space of gradient
trajectories going from $x$ to $y$ as in~\S\ref{sbsb:orient-morse}
above.

All orientation conventions described below are established by
assuming that we restrict attention only to the moduli spaces
involving absolutely distinct sequences of simple disks in the sense
of~\cite{Bi-Co:rigidity, Bi-Co:qrel-long}.

The moduli space $\mathcal{P}(x,y;\mathscr{D};(B_{1},\ldots, B_{k}))$
is thus a subset of $W^{u}(x)\times (\mathcal{M}_{2}(B_{1},J)\times
\mathbb{R}_+) \times \ldots \times \mathcal{M}_{2}(B_{k},J)\times
W^{s}(y)$ obtained from a multi-diagonal in $L \times L^{\times 2k}
\times L$ by taking the preimage by a suitable evaluation map.
However, this procedure will not be used in order to orient these
spaces. For the purpose of orientations we describe $\mathcal{P}$ as
an iterated fiber product.

Let $B_1, \ldots, B_k$, $k \geq 1$, be a sequence of classes in
$H_2^D$ with $B_j \neq 0$ for all $j$. Consider the fiber product
\begin{equation} \label{eq:P-xy-fiber-prod}
   \begin{aligned}
      \mathcal{P}(x,y; \mathscr{D}; (B_1, \ldots, B_k)) = W^{u}(x)&
      \times_L (\mathcal{M}_{2}(B_{1},J)\times \mathbb{R}_+)
      \times_L
      \ldots \\
      \ldots & \times_{L} (\mathcal{M}_{2}(B_{i},J)\times
      \mathbb{R}_+) \times_L \ldots
      \\
      \ldots & \times_{L}\mathcal{M}_{2}(B_{k},J) \times_{L}
      W^{s}(y) \; ,
   \end{aligned}
\end{equation}
where the first and last factor here are mapped into $L$ by inclusion.
The $i$'th moduli space ($i<k$) is mapped to the term $L$ on its left
by $(u_i, t) \mapsto e_{\scriptscriptstyle -1}(u_i) = u_i(-1)$, and to
the term $L$ on its right by $(u_i, t) \mapsto \Phi_t \circ
e_{\scriptscriptstyle +1}(u_i) = \Phi_t(u_i(+1))$. The pre-last factor
$\mathcal{M}_{2}(B_k,J)$ is mapped to the $L$ on its left by
$e_{\scriptscriptstyle -1}$ and to the $L$ on its right by
$e_{\scriptscriptstyle +1}$. When $B=0$ we simply put
$\mathcal{P}(x,y; \mathscr{D}; 0) = m(x,y)$ without any orientation
adjustment. Next, for a fixed $0 \neq B\in H^{D}_{2}$, the disjoint
union of all the moduli spaces
$\mathcal{P}(x,y;\mathscr{D};(B_{1},\ldots, B_{k}))$ such that $B=\sum
B_{i}$ is denoted by $\mathcal{P}(x,y;\mathscr{D}, B)$. Sometimes we
will omit $\mathscr{D}$ from the notation. We also put $\delta(x,y;B)
= |x|-|y|-1+\mu(B)$ which is the virtual dimension of
$\mathcal{P}(x,y;\mathscr{D};B)$.

Fix a $\widetilde{\Lambda}^{+}$--algebra $\mathcal{R}$ with its
structural morphism $q:\widetilde{\Lambda}^{+} \to \mathcal{R}$.  The
differential on the pearl complex $\mathcal{C}(\mathscr{D})$
(mentioned at the beginning of~\S\ref{sb:qh}) is defined as follows.
For $x \in \textnormal{Crit}(f)$:
\begin{equation} \label{eq:diff-pearl}
   dx=\sum_{\scriptscriptstyle y; \, |y|=|x|-1} 
   \# \mathcal{P}(x,y; \mathscr{D}; 0)\,y \;\; +
   \sum_{\substack{\scriptscriptstyle y, B \neq 0 ; \\ \scriptscriptstyle
       \delta(x,y;B)=0}} (-1)^{|y|}\,\# \mathcal{P}(x,y;\mathscr{D}; B)\,
   y\,q(T^B). 
\end{equation}
Notice that the first summand coincides with the Morse differential.
Note also the $(-1)^{|y|}$ sign standing in front of the elements in
the second summand. This sign is needed in order to make $d$ be a
differential (i.e. $d^2=0$) and is implied by our sign conventions for
the moduli spaces. See Remark~\ref{r:stable-spheres} for more on that.

Showing that $d^2=0$ reduces to the verifications in the
$\mathbb{Z}_2$ case as described in \cite{Bi-Co:rigidity} together
with two points having to do with the orientation conventions. The
first concerns the coherence of the orientation conventions with
respect to bubbling and, respectively, with respect to the contraction
of a flow line joining two consecutive disks. The claim in this case
is that a configuration that appears with a certain sign by bubbling,
also appears by the contraction of a flow line but with a reversed
sign. The second has to do with the signs that appear at the breaking
of a $1$-dimensional pearl moduli space at a critical point of $f$: we
need to make sure that these signs are the correct ones so that
$d^{2}=0$.  We now intend to explain why our conventions take care of
these two points.

For the first point, let us analyze the boundary points of a
$1$-dimensional moduli space of pearly trajectories
$\mathcal{P}(x,y;\mathscr{D};(B_1, \ldots, B_k))$ that appear when a
gradient trajectory between the $i$'th disk and the $(i+1)$'th disk
($1 \leq i \leq k-1$) shrinks to zero length. The relevant part of the
fiber product in~\eqref{eq:P-xy-fiber-prod} is the space
$$\mathcal{P}_i=(\mathcal{M}_{2}(B_i,J)\times \mathbb{R}_+)
\times_{L}\mathcal{M}_{2}(B_{i+1},J).$$ Applying
formula~\eqref{eq:shrink} we get
$$\partial_{\scriptscriptstyle{shrink}}\, \overline{\mathcal{P}_i} 
= (-1)^n \, \mathcal{M}_{2}(B_i,J)\times_{L}
\mathcal{M}_{2}(B_{i+1},J).$$ Note that $\dim
\mathcal{M}_{2}(B_i,J) + 1 = n + \mu(B_i) \equiv n \pmod{2}$, since
$\mu(B_i)$ is even because $L$ is orientable. Next, by
formula~\eqref{eq:bndry-disks-2} we have that the component of the
boundary of $\overline{\mathcal{M}_{2}(B_i+B_{i+1},J)}$ that
corresponds to bubbling into two disks of classes $B_i$, $B_{i+1}$ is
$$\partial^{\scriptscriptstyle (B_i,B_{i+1})}_{\scriptscriptstyle 
  \textnormal{bubble}} \, \overline{\mathcal{M}_{2}(B_i+B_{i+1},J)}
= (-1)^{n-1} \mathcal{M}_{2}(B_i,J) \times_L
\mathcal{M}_{2}(B_{i+1},J).$$ Applying the Leibniz formula for fibre
products~\eqref{eq:fiber-Leibniz} it follows that bubbling and
shrinking of a gradient trajectory between two disks come with
opposite signs in boundaries of $1$-dimensional spaces of pearly
trajectories. Now fix $B \neq 0$. Summing this up over all $k \geq 1$
and $(B_1, \ldots, B_k)$ with $\sum B_i = B$ we obtain that
\begin{equation} \label{eq:bubble-shrink}
   \#\partial_{\scriptscriptstyle bubble}
   \mathcal{P}(x,y;\mathscr{D};B) \; + \;
   \#\partial_{\scriptscriptstyle shrink}
   \mathcal{P}(x,y;\mathscr{D};B)=0.
\end{equation}
Of course other bubbles might a priori occur (such as side bubbling,
or sphere bubbles) but they actually do not appear when $L$ is
monotone (see~\cite{Bi-Co:rigidity, Bi-Co:Yasha-fest}). This concludes
the first point in the proof that $d^2=0$.

\ 

We now come to the second point in the proof. By the results
of~\cite{Bi-Co:rigidity, Bi-Co:qrel-long} when the virtual dimension
is $\delta(x,y;B)=1$, the spaces $\mathcal{P}(x,y;\mathscr{D};B)$
admit a compactification into a $1$-dimensional manifold with
boundary. Moreover, the boundary of this compactification consists of
precisely the following three types of spaces:
\begin{equation} \label{eq:full-bndry}
   \partial \overline{\mathcal{P}(x,y;\mathscr{D};B)} = 
   \partial_{\scriptscriptstyle bubble} 
   \mathcal{P}(x,y;\mathscr{D};B) \coprod 
   \partial_{\scriptscriptstyle shrink} 
   \mathcal{P}(x,y;\mathscr{D};B) \coprod
   \partial_{\scriptscriptstyle break} 
   \mathcal{P}(x,y;\mathscr{D};B),
\end{equation}
where $\partial_{\scriptscriptstyle break}$ stands for breaking of a
pearly trajectory at a critical point which we now elaborate more
about. Let $\mathbf{B} = (B _1, \ldots, B_k)$ be such that $\sum B_j =
B$, and consider the space $\mathcal{P} =
\mathcal{P}(x,y;\mathscr{D};\mathbf{B})$. We assume that its dimension
is $1$, namely $\delta(x,y;B)=1$. There are three types of places
where the gradient trajectory might break at.  The first is at a
critical point $x'$ between $x$ and the first disks $B_1$. The second
possibility is at a critical point $z$ between two consecutive disks
$B_i$ and $B_{i+1}$. The last possibility is that this occurs at a
critical point $y'$ between the last disk $B_k$ and the point $y$.
Applying the Leibniz formula~\eqref{eq:fiber-Leibniz} together with
formulae~\eqref{eq:bndry-Wu},
~\eqref{eq:del-broken},~\eqref{eq:bndry-Ws} we obtain:
\begin{equation} \label{eq:breaking}
   \begin{aligned}
      & \partial_{x'} \mathcal{P} =
      m(x,x') \times \mathcal{P}(x',y; \mathscr{D};\mathbf{B}), \\
      & \partial_{z} \mathcal{P} = (-1)^{|x|+1}
      \mathcal{P}(x,z;\mathscr{D};(B_1, \ldots, B_i)) \times
      \mathcal{P}(z,y;\mathscr{D};(B_{i+1}, \ldots, B_k)), \\
      & \partial_{y'} \mathcal{P} =
      -\mathcal{P}(x,y';\mathscr{D};\mathbf{B}) \times m(y',y).
   \end{aligned}
\end{equation}
Recall also that by our conventions $m(x,x') =
\mathcal{P}(x,x';\mathscr{D};0)$ and similarly for $m(y',y)$.  The
union of the spaces in~\eqref{eq:breaking} over all relevant $x'$,
$z$, $y'$, $i$, $k$ and $(B_1, \ldots, B_k)$ with $\sum B_j=B$ form
the space $\partial_{\scriptscriptstyle break}
\mathcal{P}(x,y;\mathscr{D};B)$.

We are now ready to show that $d^2(x) = 0$ for every $x \in
\textnormal{Crit}(f)$. We will work here with the ring
$\widetilde{\Lambda}^+$, which implies that the same statement holds
for every $\widetilde{\Lambda}^+$--algebra. Fix $y \in
\textnormal{Crit}(f)$ and $B \in H_2^D$ so that $\delta(x,y;B)=1$.  We
have to show that the coefficient of $y T^B$ in $d \circ d(x)$, which
we denote by $\langle d^2(x), y T^B \rangle$ is $0$.  Clearly, if
$B=0$ this amounts to showing that the Morse differential squares to
$0$ which is well known, thus we assume that $B \neq 0$. A simple
computation now shows that:
\begin{equation} \label{eq:d^2=0-a}
   \begin{aligned}
      \langle d^2(x), y T^B \rangle = & \sum_{|x'|=|x|-1} (-1)^{|y|}
      \#\mathcal{P}(x,x';0) \#\mathcal{P}(x',y;B) \; + \\
      & \sum_{\substack{z, A; \\ \delta(x,z;A)=0 \\ A \neq
          B}}(-1)^{|z|+|y|} \#\mathcal{P}(x,z;A)\#\mathcal{P}(z,y;B-A)
      \;
      + \\
      & \sum_{y'; \delta(x,y';B)=0} (-1)^{|y'|}\#\mathcal{P}(x,y';B)
      \#\mathcal{P}(y',y;0).
   \end{aligned}
\end{equation}
Applying~\eqref{eq:breaking} we now arrive to:
\begin{equation} \label{eq:d^2=0-b}
   \begin{aligned}
      \langle d^2(x), y T^B \rangle = & \sum_{|x'|=|x|-1} (-1)^{|y|}
      \#
      \partial_{x'}\mathcal{P}(x,y;B) \; + \\
      & \sum_{\substack{z, A; \\
          \delta(x,z;A)=0 \\ A \neq B}}(-1)^{|z|+|y|+|x|+1} \#
      \partial_{z} \mathcal{P}(x,y;B) \; + \\
      & \sum_{y'; \delta(x,y';B)=0} (-1)^{|y'|+1} \#\partial_{y'}
      \mathcal{P}(x,y;B).
   \end{aligned}
\end{equation}
Note that for the $z$'s that appear in the second summand we have
$|z|+|x|+1 \equiv 0 \pmod{2}$, hence $(-1)^{|z|+|y|+|x|+1} =
(-1)^{|y|}$. Similarly, for the third summand we have $(-1)^{|y'|+1} =
(-1)^{|y|}$. Thus we obtain
$$\langle d^2(x), y T^B \rangle = (-1)^{|y|} \,
\#\partial_{\scriptscriptstyle break} \mathcal{P}(x,y;B) = (-1)^{|y|}
\, \#\partial \overline{\mathcal{P}(x,y;B)} = 0,$$ where the pre-last
equality follows from~\eqref{eq:bubble-shrink}
and~\eqref{eq:full-bndry}. This concludes the verification that 
$d^2 = 0$.

\begin{rem} \label{r:stable-spheres} Here we explain in a more
   conceptual way the role of the sign $(-1)^{|y|}$
   in~\eqref{eq:diff-pearl}. This sign naturally appears from slightly
   different moduli spaces than $\mathcal{P}(x,y;\mathscr{D};B)$.  For
   every $x \in \textnormal{Crit}(f)$ denote by $S^u(x)$ the unstable
   sphere corresponding to $x$. This can be thought of as small radius
   (or infinitesimal) sphere inside $W^u(x)$ oriented as the boundary
   of small disk around the critical point which lies inside $W^u(x)$
   (recall that $W^u(x)$ is oriented). Similarly we have the stable
   sphere $S^s(y)$ for every $y \in \textnormal{Crit}(f)$. Consider
   now the following moduli space:
   \begin{equation}\label{eq:P-sph-xy-fiber-prod}
      \begin{aligned}
         \mathcal{P}_{\scriptscriptstyle \textnormal{sph}}(x,y;
         \mathscr{D}; (B_1, \ldots, B_k)) = (S^{u}(x) \times
         \mathbb{R}_+) & \times_L (\mathcal{M}_{2}(B_{1},J)\times
         \mathbb{R}_+) \times_L
         \ldots \\
         \ldots & \times_{L} (\mathcal{M}_{2}(B_{i},J)\times
         \mathbb{R}_+) \times_L \ldots
         \\
         \ldots & \times_{L} (\mathcal{M}_{2}(B_{k},J) \times
         \mathbb{R}_+ )\times_{L} S^{s}(y) \;,
      \end{aligned}
   \end{equation}
   where the first factor is mapped to $L$ by $(p,t) \mapsto
   \Phi_t(p)$, and the last one by inclusion. The only difference
   between $\mathcal{P}$ and $\mathcal{P}_{\scriptscriptstyle
     \textnormal{sph}}$ is the first factor in the fiber product as
   well as the last two ones. For $B \neq 0$, we define
   $\mathcal{P}_{\scriptscriptstyle \textnormal{sph}}(x,y;
   \mathscr{D}; B)$ to be the union of all
   $\mathcal{P}_{\scriptscriptstyle \textnormal{sph}}(x,y;
   \mathscr{D}; (B_1, \ldots, B_k))$ over all $(B_1, \ldots, B_k)$
   with $\sum B_j= B$. When $B=0$ we put
   $\mathcal{P}_{\scriptscriptstyle
     \textnormal{sph}}(x,y;\mathscr{D},0) = (S^u(x) \times
   \mathbb{R}_+) \times_L S^s(y)$.  The relation between these spaces
   and the one we have used so far is give by:
   \begin{equation} \label{eq:P-P-sph}
      \begin{aligned}
         & \mathcal{P}_{\scriptscriptstyle
           \textnormal{sph}}(x,y;\mathscr{D};0) = (-1)^{n+|x|-|y|-1} \,
         \mathcal{P}(x,y;\mathscr{D};0) = (-1)^{n+|x|-|y|-1} \, m(x,y), \\
         & \mathcal{P}_{\scriptscriptstyle
           \textnormal{sph}}(x,y;\mathscr{D};B) =
         (-1)^{n+1+|x|} \, \mathcal{P}(x,y;\mathscr{D};B), \quad
         \textnormal{when } B \neq 0.
      \end{aligned}
   \end{equation}
   In particular, when $\delta(x,y;B)=0$ we have:
   $$\mathcal{P}_{\scriptscriptstyle
     \textnormal{sph}}(x,y;\mathscr{D};0) = (-1)^n m(x,y), \quad
   \mathcal{P}_{\scriptscriptstyle
     \textnormal{sph}}(x,y;\mathscr{D};B) = (-1)^{n+|y|} \,
   \mathcal{P}(x,y;\mathscr{D};B), \; \textnormal{when } B \neq 0.$$
   Thus our differential~\eqref{eq:diff-pearl} can be written also as:
   $$d(x) = (-1)^n \sum_{\substack{y, B; \\ \delta(x,y;B)=0}}
   \# \mathcal{P}_{\scriptscriptstyle
     \textnormal{sph}}(x,y;\mathscr{D};B) y \, T^B.$$ Moreover, the
   spaces $\mathcal{P}_{\scriptscriptstyle \textnormal{sph}}$ behave
   better with respect to breaking at critical points, at least as far
   as orientations go. In fact, if $\delta(x,y;B)=1$ we have:
   $$\partial_{\scriptscriptstyle \textnormal{break}}
   \bigl( \mathcal{P}_{\scriptscriptstyle \textnormal{sph}}(x,y;
   \mathscr{D};
   B) \bigr) \coprod_{\substack{v \in \textnormal{Crit}(f),\, B'+B''=B;\\
       \delta(x,v;B')=0}} (-1)^{n+1} \,
   \mathcal{P}_{\scriptscriptstyle \textnormal{sph}}(x,v; \mathscr{D};
   B') \times \mathcal{P}_{\scriptscriptstyle \textnormal{sph}}(v,y;
   \mathscr{D}; B'').$$ This together with~\eqref{eq:bubble-shrink}
   immediately implies that $d^2=0$.

   Although the spaces $\mathcal{P}_{\scriptscriptstyle
     \textnormal{sph}}$ seem more natural from the point of view of
   orientations we have chosen not to explicitly work with them. One
   reason is that they seem less convenient for the purpose of the
   other quantum operations (e.g. the quantum product). Another
   drawback is that one has to redefine these spaces in some
   situations, e.g. when $x$ is a minimum the unstable sphere $S^u(x)$
   is, naively speaking, void.  Another case is when the holomorphic
   disks in $\mathcal{M}_2(B_1)$ come closer to the point $x$ than
   $S^u(x)$ (or even touch that point).
  \end{rem}

\subsubsection{Orientations for the quantum product}
\label{sbsb:or-prod}
The various operations described earlier in this section are modeled
on trees with nodes of valence at most four. In other, words they
correspond to strings of pearls that possibly meet a disk with at most
three entries and one exit.

As an example we now focus on the quantum product
(see~\cite{Bi-Co:rigidity, Bi-Co:qrel-long} for a complete definition
of the product). Fix three Morse functions $f,f',f''$ and the pearl
data $\mathscr{D}=(f, (\cdot, \cdot),J)$, $\mathscr{D}'=(f', (\cdot,
\cdot)', J)$, $\mathscr{D}''=(f'', (\cdot, \cdot)'', J)$. Let $v\in
\Crit(f)$, $w\in\Crit(f')$, $y\in \Crit(f'')$.  The coefficient of $y$
in the product $v\ast w$ is the sum over all classes $B,B',B'',\la\in
H^{D}_{2}$ of the number of configurations in the moduli space
$\mathcal{P}(v,w,y;B,B',B'',\la)$ given as an iterated fiber product
that we now make explicit.

Given data $\mathscr{D}=(f,(\cdot, \cdot),J)$, $x\in \Crit(f)$ and
$(B_1, \ldots, B_k)$ with $B_i \neq 0$ we first define the {\em
  unstable} pearl moduli space $\mathcal{P}^{u}(x;\mathscr{D};(B_1,
\ldots, B_k))$ to be the following iterated fiber product (together
with its orientation):
$$W^{u}(x)\times _{L} 
(\mathcal{M}_{2}(B_{1},J)\times \mathbb{R}_+)\times_L \ldots
\times_{L} (\mathcal{M}_{2}(B_{i},J)\times \mathbb{R}_+) \times_L
\ldots \times_{L}(\mathcal{M}_{2}(B_{k},J)\times \mathbb{R}_+)~.~$$
Given $B \neq 0$ we denote by $\mathcal{P}^{u}(x;\mathscr{D};B)$ the
union of all $\mathcal{P}^{u}(x;\mathscr{D}; (B_{1}, B_{2},\ldots,
B_{k}))$ with $\sum B_i = B$. In case $B=0$ we just put
$\mathcal{P}^{u}(x;\mathscr{D};0)=W^{u}(x)$ (again, as oriented
manifolds). This is similar to~\eqref{eq:P-xy-fiber-prod} with the
exception that the last fiber product is missing here and is replaced
by the term $\mathbb{R}_+$. The space
$\mathcal{P}^{u}(x;\mathscr{D};B)$ comes with an evaluation map
$$e^{u}_{B}:\mathcal{P}^{u}(x;\mathscr{D};B) \to L$$ whose restriction 
to $\mathcal{P}^{u}(x;\mathscr{D}; (B_{1}, B_{2},\ldots, B_{k}))$ is
induced from the evaluation on the last factor
$$\mathcal{M}_{2}(B_k,J) \times \mathbb{R}_+ \longrightarrow L, \quad (u,t)
\longmapsto \Phi_t(u(+1)).$$ For $B=0$ we take this evaluation map to
be the inclusion $W^u(x) \longrightarrow L$.

Similarly, we define the moduli space
$\mathcal{P}^{s}(x;\mathscr{D};B)$ whose components are defined when
$B \neq 0$ by the fiber product:
$$(L\times
\mathbb{R}_+)\times_{L}(\mathcal{M}_{2}(B_{1},J)\times \mathbb{R}_+)
\times_L \ldots \times_{L} (\mathcal{M}_{2}(B_{i},J)\times
\mathbb{R}_+)\times_L\ldots
\times_{L}(\mathcal{M}_{2}(B_{k},J)\times_{L} W^{s}(x),$$ and
$\mathcal{P}^{s}(x;\mathscr{D},0)=W^{s}(x)$ when $B=0$. There is also
an evaluation map $e^{s}_{B}:\mathcal{P}^{s}(x;\mathscr{D};B)\to L$
whose restriction to the component written above is induced from the
identity $L \to L$ defined on the leftmost term in the product.

Next, consider the parametrized moduli space
$\widetilde{\mathcal{M}}(\la,J)$ together with the following three
evaluation maps $e_{\scriptscriptstyle
  \zeta_j}:\widetilde{\mathcal{M}}(\la,J) \longrightarrow L$ where
$\zeta_j = e^{-2j \pi i/3}$, $j=1,2,3$. Finally, we define the space
$\mathcal{P}(v,w,y;B,B',B'',\la)$ by the fiber product:
\begin{equation}\label{eq:orient-prod}
   \mathcal{P}^{u}(v;\mathscr{D};B) 
   \lrsub{e^u_B}{\times}{e'_{\scriptscriptstyle \zeta_1}}
   \Bigl( \mathcal{P}^{u}(w;\mathscr{D}';B')
   \lrsub{e^{u}_{B'}}{\times}{e_{\scriptscriptstyle \zeta_2}}
   \widetilde{\mathcal{M}}(\la,J)
   \lrsub{e_{\scriptscriptstyle \zeta_3}}{\times}{e^{s}_{B''}}
   \mathcal{P}^{s}(y;\mathscr{D''};B'') \Bigr),
\end{equation}
where in this formula the map $e'_{\scriptscriptstyle \zeta_1}$ is
induced on the fiber product in the brackets by the evaluation
$e_{\scriptscriptstyle \zeta_1}$ originally defined on
$\widetilde{\mathcal{M}}(\lambda,J)$. Note that the dimension of
$\mathcal{P}(v,w,y;B,B',B'',\la)$ is $|v|+|w|-|y|-n +
\mu(B)+\mu(B')+\mu(B'')+ \mu(\lambda)$.

For $y \in \textnormal{Crit}(f)$, $C \in H_2^D$ such that
$|y|-\mu(C)=|v|+|w|-n$. The coefficient of $y T^C$ in the product
$v*w$ is given by 
$$\sum_{B+B'+B''+\lambda = C} \#\mathcal{P}(v,w,y;B,B',B'',\la).$$
By using similar arguments as those used above in the verification
showing $d^{2}=0$ (see~\S\ref{sbsb:orient-pearl}), it is easy to see
that the product defined by these moduli spaces verifies the Leibniz
formula~\eqref{eq:leibniz-prod} and, moreover, that the classical term
in this definition coincides with the Morse intersection product
(on the chain level). Furthermore similar arguments show that the
induced product on homology makes $QH(L; \mathcal{R})$ a unital
associative ring.

\subsubsection{Orientations for the quantum module structure}
\label{sbsb:or-mod} Similar conventions are used to define the
orientations required for the module structure
from~\S\ref{subsubsec:module}. Explicitly, let $h:M\to \R$ be a Morse
function and fix a metric $(\cdot, \cdot)_M$ on $M$ so that the pair
$(h,(\cdot, \cdot)_M)$ is Morse-Smale. Fix a pearl data on $L$,
$\mathscr{D}=(f, (\cdot, \cdot), J)$. Let $a\in \Crit(h)$ and $x\in
\Crit(f)$. Let $y \in \textnormal{Crit}(f)$ and $C \in H_2^D$ with
$|y|-\mu(C) = |a|+|x|-2n$. The coefficient of $y T^C$ in the product
$a\ast x$ is given by counting elements in moduli spaces of the form:
\begin{equation}\label{eq:orient-module}
   W^{u}(a) \lrsub{i}{\times}{e'_0} 
   \Bigl( \mathcal{P}^{u}(x;\mathscr{D}';B')
   \lrsub{e^{u}_{B'}}{\times}{e_{-1}}
   \widetilde{\mathcal{M}}(\la,J)
   \lrsub{e_{+1}}{\times}{e^{s}_{B''}}
   \mathcal{P}^{s}(y;\mathscr{D''};B'') \Bigr),
\end{equation}
for all $B',B'', \lambda$ with $B'+B''+\lambda=C$. Here $i:W^{u}(a)
\longrightarrow M$ is the inclusion, $e'_0$ is the map induced from
$e_0:\widetilde{M}(\lambda,J) \longrightarrow M$, $e_0(u) = u(0)$, and
$e_{\pm 1}: \widetilde{M}(\lambda,J) \longrightarrow L$ are the
evaluation maps $e_{\pm 1}(u) = u(\pm 1)$.

Proving that this operation induces on $QH(L;\mathcal{R})$ a structure
of a module over $QH(M;\mathcal{R})$ is based on arguments similar to the
ones in~\S\ref{sbsb:orient-pearl}.

\subsubsection{Orientations for the quantum inclusion}
\label{sbsb:or-inc} Here we fix our conventions for the quantum
inclusion $i_{L}:QH(L;\mathcal{R})\to QH(M;\mathcal{R})$ which has
been recalled in~\S\ref{subsubsec:inclu}. The basic data is similar
here as in the case of the module multiplication: besides
$\mathscr{D}$ we also fix the Morse-Smale pair $(h,(\cdot, \cdot)_M)$
on $M$. We fix $x\in \Crit(f)$. Let $a \in \textnormal{Crit}(h)$ and
$B \in H_2^D$ with $|a|-\mu(B) = |x|$. The coefficient of $a T^B$ in
the expression of $i_{L}(x)$ is given by counting elements in moduli
spaces of the form:
\begin{equation}\label{eq:orient-inclusion}
   \mathcal{P}^{u}(x;\mathscr{D}; B') \lrsub{e^{u}_{B'}}{\times}{e_{-1}}
   \widetilde{\mathcal{M}}(\la,J) \lrsub{e_0}{\times}{i} W^{s}(a),
\end{equation}
where $i:W^{s}(a)\to M$ is the inclusion. It is not difficult to see
that with our conventions this defines a chain morphism whose
classical part coincides with the usual Morse (or singular homology)
inclusion.

\subsubsection{Invariance of the structures} \label{sbsb:invariance}
We now shortly discus the proof of the invariance of all these
structures with respect to changes in the data $\mathscr{D}$. This is
based on constructing comparison chain maps associated to any two
pairs of data. In turn, to construct such comparison maps there are
two distinct methods each perfectly similar to those described over
$\Z_{2}$ as in~\S3.2-e of~\cite{Bi-Co:rigidity} and, respectively, in
the proof of~Proposition~4.4.1 in the same paper. The first method is
based on a cone construction naturally appearing in a pearly version
of Morse cobordisms. It provides a quasi-isomorphism (canonical upto
chain homotopy) $\Psi_{\mathscr{D}',
  \mathscr{D}}:\mathcal{C}(L;\mathscr{D};\mathcal{R}) \longrightarrow
\mathcal{C}(L;\mathscr{D}';\mathcal{R})$ for {\em any two} tuples of
data $\mathscr{D}$, $\mathscr{D}'$. In view of the previous
subsections, the right convention to orient the relevant moduli spaces
in this case is rather straightforward and we omit the details.

The second methods is less general in the sense that it allows to
compare the pearl complexes associated only to two tuples
$\mathscr{D}$, $\mathscr{D}'$ having the same almost complex structure
$J$ and moreover the two Morse functions should be mutually in general
position. The resulting chain map $\phi_{\mathscr{D}', \mathscr{D}}$
coincides in homology with the one provided by the general method
$\Psi_{\mathscr{D}',\mathscr{D}}$.  As we use explicitely in the paper only 
the second construction we indicate briefly
the orientation conventions in that case.  Let $\mathscr{D}=(f,(\cdot,
\cdot), J)$ and $\mathscr{D}'=(f',(\cdot, \cdot), J)$ with $f$ and
$f'$ in general position. The map
$\phi_{\mathscr{D}',\mathscr{D}}:\mathcal{C}(L;\mathscr{D})\to
\mathcal{C}(L;\mathscr{D}')$ is defined by counting elements in the
moduli spaces of the form:
$$\Phi(x,y;\mathscr{D},B;\mathscr{D}',B') = 
\mathcal{P}^{u}(x;\mathscr{D};B)
\times_{L}\mathcal{P}^{s}(y;\mathscr{D}';B')~.~$$ The evaluation maps
here are the obvious ones.  The chain map
$\phi_{\mathscr{D}',\mathscr{D}}$ is now defined by
$$\phi_{\mathscr{D}',\mathscr{D}} (x) = 
\sum_{\substack{y,B,B'; \\ |y|=|x|+\mu(B+B')}}
\#\Phi(x,y;\mathscr{D},B;\mathscr{D}',B') y T^{B+B'}.$$ By the same
type of arguments as above, it is easy to see that this definition
provides a chain map that induces an isomorphism in homology and that
this definition provides the usual Morse comparison map in the
classical case.

\subsubsection{Orientation conventions for duality}
\label{sbsb:or-duality}
This is a topic that has been discussed in~\S4.4
of~\cite{Bi-Co:rigidity} but only over $\mathbb{Z}_2$ hence in the
absence of orientations. We fix a ground ring $K$ (it will be here a
field or $\mathbb{Z}$). We now recall some notation from
\cite{Bi-Co:rigidity} and adapt it to the present setting.

Assume that $\mathcal{R}$ is a commutative
$\widetilde{\La}^{+}$--algebra and suppose that
$(\mathcal{C},\partial)$ is a free $\mathcal{R}$-chain complex
(see~\cite{Bi-Co:rigidity}~\S2.2.1 for the precise definition). Thus
$\mathcal{C}=\mathcal{R}\otimes G$ for some graded free $K$-module
$G$. To the chain complex $(\mathcal{C},\partial)$ we associate the
following two closely related complexes:
\begin{itemize}
  \item[a.] $(\mathcal{C}^{\odot},\partial^{\ast})$;
   $\mathcal{C}^{\odot}=\hom_{\mathcal{R}}(\mathcal{C},\mathcal{R})$
   with the grading given by $g\in \mathcal{C}^{\odot}$, $|g|=k$ if
   $g(\mathcal{C}_{i})\subset \mathcal{R}_{i+k}$. The differential
   $\partial^{\ast}$ is given by
   \begin{equation}\label{eq:signs-adjoint-diff}
      \langle\partial^{\ast} g, x\rangle  = -(-1)^{|g|}\langle g,\partial x\rangle
      ~.~
   \end{equation}
   Clearly, $\mathcal{C}^{\odot}$ is a chain complex and 
   we have an isomorphism of graded
   modules $\mathcal{C}^{\odot}\cong \mathcal{R}\otimes \hom_{K}(G,K)$.
  \item[b.] $(\mathcal{C}^{\ast},\partial^{\ast})$;
   $\mathcal{C}^{q}=\mathcal{C}^{\odot}_{-q}$ and the differential
   $\partial^{\ast}$ coincides with the differential of
   $\mathcal{C}^{\odot}$ but it has now degree $+1$ so that
   $\mathcal{C}^{\ast}$ is a cochain complex. The cohomology of
   $\mathcal{C}$ is by definition
   $H^{k}(\mathcal{C})=H^{k}(\mathcal{C}^{\ast})$.  Notice that
   $\mathcal{C}^{\ast}=\mathcal{R}^{inv}\otimes \hom_{K}(G,K)^{inv}$
   where for a graded vector space $A$, $A^{inv}$ is the graded vector
   space so that for $a\in A^{inv}$, $|a|=-deg_{A}(a)$.
\end{itemize}

\begin{rem}
   \begin{itemize}
     \item[a.]  The identification between chain complexes $(C_{k},
      d_{k})$ and cochain complexes $(C^{k}, d^{k})$, $C^{k}=C_{-k}$,
      $d^{k}=d_{-k}$ that appears at point b. is standard in
      homological algebra but we have preferred to make it explicit
      here by means of the functor $(-)^{inv}$.
     \item[b.] The sign that appears in the definition of
      $\partial^{\ast}$ in formula~\eqref{eq:signs-adjoint-diff} is
      the only addition to the notation in~\S4.4 from
      \cite{Bi-Co:rigidity} (where we worked over $\Z_{2}$). This sign
      appears in other situations in algebraic topology as well. For
      instance, let $(S_{\bullet}X,\delta)$ be the standard singular
      chain complex of a space $X$. In the definition of the singular
      cohomology of $X$ the literature contains essentially two
      variants for the differential: one is the adjoint of $\delta$,
      without the signs in~\eqref{eq:signs-adjoint-diff}, and the
      other is given by formula~\eqref{eq:signs-adjoint-diff}. Many
      authors use the signed formula at least as soon as they deal
      with products and duality (see for
      instance~\cite{Dold:alg-top}). The advantage of this formula is
      that the Kronecker pairing $S^{\bullet}(N)\otimes
      S_{\bullet}(N)\to \Z$ is a chain map. If $X$ is an oriented
      manifold and once Poincar\'e duality is defined by $(-)\cap
      [X]$, the intersection product is the dual of the cup-product,
      and both the intersection product and the cup-product verify the
      respective Leibniz formulas with the usual signs. These are the
      conventions concerning classical algebraic topology that we also
      use in this paper.
     \item[c.]  Clearly, in our situation equation
      (\ref{eq:signs-adjoint-diff}) insures that the pairing
      $\mathcal{C}^{\odot}\otimes_{\mathcal{R}} \mathcal{C}\to
      \mathcal{R}$ is a chain map (where the differential on
      $\mathcal{R}$ is trivial).
   \end{itemize}
\end{rem}

For a complex $\mathcal{C}$ we denote by $s^{n}\mathcal{C}$ its
$n$-fold suspension: this coincides with $\mathcal{C}$ but is graded
so that the degree of $x$ in $s^{n}\mathcal{C}$ is
$n+deg_{\mathcal{C}}(x)$, in other words $(s^n \mathcal{C})_k =
\mathcal{C}_{k-n}$. The differential on this complex remains the same.
In particular, $H_{k}(s^{n}\mathcal{C}^{\odot})\cong
H_{k-n}(\mathcal{C}^{\odot})=H^{n-k}(\mathcal{C}^{\ast})$.

With the conventions above the results stated in~Proposition 4.4.1
in~\cite{Bi-Co:rigidity} remain true. Namely, there exists a degree
preserving morphism of chain complexes:
\begin{equation}\label{eq:duality}
   \eta:\mathcal{C}(L;\mathscr{D})\longrightarrow s^{n}
   (\mathcal{C}(L;\mathscr{D})^{\odot})
\end{equation}
that induces an isomorphism in homology. Thus there is an induced
isomorphism $$QH_{k}(L;\mathcal{R})\cong QH^{n-k}(L;\mathcal{R})~.~$$

The proof of this fact is basically the same as that of
Proposition~4.4.1 in~\cite{Bi-Co:rigidity}: $\eta$ is written as the
composition of two morphism, each of them inducing an isomorphism in
homology.  The first is the comparison chain morphism between
$\mathcal{C}(L; f, (\cdot, \cdot),J)$ and $\mathcal{C}(L; - f, (\cdot,
\cdot) ,J)$. The second is the following identification:
$$\Theta:\mathcal{C}(L;-f,(\cdot, \cdot),J)\cong 
s^{n}(\mathcal{C}(L;f,(\cdot, \cdot), J)^{\odot})$$ where $\Theta$ is
induced by $$\textnormal{Crit}(-f) \ni x \longmapsto (x')^{\ast}\in
\hom_{K}(K\langle\Crit(f)\rangle,K).$$ Here we have denoted by $x'$
the point $x \in \textnormal{Crit}(-f)$ viewed as critical point of
$f$ and by $(x')^{\ast}$ the dual of $x'$ with respect to the basis
$\{y'\}_{y' \in \textnormal{Crit}(f)}$ of $K \langle
\textnormal{Crit}(f) \rangle$.  The orientations of the stable and
unstable manifolds of $-f$ are related to those for $f$ as follows.
First we orient the stable submanifolds of $-f$ by requiring that
$W^{s}_{-f}(x)=W^{u}_{f}(x)$.  Next, in order to orient the unstable
submanifolds of $-f$ we apply to $-f$ the standard orientation
conventions. Namely we require that $T_{x}W^{s}_{-f}(x)\oplus
T_{x}W^{u}_{-f}(x)=T_{x}(L)$ at each $x\in \Crit(-f)$.

With these conventions one obtains the following identities.  Let $x,y
\in \textnormal{Crit}(-f)$, $B \in H_2^D$ with $\delta(x,y;B)=0$.  Put
$\mathscr{D} = (-f,(\cdot,\cdot), J)$ and $\mathscr{D}' =
(f,(\cdot,\cdot), J)$. Then:
\begin{equation} \label{eq:pearly-space-duality}
   \begin{aligned}
      & \mathcal{P}(x,y;\mathscr{D};0) = -(-1)^{|x'|} \mathcal{P}(y',x';
      \mathscr{D}';0), \\
      & \mathcal{P}(x,y;\mathscr{D};B) = (-1)^{|y|+1}
      \mathcal{P}(y',x';\mathscr{D}';B).
   \end{aligned}
\end{equation}
From this it follows that the coefficient of $y$ in $d(x)$ satisfies:
$d'(x)|_y = -(-1)^{|x'|} d(y')|_{x'}$. This immediately implies that
$\Theta$ is a chain map.

Finally, with these conventions it is not difficult to verify in the
present context the following formula:
\begin{equation}\label{eq:duality-augmentation}
   \langle PD(h), x\rangle = \epsilon(h\ast x), 
   \ \forall h\in H_{\ast}(M,K),\  x\in QH(L) ~.~
\end{equation} 
This formula appeared in~\cite{Bi-Co:rigidity}, as ``formula~(6)'' in
the point~iii of Theorem~A in that paper (which was proved there only
over $\Z_{2}$). More specifically: Here $\langle\, , \rangle $ is the
Kronecker product, $- \ast -$ is the module operation discussed
in~\S\ref{subsubsec:module} and $\epsilon$ is the augmentation defined
in~\cite{Bi-Co:rigidity}. Recall that for a pearl complex
$\mathcal{C}(L;f,(\cdot, \cdot),J)$ where $f$ has a unique minimum,
the augmentation $\epsilon$ is induced by the map that sends the
minimum to $1\in\mathcal{R}$ (and sends all the other critical points
to $0$).



\bibliography{bibliography}

%
%
%

\end{document}